\title[Classification of Rokhlin flows]{The classification of Rokhlin flows on \cstar-algebras}
\author{Gábor Szabó}
\address{Department of Mathematics, KU Leuven, Celestijnenlaan 200b, box 2400\linebreak
\phantom{-}\hspace{2mm} B-3001 Leuven, Belgium}
\email{gabor.szabo@kuleuven.be}
\subjclass[2010]{46L55, 46L40}
\numberwithin{equation}{section}
\begin{document}

\newcommand\set[1]{\left\{#1\right\}}  

\newcommand{\IA}[0]{\mathbb{A}} \newcommand{\IB}[0]{\mathbb{B}}
\newcommand{\IC}[0]{\mathbb{C}} \newcommand{\ID}[0]{\mathbb{D}}
\newcommand{\IE}[0]{\mathbb{E}} \newcommand{\IF}[0]{\mathbb{F}}
\newcommand{\IG}[0]{\mathbb{G}} \newcommand{\IH}[0]{\mathbb{H}}
\newcommand{\II}[0]{\mathbb{I}} \renewcommand{\IJ}[0]{\mathbb{J}}
\newcommand{\IK}[0]{\mathbb{K}} \newcommand{\IL}[0]{\mathbb{L}}
\newcommand{\IM}[0]{\mathbb{M}} \newcommand{\IN}[0]{\mathbb{N}}
\newcommand{\IO}[0]{\mathbb{O}} \newcommand{\IP}[0]{\mathbb{P}}
\newcommand{\IQ}[0]{\mathbb{Q}} \newcommand{\IR}[0]{\mathbb{R}}
\newcommand{\IS}[0]{\mathbb{S}} \newcommand{\IT}[0]{\mathbb{T}}
\newcommand{\IU}[0]{\mathbb{U}} \newcommand{\IV}[0]{\mathbb{V}}
\newcommand{\IW}[0]{\mathbb{W}} \newcommand{\IX}[0]{\mathbb{X}}
\newcommand{\IY}[0]{\mathbb{Y}} \newcommand{\IZ}[0]{\mathbb{Z}}

\newcommand{\CA}[0]{\mathcal{A}} \newcommand{\CB}[0]{\mathcal{B}}
\newcommand{\CC}[0]{\mathcal{C}} \newcommand{\CD}[0]{\mathcal{D}}
\newcommand{\CE}[0]{\mathcal{E}} \newcommand{\CF}[0]{\mathcal{F}}
\newcommand{\CG}[0]{\mathcal{G}} \newcommand{\CH}[0]{\mathcal{H}}
\newcommand{\CI}[0]{\mathcal{I}} \newcommand{\CJ}[0]{\mathcal{J}}
\newcommand{\CK}[0]{\mathcal{K}} \newcommand{\CL}[0]{\mathcal{L}}
\newcommand{\CM}[0]{\mathcal{M}} \newcommand{\CN}[0]{\mathcal{N}}
\newcommand{\CO}[0]{\mathcal{O}} \newcommand{\CP}[0]{\mathcal{P}}
\newcommand{\CQ}[0]{\mathcal{Q}} \newcommand{\CR}[0]{\mathcal{R}}
\newcommand{\CS}[0]{\mathcal{S}} \newcommand{\CT}[0]{\mathcal{T}}
\newcommand{\CU}[0]{\mathcal{U}} \newcommand{\CV}[0]{\mathcal{V}}
\newcommand{\CW}[0]{\mathcal{W}} \newcommand{\CX}[0]{\mathcal{X}}
\newcommand{\CY}[0]{\mathcal{Y}} \newcommand{\CZ}[0]{\mathcal{Z}}

\newcommand{\FA}[0]{\mathfrak{A}} \newcommand{\FB}[0]{\mathfrak{B}}
\newcommand{\FC}[0]{\mathfrak{C}} \newcommand{\FD}[0]{\mathfrak{D}}
\newcommand{\FE}[0]{\mathfrak{E}} \newcommand{\FF}[0]{\mathfrak{F}}
\newcommand{\FG}[0]{\mathfrak{G}} \newcommand{\FH}[0]{\mathfrak{H}}
\newcommand{\FI}[0]{\mathfrak{I}} \newcommand{\FJ}[0]{\mathfrak{J}}
\newcommand{\FK}[0]{\mathfrak{K}} \newcommand{\FL}[0]{\mathfrak{L}}
\newcommand{\FM}[0]{\mathfrak{M}} \newcommand{\FN}[0]{\mathfrak{N}}
\newcommand{\FO}[0]{\mathfrak{O}} \newcommand{\FP}[0]{\mathfrak{P}}
\newcommand{\FQ}[0]{\mathfrak{Q}} \newcommand{\FR}[0]{\mathfrak{R}}
\newcommand{\FS}[0]{\mathfrak{S}} \newcommand{\FT}[0]{\mathfrak{T}}
\newcommand{\FU}[0]{\mathfrak{U}} \newcommand{\FV}[0]{\mathfrak{V}}
\newcommand{\FW}[0]{\mathfrak{W}} \newcommand{\FX}[0]{\mathfrak{X}}
\newcommand{\FY}[0]{\mathfrak{Y}} \newcommand{\FZ}[0]{\mathfrak{Z}}

\renewcommand{\phi}[0]{\varphi}
\newcommand{\eps}[0]{\varepsilon}

\newcommand{\id}[0]{\operatorname{id}}		
\renewcommand{\sp}[0]{\operatorname{Sp}}		
\newcommand{\eins}[0]{\mathbf{1}}			
\newcommand{\diag}[0]{\operatorname{diag}}
\newcommand{\ad}[0]{\operatorname{Ad}}
\newcommand{\ev}[0]{\operatorname{ev}}
\newcommand{\fin}[0]{{\subset\!\!\!\subset}}
\newcommand{\Aut}[0]{\operatorname{Aut}}
\newcommand{\dimrok}[0]{\dim_{\mathrm{Rok}}}
\newcommand{\dst}[0]{\displaystyle}
\newcommand{\cstar}[0]{\ensuremath{\mathrm{C}^*}}
\newcommand{\dist}[0]{\operatorname{dist}}
\newcommand{\ann}[0]{\operatorname{Ann}}
\newcommand{\cc}[0]{\simeq_{\mathrm{cc}}}
\newcommand{\scc}[0]{\simeq_{\mathrm{scc}}}
\newcommand{\vscc}[0]{\simeq_{\mathrm{vscc}}}
\newcommand{\scd}[0]{\preceq_{\mathrm{scd}}}
\newcommand{\ue}[0]{{~\approx_{\mathrm{u}}}~}
\newcommand{\cel}[0]{\ensuremath{\mathrm{cel}}}
\newcommand{\acel}[0]{\ensuremath{\mathrm{acel}}}
\newcommand{\sacel}[0]{\ensuremath{\mathrm{sacel}}}
\newcommand{\prim}[0]{\ensuremath{\mathrm{Prim}}}
\newcommand{\co}[0]{\ensuremath{\mathrm{co}}}
\newcommand{\GL}[0]{\operatorname{GL}}
\newcommand{\Bott}[0]{\ensuremath{\mathrm{Bott}}}
\newcommand{\tK}[0]{\ensuremath{\underline{K}}}
\newcommand{\Hom}[0]{\operatorname{Hom}}
\newcommand{\wil}[0]{\ensuremath{\mathrm{wil}}}

\newtheorem{theorem}{Theorem}[section]	
\newtheorem{cor}[theorem]{Corollary}
\newtheorem{lemma}[theorem]{Lemma}
\newtheorem{prop}[theorem]{Proposition}

\newcounter{theoremintro}
\newtheorem{theoremi}[theoremintro]{Theorem}
\renewcommand*{\thetheoremintro}{\Alph{theoremintro}}

\theoremstyle{definition}
\newtheorem{defi}[theorem]{Definition}
\newtheorem*{defie}{Definition}
\newtheorem{nota}[theorem]{Notation}
\newtheorem*{notae}{Notation}
\newtheorem{rem}[theorem]{Remark}
\newtheorem*{reme}{Remark}
\newtheorem{example}[theorem]{Example}
\newtheorem*{examplee}{Example}
\newtheorem{conjecture}[theorem]{Conjecture}
\newtheorem*{conjecturee}{Conjecture}
\newtheorem{question}[theorem]{Question}

\begin{abstract} 
We study flows on \cstar-algebras with the Rokhlin property.
We show that every Kirchberg algebra carries a unique Rokhlin flow up to cocycle conjugacy, which confirms a long-standing conjecture of Kishimoto.
We moreover present a classification theory for Rokhlin flows on \cstar-algebras satisfying certain technical properties, which hold for many \cstar-algebras covered by the Elliott program.
As a consequence, we obtain the following further classification theorems for Rokhlin flows.
Firstly, we extend the statement of Kishimoto's conjecture to the non-simple case:\ Up to cocycle conjugacy, a Rokhlin flow on a separable, nuclear, $\CO_\infty$-absorbing \cstar-algebra is uniquely determined by its induced action on the prime ideal space. 
Secondly, we give a complete classification of Rokhlin flows on simple classifiable $KK$-contractible \cstar-algebras:\ Two Rokhlin flows on such a \cstar-algebra are cocycle conjugate if and only if their induced actions on the cone of lower-semicontinuous traces are affinely conjugate. 
\end{abstract}

\maketitle

\setcounter{tocdepth}{1}

\tableofcontents


\section*{Introduction}

The study of dynamical systems encompasses many diverse parts of mathematics.
It is generally agreed that the fundamental original motivation is to gain an understanding of mechanics of nature, in particular how physical systems change in time.
By exploring the algebraic properties of dynamics in the classical sense, one arrives at a notion of a dynamical system --- an action of a locally compact group on a locally compact space --- that provides a common framework for studying time evolutions and symmetry groups in classical mechanics.
As soon as quantum mechanics emerged in the early 20th century as a fundamental field of physics, important mathematical ideas came with it via its interpretation of observables as certain operators over a Hilbert space whose elements represent physical states; the mathematical field of operator algebras was born shortly afterwards.
By seeking the parallel framework for dynamics in the Hilbert space model, one arrives at the notion of a \cstar-dynamical system, which brings the classical and non-classical ideas together in one neat package. This has led to the general acceptance of \cstar-dynamical systems as an important field of research.
The connection between dynamical systems and operator algebras has remained close and intimate throughout their respective history. Dynamical ideas represent pivotal ingredients within major branches of operator algebras, such as in Connes' noncommutative geometry \cite{Connes94} or in Popa's deformation/rigidity theory \cite{Popa07}. 

While there is much to say about the relevance of discrete group actions on operator algebras --- see \cite{Izumi10} for a survey --- the most exciting applications in geometry or physics are typically related to time evolutions, or briefly \emph{flows}, which are continuous actions of the real numbers $\IR$. 
Compared to actions of other groups, flows on operator algebras are historically the most relevant for gaining insight into the structure theory of general operator algebras through the crossed product construction.
This is especially prominent in the theory of von Neumann algebras, which are also often called $\mathrm{W}^*$-algebras.
The Kubo--Martin--Schwinger condition for flows --- originally conceived in the context of mathematical physics \cite{Kubo57, MartinSchwinger59, HaagHugenholtzWinnink67} --- has influenced the invention of Tomita--Takesaki theory \cite{Takesaki70}, which in the form of the \emph{flow of weights} construction allows one to carry over structural properties back and forth between von Neumann algebras of type II and type III. 

On the \cstar-algebraic side, the subject of flows could be considered a hot topic in the 1970s and 80s, with highlights in the theory of derivations \cite{Kadison66, Sakai66, Elliott70, Elliott77} perhaps most notably due to Sakai \cite{Sakai}, and Bratteli--Robinson's approach \cite{BratteliRobinson_I, BratteliRobinson_II} to quantum statistical mechanics.
A natural but very ambitious problem is to determine when two flows on a \cstar-algebra are cocycle conjugate. 
It is fair to say that over the years, the progress on this general problem has remained rather modest, with only a couple results applicable in some very special cases.
A lot of this is due to the key distinguishing feature of \cstar-algebras, which is based on the idea of being close everywhere in some parameter space instead of just locally close as for $\mathrm{W}^*$-algebras.
This fundamental theme weighs particularly heavy in the context of dynamical systems and requires challengingly tight maneuvering when making approximations.

This is where Kishimoto's invaluable contributions on flows enters the picture, which can arguably be regarded as a branch of the subject initiated by Sakai.
Apart from his many papers dedicated to the fine structure of general flows on \cstar-algebras, one of Kishimoto's finest inventions in this context is the Rokhlin property; see \cite{Kishimoto96_R}.

\begin{defie}
Let $\alpha: \IR\curvearrowright A$ be a flow on a separable, unital \cstar-algebra.\footnote{For convenience we restrict to the unital case for the moment; see Definition \ref{def:Rokhlin-property} for the general formal definition.}
We say that $\alpha$ has the Rokhlin property, if for every $p>0$, there exists an approximately central sequence of unitaries $u_n$ in $A$ satisfying \[
\lim_{n\to\infty}\ \max_{|t|\leq 1}\ \| \alpha_t(u_n)-e^{ipt}u_n \| = 0.
\]
\end{defie}

The basic idea behind all kinds of Rokhlin-type properties for group actions on \cstar-algebras is to find an exhaustive set of finite (or compact) approximate representations of the acting group $G$ in the \cstar-algebra $A$, which are at the same time approximately central.
Using functional calculus, one can indeed see that the unitaries $u_n$ in the above definition correspond to a sequence of approximately central and equivariant $*$-homomorphisms of $\CC(\IR/\frac{2\pi}{p}\IZ)$ equipped with the left $\IR$-shift into $A$.
Thus for small constants $p>0$, the chosen unitaries represent large circles on which a large part of the reals is represented via a cyclic shift.

Although the Rokhlin property is somewhat restrictive --- such flows cannot exist on AF algebras, and they do not admit any KMS states --- it does appear in important examples \cite{Kishimoto96_R, Kishimoto98II, Kishimoto02, Kishimoto05, BratteliKishimotoRobinson07}.
A particular consequence of \cite{BratteliKishimotoRobinson07} is that every Kirchberg algebra carries at least one Rokhlin flow.
A higher-rank version of the Rokhlin property for flows, called finite Rokhlin dimension, was recently introduced in \cite{HirshbergSzaboWinterWu17}.
There it has been shown to be both relevant for the classifiability of the crossed product, and to be automatic for many classical dynamical systems, namely those arising from free flows on finite-dimensional locally compact metric spaces.

Kishimoto's early insight in \cite{Kishimoto96_R} was that, under a mild additional assumption on the cocycles, the Rokhlin property leads to an approximate vanishing of the first cohomology of the flow.
Given the importance of approximate cohomology vanishing arguments in all existing approaches to classify group actions up to cocycle conjugacy, this led him to suspect that flows with the Rokhlin property ought to be the ones accessible to classification; see the introduction of \cite{BratteliKishimotoRobinson07}.

Given that flows contain only little inherent $K$-theoretical information and that the behavior of Kirchberg algebras is governed by $K$-theory in the spirit of Kirchberg--Phillips classification \cite{KirchbergC, Phillips00}, Kishimoto conjectured that there exists precisely one Rokhlin flow on every Kirchberg algebra up to cocycle conjugacy; see the discussion after \cite[Theorem 1.3]{Kishimoto03_R} or the end of the introduction to \cite{Kishimoto02}. 
A suitable uniqueness theorem is also lacking for Rokhlin flows on simple A$\IT$ algebras of real rank zero, in particular on irrational rotation algebras, where Kishimoto has given various ways of constructing Rokhlin flows; cf.\ \cite[Section 7]{Kishimoto03}.
The closest thing to a uniqueness theorem so far has been accomplished by Kishimoto and Bratteli--Kishimoto--Robinson, who verified that any two Rokhlin flows on Cuntz algebras \cite{Cuntz77} are cocycle conjugate as long as they are quasi-free; see \cite[Corollary 5.11]{BratteliKishimotoRobinson07}. To my knowledge, no new insight has come after this result so far.

Before delving deeper on the \cstar-algebraic side, it is worthwhile to take a look back into the theory of von Neumann algebras.
Based on Kishimoto's ideas for \cstar-algebras and building on some preliminary work of Kawamuro \cite{Kawamuro00}, Masuda--Tomatsu \cite{MasudaTomatsu16} have recently introduced the Rokhlin property for flows on von Neumann algebras.
With it, they gave a complete classification of Rokhlin flows on a von Neumann algebra with separable predual. In short, two Rokhlin flows turn out to be cocycle conjugate as soon as they are pointwise approximately unitarily equivalent; see \cite[Theorem 1]{MasudaTomatsu16}.
Among other applications, they showcased how this yields an independent approach to the Connes--Haagerup--Krieger classification \cite{Connes73, Connes76, Haagerup87, Krieger76} of injective type III factors.
One of the reasons why Masuda--Tomatsu's achievement is impressive is that their classification of Rokhlin flows is in a sense complementary to other typical uniqueness results for flows, such as those arising in Tomita--Takesaki theory \cite{Connes73}.

The goal of this paper is to jump-start the classification theory for flows on \cstar-algebras by providing a satisfactory analog of Masuda--Tomatsu's classification theorem.
Of course one would not expect to cover all separable \cstar-algebras in the naive way, simply because these might have too complicated structure in general, but at least one would hope to cover enough ground to make substantial progress on some of the problems left open in Kishimoto's earlier work.

Masuda--Tomatsu's approach is technically very involved, but follows a by now fairly well-known strategy for classifying group actions:\ establishing an approximate second-cohomology vanishing, an approximate first-cohomology vanishing, and then combining these with an Evans--Kishimoto \cite{EvansKishimoto97} intertwining argument.
On the \cstar-algebraic side, the approximate first-cohomology vanishing is readily available as one of Kishimoto's first observations about Rokhlin flows, as well as Evans--Kishimoto intertwining being a \cstar-algebraic invention to begin with.
However, an approximate second-cohomology vanishing is so far completely missing from the theory.
It is due to the more flexibel nature of the weak operator topology that this technical obstacle can be overcome for von Neumann algebras, and even a close inspection of Masuda--Tomatsu's proof does not reveal how to carry their ideas over to the \cstar-algebraic context.
It is useful to note, however, that the approximate second-cohomology vanishing is just used as a technical prerequisite to show that two given flows are approximate cocycle perturbations of each other in the sense of \cite[Section 4]{Kishimoto02}.

In what may be considered the main novelty of this paper, a direct method is given to compare two flows as approximate cocycle perturbations of one another in the second section, which foregoes second-cohomology considerations altogether. 
The key difference to comparable techniques in the literature is that the cocycles are directly constructed as coboundaries, whose existence would often be an \emph{a posteriori} outcome together with the approximate vanishing of the first cohomology.
This approach works under the assumption that the underlying \cstar-algebra satisfies a property we call \emph{finite weak inner length} --- see Definition \ref{def:weak-inner-length} --- which is a property much weaker than having finite exponential length \cite{Ringrose92}. 
In particular, this method applies to many examples of classifiable \cstar-algebras, in fact in some important cases even when the exponential length is infinite. 
The basic idea of the proof draws inspiration from a similar idea from \cite{BarlakSzaboVoigt17} for classifying Rokhlin actions of compact (quantum) groups.
The crucial addition is a continuous implementation of Berg's technique \cite{Berg75} that compensates for the non-compactness of the reals $\IR$.
This is made possible by the aforementioned technical assumption on the \cstar-algebra, and bears some resemblence to Kishimoto's key argument in the proof of \cite[Theorem 2.1]{Kishimoto96_R}.

Building on Kishimoto's previous work mentioned so far, we then obtain a positive solution to his conjecture as our first main result:

\begin{theoremi} \label{Theorem-A}
Let $A$ be a unital Kirchberg algebra. Then up to cocycle conjugacy, there exists a unique flow on $A$ with the Rokhlin property.
\end{theoremi}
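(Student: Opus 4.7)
The existence half of the theorem is already supplied by Bratteli--Kishimoto--Robinson \cite{BratteliKishimotoRobinson07}, so the task is to prove uniqueness. My plan is to invoke the general classification framework outlined in the introduction: combine an Evans--Kishimoto \cite{EvansKishimoto97} intertwining argument with an approximate first-cohomology vanishing statement and the approximate cocycle perturbation machinery of Section 2. Concretely, given two Rokhlin flows $\alpha,\beta:\IR\curvearrowright A$, I would aim to show that $\alpha$ and $\beta$ are mutual approximate cocycle perturbations of one another, and then feed this into Evans--Kishimoto intertwining to conclude $\alpha\cc\beta$.

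The approximate first-cohomology vanishing for Rokhlin flows is Kishimoto's original observation from \cite{Kishimoto96_R}. The crucial new input is the direct comparison technique from Section 2, which should give that $\alpha$ and $\beta$ are approximate cocycle perturbations without passing through any second-cohomology vanishing. Applying that method to $A$ requires two hypotheses to be verified: first, that $A$ has finite weak inner length, and second, that $\alpha$ and $\beta$ are pointwise approximately unitarily equivalent. The first is immediate, since any unital purely infinite simple \cstar-algebra has finite exponential length (in fact $\cel(A)\leq\pi$), which is much stronger than finite weak inner length.

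For pointwise approximate unitary equivalence, the essential observation is that each $\alpha_t$ is automatically approximately inner. Indeed, the path $s\in[0,1]\mapsto\alpha_{st}$ is a continuous homotopy through automorphisms from $\id_A$ to $\alpha_t$, so that $[\alpha_t]=[\id_A]$ in $KK(A,A)$, and then the Kirchberg--Phillips classification of endomorphisms \cite{Phillips00} yields $\alpha_t\ue\id_A$ for every $t\in\IR$. The same holds for $\beta$, and consequently $\alpha_t\ue\beta_t$ pointwise. This collapses the classification criterion to something tautological in the Kirchberg setting, which is exactly the heuristic behind Kishimoto's original conjecture.

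The hardest part of the overall strategy is, of course, the construction of the general approximate cocycle perturbation theorem in Section 2. For the deduction of Theorem \ref{Theorem-A} from that machinery, the most delicate step is to ensure that the pointwise approximate unitary equivalences $\alpha_t\ue\beta_t$ can be upgraded to a sufficiently controlled comparison that is continuous in $t$, so that an intertwining argument over the non-compact group $\IR$ can actually be performed. This is precisely where the Rokhlin property combines with the finite weak inner length assumption, via a continuous variant of Berg's technique \cite{Berg75}, to compensate for the non-compactness of $\IR$ and close the argument.
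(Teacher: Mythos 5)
Your outline follows the paper's route (existence from Bratteli--Kishimoto--Robinson, uniqueness via ``mutual approximate cocycle perturbation'' plus an intertwining argument, with the intertwining itself available off the shelf from \cite[Theorem 4.7]{Kishimoto02}), and your verification of finite weak inner length is fine. But there is a genuine gap in the input you feed into the Section 2 machinery. The hypothesis of Lemma \ref{lem:acp} and Corollary \ref{cor:acp} is \emph{not} that $\alpha_t\ue\beta_t$ for each $t$; it is that the restricted coactions $\alpha_\co,\beta_\co\colon A\to\CC([0,1],A)$ are approximately unitarily equivalent, i.e.\ a single unitary in $\eins+\CC([0,1],A)$ must witness the equivalence uniformly over $t\in[0,1]$. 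These are genuinely different conditions: Remark \ref{ex:uniform-assumption} exhibits flows (on irrational rotation algebras) that are pointwise approximately unitarily equivalent but not uniformly so, and the point of that remark is precisely that no general mechanism upgrades the pointwise statement to the uniform one on \cstar-algebras. In particular, the role you assign to the Rokhlin property, finite weak inner length and Berg's technique in your last paragraph is misplaced: that machinery converts the \emph{uniform} equivalence $\alpha_\co\ue\beta_\co$ into an approximate cocycle perturbation (this is the content of Lemma \ref{lem:acp-pre}); it does not produce the uniform equivalence from the pointwise one. So as written, your argument establishes a hypothesis that is too weak and then gestures at a step that the quoted tools do not perform.

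The repair is short and is exactly what the paper does: run your $KK$-homotopy argument one level up. The maps $\alpha_\co,\beta_\co\colon A\to\CC([0,1],A)$ are each homotopic to the constant embedding $a\mapsto\eins\otimes a$ (shrink the time parameter), hence define the same class, and Phillips' uniqueness theorem \cite[Theorem 4.1.1]{Phillips00} applied to these maps into $\CC([0,1],A)$ yields $\alpha_\co\ue\beta_\co$ directly, with no separate ``continuity in $t$'' step needed. With that substitution, Corollary \ref{cor:acp} applies symmetrically in $\alpha$ and $\beta$ and the proof closes. (A minor inaccuracy: the bound is $\cel(A)\leq 2\pi$ from \cite{Phillips02}, not $\pi$, but only finiteness is used.)
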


In the third section, we proceed to develop a general classification theorem à la Masuda--Tomatsu from scratch.
For this purpose, a general approximate first-cohomology vanishing is needed that respects approximate centrality. 
This works for \cstar-algebras satisfying a technical property that we call \emph{finite (weak) approximately central exponential length}; see Definition \ref{def:acel}. 
Although our proof is based on Kishimoto's earlier ingenious proof from \cite{Kishimoto96_R}, our treatment is more general and in fact contains some new aspects for the non-unital case.
For \cstar-algebras satisfying both of the two aforementioned technical properties, we then obtain a \cstar-algebraic analog of the Masuda--Tomatsu classification of Rokhlin flows; see Theorem \ref{thm:main-result}.

The notion of approximately central exponential length has implicitly appeared before in various places without a name, most recently in Enders' solution \cite{Enders15_2} of Blackadar's conjecture on the semiprojectivity of Kirchberg algebras.
It is interesting to note that, although this does not play an obvious or explicit role in Masuda--Tomatsu's proof of their classification theorem, it is observed in their appendix that all von Neumann algebras satisfy a stronger variant of this property, essentially due to Borel functional calculus; see \cite[Lemma 9.4]{MasudaTomatsu16}.
From the point of view of the present work, this yields a good explanation in hindsight why Masuda--Tomatsu's classification is possible in general, as well as a good reason to single out the approximately central exponential length of a \cstar-algebra as a natural notion. 
 
In short, the classification result (Theorem \ref{thm:main-result}) says that two Rokhlin flows $\alpha,\beta: \IR\curvearrowright A$ are cocycle conjugate when the embeddings $A \to \CC([0,1],A)$ coming from their orbit maps are approximately unitarily equivalent. 
To be more precise, we obtain an actual \emph{if and only if} condition when we ask for the cocycle conjugacy to be implemented by an approximately inner automorphism on $A$, which is the case in Masuda--Tomatsu's approach as well.
Compared to just characterizing cocycle conjugacy, this has the advantage that it allows one to obtain \emph{strong} classification theorems in applications, meaning that a conjugacy between two flows on a nice invariant actually lifts to a cocycle conjugacy on the \cstar-algebra.

Akin to Masuda--Tomatsu's classification, we in particular require that the automorphisms of the flows are approximately unitarily equivalent to each other, but moreover witnessed in a uniform way.
The relevance of this kind of comparison between flows was hinted at in \cite[Theorem 9.5]{MasudaTomatsu16} and credited to Izumi, albeit not fully exploited until now. 
The uniformity assumption is the minimum to be expected in the \cstar-algebra situation compared to Masuda--Tomatsu's pointwise assumption for $\mathrm{W}^*$-algebras. 
In fact there are examples showcasing that the pointwise assumption is not good enough in general; see Remark \ref{ex:uniform-assumption}.
One might say that this conceptual difference is analogous to uniform convergence on continuous functions over a space versus pointwise convergence on Borel functions.

The rest of the paper is then concerned with applying this general classification result for Rokhlin flows on certain classifiable classes of \cstar-algebras.
In the fourth section, we treat the purely infinite and possibly non-simple case. 
We obtain the following generalization of Theorem \ref{Theorem-A}, which is our second main result:

\begin{samepage}
\begin{theoremi} \label{Theorem-B}
Let $A$ be a separable, nuclear, $\CO_\infty$-absorbing \cstar-algebra.
Let $\alpha$ and $\beta$ be two flows on $A$ with the Rokhlin property.
Suppose that $\alpha$ and $\beta$ induce the same action on the prime ideal space of $A$.
Then $\alpha$ and $\beta$ are cocycle conjugate via an approximately inner automorphism.
\end{theoremi}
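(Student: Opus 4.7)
The strategy is to invoke the general classification result (Theorem~\ref{thm:main-result}) from Section 3. This requires verifying two things: first, that $A$ satisfies the technical hypotheses of finite weak inner length and finite (weak) approximately central exponential length; and second, that the orbit embeddings $\alpha_\bullet,\beta_\bullet: A \to \CC([0,1],A)$ given by $a \mapsto (t\mapsto \alpha_t(a))$ and analogously for $\beta$, are approximately unitarily equivalent by unitaries living in $\CC([0,1],A)$, i.e.\ by a continuous path of unitaries in $A$.

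For the technical hypotheses, the plan is to exploit the Kirchberg--Rørdam absorption theorem: every separable, nuclear, strongly purely infinite \cstar-algebra is $\CO_\infty$-absorbing. In an $\CO_\infty$-absorbing algebra, every unitary in the connected component of the identity can be written as $e^{ih}$ with $\|h\|$ bounded by a universal constant, giving finite exponential length. The absorption isomorphism $A\cong A\otimes\CO_\infty$ furthermore allows one to reposition such self-adjoints inside approximately central subalgebras (using the tensorial $\CO_\infty$-factor against any prescribed finite set), which promotes the bound to a bound on approximately central exponential length. Both technical properties required by Theorem~\ref{thm:main-result} thus hold.

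For the second point, observe that $\CC([0,1],A) \cong \CC([0,1])\otimes A$ remains separable, nuclear and $\CO_\infty$-absorbing, hence strongly purely infinite. Each orbit embedding is a nuclear $*$-homomorphism, and the map it induces on primitive ideals factors through the identification $\prim(\CC([0,1],A))\cong [0,1]\times \prim(A)$: for $\alpha_\bullet$ it sends $(t,\mathfrak{p})$ to $\alpha_t^{-1}(\mathfrak{p})$, and similarly for $\beta_\bullet$. The hypothesis that $\alpha$ and $\beta$ induce the same $\IR$-action on $\prim(A)$ forces these two primitive-ideal maps to coincide. Moreover, both orbit embeddings are homotopic to the constant inclusion $a\mapsto 1_{[0,1]}\otimes a$ through the reparametrizations $(s,a)\mapsto (t\mapsto \alpha_{st}(a))$ (and likewise for $\beta$), so they agree on ideal-related $KK$-theoretic invariants. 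Kirchberg's classification of nuclear $*$-homomorphisms into $\CO_\infty$-absorbing \cstar-algebras, in its non-simple ideal-lattice-preserving form, then yields approximate unitary equivalence inside $\CC([0,1],A)$, which is precisely the uniform condition demanded by Theorem~\ref{thm:main-result}.

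The hard part will be aligning these outputs cleanly with the exact shape of Theorem~\ref{thm:main-result} — in particular, ensuring that the approximate unitary equivalence produced by Kirchberg's theorem happens \emph{inside the target path algebra} rather than only fiberwise over $[0,1]$. This is essential, since it is this uniformity that allows the subsequent Evans--Kishimoto-type intertwining to produce a single approximately inner automorphism, rather than merely fiberwise data. A subsidiary delicacy is the extension of the approximately central exponential length bound from the simple Kirchberg-algebra setting, where it is essentially folklore via Enders' ideas alluded to in the introduction, to the non-simple strongly purely infinite setting; the $\CO_\infty$-absorption should again provide the necessary room for approximate centrality. Once both components are in place, Theorem~\ref{thm:main-result} produces the desired cocycle conjugacy via an approximately inner automorphism, which is exactly the conclusion of Theorem~\ref{Theorem-B}.
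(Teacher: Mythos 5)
Your overall architecture matches the paper's: verify the technical length hypotheses via $\CO_\infty$-absorption, prove $\alpha_\co\ue\beta_\co$, and feed both into Theorem \ref{thm:main-result}. However, both key steps have genuine gaps as you have set them up.

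First, the uniqueness step. You propose to quote Kirchberg's ideal-related classification of nuclear $*$-homomorphisms into strongly purely infinite \cstar-algebras to get $\alpha_\co\ue\beta_\co$ inside $\CC([0,1],A)$. This is precisely the route the paper deliberately avoids: the standard formulations of Kirchberg's uniqueness theorem require the target to be \emph{stable}, and $\CC([0,1],A)$ is not stable in general (e.g.\ when $A$ is unital). Stability is not a cosmetic hypothesis here — it is what suppresses the non-degeneracy issues (a unital and a non-unital map into a unital algebra can share the same invariant without being approximately unitarily equivalent), and it is stated in the paper as an open problem whether two non-degenerate maps with the same ideal-related $KK$-class must be approximately unitarily equivalent in the non-stable setting. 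The paper instead proves the needed uniqueness elementarily: it exhibits an ideal-preserving homotopy through non-degenerate maps, applies Gabe's $\CO_2$-stable classification to get approximate Murray--von Neumann (hence unitary) equivalence after tensoring with $\CO_2$ fiberwise along the homotopy, and then uses Phillips' interpolation trick with projections of alternating $K_0$-class in $\CO_\infty$ (Lemma \ref{lem:homotopy-reduction}) to upgrade this to approximate unitary equivalence after tensoring with $\CO_\infty$, which suffices since $A\cong A\otimes\CO_\infty$. Second, even granting a usable ideal-related uniqueness theorem, your argument that the two coactions have the same invariant is flawed: the homotopy $s\mapsto[a\mapsto(t\mapsto\alpha_{st}(a))]$ from $\alpha_\co$ to the constant embedding is \emph{not} ideal-preserving unless the action on $\prim(A)$ is trivial (at $s=1$ the ideal generated by the image of an ideal $J$ is the "moving" ideal $\{f: f(t)\in\alpha_t(J)\}$, at $s=0$ it is the constant one), so it cannot be used to compute an ideal-related homotopy invariant. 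The correct homotopy is the direct one $\Phi_s(a)(t)=\beta_{st}(\alpha_{(1-s)t}(a))$ connecting $\alpha_\co$ to $\beta_\co$; this preserves ideals exactly because $\alpha$ and $\beta$ induce the same action on $\prim(A)$.

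On the technical hypotheses: the claim that in an $\CO_\infty$-absorbing algebra every unitary in $\CU_0$ is $e^{ih}$ with universally bounded $\|h\|$ is false as stated (one only has $\cel\leq 2\pi$, and even Phillips' exponential approximation requires tensoring with a full-spectrum unitary in $\CO_\infty$). More importantly, the promotion to a bound on $\acel$ is not a soft consequence of repositioning self-adjoints into a commuting $\CO_\infty$-factor: the paper needs the Haagerup--R{\o}rdam commutator estimate in $A\otimes\CO_2$, Nakamura's interpolation along the given approximately central path using the alternating-$K_0$ projections, and a sequence-algebra contradiction argument to make the exponent $h$ approximately central (Lemmas \ref{lem:Haagerup-Rordam}--\ref{lem:fin-acel-Oinf}). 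You flag this as a "subsidiary delicacy," which is fair, but as written it is a gap rather than a proof. The finite weak inner length, by contrast, does follow for free from $\cel(A)\leq 2\pi$ via Remark \ref{rem:celw-cel}.
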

\end{samepage}

The proof of this result involves two steps.
Firstly, we combine arguments of Haagerup--R{\o}rdam \cite{HaagerupRordam95}, Nakamura \cite{Nakamura00} and Phillips \cite{Phillips97, Phillips02} to prove that any tensor product with $\CO_\infty$ has finite approximately central exponential length; see Theorem \ref{thm:fin-acel-Oinf}.
This may be viewed as an extension of Phillips' theorem from \cite{Phillips02} that such \cstar-algebras have finite exponential length, and is arguably of independent interest.
Secondly, we establish a uniqueness theorem for $*$-homomorphisms into $\CO_\infty$-absorbing \cstar-algebras with respect to ideal-preserving homotopy, which in particular applies to restricted coactions of flows.
We note that at least for stable \cstar-algebras, this would arise as a direct corollary of suitable uniqueness theorems from \cite{KirchbergC, Kirchberg00, Gabe19} with respect to ideal-related $KK$-theory.
However, it is unclear how to remove the stability assumption with any simple reduction argument.
Instead we give an independent elementary treatment, which hinges on the (comparably easier) $\CO_2$-stable classification theory; see \cite{Gabe18}.
Combining these two steps, Theorem \ref{Theorem-B} arises as a consequence of our general classification theorem from the third section.

In the fifth section, we treat the case of simple classifiable $KK$-contractible \cstar-algebras; see \cite{ElliottGongLinNiu17} for their recent classification theory due to Elliott--Gong--Lin--Niu.
As our third main result, we obtain the following classification of Rokhlin flows by the tracial invariant:

\begin{samepage}
\begin{theoremi} \label{Theorem-C}
Let $A$ be a separable, simple, stably projectionless \cstar-algebra with finite nuclear dimension and $KK(A,A)=0$. Then two Rokhlin flows $\alpha,\beta: \IR\curvearrowright A$ are cocycle conjugate if and only if $\alpha$ and $\beta$ induce affinely conjugate $\IR$-actions on the extended traces of $A$.\footnote{This means that there exists an affine homeomorphism $\gamma$ on the topological cone $T(A)$ of lower-semicontinuous extended traces respecting the scale $\Sigma_A$ such that $\gamma(\tau\circ\alpha_t)=\gamma(\tau)\circ\beta_t$ for all $\tau\in T(A)$ and $t\in\IR$.} Moreover, any conjugacy on the level of traces lifts to an automorphism on $A$ inducing cocycle conjugacy between $\alpha$ and $\beta$. 
\end{theoremi}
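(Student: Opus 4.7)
The plan is to deduce Theorem~\ref{Theorem-C} from the general classification Theorem~\ref{thm:main-result} together with the Elliott--Gong--Lin--Niu classification theory. The forward implication is immediate: a cocycle conjugacy $\alpha \cc \beta$ automatically produces an affine conjugacy of trace flows, since the unitary cocycle drops out at the level of traces. For the converse and the simultaneous lifting statement, the task reduces to verifying two things. First, $A$ must satisfy the hypotheses of Theorem~\ref{thm:main-result}, namely finite weak inner length and finite approximately central exponential length. Second, given the affine conjugacy $\gamma$ of trace flows, one must produce an automorphism $\Phi$ of $A$ such that after replacing $\alpha$ by $\Phi\circ\alpha\circ\Phi^{-1}$ the orbit embeddings $\eta_\alpha,\eta_\beta : A \to \CC([0,1],A)$, defined by $\eta_\alpha(a)(t)=\alpha_t(a)$ and $\eta_\beta(a)(t)=\beta_t(a)$, become approximately unitarily equivalent.

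For the technical properties, one uses that the Elliott--Gong--Lin--Niu theorem implies $A$ is $\CZ$-stable, and in fact $\CW$-stable for the Razak--Jacelon algebra $\CW$: since $\CW$ carries a unique lower-semicontinuous tracial weight up to scaling, $A\otimes\CW$ shares its scaled cone of traces with $A$ and is itself simple, nuclear, stably projectionless, $KK$-contractible, and of finite nuclear dimension, so the classification gives $A\otimes\CW\cong A$. Furthermore, $KK$-contractibility together with the UCT forces $K_0(A)=K_1(A)=0$, so every unitary in $A^\sim$ is null-homotopic. Combining $\CW$-stability with $K_1$-triviality, one can produce, for every $F\fin A$ and $\eps>0$, a uniform bound on the length of a piecewise-exponential path from a given unitary in $A^\sim$ to $\eins$ which remains $\eps$-central on $F$, in close analogy with the arguments establishing the finite approximately central exponential length of $\CO_\infty$-absorbing $\cstar$-algebras in Section~4 (Theorem~\ref{thm:fin-acel-Oinf}). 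Finite weak inner length then follows as a formally weaker consequence.

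For the tracial part, invoke the existence portion of Elliott--Gong--Lin--Niu to lift $\gamma$ to an automorphism $\Phi:A\to A$; after the indicated conjugation we may assume $\tau\circ\alpha_t=\tau\circ\beta_t$ for all $\tau\in T(A)$ and $t\in\IR$. Since every lower-semicontinuous trace on $\CC([0,1],A)$ is an integral of evaluation traces, the orbit maps $\eta_\alpha$ and $\eta_\beta$ induce identical trace data on $\CC([0,1],A)$. Moreover $KK(A,A)=0$ and homotopy invariance of $KK$-theory give $KK(A,\CC([0,1],A))=0$, so there is no $KK$-obstruction between $\eta_\alpha$ and $\eta_\beta$ either. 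These are precisely the inputs for a parameterized uniqueness statement of EGLN type to yield $\eta_\alpha\ue\eta_\beta$. An application of Theorem~\ref{thm:main-result} then provides an approximately inner automorphism intertwining the new $\alpha$ with $\beta$ up to an exterior equivalence, and composition with $\Phi$ delivers the full lift.

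The main expected obstacle is the last step: converting the fiberwise uniqueness $\alpha_t\ue\beta_t$, which is an immediate consequence of EGLN on each fiber once the traces match, into the genuinely uniform $\CC([0,1])$-linear statement $\eta_\alpha\ue\eta_\beta$. Two complementary routes present themselves. The first is to apply an EGLN-type uniqueness theorem directly to $*$-homomorphisms with target the continuous $\CC([0,1])$-algebra $\CC([0,1],A)$, whose fibers are simple classifiable $KK$-contractible and whose $KK$-obstruction has already been verified to vanish; this should be extractable from the EGLN machinery much as the ideal-preserving uniqueness in Section~4 was extracted from Gabe's $\CO_2$-stable classification. The second is to exploit the Rokhlin property itself, using the approximately central unitaries $u_n$ with $\alpha_t(u_n)\approx e^{ipt}u_n$ uniformly for $|t|\leq 1$ in order to run a continuous variant of Berg's technique that bootstraps a sequence of pointwise unitary approximations into a genuinely continuous one. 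Either way, once the uniform approximate unitary equivalence of orbit maps is in hand, the conclusion is immediate from the framework of Section~3.
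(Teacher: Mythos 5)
There is a genuine gap in the second step. You claim that $A$ has finite \emph{approximately central exponential length}, to be proved ``in close analogy'' with the $\CO_\infty$-absorbing case of Theorem \ref{thm:fin-acel-Oinf}. This is the wrong statement and the wrong method. These $KK$-contractible algebras typically have \emph{infinite} exponential length and infinite $\acel$: since $K_0(A)=0$ but $T(A)\neq 0$, the de la Harpe--Skandalis determinant is an honest obstruction to short paths, and one can exhibit approximately central unitaries (e.g.\ of the form $u\exp(ih_0)$ with $h_0$ supported in an almost orthogonal corner) whose distance to $\eins$ along \emph{any} path is arbitrarily large. Moreover the entire Section 4 machinery is unavailable here: the Haagerup--R{\o}rdam and Nakamura arguments rest on Cuntz isometries and on projections $p_j$ with prescribed classes $[p_j]=\pm 1$ in $K_0(\CO_\infty)$, none of which exist in a stably projectionless algebra. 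What is actually true, and what Theorem \ref{thm:main-result} needs, is finiteness of the \emph{weak} approximately central exponential length $\acel_w(A)$, where the Lipschitz estimate is only required after multiplying by elements of $\CF$, i.e.\ $\|a(v(t_1)-v(t_2))\|\leq\eps+C|t_1-t_2|$. The correct proof first kills the determinant by multiplying with an exponential $\exp(-ith_0)$ living in a corner orthogonal to $\CF$ (a path of uncontrolled norm-length that is nevertheless weakly short), reducing to a unitary in the closed commutator subgroup, and then invokes Gong--Lin's basic homotopy lemma, whose path one must inspect closely to extract the $2\pi$ bound. Your parenthetical claim that finite weak inner length ``follows as a formally weaker consequence'' also does not hold up: $\wil(A)<\infty$ is not a formal consequence of $\acel_w(A)<\infty$ (the former concerns arbitrary unitaries, the latter only approximately central ones), and it requires a separate argument via the $6\pi$ bound on $\cel$ of commutators.

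The same technology is what resolves the obstacle you flag at the end. Neither of your two proposed routes is carried out: there is no off-the-shelf parameterized EGLN uniqueness theorem for maps into $\CC([0,1],A)$, and the Rokhlin property is not used to prove $\alpha_\co\ue\beta_\co$ (it enters only inside Theorem \ref{thm:main-result}). The actual passage from the fiberwise statement (each $\beta_t\circ\alpha_{-t}$ is approximately inner, by Razak/Robert uniqueness since the traces match) to the uniform one is done by hand: choose intertwining unitaries $w_j$ at a fine grid $j/N$, observe that consecutive ratios $w_j^*w_{j+1}$ are approximately central, and connect them by the approximately central paths supplied by the basic homotopy lemma to assemble a single unitary in $\CU(\eins+\CC([0,1],A))$. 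Your tracial reduction via lifting $\gamma$ to an automorphism $\Phi$ with \cite[Theorem 7.5]{ElliottGongLinNiu17} and the overall architecture (reduce to equal trace actions, verify the hypotheses of Theorem \ref{thm:main-result}, conclude) do match the paper, but the two technical pillars are not established by the arguments you indicate.
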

\end{samepage}

The proof of this result is underpinned by certain observations of Gong--Lin about approximately central unitaries in such \cstar-algebras.
Namely, it turns out that an approximately central unitary can always be connected to the unit via an approximately central unitary path.
Although one does not have any kind of control over the length of such a path in general, our main observation is that the path in Gong--Lin's construction approximately behaves like a path with length at most $2\pi$, at least when viewed in the strict topology.
In particular, the \cstar-algebras in the above class will often have infinite exponential length as well as infinite approximately central exponential length, but finite \emph{weak} approximately central exponential length, which is good enough for our purposes.
Theorem \ref{Theorem-C} will thus arise as another consequence of Theorem \ref{thm:main-result}.

We note that a similar observation as above was made by Nawata \cite{Nawata19} for \cstar-algebras isomorphic to either $\CW$ or $\CW\otimes\CK$.
To be more precise, he showed that the central sequence algebra of $\CW$ has a connected unitary group and exponential length at most $2\pi$.
As a byproduct of our efforts in the fifth section, the same follows for all \cstar-algebras in the classifiable $KK$-contractible class; see Theorem \ref{thm:nawata-property}.
By the arguments detailed in \cite[Section 7]{Nawata19}, we thus obtain a classification of single automorphisms with the Rokhlin property analogous to Theorem \ref{Theorem-C}; see Theorem \ref{thm:Rokhlin-automorphisms}.

Three main questions for future research, which we shall briefly outline, are left open. 

Firstly, one may observe (using Lin's work on basic homotopy lemmas \cite{Lin10}) that classifiable TAF \cstar-algebras have finite approximately central exponential length, and thus fall within the scope of the main classication theorem (\ref{thm:main-result}) of this paper. 
However, unlike in the situation covered by our main results, the classification of $*$-homomorphisms $A\to\CC([0,1],A)$ up to approximate unitary equivalence is necessarily then more complicated, as the Elliott invariant alone is not fine enough for this purpose. 
For example, it has to involve the rotation map; see \cite[Subsection 5.7]{Kishimoto03} and Remark \ref{ex:uniform-assumption}.
Thus we are tempted to ask:\ are Rokhlin flows on TAF \cstar-algebras classified by their rotation maps?

Secondly, how far can our approach to classifying Rokhlin flows be pushed into the realm of other classifiable \cstar-algebras that are either finite unital \cite{GongLinNiu15, ElliottGongLinNiu15, TikuisisWhiteWinter17} or stably projectionless \cite{GongLin17} with non-trivial $K$-theory? 

Thirdly, under what reasonable assumptions can one expect the Rokhlin property to hold automatically? 
Is it enough to assume that the flow acts sufficiently non-trivially on the classification invariant in the style of \cite{Shimada16}?
Does it hold for trace-scaling flows \cite{KishimotoKumjian96, KishimotoKumjian97}?

These questions shall be pursued in subsequent work.
\bigskip

\textbf{Acknowledgements.} I have been partially supported by the following sources, either while carrying out the research, or while writing or revising this manuscript:\
 EPSRC grant EP/N00874X/1; the Danish National Research Foundation through the \emph{Centre for Symmetry and Deformation} (DNRF92); the European Union's Horizon 2020 research and innovation programme under the grants MSCA-IF-2016-746272-SCCD and MSCA-RISE-2015-691246-QUANTUM DYNAMICS; a start-up grant of KU Leuven and an internal grant by the Research Council of KU Leuven.

I would like to thank the following colleagues for valuable discussions, remarks or other interactions that have benefited this paper in one way or another:\ Akitaka Kishimoto, Marius Dadarlat, Norio Nawata, James Gabe, Huaxin Lin, Masaki Izumi, Sel{\c c}uk Barlak, Guihua Gong, George Elliott, David Kerr, Aaron Tikuisis, and Hannes Thiel.


\section{Preliminaries}

\begin{notae}
Given a \cstar-algebra $A$, we denote its multiplier algebra by $\CM(A)$ and its proper unitization by $A^+$, i.e., we add a unit even if $A$ is unital.
If $A$ is unital, we denote by $\CU(A)$ the unitary group of $A$. 
In general, let us write $\eins+A$ for the obvious subset of $A^+$ and $\CU(\eins+A)=(\eins+A)\cap\CU(A^+)$ for the obvious unitary subgroup. 
Note that, if $A$ is unital, one can identify $\CU(\eins+A)\cong\CU(A)$, although these sets are not equal.
We will write $\CU_0(A)$ (in the unital case) or $\CU_0(\eins+A)$ for the path connected component of the unit. 
If we are given a continuous path of unitaries $w: [0,1]\to \CU(\eins+A)$ and $s\in [0,1]$, we use $w(s)$ or $w_s$ interchangably, whichever is more convenient. 

We sometimes denote by $A_{\leq 1}$ the closed unit ball of $A$.
For two elements $x$ and $y$ in some \cstar-algebra, we frequently write $x=_\eps y$ as short-hand for $\|x-y\|\leq\eps$.
We often write $\CF\fin M$ to mean that $\CF$ is a finite subset of some set $M$.
For two $*$-homomorphisms $\phi_1, \phi_2: A\to B$ between \cstar-algebras, we write $\phi_1\ue\phi_2$ to say that they are approximately unitarily equivalent, i.e., there exists a net of unitaries $u_\lambda\in\CU(\eins+B)$ with $\lim_{\lambda\to\infty} u_\lambda\phi_1(x)u_\lambda^* = \phi_2(x)$ for all $x\in A$.
For a given \cstar-algebra $A$, we will denote by $T(A)$ the topological cone of lower-semicontinuous, extended traces on $A$; see \cite{ElliottRobertSantiago11}.
\end{notae}

\begin{defi}
A flow on a \cstar-algebra $A$ is a group homomorphism $\alpha: \IR\to\Aut(A)$ with the property that $[t\mapsto\alpha_t(a)]$ is a norm-continuous map for every $a\in A$. One writes $\alpha: \IR\curvearrowright A$.
\end{defi}

\begin{defi}
\label{def:scc}
Let $A$ be a \cstar-algebra and $\alpha: \IR\curvearrowright A$ a flow. 
\begin{enumerate}[label=(\roman*),leftmargin=*] 
\item A strictly continuous map $w: \IR\to\CU(\CM(A))$ is called an $\alpha$-cocycle, if one has $w_s\alpha_s(w_t)=w_{s+t}$ for all $s,t\in\IR$.
In this case, the map $\alpha^w: \IR\to\Aut(A)$ given by $\alpha_t^w=\ad(w_t)\circ\alpha_t$ is again a flow, and is called a cocycle perturbation of $\alpha$. Two flows on $A$ are called exterior equivalent if one of them is a cocycle perturbation of the other.
\item Let $w$ be an $\alpha$-cocycle. It is called an approximate coboundary, if there exists a sequence of unitaries $x_n\in\CU(\CM(A))$ such that 
\[
\max_{|t|\leq 1}~\|(x_n\alpha_t(x_n^*) - w_t)a\|\stackrel{n\to\infty}{\longrightarrow} 0
\]
for all $a\in A$. 
If one even has 
\[
\max_{|t|\leq 1}~\|x_n\alpha_t(x_n^*) - w_t\|\stackrel{n\to\infty}{\longrightarrow} 0
\]
for some sequence $x_n\in\CU(\eins+A)$, then $w$ is called a norm-approximate coboundary. (In this case $w$ must be norm-continuous and have values in $\CU(\eins+A)$.) 
\item Let $\beta: \IR\curvearrowright B$ be another flow. The flows $\alpha$ and $\beta$ are called cocycle conjugate, 
if there exists an isomorphism $\psi: A\to B$ such that the flows $\psi^{-1}\circ\beta\circ\psi$ and $\alpha$ are exterior equivalent. 
If $\psi$ can be chosen such that $\psi^{-1}\circ\beta\circ\psi$ and $\alpha$ are exterior equivalent via a (norm-)approximate coboundary, then $\alpha$ and $\beta$ are called (norm-)strongly cocycle conjugate.
\end{enumerate}
\end{defi}

\begin{reme}
The notion of norm-strong cocycle conjugacy only differs from the definition of strong cocycle conjugacy in the non-unital case.
The original notion of strong cocycle conjugacy originates in Izumi--Matui's work on discrete group actions, but was arguably a mix of these two notions; cf.\ \cite[Definition 2.1]{IzumiMatui10}. 
\end{reme}

\begin{defi}
Let $(X,d)$ be a metric space. Let $v: [0,1]\to X$ be a continuous path.
One defines its length as
\[
\ell(v) = \sup_{\CP}~ \sum_{j=0}^{n-1} d\big( v(t_{j+1}), v(t_j) \big),
\]
where $\CP$ denotes the family of all sets $\set{t_0,t_1,\dots,t_n}\subset [0,1]$ with
\[
0=t_0 < t_1 < \dots < t_{n-1} < t_n=1.
\]
If $\ell(v)<\infty$, then $v$ is called rectifiable.
\end{defi}

\begin{defi}[see {\cite[Definition 2.1]{Phillips02}}] \label{def:exp-length}
Let $A$ be a \cstar-algebra. For a given unitary $u\in\CU_0(\eins+A)$, we define
\[
\cel(u) = \inf\set{\ell(v) \mid v: [0,1]\to\CU(\eins+A) \text{ continuous}, ~ v(0)=\eins,~ v(1)=u}.
\]
The exponential length of $A$ is defined as
\[
\cel(A) = \sup\set{ \cel(u) \mid u\in\CU_0(\eins+A)} \ \in \ [0,\infty].
\]
\end{defi}

\begin{rem} \label{rem:ringrose}
Originally, the notion of exponential length for a unital \cstar-algebra originates in Ringrose's work \cite{Ringrose92}. 
In any (unital) \cstar-algebra $A$, unitaries in the connected component of the unit are precisely the finite products of exponentials $\exp(ih_1)\cdots\exp(ih_m)$ for some self-adjoint elements $h_1,\dots, h_m\in A$. 
For any $j$, the unitary path of the form $[t\mapsto \exp(ith_j)]$ is analytic and $\|h_j\|$-Lipschitz after applying the composition formula for its derivative. 
Therefore we see that the length of any unitary $u\in\CU_0(A)$ is finite and bounded above by the sum $\|h_1\|+\dots+\|h_m\|$, for an arbitrary product decomposition of $u$ into exponentials. 
In fact, Ringrose proved that one has the equality
\[
\cel(u)=\inf\Big\{ \sum_{j=1}^m \|h_j\|  \mid  h_j=h_j^*\in A,\ u=\exp(ih_1)\cdots\exp(ih_m) \Big\}.
\]
\end{rem}

\begin{rem}
Let $(X, d)$ be a metric space and $v: [0,1]\to X$ a continuous path with $\ell(v)<\infty$. Then we may obtain its arc-length parametrization $v^0: [0,1]\to X$ by the following well-known method:

The function $\sigma: [0,1]\to [0,1]$ given by $\sigma(t) = \frac{\ell(v|_{[0,t]})}{\ell(v)}$ is endpoint-preserving, monotone, and continuous. 
If $\sigma(s)=\sigma(t)$ for $s<t$, then $\ell(v|_{[s,t]})=\ell(v|_{[0,t]})-\ell(v|_{[0,s]})=0$, and therefore $v$ is constant on $[s,t]$. 
Although $\sigma^{-1}$ is not a map, we see that $v^0=v\circ\sigma^{-1}$ yields a well-defined continuous map with $v^0(0)=v(0)$ and $v^0(1)=v(1)$.
By the definition of $\sigma$, the arc-length parametrization $v^0$ then satisfies the Lipschitz condition
\[
\|v^0(s)-v^0(t)\| \leq \ell(v)\cdot|s-t| \quad\text{for all } s,t\in [0,1].
\]
This is because, if $s<t$ and $\sigma(s_0)=s$ and $\sigma(t_0)=t$ for some $s_0<t_0$, then
\[
\begin{array}{ccl}
\|v^0(s)-v^0(t)\| &=& \|v(s_0)-v(t_0)\| \\
&\leq& \ell(v|_{[s_0,t_0]}) \\
&=& \ell(v|_{[0,t_0]})-\ell(v|_{[0,s_0]}) \\
&=& \ell(v)\cdot (t-s).
\end{array}
\]
Thus, this parametrization ensures that the path $v^0$ satisfies the Lipschitz condition with respect to its length.

In this paper, many arguments will involve choosing some unitary path with length bounded by some given constant. 
In all these instances, we will implicitly assume that the Lipschitz condition holds with respect to its length, by passing to its arc-length parametrization if necessary.
\end{rem}

The following notions are all well-known; cf.\ \cite[1.1]{Kirchberg04}, \cite[Section 1]{HirshbergSzaboWinterWu17} and \cite[Section 4]{Szabo18ssa2}.

\begin{defi} \label{def:central-sequence}
Let $A$ be a \cstar-algebra and $\alpha: \IR\curvearrowright A$ a flow. 
\begin{enumerate}[label={\textup{(\roman*)}},leftmargin=*]
\item The sequence algebra of $A$ is given as 
\[
A_\infty = \ell^\infty(\IN,A)/\set{ (x_n)_n \mid \lim_{n\to\infty}\| x_n\|=0}.
\]
There is a standard embedding of $A$ into $A_\infty$ by sending an element to its constant sequence. We shall always identify $A\subset A_\infty$ this way, unless specified otherwise.
\item Pointwise application of $\alpha$ on representing sequences defines a (not necessarily continuous) $\IR$-action on $\ell^\infty(\IN,A)$. Let $\ell^\infty_\alpha(\IN,A)$ be the \cstar-algebra consisting of those sequences $(x_n)_n$ where $[t\mapsto (\alpha_t(x_n))_n]$ is norm-continuous. Then
\[
A_{\infty,\alpha} = \ell^\infty_\alpha(\IN,A)/\set{ (x_n)_n \mid \lim_{n\to\infty}\| x_n\|=0}
\]
is the continuous part of $A_\infty$ with respect to $\alpha$.\footnote{Indeed $A_{\infty,\alpha}$ coincides with the \cstar-subalgebra of elements in $A_\infty$ on which the induced algebraic $\IR$-action is continuous; see \cite[Theorem 2]{Brown00}.}
We denote by $\alpha_\infty$ the natural flow induced by $\alpha$ on $A_{\infty,\alpha}$.
\item The central sequence algebra of $A$ is defined as the quotient
\[
F_\infty(A) = (A_\infty\cap A')/\ann(A,A_\infty),
\]
where
\[
A_\infty\cap A' = \set{x\in A_\infty \mid [x,A]=0}
\]
and 
\[
\ann(A,A_\infty)=\set{x\in A_\infty \mid xA=Ax=0}.
\]
\item The continuous central sequence algebra of $A$ with respect to $\alpha$ is similarly defined as
\[
F_{\infty,\alpha}(A) = (A_{\infty,\alpha}\cap A')/\ann(A,A_{\infty,\alpha}).
\]
We denote by $\tilde{\alpha}_\infty$ the natural flow induced by $\alpha$ on $F_{\infty,\alpha}(A)$.
\end{enumerate}
\end{defi}

The following definition is due to Kishimoto \cite{Kishimoto96_R} for unital \cstar-algebras; see also \cite[Section 2]{HirshbergSzaboWinterWu17}.

\begin{defi} \label{def:Rokhlin-property}
Let $A$ be a separable \cstar-algebra and $\alpha: \IR\curvearrowright A$ a flow. We say that $\alpha$ has the Rokhlin property, if for every $p>0$, there exists a unitary $u\in F_{\infty,\alpha}(A)$ satisfying $\tilde{\alpha}_{\infty,t}(u)=e^{ipt}u$ for all $t\in\IR$.
\end{defi}

We will often follow the convention to call a flow with the Rokhlin property simply a Rokhlin flow.

\begin{nota}
Given $T>0$, we denote by $\sigma^T: \IR\curvearrowright\CC(\IR/T\IZ)$ the $T$-periodic $\IR$-shift given by $\sigma^T_t(f)(s+T\IZ)=f(s-t+T\IZ)$ for all $s,t\in\IR$. 
If $A$ is a \cstar-algebra with some flow $\alpha: \IR\curvearrowright A$, then we have $\CC(\IR/T\IZ)\otimes A\cong\CC(\IR/T\IZ, A)$ naturally, and under this identification the tensor product flow is given by $(\sigma^T\otimes\alpha)_t(f)(s+T\IZ) = \alpha_t(f(s-t+T\IZ))$ for $s,t\in\IR$. 
\end{nota}

\begin{rem} \label{rem:Rp-perspective}
Identifying $\IR/T\IZ$ with the unit circle, and thus identifying $\CC(\IR/T\IZ)\cong\CC(\IT)$, the canonical unitary generator $z=\id_\IT\in\CC(\IT)$ satisfies the equation $\sigma^T_t(z)=e^{2\pi i t/T}\cdot z$ for all $t\in\IR$. 

Conversely, whenever $\beta: \IR\curvearrowright B$ is a flow on a unital \cstar-algebra and a unitary $u\in\CU(B)$ satisfies the condition $\alpha_t(u)=e^{2\pi i t/T} u$ for all $t\in\IR$, then the assignment $z\mapsto u$ yields an equivariant and unital $*$-homomorphism from $\big( \CC(\IR/T\IZ), \sigma^T \big)$ to $(B,\beta)$. 
This has been implicitely observed early on by Kishimoto in \cite[Proof of Theorem 2.1]{Kishimoto96_R}; see also \cite[Remarks 2.2 and 2.10]{HirshbergSzaboWinterWu17}.
\end{rem}

For many arguments related to the Rokhlin property, it is useful to exploit the perspective given by so-called sequentially split $*$-homomorphisms; see \cite{BarlakSzabo16} for details, although we will not need to refer to results about them beyond the next lemma. 
This leads to the following well-known characterization of the Rokhlin property:

\begin{lemma} \label{lem:seq-split-picture}
Let $A$ be a separable \cstar-algebra and $\alpha: \IR\curvearrowright A$ a flow. The following are equivalent:
\begin{enumerate}[label=\textup{(\roman*)}, leftmargin=*]
\item The flow $\alpha$ has the Rokhlin property; \label{lem:seq-split-picture:1}
\item For every $T>0$, there exists a unital and equivariant $*$-homomorphism from $( \CC(\IR/T\IZ), \sigma^T )$ to $\big( F_{\infty,\alpha}(A),\tilde{\alpha}_\infty \big)$; \label{lem:seq-split-picture:2}
\item For every $T>0$, there exists an equivariant $*$-homomorphism $\psi: \big( \CC(\IR/T\IZ)\otimes A, \sigma^T\otimes\alpha \big) \to ( A_{\infty,\alpha}, \alpha_\infty )$ such that $\psi(\eins\otimes a)=a$ for all $a\in A$. \label{lem:seq-split-picture:3}
\end{enumerate}
\end{lemma}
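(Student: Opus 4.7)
The equivalence of (i) and (ii) is essentially a direct reformulation in the spirit of Remark \ref{rem:Rp-perspective}. Applied to the unital flow $(F_{\infty,\alpha}(A), \tilde{\alpha}_\infty)$, a unital equivariant $*$-homomorphism $\varphi: (\CC(\IR/T\IZ), \sigma^T) \to (F_{\infty,\alpha}(A), \tilde{\alpha}_\infty)$ is uniquely determined by the image $u = \varphi(z)$ of the canonical generator, and $\varphi$ being equivariant translates to $\tilde{\alpha}_{\infty,t}(u) = e^{2\pi i t/T} u$ for all $t\in\IR$. Letting $p = 2\pi/T$ range over $(0,\infty)$ as $T$ does, (i) and (ii) become literally the same condition. (Here we use that $F_{\infty,\alpha}(A)$ is unital, a standard fact about central sequence algebras.)

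For (ii) $\Rightarrow$ (iii), I would exploit the natural left action of $F_{\infty,\alpha}(A)$ on $A_{\infty,\alpha}$ by multiplication. Given $x \in F_{\infty,\alpha}(A)$ and $a \in A$, choose any lift $\tilde{x} \in A_{\infty,\alpha} \cap A'$ and set $x \cdot a := \tilde{x} a \in A_{\infty,\alpha}$; this is well-defined, since two lifts differ by an element of $\ann(A, A_{\infty,\alpha})$. Because every such $\tilde{x}$ commutes with $A$, the assignment $(f, a) \mapsto \varphi(f) \cdot a$ is bilinear and respects products: $(\varphi(f)\cdot a)(\varphi(g)\cdot b) = \tilde{\varphi}(f)\tilde{\varphi}(g)\cdot ab = \varphi(fg)\cdot ab$. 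It thus extends to a $*$-homomorphism on the algebraic tensor product $\CC(\IR/T\IZ) \odot A$, and hence (by nuclearity of $\CC(\IR/T\IZ)$) to a bounded $*$-homomorphism $\psi: \CC(\IR/T\IZ) \otimes A \to A_{\infty,\alpha}$. The identity $\psi(\eins \otimes a) = a$ follows from the observation that an $\alpha$-invariant approximate unit of $A$ furnishes a lift of the unit of $F_{\infty,\alpha}(A)$, and equivariance is inherited from that of $\varphi$ combined with the fact that $\alpha_\infty$ intertwines the lifting procedure.

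For (iii) $\Rightarrow$ (ii), I would evaluate $\psi$ at the canonical generator. Fixing an approximate unit $(e_n)_n$ of $A$ that is asymptotically $\alpha$-invariant (obtainable via averaging over a Rokhlin-type compact set of $\IR$), I set $u_n := \psi(z \otimes e_n) \in A_{\infty,\alpha}$. Multiplicativity of $\psi$ together with $\psi(\eins \otimes -) = \id_A$ yields the exact equalities $u_n^* u_n = \psi(\eins \otimes e_n^2) = e_n^2 = u_n u_n^*$; similarly, $b u_n - u_n b = \psi(z \otimes (be_n - e_n b)) \to 0$ for every $b \in A$, and equivariance of $\psi$ gives $\alpha_\infty(u_n) - e^{2\pi i t/T} u_n = e^{2\pi i t/T}\psi(z\otimes(\alpha_t(e_n)-e_n)) \to 0$ uniformly in $t$ on compact subsets of $\IR$. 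A standard diagonal extraction through the ``double'' sequence algebra --- selecting representing sequences of the $u_n$ in $A$ and using separability of $A$ --- then produces a single contractive $\alpha$-continuous sequence whose class in $A_{\infty,\alpha} \cap A'$ descends to the desired unitary $u \in F_{\infty,\alpha}(A)$ satisfying $\tilde{\alpha}_{\infty,t}(u) = e^{2\pi i t/T} u$.

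The main conceptual content lies in (ii) $\Rightarrow$ (iii), where the $F_{\infty,\alpha}(A)$-module structure on $A_{\infty,\alpha}$ must be used to extend a map on the central sequence algebra to the honest tensor product. The main technical annoyance lies in (iii) $\Rightarrow$ (ii), where the diagonal argument demands some bookkeeping in the non-unital setting; however, this is entirely routine via Kirchberg's $\eps$-test style reindexing.
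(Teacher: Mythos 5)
Your proposal is correct and takes essentially the same route as the paper: the equivalence \ref{lem:seq-split-picture:1}$\Leftrightarrow$\ref{lem:seq-split-picture:2} is read off from Remark \ref{rem:Rp-perspective} exactly as you do, and for \ref{lem:seq-split-picture:2}$\Leftrightarrow$\ref{lem:seq-split-picture:3} the paper simply cites \cite[Lemma 4.2]{BarlakSzabo16}, whose content in this special case is precisely the multiplication map $F_{\infty,\alpha}(A)\otimes_{\max}A\to A_{\infty,\alpha}$ and the diagonal/reindexing argument you sketch (with the standard care you note about approximately $\alpha$-invariant, quasicentral approximate units and $\alpha$-equicontinuity of the diagonal sequence).
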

\begin{proof}
The equivalence \ref{lem:seq-split-picture:1}$\Leftrightarrow$\ref{lem:seq-split-picture:2} follows directly from Remark \ref{rem:Rp-perspective}. 
The equivalence \ref{lem:seq-split-picture:2}$\Leftrightarrow$\ref{lem:seq-split-picture:3} is a special case of \cite[Lemma 4.2]{BarlakSzabo16}.
\end{proof}

\begin{example}[see \cite{Kishimoto02, Kishimoto05, BratteliKishimotoRobinson07}] \label{ex:quasi-free}
Let $n\in\IN\cup\set{\infty}$ with $n\geq 2$. Let $\set{ p_j \mid 1\leq j\leq n}\subset\IR$ be a set of parameters.
Consider the Cuntz algebra of $n$ generators
\[
\CO_n = \begin{cases} \cstar\Big( s_1,\dots,s_n \mid s_j^*s_j=\eins=\sum_{j=1}^n s_js_j^* \Big) &,\quad n\neq\infty \\
\cstar\Big( s_1,\dots,s_n \mid s_j^*s_k=\delta_{j,k}\cdot\eins \Big) &,\quad n=\infty
\end{cases}
\]
and the so-called quasi-free flow
\[
\gamma: \IR\curvearrowright\CO_n \quad\text{via}\quad \gamma_t(s_j)=e^{2\pi i p_j t}s_j.
\]
If a finite subset of $\set{p_j}_{j\leq n}$ generates $\IR$ as a closed subsemigroup, then $\gamma$ has the Rokhlin property. 
Moreover, all Rokhlin flows arising in this fashion on $\CO_n$ are mutually cocycle conjugate.

In the case $n=\infty$, one may in particular choose $p_1=1$, $p_2=-\sqrt{2}$ and $p_j=0$ for $j\geq 3$.
Thus there exist Rokhlin flows on $\CO_\infty$, and due to the Kirchberg--Phillips absorption theorem \cite{KirchbergPhillips00}, there exist Rokhlin flows on every Kirchberg algebra $A$ because $A\cong A\otimes\CO_\infty$.
\end{example}

\begin{nota} \label{nota:restricted-coaction}
Let $\alpha: \IR\curvearrowright A$ be a flow on a \cstar-algebra. One may view $\alpha$ in the coaction picture as a $*$-homomorphism from $A$ to $\CC_b(\IR,A)$ via $a\mapsto [t\mapsto\alpha_t(a)]$. For $T>0$, we shall denote the resulting restriction on $[0,T]$ by $\alpha_\co^T: A\to\CC\big( [0,T], A \big)$, i.e., $\alpha_\co^T(a)(t)=\alpha_t(a)$ for all $t\in [0,T]$ and $a\in A$. For brevity, we will denote $\alpha_\co=\alpha_\co^1$ and refer to this $*$-homomorphism as the restricted coaction of $\alpha$.
\end{nota}

Since we will be interested in cases when the restricted coactions of two different flows are approximately unitarily equivalent, the following elementary lemmas will be useful, the proofs of which only involve elementary calculations.

\begin{lemma} \label{lem:res-co-ue}
Let $A$ be a separable \cstar-algebra and $\alpha, \beta:\IR\curvearrowright A$ two flows. The following are equivalent:
\begin{enumerate}[label=\textup{(\roman*)}]
\item $\alpha_\co\ue\beta_\co$; \label{lem:res-co-ue:1}
\item $\alpha_\co^T\ue\beta_\co^T$ for all $T>0$; \label{lem:res-co-ue:2}
\item $\alpha_\co^T\ue\beta_\co^T$ implemented by unitaries in $\eins+\CC_0\big( (0,T], A \big)$, for all $T>0$. \label{lem:res-co-ue:3}
\end{enumerate}
\end{lemma}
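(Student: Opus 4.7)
The plan is to establish the chain (iii) $\Rightarrow$ (ii) $\Rightarrow$ (i) $\Rightarrow$ (iii). The first two implications are trivialities: (iii) is precisely (ii) with the additional requirement that the implementing unitaries vanish (relative to $\eins$) at $t = 0$, and (ii) specialized to $T = 1$ is exactly (i) since by definition $\alpha_\co = \alpha_\co^1$. All substantive content lies in (i) $\Rightarrow$ (iii), which I would attack in two steps: a normalization at $t=0$, and an inductive extension from $[0,1]$ to $[0,T]$ that exploits the cocycle identity for $\beta$.

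For the normalization, observe that a unitary in $\eins + \CC([0,1],A)$ is nothing but a continuous path $[0,1] \to \CU(\eins + A)$, so let $u_n : [0,1] \to \CU(\eins + A)$ be a sequence of such paths witnessing $\alpha_\co \ue \beta_\co$. Since $\alpha_0 = \beta_0 = \id_A$, evaluating the uniform convergence $u_n \alpha_\co(a) u_n^* \to \beta_\co(a)$ at $t = 0$ shows that $e_n := u_n(0)$ satisfies $e_n a e_n^* \to a$ for every $a \in A$, i.e.\ $e_n$ is asymptotically central. Replacing $u_n(t)$ by $v_n(t) := u_n(t) e_n^*$ gives paths with $v_n(0) = \eins$, so that $v_n \in \eins + \CC_0((0,1], A)$. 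Since each orbit $\set{\alpha_t(a) \mid t \in [0,1]}$ is norm-compact, asymptotic centrality upgrades to $e_n^* \alpha_t(a) e_n \to \alpha_t(a)$ uniformly in $t$, and $v_n$ still implements $\alpha_\co \ue \beta_\co$. This already delivers case $T = 1$ of (iii).

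For arbitrary $T > 0$, set $N := \lceil T \rceil$ and extend by induction: put $w_n(t) := v_n(t)$ on $[0,1]$, and on each successive interval $[k, k+1]$ for $k = 1, \dots, N-1$ declare
\[
w_n(t) := \beta_{t - k}\bigl( w_n(k) \bigr) \cdot v_n(t - k).
\]
Norm-continuity of $\beta$ makes each $w_n$ continuous, the values match at integer endpoints, and $w_n(0) = \eins$, so restriction to $[0,T]$ lands in $\eins + \CC_0((0,T], A)$. Using the cocycle identity $\alpha_t = \alpha_{t-k} \circ \alpha_k$ inside the conjugation and isometricity of $\beta_{t-k}$, one obtains by induction on $k$ the recursive bound
\[
\sup_{t \in [k,k+1]} \bigl\| w_n(t) \alpha_t(a) w_n(t)^* - \beta_t(a) \bigr\| \leq \sup_{t \in [k-1,k]} \bigl\| w_n(t) \alpha_t(a) w_n(t)^* - \beta_t(a) \bigr\| + \eps_n,
\]
where $\eps_n := \max_{0 \leq j < N} \sup_{s \in [0,1]} \| v_n(s) \alpha_s(\alpha_j(a)) v_n(s)^* - \beta_s(\alpha_j(a)) \|$. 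The only mild obstacle is this bookkeeping: to make the approximations propagate uniformly across the $N$ glued pieces, one must test $v_n$ on the enlarged finite set $\set{\alpha_j(a)}_{0 \leq j < N}$, which remains finite for each fixed $T$ and $a$, so that the hypothesis of (i) forces $\eps_n \to 0$. The total error at step $N$ is then at most $N \eps_n \to 0$, which yields (iii).
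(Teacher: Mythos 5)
Your proposal is correct and follows essentially the same strategy as the paper: normalize the implementing unitary at $t=0$ (using that its value there is approximately central), then extend inductively over unit intervals via the flow, testing on an enlarged finite set so that the error accumulates only linearly. The only (immaterial) difference is a mirror-image choice in the gluing step — you translate $w_n(k)$ by $\beta$ and test $v_n$ on $\alpha$-orbits of $a$, whereas the paper translates $w_s$ by $\alpha$ and tests on $\beta$-orbits of $\CF$.
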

\begin{proof}
As the implications \ref{lem:res-co-ue:3}$\Rightarrow$\ref{lem:res-co-ue:2}$\Rightarrow$\ref{lem:res-co-ue:1} are trivial, we show \ref{lem:res-co-ue:1}$\Rightarrow$\ref{lem:res-co-ue:3}.

Assume that $\alpha_\co\ue\beta_\co$ holds. 
Since the claim is only relevant for arbitrarily large $T>0$, let us assume that $T\geq 1$ is a natural number. Let $\eps>0$ and $\CF\fin A$ be given. Consider the compact sets
\begin{equation} \label{eq:res-co-ue:F'F''}
\CF' = \bigcup_{0\leq s\leq T-1} \beta_s(\CF),\quad \CF'' = \bigcup_{0\leq s\leq 1} \alpha_s(\CF') \ \subset \ A.
\end{equation}
Let us choose a unitary $w'\in\CU(\eins+\CC([0,1], A))$ such that 
\[
\max_{a\in\CF''}~ \|w'\alpha_\co(a)w'^*-\beta_\co(a)\|\leq\eps/2T.
\] 
We may view this as a unitary path $w': [0,1]\to\CU(\eins+A)$, which then satisfies
\[
\max_{a\in\CF''}~ \max_{0\leq s\leq 1}~ \|w_s'\alpha_s(a)w_s'^*-\beta_s(a)\|\leq\eps/2T.
\]
By the definition of $\CF''$, this in particular implies
\[
\max_{a\in\CF'}~ \max_{0\leq s\leq 1}~ \|w_0'\alpha_s(a)w_0'^*-\alpha_s(a)\|\leq\eps/2T.
\]
Thus $w=w'w_0'^*: [0,1]\to\CU(\eins+A)$ yields a unitary in $\CU(\eins+\CC([0,1],A))$ with $w_0=\eins$, and with
\begin{equation} \label{eq:res-co-ue:w}
\max_{a\in\CF'}\ \|w\alpha_\co(a)w^*-\beta_\co(a)\| = \max_{a\in\CF'}\ \max_{0\leq s\leq 1}\ \|w_s\alpha_s(a)w_s^*-\beta_s(a)\|\leq\eps/T.
\end{equation}
Let us define $w^{(T)}: [0,T]\to\CU(\eins+A)$ inductively via $w^{(T)}_0=\eins$ and
\begin{equation} \label{eq:res-co-ue:wT}
w^{(T)}_{j+s}=w^{(T)}_j\alpha_{j}(w_s) ,\quad s\in [0,1],\quad j=0,\dots,T-1.
\end{equation}
As $w_0=\eins$, this is a well-defined unitary path and yields an element $w^{(T)}\in\eins+\CC_0\big( (0,T], A \big)$. 
Set
\begin{equation} \label{eq:res-co-ue:Fk}
\CF'_k = \bigcup_{0\leq s\leq T-1-k} \beta_s(\CF),\quad k=0,\dots,T-1. 
\end{equation}
Then one has
\begin{equation} \label{eq:res-co-ue:Fk-inclusion}
\CF' = \CF'_0 \supseteq \CF'_1 \supseteq \dots \supseteq \CF'_{T-1} = \CF.
\end{equation}
We shall show, inductively in $k\in\set{0,\dots, T-1}$, that
\begin{equation} \label{eq:res-co-ue:induction}
\max_{a\in\CF'_k}~ \max_{0\leq s\leq 1}~ \|w^{(T)}_{k+s}\alpha_{k+s}(a)w^{(T)*}_{k+s}-\beta_{k+s}(a)\| \leq k\eps/T.
\end{equation}
The case $k=0$ holds by assumption.
Suppose that this holds for some $k<T-1$. Then inserting $s=1$ in \eqref{eq:res-co-ue:induction} yields
\begin{equation} \label{eq:res-co-ue:special-induction}
\max_{a\in\CF'_k}~  \|w^{(T)}_{k+1}\alpha_{k+1}(a)w^{(T)*}_{k+1}-\beta_{k+1}(a)\| \leq k\eps/T.
\end{equation}
It follows for all $a\in\CF'_{k+1}$ and $s\in [0,1]$ that
\[
\renewcommand\arraystretch{1.5}
\begin{array}{cl}
\multicolumn{2}{l}{ \|w^{(T)}_{k+1+s}\alpha_{k+1+s}(a)w^{(T)*}_{k+1+s}-\beta_{k+1+s}(a)\| } \\
\stackrel{\eqref{eq:res-co-ue:wT}}{=}& \|w^{(T)}_{k+1} \alpha_{k+1}(w_s) \alpha_{k+1}(\alpha_s(a)) \alpha_{k+1}(w_s)^* w^{(T)*}_{k+1} - \beta_{k+1+s}(a)\| \\
=& \|w^{(T)}_{k+1} \alpha_{k+1}\big( w_s\alpha_s(a)w_s^* \big) w^{(T)*}_{k+1} - \beta_{k+1+s}(a)\| \\
\stackrel{\eqref{eq:res-co-ue:Fk-inclusion}, \eqref{eq:res-co-ue:w}}{\leq}& \eps/T + \|w^{(T)}_{k+1} \alpha_{k+1}\big( \beta_s(a) \big) w^{(T)*}_{k+1} - \beta_{k+1+s}(a)\| \\
\stackrel{\eqref{eq:res-co-ue:Fk}, \eqref{eq:res-co-ue:special-induction}}{\leq}& \eps/T+k\eps/T \ = \ (k+1)\eps/T.
\end{array}
\]
This finishes the induction.

We now compute that
\[
\renewcommand\arraystretch{1.5}
\begin{array}{cl}
\multicolumn{2}{l}{ \dst\max_{a\in\CF}~ \|w^{(T)}\alpha_\co^T(a)w^{(T)*}-\beta_\co^T(a)\| } \\
=& \dst\max_{a\in\CF}~ \max_{0\leq s\leq T}~ \|w^{(T)}_s\alpha_s(a)w^{(T)*}_s-\beta_s(a)\| \\
=& \dst\max_{a\in\CF}~ \max_{k\in\set{0,\dots,T-1}}~ \max_{0\leq s\leq 1}~ \|w^{(T)}_{k+s}\alpha_{k+s}(a)w^{(T)*}_{k+s}-\beta_{k+s}(a)\| \\
\stackrel{\eqref{eq:res-co-ue:Fk-inclusion}, \eqref{eq:res-co-ue:induction}}{\leq}& \dst\max_{a\in\CF}~ \max_{k\in\set{0,\dots,T-1}}~ k\eps/T \ \leq \ \eps.
\end{array}
\]
As $\CF\fin A$ and $\eps>0$ were arbitrary, this finishes the proof.
\end{proof}

\begin{lemma} \label{lem:cc-ue}
Let $A$ be a separable \cstar-algebra and $\alpha, \beta:\IR\curvearrowright A$ two flows. Suppose that $\alpha$ and $\beta$ are cocycle conjugate via an approximately inner automorphism of $A$. Then the restricted coactions $\alpha_\co$ and $\beta_\co$ are approximately unitarily equivalent.
\end{lemma}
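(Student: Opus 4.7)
The plan is to construct a norm-continuous unitary path implementing $\alpha_\co\ue\beta_\co$ by combining the cocycle data with the approximate innerness of the conjugating automorphism, first on a short subinterval $[0,T]$ and then globally via the piece-wise gluing idea already appearing in the proof of Lemma~\ref{lem:res-co-ue}.

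First I would unfold the hypotheses. Cocycle conjugacy via an approximately inner automorphism provides an automorphism $\psi\in\Aut(A)$, a sequence $u_n\in\CU(\eins+A)$ with $\ad(u_n)\to\psi$ pointwise on $A$ (hence also $\ad(u_n^*)\to\psi^{-1}$, both automatically uniformly on norm-compact subsets), and an $\alpha$-cocycle $v:\IR\to\CU(\CM(A))$ such that
\[
\beta_t \ = \ \psi\circ\ad(v_t)\circ\alpha_t\circ\psi^{-1}, \quad t\in\IR.
\]
For fixed $\CF\fin A$ and $\eps>0$, I would expand $\beta_t(a)=\psi(v_t\alpha_t(\psi^{-1}(a))v_t^*)$ and substitute the approximate-innerness approximations on the relevant compact sets. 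For $n$ sufficiently large this yields, uniformly in $a\in\CF$ and $t\in[0,1]$,
\[
\beta_t(a) \ =_{\eps/2} \ u_n v_t\alpha_t(u_n^*)\alpha_t(a)\alpha_t(u_n)v_t^* u_n^* \ = \ W_t\,\alpha_t(a)\,W_t^*,
\]
where $W_t:=u_n v_t\alpha_t(u_n^*)\in\CU(\CM(A))$.

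The main difficulty to overcome is that $W$ genuinely lies in $\CU(\CM(A))$, not in $\CU(\eins+\CC([0,1],A))$, because the cocycle $v_t$ is multiplier-valued. I would circumvent this by restricting to a short time interval $[0,T]$ on which the strict continuity of $v$ at $0$ renders it essentially invisible under conjugation: since the set $K:=\set{\alpha_t(u_n^* a u_n)\mid a\in\CF,\ t\in[0,1]}$ is norm-compact in $A$ and $v_0=\eins$, there is $T>0$ small enough that $\|v_t b v_t^*-b\|\leq\eps/2$ for all $b\in K$ and $t\in[0,T]$. Combining this with the previous estimate, I obtain on $[0,T]$
\[
\|\beta_t(a)-U_t\alpha_t(a)U_t^*\|\leq\eps \quad (a\in\CF,\ t\in[0,T]),
\]
where now $U_t:=u_n\alpha_t(u_n^*)$ is a norm-continuous path of unitaries in $\CU(\eins+A)$ with $U_0=\eins$; that is, $U\in\CU(\eins+\CC_0((0,T],A))$.

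Finally I would globalize via the piece-wise gluing construction that drives the proof of Lemma~\ref{lem:res-co-ue}: by repeating the above argument on successive subintervals of length $T$ and stitching the local unitary paths together via the cocycle-type identity $\tilde U_{kT+s}:=\tilde U_{kT}\,\alpha_{kT}(U_s)$, after enlarging $\CF$ in advance to absorb the orbits $\beta_{[0,T']}(\CF)$ relevant to each subinterval. This produces, for every $T'>0$, an approximation $\alpha_\co^{T'}\ue\beta_\co^{T'}$ by unitaries in $\CU(\eins+\CC_0((0,T'],A))$, and Lemma~\ref{lem:res-co-ue} then delivers the global statement $\alpha_\co\ue\beta_\co$.
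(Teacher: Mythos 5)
Your single-formula reduction $\beta_t=\psi\circ\ad(v_t)\circ\alpha_t\circ\psi^{-1}$ and the local computation are sound, and the coboundary $U_t=u_n\alpha_t(u_n^*)$ is exactly what the paper uses for the conjugation part. The gap is in the globalization. Note first that $U$ is itself an $\alpha$-cocycle, so your glued path $\tilde U_{kT+s}=\tilde U_{kT}\alpha_{kT}(U_s)$ is nothing other than $U_{kT+s}$; the construction therefore produces the single global unitary $[t\mapsto u_n\alpha_t(u_n^*)]$ on all of $[0,1]$, and what the gluing can deliver is only the telescoping bound $\|\beta_t(a)-U_t\alpha_t(a)U_t^*\|\leq N\delta$, where $\delta$ is the per-piece error and $N=\lceil 1/T\rceil$. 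Unlike in Lemma \ref{lem:res-co-ue} — where the per-piece error comes from an approximate unitary equivalence hypothesis and can be shrunk independently of the number of pieces — here $\delta$ and $T$ are coupled: $T=T(\delta)$ is dictated by the modulus of strict continuity of $v$ at $0$ on the compact set $K$, over which you have no control, so $N\delta=\delta\lceil 1/T(\delta)\rceil$ need not be small. Concretely, take $\psi=\id$, $u_n=\eins$, $\alpha$ trivial, $A=\CK$ and $v_t=e^{itD}$ for an unbounded self-adjoint $D$, so that $\beta_t=\ad(v_t)$ is a nontrivial flow and $U_t=\eins$. Your per-piece estimate reads $\|\beta_s(b)-b\|\leq\delta$ for $s\in[0,T]$ and $b\in\CF'\supseteq\bigcup_{0\leq s\leq 1}\beta_s(\CF)$, and telescoping gives $\|\beta_1(a)-a\|\leq\sum_{k=0}^{N-1}\|\beta_T(\beta_{kT}(a))-\beta_{kT}(a)\|\leq N\delta$ for every admissible pair $(\delta,T)$; hence $N\delta$ is always at least the fixed positive quantity $\|\beta_1(a)-a\|$, and the total error can never be made small.

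The missing ingredient is the one the paper invokes for the exterior-equivalence part: Kishimoto's result \cite[Theorem 1.1]{Kishimoto06} that a strictly continuous multiplier cocycle can be approximated, strictly and uniformly on $t\in[0,1]$, by norm-continuous cocycles with values in $\CU(\eins+A)$. With that in hand one handles $\ad(v_t)\circ\alpha_t$ versus $\alpha_t$ directly on all of $[0,1]$, with no subdivision, and combines it with your (correct) treatment of the approximately inner conjugation. The ``make $v_t$ invisible near $t=0$'' device cannot substitute for this: strict continuity at $0$ carries no quantitative information that survives multiplication by the number $1/T$ of pieces.
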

\begin{proof}
By definition, it suffices to consider the two cases where $\alpha$ and $\beta$ are either conjugate by an approximately inner automorphism, or that they are exterior equivalent.

Let us first consider the case where $\alpha$ and $\beta$ are conjugate by an approximately inner automorphism $\phi$, i.e., $\beta_t=\phi^{-1}\circ\alpha_t\circ\phi$ for all $t\in\IR$. Let $\CF\fin A$ and $\eps>0$ be given. As $\phi$ is an approximately inner automorphism, we find a unitary $x\in\CU(\eins+A)$ such that
\[
\max_{a\in\CF}~ \|\phi(a)-\ad(x)(a)\|\leq\eps/2
\]
and
\[
\max_{a\in\CF}~\max_{0\leq t\leq 1}~ \|\phi^{-1}(\alpha_t(\phi(a)))-\ad(x^*)(\alpha_t(\phi(a)))\|\leq\eps/2.
\]
If we define $w\in\CU(\eins+\CC( [0,1], A ) )$ via $w_t=x^*\alpha_t(x)$, then this yields
\[
\renewcommand\arraystretch{1.5}
\begin{array}{cl}
\multicolumn{2}{l}{ \dst\max_{a\in\CF}~ \|w\alpha_\co(a)w^*-\beta_\co(a)\| }\\
=& \dst\max_{a\in\CF}~\max_{0\leq t\leq 1}~ \|x^*\alpha_t(x)\alpha_t(a)\alpha_t(x^*)x-\beta_t(a)\| \\
=& \dst\max_{a\in\CF}~\max_{0\leq t\leq 1}~ \|\big( \ad(x^*)\circ\alpha_t\circ\ad(x) \big)(a)-\beta_t(a)\| \\
\leq & \eps/2+\dst\max_{a\in\CF}~\max_{0\leq t\leq 1}~ \|\big( \ad(x^*)\circ\alpha_t\circ \phi \big)(a)-\beta_t(a)\| \\
\leq & \eps.
\end{array}
\]
This finishes the first case.

Now let us consider the case where $\alpha$ and $\beta$ are exterior equivalent. Let $w: \IR\to\CU(\CM(A))$ be an $\alpha$-cocycle with $\beta_t=\ad(w_t)\circ\alpha_t$ for all $t\in\IR$. Let $\CF\fin A$ and $\eps>0$ be given. By \cite[Theorem 1.1]{Kishimoto06}, $w$  can be strictly approximated, uniformly on $t\in [0,1]$, by norm-continuous cocycles in $\CU(\eins+A)$. In particular, we find some norm-continuous map $z: [0,1]\to\CU(\eins+A)$ such that
\[
\max_{a\in\CF}~\max_{0\leq t\leq 1}~ \|\ad(z_t)(\alpha_t(a))-\ad(w_t)(\alpha_t(a))\| \leq \eps.
\]
As $\ad(w_t)\circ\alpha_t=\beta_t$, the induced unitary $z\in\CU(\eins+\CC([0,1],A))$ satisfies
\[
\max_{a\in\CF}~ \|z\alpha_\co(a)z^*-\beta_\co(a)\|\leq\eps.
\]
As $\CF\fin A$ and $\eps>0$ were arbitrary, this finishes the proof.
\end{proof}


\section{Approximate cocycle perturbations}

In this section, we solve an intermediate technical problem that constitutes a crucial step in the classification of flows via the Evans--Kishimoto intertwining \cite{EvansKishimoto97} method.
In general, if we want to construct a cocycle conjugacy between two group actions this way, it is necessary that one of these actions is an approximation of cocycle perturbations of the other one, and vice versa.
In the particular case of flows, this observation is due to Kishimoto, and has been pointed out by him --- see \cite[Theorem 4.7]{Kishimoto02} --- as the important missing piece towards Theorem \ref{Theorem-A}.

\begin{defi}[see {\cite[Section 4]{Kishimoto02}}] \label{def:acp}
Let $A$ be a separable \cstar-algebra with two flows $\alpha, \beta: \IR\curvearrowright A$.
We say that $\alpha$ is an approximate cocycle perturbation of $\beta$, if there exists a sequence of $\beta$-cocycles $\{w_t^{(n)}\}_{t\in\IR}\subset\CU(\eins+A)$ such that
\[
\max_{|t|\leq 1}~ \|\alpha_t(a)-\ad(w^{(n)}_t)\circ\beta_t(a)\| \stackrel{n\to\infty}{\longrightarrow} 0
\]
for all $a\in A$.
\end{defi}

\begin{rem}
Let $A$ be a separable \cstar-algebra and $\alpha: \IR\curvearrowright A$ a flow. 
By a result \cite[Theorem 1.1]{Kishimoto06} of Kishimoto, a strictly continuous multiplier cocycle $\set{w_t}_t\subset\CU(\CM(A))$ can be strictly approximated, uniformly on $t\in [-1,1]$, by cocycles $\set{w_t'}_t\subset\CU(\eins+A)$. 
Thus Definition \ref{def:acp} is equivalent to its obvious generalization using cocycles in the multiplier algebra.
\end{rem}

\begin{defi} \label{def:weak-inner-length}
Let $A$ be a \cstar-algebra. 
We define the weak inner length $\wil(u)$ of a unitary $u\in\CU_0(\eins+A)$ as the smallest number $L>0$ for which the following holds:

For every finite set of contractions $\CF\fin A_{\leq 1}$ and $\eps>0$, there exists a unitary path $v: [0,1]\to\CU(\eins+A)$ with $v_0=\eins$, $v_1=u$, and
\[
\|v_{t_1}^*av_{t_1}-v_{t_2}^*av_{t_2}\| = \|[v_{t_2}v_{t_1}^*,a]\|\leq\eps+L|t_1-t_2|
\]
for all $t_1,t_2\in [0,1]$ and $a\in\CF$. If no such constant exists, then $\wil(u):=\infty$.

The weak inner length of $A$ is defined as
\[
\wil(A)=\sup \set{ \wil(u) \mid u\in\CU_0(\eins+A) } \ \in \ [0,\infty].
\]
\end{defi}

\begin{rem} \label{rem:celw-cel}
In some cases it might be useful to let $\CF$ be a compact set instead of a finite set, which one can do by an elementary compactness argument.

As one has the estimate $\|v_1^*av_1-v_2^*av_2\|\leq 2\|a\|\|v_1-v_2\|$ for all elements $a,v_1,v_2$ in a \cstar-algebra with $v_1,v_2$ being unitaries, it follows that 
\[
\wil(u)\leq 2\cel(u) \quad\text{for all } u\in\CU_0(\eins+A).
\]
In particular, the weak inner length of any \cstar-algebra is at most twice its exponential length in the sense of Definition \ref{def:exp-length}.

A look at the abelian \cstar-algebra $A=\CC(\IT)$ makes it clear that a \cstar-algebra can have infinite exponential length, but finite weak inner length. 
As we will see in the fifth section, this phenomenon can be observed for some simple \cstar-algebras as well, e.g., the classifiable $KK$-contractible \cstar-algebras treated in the last section tend to have infinite exponential length but all of them have finite weak inner length.
\end{rem}

The technique at the heart of the following lemmas is partly inspired by \cite[Lemma 5.5]{BarlakSzaboVoigt17}, which was a technical tool used for a similar purpose to classify Rokhlin actions of compact quantum groups.
One might also argue that its true origin is somewhere in the intersection between Izumi's work on finite group actions with the Rokhlin property --- see \cite[Lemma 3.3]{Izumi04} --- and Kishimoto's original work on Rokhlin flows \cite{Kishimoto96_R}.

\begin{lemma} \label{lem:acp-pre}
Let $A$ be a \cstar-algebra with $\wil(A)<\infty$. Let $\alpha, \beta: \IR\curvearrowright A$ be two flows with $\alpha_\co\ue\beta_\co$. Then for every $\eps>0$ and $\CF\fin A$, there exists $T>0$ and a unitary $z\in\CU(\eins+\CC(\IR/T\IZ)\otimes A)$ such that
\[
\max_{a\in\CF}~ \max_{|t|\leq 1}~ \| \eins\otimes\alpha_t(a)-\big( \ad(z)\circ(\sigma^T\otimes\beta)_t\circ\ad(z^*) \big)(\eins\otimes a) \|\leq\eps.
\]
\end{lemma}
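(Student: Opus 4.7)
The plan is to build the unitary $z$ by combining a long intertwining path, obtained by iterating the local intertwining from Lemma \ref{lem:res-co-ue}, with a closing "Berg segment" provided by the hypothesis $\wil(A)<\infty$. The conclusion then reduces to a series of uniform norm estimates on the torus $\IR/T\IZ$ that are handled case-by-case, depending on where the parameter $s$ and its shift $s-t$ lie.

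First, fix $\eps>0$ and $\CF\fin A$, and introduce the auxiliary data that will be needed throughout. Choose $L\geq\wil(A)$ and enlarge $\CF$ to a finite set $\CF'$ containing all $\alpha_r(a),\beta_r(a)$ for $|r|\leq T+1$ and $a\in\CF$, where $T>0$ will be chosen large at the end. Invoking $\alpha_\co\ue\beta_\co$ and applying Lemma \ref{lem:res-co-ue}, produce a continuous unitary path $w:[0,T]\to\CU(\eins+A)$ with $w_0=\eins$ and
\[
\max_{a\in\CF'}\ \max_{0\leq s\leq T}\ \|w_s\alpha_s(a)w_s^*-\beta_s(a)\|\leq\delta,
\]
where $\delta$ is a small error parameter to be fixed at the end in terms of $\eps$ and $|\CF'|$.

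Next, invoke $\wil(A)\leq L$ applied to the unitary $w_T\in\CU_0(\eins+A)$ to obtain a Berg path $v:[0,1]\to\CU(\eins+A)$ with $v_0=\eins$, $v_1=w_T$, and $\|[v_{r_2}v_{r_1}^*,b]\|\leq\delta'+L|r_1-r_2|$ for $b$ in an auxiliary compact set $\CG$. The set $\CG$ should be chosen large enough to absorb all elements that arise when the Berg trick is interleaved with the flow $\beta$ and the inner automorphisms $\ad w_r$; in particular it must contain $\beta_r(a)$, $w_r aw_r^*$ and their iterates for the relevant parameter range. After a slow reparametrization $\tilde v(s')=v(s'/\tau)$ on $[0,\tau]$, the Lipschitz constant becomes $L/\tau$, which will be arranged to be negligible by taking $\tau$ large. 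Now set $T':=T+\tau$ and define $z\in\CU(\eins+\CC(\IR/T'\IZ)\otimes A)$ by $z(s)=w_s^*$ for $s\in[0,T]$ and $z(T+s')=\tilde v(\tau-s')^*$ for $s'\in[0,\tau]$; this matches continuously at $s=T$ and closes into a loop since $z(0)=z(T')=\eins$.

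The verification of the bound then splits into cases according to whether $s$ and $s-t$ lie in the "intertwining" part $[0,T]$ or the "Berg" part $[T,T']$ of the circle. On the main region $s,s-t\in[0,T]$, one substitutes $z=w^*$ into $y(s,t):=z(s)\beta_t(z(s-t))^*$ and chains together the identities $w_r\alpha_r(c)w_r^*\approx\beta_r(c)$ and its inverse $w_r^*\beta_r(c)w_r\approx\alpha_r(c)$, with $r=s-t$ and $r=s$ respectively and $c=\alpha_{t-s}(a)$, to collapse $\ad(y(s,t))(\beta_t(a))$ to $\alpha_t(a)$ up to an error controlled by $\delta$. On the Berg region, where at least one of $z(s),z(s-t)$ is of the form $\tilde v(r)^*$ rather than $w_r^*$, one decomposes $y(s,t)$ into a product whose first factor is a small increment $\tilde v(r_1)^*\tilde v(r_2)$ of the Berg path and whose second factor agrees with a "boundary" expression of the intertwining type. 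The Lipschitz bound $L/\tau$ allows one to replace the Berg-factor conjugation on elements of $\CG$ by the identity up to error $O(L/\tau)$, thereby reducing the bad case to the good one evaluated at the boundary $s=T$; combined with strong continuity of $\beta$, with the fact that $w_r$ is close to $\eins$ for $r$ near $0$, and the carefully chosen largeness of $\CG$, this propagates the approximation across the Berg interval.

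The main obstacle will be the Berg region, because there $z$ lacks the intertwining structure outright, so the identity $\ad(y(s,t))(\beta_t(a))\approx\alpha_t(a)$ has to be coaxed out indirectly. This is the continuous implementation of Berg's technique alluded to in the introduction: by taking the Berg interval $\tau$ large, every increment $\tilde v(r_1)^*\tilde v(r_2)$ over a time shift of size $|t|\leq 1$ commutes arbitrarily well with the enriched set $\CG$, so that all its action on $\beta_t(a)$ can be absorbed into the overall error. Once this bookkeeping is carried out, choosing $\delta,\delta'$ small and $\tau$ large in terms of $\eps$ and the cardinalities of $\CF',\CG$ produces the required $T=T'$ and $z$.
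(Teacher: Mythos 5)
Your overall strategy (intertwining path $w$ on $[0,T]$ from Lemma \ref{lem:res-co-ue}, plus a slowly varying correction supplied by $\wil(A)<\infty$ to close the loop) is the right idea, and your computation on the region where both $s$ and $s-t$ lie in $[0,T]$ is essentially the paper's. But the way you glue in the correction is fatally flawed. On the interior of your Berg interval, say $s\in(T+1,T'-1)$ and $|t|\leq 1$, both $z(s)$ and $z(s-t)$ equal $\tilde v(r)^*$ and $\tilde v(r+t)^*$ with no intertwining factor present, so the quantity to be estimated is
\[
\tilde v(r)^*\,\beta_t\big(\tilde v(r+t)\,a\,\tilde v(r+t)^*\big)\,\tilde v(r)
\;\approx\;
\big(\tilde v(r)^*\beta_t(\tilde v(r))\big)\,\beta_t(a)\,\big(\tilde v(r)^*\beta_t(\tilde v(r))\big)^*,
\]
where the approximation uses the increment estimate $\tilde v(r+t)a\tilde v(r+t)^*\approx\tilde v(r)a\tilde v(r)^*$. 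This is a conjugate of $\beta_t(a)$ by the $\beta$-coboundary $\tilde v(r)^*\beta_t(\tilde v(r))$ of a path that is constrained only by its endpoints and the approximate centrality of its increments; nothing forces it to carry $\beta_t(a)$ to $\alpha_t(a)$. (Already the example $\alpha=\mathrm{id}$, $\beta_t=\ad(e^{ith})$ shows the expression can stay near $\beta_t(a)\neq\alpha_t(a)$.) Your claim that the bad case "reduces to the good one evaluated at the boundary $s=T$" does not hold: the second factor $\tilde v(r_2)^*\beta_t(\tilde v(r_2))$ in your decomposition is not of intertwining type for interior $r_2$, and even at $r_2=\tau$ it yields $w_T^*\beta_t(w_Taw_T^*)w_T\approx\alpha_T\beta_t\alpha_{-T}(a)$, not $\alpha_t(a)$.

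The paper avoids this by never leaving the intertwining regime: it keeps the period equal to $T$ and sets $z_s=w_s^*\beta_s(\kappa_s)$ for \emph{all} $s\in[0,T]$, where $\kappa$ is the weak-inner-length path from $\eins$ to $\beta_{-T}(w_T)$ (note the $\beta_{-T}$ twist, which is what makes $z_T=w_T^*\beta_T(\kappa_T)=\eins$ and what makes the wrap-around cases come out to $\alpha_t(a)$ rather than $\alpha_T\beta_t\alpha_{-T}(a)$). In the conjugation the two $\kappa$-factors appear as $\beta_s(\kappa_s)$ and $\beta_s(\kappa_{s-t}^*)$, so after replacing $\kappa_{s-t}$ by $\kappa_s$ (an increment of parameter size $\leq 1$, costing $O(C/T)$) they cancel exactly, leaving the pure $w$-intertwining computation at every point of the circle. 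If you want to repair your argument, you must weave the correction in multiplicatively in this way rather than appending it as a separate time interval.
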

\begin{proof}
Set $C:=\wil(A)$. Let $\eps>0$ and $\CF\fin A$ be given. Without loss of generality, let us assume that $\CF$ consists of contractions. 
We then choose $T>0$ big enough so that $\frac{5C}{T}<\eps$. 
Set
\begin{equation} \label{eq:acp-pre-F-prime}
\CF'=\bigcup_{|s|\leq T+1} \alpha_{-s}(\CF).
\end{equation}
As $\alpha_\co\ue\beta_\co$, we have $\alpha_\co^T\ue\beta_\co^T$, so by Lemma \ref{lem:res-co-ue} we may choose a unitary path $w: [0,T]\to\CU(\eins+A)$ with $w_0=\eins$ and
\begin{equation} \label{eq:acp-pre-w}
\max_{a\in\CF'}~ \max_{0\leq s\leq T}~ \|w_s\alpha_s(a)w_s^*-\beta_s(a)\| \leq \eps/5.
\end{equation}
Then the unitary $\beta_{-T}(w_T)$ clearly is in $\CU_0(\eins+A)$. By our choice of $C$ and $T$ (recall Definition \ref{def:weak-inner-length}), we may find a unitary path $\kappa: [0,T]\to\CU(\eins+A)$ with 
\begin{equation} \label{eq:acp-pre-kappa-endpoint}
\kappa_0=\eins,\quad \kappa_T=\beta_{-T}(w_T)
\end{equation}
and
\begin{equation} \label{eq:acp-pre-kappa}
\|\kappa_{s_1}^*a\kappa_{s_1}-\kappa_{s_2}^*a\kappa_{s_2}\| \leq \Big(\frac{\eps}{5}-\frac{C}{T}\Big)+\frac{C}{T}|s_1-s_2| \leq \frac{\eps}{5} 
\end{equation}
for all $s_1,s_2\in [0,T]$ with $|s_1-s_2|\leq 1$ and all $a\in\CF'\cup\beta_{-T}(\CF')$.

We then define the unitary path
\begin{equation} \label{eq:acp-pre-z}
z: [0,T]\to\CU(\eins+A) \quad\text{via}\quad z_s = w_s^*\beta_s(\kappa_s).
\end{equation}
By construction, we have $z_0=\eins=z_T$, and thus we may view $z$ as a unitary in $\CU(\eins+\CC(\IR/T\IZ)\otimes A)$ via $z(s+T\IZ)=z_s$ for $s\in [0,T]$.
We claim that this unitary satisfies the desired property.

Let us from now on fix a number $t\in [-1,1]$ and $a\in\CF$. Let $s\in [0,T]$ be given. If $s-t\in [0,T]$, then we compute
\[
\renewcommand\arraystretch{1.25}
\begin{array}{cl}
\multicolumn{2}{l}{ \Big( (\sigma^T\otimes\beta)_t(z^*) (\eins\otimes \beta_t(a)) (\sigma^T\otimes\beta)_t(z) \Big)(s) } \\
\stackrel{\eqref{eq:acp-pre-z}}{=}& \beta_t\big( \beta_{s-t}(\kappa_{s-t}^*) w_{s-t} \big) \beta_t(a) \beta_t\big( w_{s-t}^* \beta_{s-t}(\kappa_{s-t}) \big) \\
=& \beta_s(\kappa_{s-t}^*)\cdot \beta_t\big( w_{s-t} a w_{s-t}^* \big) \cdot \beta_s(\kappa_{s-t}) \\
\stackrel{\eqref{eq:acp-pre-F-prime}, \eqref{eq:acp-pre-w}}{=}_{\makebox[0pt]{\footnotesize \hspace{-8mm}$\eps/5$}}& \beta_s(\kappa_{s-t}^*) \cdot (\beta_t\circ\beta_{s-t}\circ\alpha_{t-s})(a) \cdot \beta_s(\kappa_{s-t}) \\
=& \beta_s\Big( \kappa_{s-t}^* \alpha_{t-s}(a) \kappa_{s-t} \Big) \\
\stackrel{\eqref{eq:acp-pre-kappa}}{=}_{\makebox[0pt]{\footnotesize $\eps/5$}}& \beta_s\Big( \kappa_{s}^* \alpha_{t-s}(a) \kappa_{s} \Big) .
\end{array}
\]
Hence
\[
\renewcommand\arraystretch{1.25}
\begin{array}{cl}
\multicolumn{2}{l}{ \Big( z(\sigma^T\otimes\beta)_t(z^*)  (\eins\otimes \beta_t(a))  (\sigma^T\otimes\beta)_t(z)z^* \Big)(s) } \\
\stackrel{\eqref{eq:acp-pre-z}}{=}_{\makebox[0pt]{\footnotesize \hspace{2mm}$2\eps/5$}} ~ & w_s^*\beta_s(\kappa_s)\beta_s\Big( \kappa_{s}^* \alpha_{t-s}(a) \kappa_{s} \Big) \beta_s(\kappa_s^*)w_s \\
=& w_s^*\beta_s\big( \alpha_{t-s}(a) \big)w_s \\
\stackrel{\eqref{eq:acp-pre-F-prime}, \eqref{eq:acp-pre-w}}{=}_{\makebox[0pt]{\footnotesize\hspace{-8mm}$\eps/5$}}& \alpha_s\big( \alpha_{t-s}(a) \big) \ = \ \alpha_t(a).
\end{array}
\]
On the other hand, if $s-t\notin [0,T]$, then either $s-t<0$ or $s-t>T$.

If $s<t$, then in particular $s\leq 1$ and $T-1 \leq T+s-t$, and we compute
\[
\renewcommand\arraystretch{1.25}
\begin{array}{cl}
\multicolumn{2}{l}{ \Big( (\sigma^T\otimes\beta)_t(z^*)  (\eins\otimes \beta_t(a))  (\sigma^T\otimes\beta)_t(z) \Big)(s) } \\
\stackrel{\eqref{eq:acp-pre-z}}{=}& \beta_t\big( \beta_{T+s-t}(\kappa_{T+s-t}^*) w_{T+s-t} \big) \beta_t(a) \beta_t\big( w_{T+s-t}^* \beta_{T+s-t}(\kappa_{T+s-t}) \big) \\
\stackrel{\eqref{eq:acp-pre-F-prime}, \eqref{eq:acp-pre-w}}{=}_{\makebox[0pt]{\footnotesize \hspace{-8mm}$\eps/5$}}& \beta_t\Big( \beta_{T+s-t}(\kappa_{T+s-t}^*) \beta_{T+s-t}(\alpha_{-(T+s-t)}(a)) \beta_{T+s-t}(\kappa_{T+s-t}) \Big) \\
=& \beta_{T+s}\Big( \kappa_{T+s-t}^* \cdot \alpha_{-(T+s-t)}(a) \cdot \kappa_{T+s-t} \Big) \\
\stackrel{\eqref{eq:acp-pre-kappa-endpoint}, \eqref{eq:acp-pre-kappa}}{=}_{\makebox[0pt]{\footnotesize\hspace{-8mm}$\eps/5$}}& \beta_{T+s}\Big( \beta_{-T}(w_T^*) \cdot \alpha_{-(T+s-t)}(a) \cdot \beta_{-T}(w_T) \Big) \\
=& \beta_s\Big( w_T^* \beta_T\big( \alpha_{-(T+s-t)}(a) \big) w_T \Big) \\
\stackrel{\eqref{eq:acp-pre-F-prime}, \eqref{eq:acp-pre-w}}{=}_{\makebox[0pt]{\footnotesize \hspace{-8mm}$\eps/5$}}& (\beta_s\circ\alpha_{t-s})(a).
\end{array}
\]
Hence
\[
\renewcommand\arraystretch{1.25}
\begin{array}{cl}
\multicolumn{2}{l}{ \Big( z (\sigma^T\otimes\beta)_t(z^*)  (\eins\otimes \beta_t(a))  (\sigma^T\otimes\beta)_t(z)z^* \Big)(s) } \\
\stackrel{\eqref{eq:acp-pre-z}}{=}_{\makebox[0pt]{\footnotesize \hspace{2mm}$3\eps/5$}}& w_s^*\beta_s(\kappa_s)\cdot (\beta_s\circ\alpha_{t-s})(a) \cdot \beta_s(\kappa_s^*) w_s \\
=& w_s^*\beta_s\Big( \kappa_s \alpha_{t-s}(a) \kappa_s^* \Big) w_s \\
\stackrel{\eqref{eq:acp-pre-kappa-endpoint}, \eqref{eq:acp-pre-kappa}}{=}_{\makebox[0pt]{\footnotesize \hspace{-8mm}$\eps/5$}}& w_s^* \beta_s(\alpha_{t-s}(a))w_s \\
\stackrel{\eqref{eq:acp-pre-F-prime}, \eqref{eq:acp-pre-w}}{=}_{\makebox[0pt]{\footnotesize \hspace{-8mm}$\eps/5$}}& \alpha_s(\alpha_{t-s}(a)) \ = \ \alpha_t(a).
\end{array}
\]
Lastly, if $s-t>T$, then in particular $0< s-t-T\leq 1$ and $s\geq T-1$, and we compute
\[
\renewcommand\arraystretch{1.25}
\begin{array}{cl}
\multicolumn{2}{l}{ \Big( (\sigma^T\otimes\beta)_t(z^*)  (\eins\otimes \beta_t(a))  (\sigma^T\otimes\beta)_t(z) \Big)(s) } \\
\stackrel{\eqref{eq:acp-pre-z}}{=}& \beta_t\big( \beta_{s-t-T}(\kappa_{s-t-T}^*) w_{s-t-T} \big) \beta_t(a) \beta_t\big( w_{s-t-T}^* \beta_{s-t-T}(\kappa_{s-t-T}) \big) \\
=& \beta_{s-T}(\kappa_{s-t-T}^*)\cdot \beta_t\big(  w_{s-t-T} a w_{s-t-T}^* \big) \beta_{s-T}(\kappa_{s-t-T}) \\
\stackrel{\eqref{eq:acp-pre-F-prime}, \eqref{eq:acp-pre-w}}{=}_{\makebox[0pt]{\footnotesize \hspace{-8mm}$\eps/5$}}& \beta_{s-T}(\kappa_{s-t-T}^*)\cdot (\beta_t\circ\beta_{s-t-T}\circ\alpha_{-(s-t-T)})(a) \cdot \beta_{s-T}(\kappa_{s-t-T}) \\
=& \beta_{s-T}\Big( \kappa_{s-t-T}^* \cdot \alpha_{-(s-t-T)}(a) \cdot \kappa_{s-t-T} \Big) \\
\stackrel{\eqref{eq:acp-pre-kappa-endpoint}, \eqref{eq:acp-pre-kappa}}{=}_{\makebox[0pt]{\footnotesize \hspace{-8mm}$\eps/5$}}& (\beta_{s-T}\circ\alpha_{-(s-t-T)})(a). 
\end{array}
\]
Hence
\[
\renewcommand\arraystretch{1.25}
\begin{array}{cl}
\multicolumn{2}{l}{ \Big( z (\sigma^T\otimes\beta)_t(z^*)  (\eins\otimes \beta_t(a))  (\sigma^T\otimes\beta)_t(z)z^* \Big)(s) } \\
\stackrel{\eqref{eq:acp-pre-z}}{=}_{\makebox[0pt]{\footnotesize \hspace{2mm}$2\eps/5$}}& w_s^*\beta_s(\kappa_s) \cdot (\beta_{s-T}\circ\alpha_{-(s-t-T)})(a) \cdot \beta_s(\kappa_s^*) w_s \\
=& w_s^*\beta_s\Big( \kappa_s \cdot (\beta_{-T}\circ\alpha_{-(s-t-T)})(a) \cdot \kappa_s^* \Big) w_s \\
\stackrel{\eqref{eq:acp-pre-kappa-endpoint}, \eqref{eq:acp-pre-kappa}}{=}_{\makebox[0pt]{\footnotesize \hspace{-8mm}$\eps/5$}}& w_s^*\beta_s\Big( \beta_{-T}(w_T) \cdot (\beta_{-T}\circ\alpha_{-(s-t-T)})(a) \cdot \beta_{-T}(w_T^*) \Big) w_s \\
=& w_s^*\beta_{s-T}\Big( w_T \cdot \alpha_{-(s-t-T)}(a) \cdot w_T^* \Big) w_s \\
\stackrel{\eqref{eq:acp-pre-F-prime}, \eqref{eq:acp-pre-w}}{=}_{\makebox[0pt]{\footnotesize \hspace{-8mm}$\eps/5$}}& w_s^*\beta_{s-T}\Big( \beta_T\circ\alpha_{t-s}(a) \Big) w_s \\
=& w_s^*\beta_s(\alpha_{t-s}(a))w_s \\
\stackrel{\eqref{eq:acp-pre-F-prime}, \eqref{eq:acp-pre-w}}{=}_{\makebox[0pt]{\footnotesize \hspace{-8mm}$\eps/5$}}& \alpha_s(\alpha_{t-s}(a)) \ = \ \alpha_t(a).
\end{array}
\]
Since $s\in [0,T]$ was arbitrary, this finishes the proof.
\end{proof}

\begin{lemma} \label{lem:acp}
Let $A$ be a separable \cstar-algebra with $\wil(A)<\infty$. Let $\alpha, \beta: \IR\curvearrowright A$ be two flows with $\alpha_\co\ue\beta_\co$. Suppose that $\beta$ has the Rokhlin property. Then for every $\eps>0$ and $\CF\fin A$, there exists a unitary $z\in\CU(\eins+A)$ such that
\[
\max_{a\in\CF}~ \max_{|t|\leq 1}~ \| \alpha_t(a)-\big( \ad(z)\circ\beta_t\circ\ad(z^*) \big)(a) \|\leq\eps.
\]
\end{lemma}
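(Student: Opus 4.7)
My plan is to combine Lemma \ref{lem:acp-pre} with the Rokhlin property of $\beta$ as captured by Lemma \ref{lem:seq-split-picture} to produce a unitary $\tilde z\in\CU(\eins+A_{\infty,\beta})$ having the required conjugation property on the central sequence algebra level, and then to descend to an actual unitary in $\CU(\eins+A)$ via a compactness argument on $[-1,1]$.

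First, I apply Lemma \ref{lem:acp-pre} (with $\eps/4$ in place of $\eps$) to obtain $T>0$ and a unitary $z\in\CU(\eins+\CC(\IR/T\IZ)\otimes A)$ satisfying the corresponding approximation. The Rokhlin property of $\beta$ together with Lemma \ref{lem:seq-split-picture} supplies an equivariant $*$-homomorphism $\psi:(\CC(\IR/T\IZ)\otimes A,\sigma^T\otimes\beta)\to(A_{\infty,\beta},\beta_\infty)$ with $\psi(\eins\otimes a)=a$ for all $a\in A$. Its canonical unital extension sends $z$ to a unitary $\tilde z\in\CU(\eins+A_{\infty,\beta})$, and by equivariance together with $\psi(\eins\otimes a)=a$ one obtains
\[
\max_{a\in\CF}~\max_{|t|\leq 1}~\|\alpha_t(a)-\ad(\tilde z)\circ\beta_{\infty,t}\circ\ad(\tilde z^*)(a)\|_{A_{\infty,\beta}}\leq\eps/4.
\]

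Next, I lift $\tilde z$ to a sequence of unitaries $(u_n)_n\subset\CU(\eins+A)$ representing it such that $(u_n-\eins)_n\in\ell^\infty_\beta(\IN,A)$. Concretely, one chooses a representing sequence $(y_n)_n\in\ell^\infty_\beta(\IN,A)$ of $\tilde z-\eins$, so that $(\eins+y_n)_n$ is asymptotically unitary, and then unitarizes via continuous functional calculus, which preserves equicontinuity of the $\beta$-orbits. For each fixed $a\in\CF$ and $t\in[-1,1]$, the sequence $(u_n\beta_t(u_n^*)\beta_t(a)\beta_t(u_n)u_n^*)_n$ represents $\ad(\tilde z)\circ\beta_{\infty,t}\circ\ad(\tilde z^*)(a)$ in $A_{\infty,\beta}$, so the above estimate unpacks to
\[
\limsup_n~\|\alpha_t(a)-u_n\beta_t(u_n^*)\beta_t(a)\beta_t(u_n)u_n^*\|\leq\eps/4.
\]

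The hard part is the final step: the previous inequality is only \emph{pointwise in $t$}, whereas the conclusion demands uniformity over $|t|\leq 1$. To close this gap I use that the family of continuous functions $t\mapsto u_n\beta_t(u_n^*)\beta_t(a)\beta_t(u_n)u_n^*$ is equicontinuous in $n$---a consequence of the choice $(u_n-\eins)_n\in\ell^\infty_\beta(\IN,A)$, combined with the uniform continuity of $t\mapsto\beta_t(a)$ on $[-1,1]$. A standard $\delta$-net argument on $[-1,1]$ then amplifies the pointwise estimate at the finitely many net centers to a uniform estimate of the form $\leq\eps/4+O(\delta)$ for all sufficiently large $n$; choosing $\delta$ small enough yields the desired bound $\leq\eps$ uniformly over $t\in[-1,1]$ and $a\in\CF$, and then $z:=u_n$ has the claimed property for any large enough $n$.
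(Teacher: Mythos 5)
Your proposal is correct and follows essentially the same route as the paper: apply Lemma \ref{lem:acp-pre}, push the unitary into $\CU(\eins+A_{\infty,\beta})$ via the equivariant $*$-homomorphism from Lemma \ref{lem:seq-split-picture}, and extract a representative $z=z_n$ for large $n$. The only difference is that you spell out the equicontinuity/$\delta$-net argument needed to pass from the sequence-algebra estimate to a bound uniform in $t\in[-1,1]$, a step the paper compresses into the assertion that the two estimates are ``equivalent''; your elaboration is accurate and relies precisely on the defining property of $A_{\infty,\beta}$.
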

\begin{proof}
We apply Lemma \ref{lem:acp-pre} and find $T>0$ and a unitary $z_0\in\CU(\eins+\CC(\IR/T\IZ)\otimes A)$ such that
\begin{equation} \label{eq:acp-pre-2-1}
\max_{|t|\leq 1}~ \| \eins\otimes\alpha_t(a)-\big( \ad(z_0)\circ(\sigma^T\otimes\beta)_t\circ\ad(z_0^*) \big)(\eins\otimes a) \|\leq\eps/2
\end{equation}
for all $a\in\CF$. As $\beta$ has the Rokhlin property, we may apply Lemma \ref{lem:seq-split-picture} and find an equivariant $*$-homomorphism $\psi: \big( \CC(\IR/T\IZ)\otimes A, \sigma^T\otimes\beta \big) \to ( A_{\infty,\beta}, \beta_\infty )$ such that $\psi(\eins\otimes a)=a$ for all $a\in A$.
We may naturally extend $\psi$ to a unital $*$-homomorphism $\psi^+$ between the proper unitizations. Then \eqref{eq:acp-pre-2-1} implies
\begin{equation} \label{eq:acp-pre-2-2}
\max_{|t|\leq 1}~ \| \alpha_t(a)-\big( \ad(\psi^+(z_0))\circ\beta_{\infty,t}\circ\ad(\psi^+(z_0^*)) \big)(a) \|\leq\eps/2
\end{equation}
for all $a\in\CF$. We may find a sequence of unitaries $z_n\in\CU(\eins+A)$ representing $\psi^+(z_0)\in\CU(\eins+A_{\infty,\beta})$. Then for each $a\in\CF$, \eqref{eq:acp-pre-2-2} is equivalent to
\[
\limsup_{n\to\infty}~ \max_{|t|\leq 1}~ \| \alpha_t(a)-\big( \ad(z_n)\circ\beta_{t}\circ\ad(z_n^*) \big)(a) \|\leq\eps/2.
\]
In particular, we may choose $z=z_n$ for some sufficiently large $n$ to have the desired property.
\end{proof}

\begin{cor} \label{cor:acp}
Let $A$ be a separable \cstar-algebra with $\wil(A)<\infty$. Let $\alpha, \beta: \IR\curvearrowright A$ be two flows with $\alpha_\co\ue\beta_\co$. Suppose that $\beta$ has the Rokhlin property. Then $\alpha$ is an approximate cocycle perturbation of $\beta$.
\end{cor}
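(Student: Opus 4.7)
The plan is to deduce this essentially directly from Lemma \ref{lem:acp} (the previous one), the only conceptual step being to repackage the approximating conjugations produced there as honest $\beta$-cocycles. Since $A$ is separable, I would first fix a dense sequence $(a_k)_{k\in\IN}$ in $A$, set $\CF_n=\{a_1,\dots,a_n\}$, and apply Lemma \ref{lem:acp} (which requires precisely the hypotheses of the corollary) to obtain for each $n$ a unitary $z_n\in\CU(\eins+A)$ with
\[
\max_{a\in\CF_n}~ \max_{|t|\leq 1}~ \|\alpha_t(a)-(\ad(z_n)\circ\beta_t\circ\ad(z_n^*))(a)\|\leq 1/n.
\]

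Next, I would define $w_t^{(n)}=z_n\beta_t(z_n^*)$ for $t\in\IR$. Since $z_n\in\CU(\eins+A)$ and $\beta$ is a flow, the map $[t\mapsto w_t^{(n)}]$ is norm-continuous and takes values in $\CU(\eins+A)$. A direct computation
\[
w_s^{(n)}\beta_s(w_t^{(n)})=z_n\beta_s(z_n^*)\beta_s(z_n\beta_t(z_n^*))=z_n\beta_{s+t}(z_n^*)=w_{s+t}^{(n)}
\]
shows that $\{w_t^{(n)}\}_{t\in\IR}$ is a $\beta$-cocycle. The identity $\ad(z_n)\circ\beta_t\circ\ad(z_n^*)=\ad(w_t^{(n)})\circ\beta_t$ is immediate, so the estimate above reads
\[
\max_{a\in\CF_n}~ \max_{|t|\leq 1}~ \|\alpha_t(a)-\ad(w_t^{(n)})\circ\beta_t(a)\|\leq 1/n.
\]

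Finally, for any fixed $a\in A$, a routine three-$\eps$ argument — using that $\alpha_t$ and $\ad(w_t^{(n)})\circ\beta_t$ are isometries, that $(a_k)$ is dense, and that the estimate above applies to every $a_k$ once $n\geq k$ — yields $\max_{|t|\leq 1}\|\alpha_t(a)-\ad(w_t^{(n)})\circ\beta_t(a)\|\to 0$ as $n\to\infty$, which is exactly Definition \ref{def:acp}. There is no genuine obstacle: all the work has been carried out in Lemmas \ref{lem:acp-pre} and \ref{lem:acp}, and the corollary amounts to a bookkeeping step that converts the conjugating unitaries $z_n$ into coboundary cocycles and lets $\CF_n$ exhaust $A$.
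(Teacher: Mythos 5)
Your proposal is correct and is exactly the argument the paper intends (the corollary is stated without proof, as an immediate consequence of Lemma \ref{lem:acp}): the unitaries $z_n$ from that lemma give coboundary cocycles $w^{(n)}_t=z_n\beta_t(z_n^*)$, and a standard density/three-$\eps$ argument upgrades the estimate from the exhausting finite sets $\CF_n$ to all of $A$.
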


\begin{rem}
Notice that, in the situation of Corollary \ref{cor:acp}, the converse always holds, even without the Rokhlin property. That is, if a flow $\alpha$ is an approximate cocycle purturbation of another flow $\beta$, then it is easy to show that $\alpha_\co\ue\beta_\co$. This can be proved along the same lines as Lemma \ref{lem:cc-ue}.  
\end{rem}

Based on Kishimoto's earlier work, we are now already in the position to obtain Theorem \ref{Theorem-A} with the help of Corollary \ref{cor:acp}.

\begin{proof}[Proof of Theorem {\em\ref{Theorem-A}}]
Due to a result of Bratteli--Kishi\-moto--Robinson \cite{BratteliKishimotoRobinson07}, there exists a Rokhlin flow on every Kirchberg algebra; see Example \ref{ex:quasi-free}. So we have to show the uniqueness part.

Let $A$ be a unital Kirchberg algebra and $\alpha,\beta: \IR\curvearrowright A$ two flows with the Rokhlin property. By a result of Kishimoto \cite[Theorem 4.7]{Kishimoto02}, in order to show that $\alpha$ and $\beta$ are cocycle conjugate, it is enough to show that they are approximate cocycle perturbations of each other. It is well-known that Kirchberg algebras have finite exponential length, in fact one has $\cel(A)\leq 2\pi$; see \cite{Phillips02}. In particular, we have $\wil(A)\leq 4\pi<\infty$ by Remark \ref{rem:celw-cel}. 

As the two $*$-homomorphisms
\[
\alpha_\co,\ \beta_\co: A\to\CC\big( [0,1], A\big)
\]
are homotopic to the constant embedding, it follows from the uniqueness theorem in Kirchberg--Phillips classification \cite[Theorem 4.1.1]{Phillips00} that $\alpha_\co$ and $\beta_\co$ are approximately unitarily equivalent after composing them with the inclusion 
\[
\id\otimes\eins_{\CO_\infty}\otimes e: \CC\big( [0,1], A\big) \to \CC\big( [0,1], A\big)\otimes\CO_\infty\otimes\CK,
\] 
where $e\in\CK$ is some minimal projection. 
Since $A$ is unital, it follows that the sequence of unitaries implementing this equivalence approximately commute with $e$, so we can slightly perturb them to genuinely commute with $e$.
This allows us to conlude that $\alpha_\co\otimes\eins_{\CO_\infty}$ is approximately unitarily equivalent to $\beta_\co\otimes\eins_{\CO_\infty}$.
At the same time, $A$ is $\CO_\infty$-absorbing by \cite[Theorem 3.14]{KirchbergPhillips00}, so it is a straightforward consequence of \cite[Remark 2.7]{TomsWinter07} that in fact $\alpha_\co$ and $\beta_\co$ are approximately unitarily equivalent.

By Corollary \ref{cor:acp}, the flows $\alpha$ and $\beta$ are thus approximate cocycle perturbations of each other. This shows our claim.
\end{proof}


\section{The classification theorem}

In this section, we prove a general classification theorem for Rokhlin flows on a class of \cstar-algebras with some abstract properties detailed below. 
The main purpose of the later sections will be to verify these properties for some natural classes of \cstar-algebras featuring in Theorems \ref{Theorem-B} and \ref{Theorem-C}.

\begin{defi} \label{def:acel}
Let $A$ be a \cstar-algebra. 
\begin{enumerate}[label=\textup{(\roman*)}, leftmargin=*]
\item We define the approximately central exponential length of $A$, written $\acel(A)\in [0,\infty]$, as the smallest constant $C>0$ for which the following is true.

For every $\eps>0$ and $\CF\fin A_{\leq 1}$, there exist $\delta>0$ and $\CG\fin A_{\leq 1}$ with the following property. For every unitary path $u: [0,1]\to\CU(\eins+A)$ with $u(0)=\eins$ and
\[
\max_{a\in\CG}~ \max_{0\leq t\leq 1}~ \|[u(t),a]\|\leq\delta,
\]
there exists a unitary path $v: [0,1]\to\CU(\eins+A)$ with
\[
v(0)=\eins,\quad v(1)=u(1),\quad \ell(v)\leq C+\eps,
\]
and
\[
\max_{a\in\CF}~ \max_{0\leq t\leq 1}~ \|[v(t),a]\|\leq\eps.
\]
\label{def:acel-1}
\item We define the weak approximately central exponential length of $A$, written $\acel_w(A)\in [0,\infty]$, as the smallest constant $C>0$ for which the following is true.

For every $\eps>0$ and $\CF\fin A_{\leq 1}$, there exists $\delta>0$ and $\CG\fin A_{\leq 1}$ with the following property. For every unitary path $u: [0,1]\to\CU(\eins+A)$ with $u(0)=\eins$ and
\[
\max_{a\in\CG}~ \max_{0\leq t\leq 1}~ \|[u(t),a]\|\leq\delta,
\]
there exists a unitary path $v: [0,1]\to\CU(\eins+A)$ with
\[
v(0)=\eins,\quad v(1)=u(1),\quad \max_{a\in\CF}~ \max_{0\leq t\leq 1}~ \|[v(t),a]\|\leq\eps,
\]
and
\[
\max_{a\in\CF}~ \|a(v(t_1)-v(t_2))\|\leq\eps+C|t_1-t_2| 
\]
for all $t_1,t_2\in [0,1]$.
\label{def:acel-2}
\end{enumerate}
\end{defi}

\begin{rem}
If one compares the two concepts \ref{def:acel-1} and \ref{def:acel-2} above, one can observe that they always coincide for unital \cstar-algebras, simply by inserting $a=\eins$ in \ref{def:acel-2}.
In general, the difference can be summarized by saying that \ref{def:acel-1} cares about the length of unitary paths in the norm topology, whereas \ref{def:acel-2} only cares about the length of unitary paths in the natural semi-norms inducing the strict topology.
In some important classes of \cstar-algebras, such as the ones treated in the last section, one has a uniform bound on the length as in \ref{def:acel-2}, despite the fact that these \cstar-algebras have infinite exponential length in general.

An elementary compactness argument shows that, in Definition \ref{def:acel} and in other similar statements, one can replace the finite sets $\CF,\CG\fin A$ with compact subsets of $A$ instead.
We will henceforth make use of this fact without further mention. 
\end{rem}

\begin{example}[see {\cite{Phillips02}, \cite[Theorem 2.5]{Enders15_2} and \cite[Theorem 7]{Nakamura00}}]
The approximately central exponential length of any Kirchberg algebra is at most $6\pi$. If it satisfies the UCT, then it is at most $2\pi$.
As we will see in the next section, it is in fact always at most $2\pi$ for any tensor product with the Cuntz algebra $\CO_\infty$.
\end{example}

The following proof is closely related to an argument in Kishimoto's original paper \cite{Kishimoto96_R} introducing the Rokhlin property, where an approximate first-cohomology vanishing result was obtained. Several variants of this have appeared in his work since, the most relevant one being \cite[Proposition 3.4]{Kishimoto02}.

\begin{lemma} \label{lem:cv-pre}
Let $A$ be a \cstar-algebra with $\acel_w(A)<\infty$. Let $\alpha: \IR\curvearrowright A$ be a flow. Then for every $\eps>0$ and $\CF\fin A$, there exist $T>0$, $\delta>0$ and $\CG\fin A$ with the following property:

Suppose that $w: \IR\to\CU(\eins+A)$ is an $\alpha$-cocycle with
\[
\max_{a\in\CG}~ \max_{0\leq t\leq 1}~ \|[w_t,a]\|\leq\delta.
\]
Then there exists a unitary $v\in\CU(\eins+\CC(\IR/T\IZ)\otimes A))$ such that
\[
\max_{a\in\CF}~ \|[v,\eins\otimes a]\|\leq\eps
\]
and
\[
\max_{a\in\CF}~ \max_{|t|\leq 1}~ \|\eins\otimes a\cdot \big( \eins\otimes w_t-v(\sigma^T\otimes\alpha)_t(v^*) \big)\|\leq\eps.
\]
If moreover $\acel(A)<\infty$, then we can improve the last part of the conclusion to
\[
\max_{|t|\leq 1}~ \|\eins\otimes w_t-v(\sigma^T\otimes\alpha)_t(v^*)\|\leq\eps.
\] 
\end{lemma}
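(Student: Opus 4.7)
The plan is to construct $v$ explicitly as a continuous unitary path $v:[0,T]\to\CU(\eins+A)$ of the form $v(s)=w_s\alpha_s(\kappa_s^*)$, where $\kappa:[0,T]\to\CU(\eins+A)$ is an auxiliary unitary path whose endpoints are arranged to be $\kappa_0=\eins$ and $\kappa_T=\alpha_{-T}(w_T)$. These endpoint conditions are chosen precisely so that $v(0)=\eins$ and $v(T)=w_T\alpha_T(\alpha_{-T}(w_T^*))=\eins$, whence $v$ descends to an element of $\CU(\eins+\CC(\IR/T\IZ)\otimes A)$. This construction is a direct analogue of the one used in Lemma~\ref{lem:acp-pre}, adapted to the current situation where $w$ is an $\alpha$-cocycle rather than a conjugacy witness between two flows.

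In more detail, one sets $C:=\acel_w(A)$, chooses $T$ large enough compared to $C/\eps$ and some $\eps'\ll\eps$, and forms the enlarged compact set $\CF':=\bigcup_{|s|\leq T}\alpha_s(\CF\cup\CF^*)$. Applying the assumption $\acel_w(A)\leq C$ with input $(\eps',\CF')$ yields auxiliary data $\delta_1,\CG_1$; the final output data are $\CG:=\bigcup_{|s|\leq T+1}\alpha_s(\CG_1\cup\CF\cup\CF^*)$ and $\delta$ a sufficiently small multiple of $\delta_1/(T+1)$. Given the cocycle $w$ satisfying the hypothesis, the natural initial path $u(r):=\alpha_{-T}(w_{rT})$ on $[0,1]$ from $\eins$ to $\alpha_{-T}(w_T)$ can be decomposed via the cocycle relation as a product of $\alpha$-translates of $w_t$ for $t\in[0,1]$; the Leibniz rule for commutators then shows $\|[u(r),a]\|\leq(T+1)\delta\leq\delta_1$ for $a\in\CG_1$. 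The assumption then produces a replacement path that, after rescaling to $[0,T]$, becomes the desired $\kappa$, which is approximately central to $\CF'$ with parameter $\eps'$ and which satisfies the weak Lipschitz estimate $\|a(\kappa_{s_1}-\kappa_{s_2})\|\leq\eps'+(C/T)|s_1-s_2|$ for all $a\in\CF'$.

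The approximate centrality $\|[v,\eins\otimes a]\|\leq\eps$ follows directly from the Leibniz rule. For the weak cocycle property, the computation
\[
v(s)\alpha_t(v(s-t))^* \ =\ w_s\alpha_s(\kappa_s^*\kappa_{s-t})w_s^*w_t,
\]
obtained via the cocycle identity $\alpha_t(w_{s-t})^*=w_s^*w_t$ when $s-t\in[0,T]$, reduces the estimation of $\|a(w_t-v(s)\alpha_t(v(s-t))^*)\|$ to controlling $\|\alpha_{-s}(a)(\kappa_s^*\kappa_{s-t}-\eins)\|$ after paying a commutator cost of $O(T\delta)$ to shuffle $a$ past $w_s$. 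The latter term is bounded by $O(\eps'+C/T)$ using the weak Lipschitz property of $\kappa$ applied to $\alpha_{-s}(a)\in\CF'$. The main subtlety will be handling the wrap-around cases where $(s-t)\bmod T\neq s-t$: there the cocycle relations $w_{s+T}=w_s\alpha_s(w_T)$ and $w_{s-T}=w_s\alpha_s(\kappa_T^*)$ combined with $\alpha_T(\kappa_T)=w_T$ cancel the extra factor involving $w_T$ that would otherwise appear, so the error is again controlled by the weak Lipschitz of $\kappa$ near its endpoints $\kappa_0=\eins$ and $\kappa_T$. Finally, for the \emph{moreover} statement under the stronger hypothesis $\acel(A)<\infty$, the identical construction with $\kappa$ replaced by a norm-Lipschitz path gives $\|\eins-\alpha_s(\kappa_s^*\kappa_{s-t})\|=\|\kappa_{s-t}-\kappa_s\|\leq(C/T)|t|$ in operator norm, yielding the improved conclusion $\|\eins\otimes w_t-v(\sigma^T\otimes\alpha)_t(v^*)\|\leq\eps$.
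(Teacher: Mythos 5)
Your proposal is correct and follows essentially the same route as the paper: the same ansatz $v_s=w_s\alpha_s(\kappa_s^*)$ with $\kappa$ produced by applying the $\acel_w$ (resp.\ $\acel$) hypothesis to an approximately central path from $\eins$ to $\alpha_{-T}(w_T)=w_{-T}^*$, the same endpoint/cocycle cancellations in the wrap-around cases, and the same ``set $a=\eins$'' observation for the norm version. The only differences are cosmetic (your input path $r\mapsto\alpha_{-T}(w_{rT})$ versus the paper's $s\mapsto\alpha_{-s}(w_s)$, and your compact identity $v(s)\alpha_t(v(s-t)^*)=w_s\alpha_s(\kappa_s^*\kappa_{s-t})w_s^*w_t$, which packages the paper's case-by-case estimates).
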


\begin{proof}
Let $\eps>0$ and $\CF\fin A$ be given. Without loss of generality, let us assume that $\CF$ consists of self-adjoint contractions.
Set $C:=\acel_w(A)$. 

We then choose $T\in\IN$ big enough so that $\frac{6C}{T}<\eps$.
Set
\begin{equation} \label{eq:cv-pre-delta-F'}
\CF' = \bigcup_{|s|\leq T+1} \alpha_{s}(\CF).
\end{equation}
Choose a pair $(\delta',\CG')$ for the pair $(\eps',\CF')$ according to Definition \ref{def:acel}\ref{def:acel-2}, where $\eps'=\frac{\eps}{6}-\frac{C}{T}$. Without loss of generality we may assume $\delta'\leq\eps/6$ and $\CG'\supseteq\CF$. 
Set
\begin{equation} \label{eq:cv-pre-delta-G}
\delta=\delta'/T \quad\text{and}\quad \CG = \bigcup_{|s|\leq T+1} \alpha_{s}(\CG').
\end{equation}
Now let $w: \IR\to\CU(\eins+A)$ be an $\alpha$-cocycle. Suppose that
\begin{equation} \label{eq:cv-pre-w}
\max_{a\in\CG}~ \max_{0\leq t\leq 1}~ \|[w_t,a]\|\leq\delta.
\end{equation}
Now let $a\in\CG'$ and $s\in (0,T]$. Choose a natural number $j<T$ with $j< s\leq j+1$. Set $t_s=s-j\in (0,1]$. By applying the cocycle identity, we have for every $r\in [0,T]$ that
\[
\renewcommand\arraystretch{1.5}
\begin{array}{ccl}
\|[w_s,\alpha_r(a)]\| &=&  \| \big[ w_1\alpha_1(w_1)\cdots\alpha_{j-1}(w_1)\cdot\alpha_{j}(w_{t_s}), \alpha_r(a) \big]\| \\
&\leq & \dst \|[\alpha_j(w_{t_s}), \alpha_r(a)]\| + j\cdot\max_{0\leq j_0<j}~\|[\alpha_{j_0}(w_1),\alpha_r(a)]\| \\
&\stackrel{\eqref{eq:cv-pre-w}}{\leq}& (j+1)\cdot\delta \ \leq \ T\delta \ \stackrel{\eqref{eq:cv-pre-delta-G}}{\leq} \ \delta'.
\end{array}
\] 
In particular, let us record
\begin{equation} \label{eq:cv-pre-w-2}
\max_{a\in\CG'}~ \max_{0\leq s\leq T}~ \|[w_s,a]\|\leq\delta'.
\end{equation}
As $a\in\CG'$, $s\in (0,T]$ and $r\in [0,T]$ were arbitrary in the above calculation, the unitary path $[0,T]\to\CU(\eins+A)$ given by $s\mapsto\alpha_{-s}(w_s)=w_{-s}^*$ starts at the unit and commutes componentwise with elements in $\CG'$ up to $\delta'$.
By the choice of the pair $(\delta',\CG')$ and our choice of $T$, we may thus find a unitary path $\kappa: [0,T]\to\CU(\eins+A)$ with 
\begin{equation} \label{eq:cv-pre-kappa-endpoint}
\kappa_0=\eins,\quad \kappa_T=\alpha_{-T}(w_T)=w_{-T}^*;
\end{equation}
\begin{equation} \label{eq:cv-pre-kappa-1}
\max_{a\in\CF'}~ \max_{0\leq s\leq T}~ \|[\kappa_s,a]\| \leq \frac{\eps}{6};
\end{equation}
\begin{equation} \label{eq:cv-pre-kappa-2}
\|a(\kappa_{s_1}-\kappa_{s_2})\| \leq \big( \frac{\eps}{6}-\frac{C}{T} \big)+\frac{C}{T}|s_1-s_2| \leq \frac{\eps}{6} 
\end{equation}
for all $s_1,s_2\in [0,T]$ with $|s_1-s_2|\leq 1$ and all $a\in\CF'$.

We then define the unitary path
\begin{equation} \label{eq:cv-pre-v}
v: [0,T]\to\CU(\eins+A) \quad\text{via}\quad v_s = w_s\alpha_s(\kappa_s^*).
\end{equation}
By construction, we have $v_0=\eins=v_T$, and thus we may view $v$ in a natural way as a unitary in $\CU(\eins+\CC(\IR/T\IZ)\otimes A)$ via $v(s+T\IZ)=v_s$ for $s\in [0,T]$.

The first part of the claim now follows as we compute
\[
\begin{array}{ccl}
\renewcommand\arraystretch{1.5}
\dst \max_{a\in\CF}~ \|[v,\eins\otimes a]\| &\stackrel{\eqref{eq:cv-pre-v}}{=}& \dst \max_{a\in\CF}~\max_{0\leq s\leq T}~ \|[w_s\alpha_s(\kappa_s^*),a]\| \\
&\leq& \dst \max_{a\in\CF}~\max_{0\leq s\leq T}~ \|[w_s,a]\|+\|[\kappa_s,\alpha_{-s}(a)]\| \\
&\stackrel{\eqref{eq:cv-pre-delta-F'}, \eqref{eq:cv-pre-w-2}, \eqref{eq:cv-pre-kappa-1}}{\leq}& \delta' + \eps/6 \ \leq \ \eps.
\end{array}
\]
For the second part of the claim, let us from now on fix a given $t\in [-1,1]$ and $a\in\CF$. Let also $s\in [0,T]$ be given. If $s-t\in [0,T]$, then one has
\[
\begin{array}{ccl}
\renewcommand\arraystretch{1.25}
a\cdot \big( v\cdot(\sigma^T\otimes\alpha)_t(v^*) \big)(s)  &\stackrel{\eqref{eq:cv-pre-v}}{=}& a \cdot w_s\alpha_s(\kappa_s^*)\cdot \alpha_t\big( \alpha_{s-t}(\kappa_{s-t}) w_{s-t}^* \big) \\
&\stackrel{\eqref{eq:cv-pre-w-2}, \eqref{eq:cv-pre-kappa-1}}{=}_{\makebox[0pt]{\footnotesize \hspace{-6mm}$2\eps/6$}}& w_s \alpha_s(\kappa_s^*)\cdot a \cdot \alpha_s(\kappa_{s-t}) \alpha_t(w_{s-t}^*) \\
&\stackrel{\eqref{eq:cv-pre-kappa-2}}{=}_{\makebox[0pt]{\footnotesize $\eps/6$}}& w_s\cdot \alpha_s(\kappa_s^*) a \alpha_s(\kappa_s) \cdot \alpha_t(w_{s-t}^*) \\
&\stackrel{\eqref{eq:cv-pre-w-2}, \eqref{eq:cv-pre-kappa-1}}{=}_{\makebox[0pt]{\footnotesize \hspace{-6mm}$2\eps/6$}}&  a \cdot w_s\alpha_t(w_{s-t}^*) \\
&=& a\cdot w_s\alpha_t(w_{s-t}^*)w_t^*\cdot w_t \ = \ a w_t .
\end{array}
\]
On the other hand, if $s-t\notin [0,T]$, then either $s-t<0$ or $s-t>T$.

If $s<t$, then in particular $s\leq 1$ and $T-1 \leq T+s-t$, and we compute
\[
\renewcommand\arraystretch{1.25}
\begin{array}{cl}
\multicolumn{2}{l}{ a \cdot \big( v\cdot(\sigma^T\otimes\alpha)_t(v^*) \big)(s) } \\
\stackrel{\eqref{eq:cv-pre-v}}{=}& a \cdot w_s\alpha_s(\kappa_s^*)\cdot \alpha_t\big( \alpha_{T+s-t}(\kappa_{T+s-t}) w_{T+s-t}^* \big) \\
\stackrel{\eqref{eq:cv-pre-w-2}, \eqref{eq:cv-pre-kappa-1}}{=}_{\makebox[0pt]{\footnotesize \hspace{-6mm}$2\eps/6$}}& w_s\alpha_s(\kappa_s^*)\cdot a\cdot \alpha_{T+s}(\kappa_{T+s-t}) \alpha_t(w_{T+s-t}^*) \\
\stackrel{\eqref{eq:cv-pre-kappa-endpoint}, \eqref{eq:cv-pre-kappa-2}}{=}_{\makebox[0pt]{\footnotesize \hspace{-6mm}$2\eps/6$}}& w_s \cdot a \cdot \alpha_{s}(w_T)  \alpha_t(w_{T+s-t}^*) \\
\stackrel{\eqref{eq:cv-pre-w-2}}{=}_{\makebox[0pt]{\footnotesize $\eps/6$}}& a\cdot w_s\alpha_s(w_T)\alpha_t(w_{T+s-t}^*) \\
=& a \cdot w_{T+s}\alpha_t(w_{T+s-t}^*)w_t^*\cdot w_t \\
=& aw_t .
\end{array}
\]
Lastly, if $s-t>T$, then in particular $0< s-t-T\leq 1$ and $s\geq T-1$, and we compute
\[
\renewcommand\arraystretch{1.25}
\begin{array}{cl}
\multicolumn{2}{l}{ a \cdot \big( v\cdot(\sigma^T\otimes\alpha)_t(v^*) \big)(s) } \\
\stackrel{\eqref{eq:cv-pre-v}}{=}& a \cdot w_s\alpha_s(\kappa_s^*)\cdot \alpha_t\big( \alpha_{s-t-T}(\kappa_{s-t-T}) w_{s-t-T}^* \big) \\
\stackrel{\eqref{eq:cv-pre-w-2}, \eqref{eq:cv-pre-kappa-1}}{=}_{\makebox[0pt]{\footnotesize \hspace{-6mm}$2\eps/6$}}&
w_s\alpha_s(\kappa_s^*)\cdot a \cdot \alpha_{s-T}(\kappa_{s-t-T})\alpha_t(w_{s-t-T}^*) \\
\stackrel{\eqref{eq:cv-pre-kappa-endpoint}, \eqref{eq:cv-pre-kappa-2}}{=}_{\makebox[0pt]{\footnotesize \hspace{-6mm}$2\eps/6$}}&
w_s\alpha_s(w_{-T})\cdot  a\cdot\alpha_t(w_{s-t-T}^*) \\
\stackrel{\eqref{eq:cv-pre-w-2}}{=}_{\makebox[0pt]{\footnotesize \hspace{2mm}$2\eps/6$}}&
a \cdot w_s\alpha_s(w_{-T})\alpha_t(w_{s-t-T}^*) \ = \ a w_t.
\end{array}
\]
All in all, this shows
\[
\max_{a\in\CF}~ \max_{|t|\leq 1}~ \|\eins\otimes a \cdot \big( \eins\otimes w_t-v(\sigma^T\otimes\alpha)_t(v^*) \big)\|\leq\eps
\]
and finishes the proof of the first claim.

If we additionally assume $\acel(A)<\infty$ and pick $C>\acel(A)$ and $T\in\IN$ as above, then the analogous calculations\footnote{In fact, just insert $a=\eins$ in the previous calculations, although some steps then become redundant.} yield
\[
\max_{|t|\leq 1}~ \|\eins\otimes w_t-v(\sigma^T\otimes\alpha)_t(v^*)\|\leq\eps
\] 
instead.
\end{proof}

In addition to the key Lemma \ref{lem:acp} from the previous section, we need the following approximate first-cohomology vanishing result, which follows from Lemma \ref{lem:cv-pre} and is a modified version of Kishimoto's cohomology vanishing result from \cite[Theorem 2.1]{Kishimoto96_R}.

\begin{lemma} \label{lem:cv}
Let $A$ be a separable \cstar-algebra with $\acel_w(A)<\infty$. Let $\alpha: \IR\curvearrowright A$ be a Rokhlin flow. Then for every $\eps>0$ and $\CF\fin A$, there exist $\delta>0$ and $\CG\fin A$ with the following property:

Suppose that $w: \IR\to\CU(\eins+A)$ is an $\alpha$-cocycle with
\[
\max_{a\in\CG}~ \max_{0\leq t\leq 1}~ \|[w_t,a]\|\leq\delta.
\]
Then there exists a unitary $v\in\CU(\eins+A)$ such that
\[
\max_{a\in\CF}~ \|[v,a]\|\leq\eps
\]
and
\[
\max_{a\in\CF}~ \max_{|t|\leq 1}~ \|a\cdot \big(w_t-v\alpha_t(v^*)\big)\|\leq\eps.
\]
If moreover $\acel(A)<\infty$, then the last part of the conclusion can be improved to
\[
\max_{|t|\leq 1}~ \|w_t-v\alpha_t(v^*)\|\leq\eps.
\]
\end{lemma}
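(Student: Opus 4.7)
The plan is to combine Lemma \ref{lem:cv-pre}, which produces a unitary $v_0\in\CU(\eins+\CC(\IR/T\IZ)\otimes A)$ witnessing the desired weighted approximate coboundary relation on a suitably large circle, with the Rokhlin property of $\alpha$ to transport $v_0$ into an honest unitary in $\CU(\eins+A)$. The bridge between the two is Lemma \ref{lem:seq-split-picture}, which translates the Rokhlin property into an equivariant embedding of the circle flow into the continuous central sequence algebra.

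Concretely, given $\eps>0$ and $\CF\fin A$, I would first feed the data into Lemma \ref{lem:cv-pre} to obtain $T>0$, $\delta>0$, and $\CG\fin A$; these same $\delta$ and $\CG$ I claim work for the present lemma. For any $\alpha$-cocycle $w$ satisfying the approximate centrality hypothesis, Lemma \ref{lem:cv-pre} provides $v_0$ as above with $\max_{a\in\CF}\|[v_0,\eins\otimes a]\|\leq\eps$ together with the weighted cocycle bound $\max_{a\in\CF}\max_{|t|\leq 1}\|\eins\otimes a\cdot(\eins\otimes w_t-v_0(\sigma^T\otimes\alpha)_t(v_0^*))\|\leq\eps$ (and with the $a$-independent strengthening available under $\acel(A)<\infty$). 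Next I would invoke Lemma \ref{lem:seq-split-picture} to obtain an equivariant $*$-homomorphism $\psi\colon(\CC(\IR/T\IZ)\otimes A,\sigma^T\otimes\alpha)\to(A_{\infty,\alpha},\alpha_\infty)$ with $\psi(\eins\otimes a)=a$ for $a\in A$, extend it to $\psi^+$ on the unitizations, and form $V:=\psi^+(v_0)\in\CU(\eins+A_{\infty,\alpha})$. Because $\psi$ intertwines the two flows and restricts to the identity on $A$, the two inequalities for $v_0$ translate directly into $\max_{a\in\CF}\|[V,a]\|\leq\eps$ and $\max_{a\in\CF}\max_{|t|\leq 1}\|a(w_t-V\alpha_{\infty,t}(V^*))\|\leq\eps$.

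To finish, I would lift $V$ to a bounded sequence of unitaries $v_n\in\CU(\eins+A)$ using a standard functional-calculus or polar-decomposition correction applied to any bounded lift of $V-\eins$, and then extract a single $v=v_n$ for which both estimates persist within tolerance. The main technical point I would expect to require care is the uniformity in $t$ over the compact interval $[-1,1]$ when passing from the sequence-algebra estimate to a statement about a single representative: for each fixed $a\in\CF$ the map $t\mapsto\|a(w_t-v_n\alpha_t(v_n^*))\|$ is continuous (by norm-continuity of $w$ and of the orbit maps), so a finite net of evaluation points in $[-1,1]$ together with a uniform continuity estimate reduces the supremum bound to finitely many scalar inequalities that all hold for large enough $n$ by the definition of the sequence algebra. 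The stronger variant under the hypothesis $\acel(A)<\infty$ follows by the same transport procedure from the correspondingly stronger conclusion of Lemma \ref{lem:cv-pre}, with the multiplier $a$ simply dropped throughout.
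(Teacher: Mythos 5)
Your proposal follows essentially the same route as the paper's proof: apply Lemma \ref{lem:cv-pre}, push the resulting unitary over the circle algebra into $A_{\infty,\alpha}$ via the Rokhlin homomorphism from Lemma \ref{lem:seq-split-picture}, and extract a representing sequence of unitaries; the uniformity-in-$t$ issue you flag is handled exactly by the equicontinuity built into the definition of $A_{\infty,\alpha}$. The only bookkeeping difference is that the paper applies Lemma \ref{lem:cv-pre} with the pair $(\eps/2,\CF)$, since passing from a sequence-algebra bound (a $\limsup$) to a single large index only preserves the estimate up to an arbitrarily small error, so you should likewise run the argument with a tolerance strictly below $\eps$.
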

\begin{proof}
Let $\eps>0$ and $\CF\fin A$ be given. We apply Lemma \ref{lem:cv-pre} and find $T>0$, $\delta>0$ and $\CG\fin A$ for the pair $(\eps/2,\CF)$. We claim that the resulting pair $(\delta,\CG)$ has the desired property.

As $\alpha$ has the Rokhlin property, we may apply Lemma \ref{lem:seq-split-picture} and find an equivariant $*$-homomorphism $\psi: \big( \CC(\IR/T\IZ)\otimes A, \sigma^T\otimes\alpha \big) \to ( A_{\infty,\alpha}, \alpha_\infty )$ such that $\psi(\eins\otimes a)=a$ for all $a\in A$. Let $\psi^+$ be the unital extension between the proper unitizations on both sides.

Now let $w: \IR\to\CU(\eins+A)$ be an $\alpha$-cocycle with
\[
\max_{a\in\CG}~ \max_{0\leq t\leq 1}~ \|[w_t,a]\|\leq\delta.
\]
According to our choice of $(T,\delta,G)$, we find a unitary 
\[
v_0\in\CU(\eins+\CC(\IR/T\IZ)\otimes A))
\]
such that
\[
\max_{a\in\CF}~ \|[v_0,\eins\otimes a]\|\leq\eps/2
\]
and
\[
\max_{a\in\CF}~ \max_{|t|\leq 1}~ \|\eins\otimes a\cdot \big( \eins\otimes w_t-v_0(\sigma^T\otimes\alpha)_t(v_0^*) \big)\|\leq\eps/2.
\]
Let us find a sequence of unitaries $v_n\in\CU(\eins+A)$ representing $\psi^+(v_0)\in\CU(\eins+A_{\infty,\alpha})$. Then by the choice of $\psi$ we have
\[
\limsup_{n\to\infty}~ \max_{a\in\CF}~ \|[v_n,a]\|\leq\eps/2
\]
and
\[
\limsup_{n\to\infty}~ \max_{a\in\CF}~ \max_{|t|\leq 1}~ \|a \cdot \big(w_t-v_n\alpha_t(v_n^*)\big)\|\leq\eps/2.
\]
Hence for sufficiently large $n$, the unitary $v=v_n$ has the desired property as in the claim.

If moreover $\acel(A)<\infty$, then we can follow the same line of argument with the improved statement in Lemma \ref{lem:cv-pre}, whereby the unitary $v_0$ will satisfy
\[
\max_{|t|\leq 1}~ \|\eins\otimes w_t-v_0(\sigma^T\otimes\alpha)_t(v_0^*)\|\leq\eps/2.
\]
The resulting sequence of unitaries $v_n$ will then satisfy
\[
\limsup_{n\to\infty}~ \max_{|t|\leq 1}~ \|w_t-v_n\alpha_t(v_n^*)\|\leq\eps/2
\]
and so we get the desired property for sufficiently large $n$.
\end{proof}

The following abstract classification theorem for Rokhlin flows is arguably the main abstract result of this paper, and is a natural \cstar-algebraic analog of Masuda--Tomatsu's classification \cite[Theorem 1]{MasudaTomatsu16} of Rokhlin flows on von Neumann algebras.

The reader should now recall the Definitions \ref{def:weak-inner-length} and \ref{def:acel} for what the assumptions $\wil(A)<\infty$, $\acel_w(A)<\infty$ or $\acel(A)<\infty$ mean.
See also Notation \ref{nota:restricted-coaction}, where we introduced the restricted coaction of a flow.

\begin{theorem} \label{thm:main-result}
Let $A$ be a separable \cstar-algebra with $\wil(A)<\infty$ and $\acel_w(A)<\infty$. Let $\alpha, \beta: \IR\curvearrowright A$ be two Rokhlin flows. Then the following are equivalent:
\begin{enumerate}[label=\textup{(\roman*)}]
\item $\alpha$ and $\beta$ are strongly cocycle conjugate via an approximately inner automorphism; \label{main-result:1}
\item $\alpha$ and $\beta$ are cocycle conjugate via an approximately inner automorphism; \label{main-result:2}
\item The maps $\alpha_\co$ and $\beta_\co$ are approximately unitarily equivalent. \label{main-result:3}
\end{enumerate}
If moreover $\acel(A)<\infty$, then these statements are further equivalent to
\begin{enumerate}[label=\textup{(\roman*)}, resume]
\item $\alpha$ and $\beta$ are norm-strongly cocycle conjugate via an approximately inner automorphism. \label{main-result:4}
\end{enumerate}
\end{theorem}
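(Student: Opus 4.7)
The implications \ref{main-result:4}$\Rightarrow$\ref{main-result:1}$\Rightarrow$\ref{main-result:2} are immediate from Definition \ref{def:scc}, and \ref{main-result:2}$\Rightarrow$\ref{main-result:3} is Lemma \ref{lem:cc-ue} applied to the given cocycle conjugacy (note that the approximate innerness of the conjugating automorphism and passage through a cocycle perturbation both preserve the restricted coaction up to approximate unitary equivalence). So the content is the implication \ref{main-result:3}$\Rightarrow$\ref{main-result:1}, together with its strengthening to \ref{main-result:4} under the extra hypothesis. My plan is to run an Evans--Kishimoto intertwining, feeding Corollary \ref{cor:acp} and Lemma \ref{lem:cv} into each other in an alternating fashion.

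First I would fix the inductive data: an increasing sequence of finite sets $\CF_1\subset\CF_2\subset\cdots\fin A$ with dense union, and a summable sequence of tolerances $\eps_n>0$. At the $n$-th stage I would have constructed unitaries $x_1,\dots,x_{n-1},y_1,\dots,y_{n-1}\in\CU(\eins+A)$ so that the partial inner automorphisms $\phi_n=\ad(x_n\cdots x_1)$ and $\psi_n=\ad(y_n\cdots y_1)$ together with a partial $\beta$-cocycle $w^{(n)}$ approximate a cocycle conjugacy: concretely, $\psi_n\circ\alpha_t\circ\psi_n^{-1}$ should agree up to $\eps_n$ on $\CF_n$ (and on $t\in[-1,1]$) with $\ad(w^{(n)}_t)\circ(\phi_n\circ\beta_t\circ\phi_n^{-1})$. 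To perform the step $n\to n+1$, I would first rewrite the approximate identity using the current invariants: the composition of an approximately inner automorphism with a cocycle perturbation of a Rokhlin flow is again Rokhlin and still has its restricted coaction approximately unitarily equivalent to the other side (by Lemma \ref{lem:cc-ue}). Corollary \ref{cor:acp}, applied with sufficiently small tolerance depending on the $(\delta,\CG)$-pair supplied by Lemma \ref{lem:cv} at the next step, then yields a new $\beta$-cocycle $\tilde w$ with $\ad(\tilde w_t)\circ\beta_t$ close to $\psi_n\circ\alpha_t\circ\psi_n^{-1}$ on the enlarged finite set. Lemma \ref{lem:cv} is then applied to the approximately central cocycle obtained from $\tilde w$ relative to $w^{(n)}$, producing a unitary $v\in\CU(\eins+A)$ that trivializes this cocycle up to $\eps_{n+1}$. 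This $v$ becomes the next intertwining unitary, absorbed into $\phi_{n+1}$ or $\psi_{n+1}$ depending on the parity.

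Once the inductive construction terminates, summability of the $\eps_n$ together with the telescoping of the partial products gives pointwise Cauchy sequences on the dense subset, yielding automorphisms $\phi=\lim\phi_n$ and $\psi=\lim\psi_n$ which are approximately inner. Passing the corresponding limit of the cocycles $w^{(n)}$ through Kishimoto's strict approximation \cite[Theorem 1.1]{Kishimoto06} provides a genuine $\beta$-cocycle $w$ with $\psi\circ\alpha_t\circ\psi^{-1}=\ad(w_t)\circ(\phi\circ\beta_t\circ\phi^{-1})$. The fact that all intertwining unitaries were drawn from $\CU(\eins+A)$ ensures the resulting conjugacy is by an approximately inner (not merely multiplier-inner) automorphism. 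Moreover, since $w$ was assembled as a convergent product of approximate coboundaries, it is an approximate coboundary in the sense of Definition \ref{def:scc}(ii), giving strong cocycle conjugacy. Under the additional assumption $\acel(A)<\infty$, one uses the improved, norm-version conclusions of Lemma \ref{lem:cv} at each step; the identical scheme then delivers $w$ as a \emph{norm}-approximate coboundary, yielding \ref{main-result:4}.

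The main obstacle will be the bookkeeping of tolerances: the $(\delta,\CG)$ delivered by Lemma \ref{lem:cv} depends on both $\CF_{n+1}$ and $\eps_{n+1}$, and must be small enough to control the application of Corollary \ref{cor:acp} at the next parity, while the growing finite sets must incorporate the images of the previous $\CF_k$ under all prior conjugating unitaries so that the limiting automorphisms are well-defined on a dense subset. A secondary subtlety is that after each step, one must re-verify that the new Rokhlin flow (a conjugate of $\alpha$ or $\beta$ by an approximately inner automorphism) still has the Rokhlin property and that its restricted coaction is approximately unitarily equivalent to the partner's; both are stable under such perturbations, but the quantitative statements need to be threaded through the induction. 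These are the technical but standard difficulties of any Evans--Kishimoto argument, now made possible in the \cstar-algebraic flow setting by the two key inputs from Section~2 and the earlier part of Section~3.
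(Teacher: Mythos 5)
Your proposal is correct and follows essentially the same route as the paper: the trivial implications plus Lemma \ref{lem:cc-ue} for \ref{main-result:2}$\Rightarrow$\ref{main-result:3}, and for \ref{main-result:3}$\Rightarrow$\ref{main-result:1} an Evans--Kishimoto intertwining that alternates Corollary \ref{cor:acp}/Lemma \ref{lem:acp} with Lemma \ref{lem:cv} (choosing the $(\delta,\CG)$-pair of Lemma \ref{lem:cv} before the next application of the cocycle-perturbation step), takes point-norm limits of the inner automorphisms and strict limits of the coboundary expressions, and invokes the norm-version of Lemma \ref{lem:cv} for \ref{main-result:4} when $\acel(A)<\infty$. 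The only cosmetic difference is bookkeeping: the paper perturbs both flows alternately into $\alpha^{2k}$, $\beta^{2k+1}$ and only assembles the two approximate coboundaries $W^0$, $W^1$ at the end from the products $X_n=V_n^*Z_n$, whereas you carry a running cocycle $w^{(n)}$ through the induction; these are equivalent formulations of the same argument.
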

\begin{proof}
The implications \ref{main-result:4}$\Rightarrow$\ref{main-result:1}$\Rightarrow$\ref{main-result:2} are trivial.
The implication \ref{main-result:2}$\Rightarrow$\ref{main-result:3} holds due to Lemma \ref{lem:cc-ue}.
The essence of the claim is in the implication \ref{main-result:3}$\Rightarrow$\ref{main-result:1}, as well as \ref{main-result:3}$\Rightarrow$\ref{main-result:4} in case of $\acel(A)<\infty$.
Let us first proceed with \ref{main-result:3}$\Rightarrow$\ref{main-result:1}.

Before going into the details, it should be mentioned that the proof consists of implementing a slightly modified version of the Evans--Kishimoto intertwining argument invented in \cite{EvansKishimoto97}, with the help of Lemmas \ref{lem:acp} and \ref{lem:cv}. 
In particular, most of the arguments below may be familiar to some experts. 
That being said, the actual implementation of the intertwining technique is by no means trivial, and therefore we give the argument in full detail.

Assume that $\alpha_\co\ue\beta_\co$.
Note that if we perturb either $\alpha$ or $\beta$ with some cocycle, then both will still have the Rokhlin property and the restricted coactions will still be approximately unitarily equivalent; cf.\ Lemma \ref{lem:cc-ue}. This will be used repeatedly without mention.
For the rest of the proof, let $\CF_n\fin A_{\leq 1}$ be an increasing sequence of finite sets of contractions with dense union in the unit ball.

Put $\alpha^0=\alpha$ and $\beta^1=\beta$. 
By applying Lemmas \ref{lem:acp} and \ref{lem:cv} in a certain zig-zag way, we will inductively construct flows $\alpha^{2k}$ and $\beta^{2k+1}$ on $A$\footnote{The superscripts attached to flows in this context are not to be confused with taking powers of the involved automorphisms, which we will never consider throughout this proof.}
and unitaries $z_k,v_k\in\CU(\eins+A)$ satisfying the following list of properties for all $k\geq 0$ (or $k\geq 1$ where appropriate):
\begin{equation} \label{eq:alpha-2k}
\alpha^{2(k+1)}=\ad(z_{2k})\circ\alpha^{2k}\circ\ad(z_{2k}^*);
\end{equation}
\begin{equation} \label{eq:beta-2k+1}
\beta^{2k+1} = \ad(z_{2k-1})\circ\beta^{2k-1}\circ\ad(z_{2k-1}^*);
\end{equation}
\begin{equation} \label{eq:alpha-beta}
\max_{a\in\CF_{2k+1}}~ \max_{|t|\leq 1}~ \|\beta^{2k+1}_t(a)-\alpha^{2k}_t(a)\| \leq 2^{-(2k+1)};
\end{equation}
\begin{equation} \label{eq:F-prime-even}
\CF_{2k}' = \CF_{2k}\cup\ad(v_{2(k-1)}\cdots v_0)(F_{2k})\cup\CF_{2k}v_0^*\cdots v_{2(k-1)}^*;
\end{equation}
\begin{equation} \label{eq:F-prime-odd}
\CF_{2k+1}' = \CF_{2k+1}\cup\ad(v_{2k-1}\cdots v_1)(F_{2k+1})\cup\CF_{2k+1}v_1^*\cdots v_{2k-1}^*;
\end{equation}
\begin{equation} \label{eq:zv-even}
\max_{a\in\CF_{2k}'}~ \max_{|t|\leq 1}~ \| a\big( z_{2k}\alpha^{2k}_t(z_{2k}^*)-v_{2k}\alpha^{2k}_t(v_{2k}^*) \big)\|\leq 2^{-2k};
\end{equation}
\begin{equation} \label{eq:zv-odd}
\max_{a\in\CF_{2k+1}'}~ \max_{|t|\leq 1}~ \| a\big( z_{2k+1}\beta^{2k+1}_t(z_{2k+1}^*)-v_{2k+1}\beta^{2k+1}_t(v_{2k+1}^*) \big)\|\leq 2^{-(2k+1)};
\end{equation}
\begin{equation} \label{eq:cauchy-even}
\max_{a\in\CF_{2k}'}~ \|[v_{2k},a]\|\leq 2^{-2k};
\end{equation}
\begin{equation} \label{eq:cauchy-odd}
\max_{a\in\CF_{2k+1}'}~\|[v_{2k+1},a]\|\leq 2^{-(2k+1)}.
\end{equation}
Once this is done, these unitaries will allow us to construct approximately inner automorphisms $\phi_0,\phi_1$ on $A$, an approximate $\alpha$-coboundary $\set{ W_t^0}_{t\in\IR} \subset \CU(\CM(A))$ and an approximate $\beta$-coboundary $\set{W_t^1}_{t\in\IR} \subset \CU(\CM(A))$ such that
\[
\phi_0\circ\ad(W_t^0)\circ\alpha_t\circ\phi_0^{-1} = \phi_1\circ\ad(W_t^1)\circ\beta_t\circ\phi_1^{-1} ,\quad t\in\IR.
\]
Let us now implement this strategy and first construct the flows $\alpha^{2k}$, $\beta^{2k
1}$, and unitaries $z_k, v_k$ with the above properties. 

Apply Lemma \ref{lem:cv} to $\beta^1$ and the pair $(1/2, \CF_1)$ and choose a pair $(\delta_1,\CG_1)$. Define the compact set
\begin{equation} \label{eq:G-prime-1}
\CG_1' = \bigcup_{|t|\leq 1} \beta^1_{-t}(\CG_1)\cup\CF_1.
\end{equation}
Apply Lemma \ref{lem:acp} to $\alpha^0$ in place of $\beta$ and find a unitary $z_0\in\CU(\eins+A)$ such that
\begin{equation} \label{eq:step-0}
\max_{a\in\CG_1'}~ \max_{|t|\leq 1}~ \|\beta^1_t(a)-\big( \ad(z_0)\circ\alpha^0_t\circ\ad(z_0^*) \big)(a)\|<\delta_1/2.
\end{equation}
Set $\alpha^2=\ad(z_0)\circ\alpha^0\circ\ad(z_0^*)$. Set $v_0=z_0$ and
\[
\CF_2' = \CF_2\cup v_0\CF_2v_0^*\cup\CF_2 v_0^*.
\]
Apply Lemma \ref{lem:cv} to $\alpha^2$ and the pair $(1/4, \CF_2')$ and choose a pair $(\delta_2,\CG_2)$ with $\delta_2\leq\min(1/4, \delta_1)$. We let
\begin{equation} \label{eq:G-prime-2}
\CG_2' = \bigcup_{|t|\leq 1} \alpha^2_{-t}(\CG_2)\cup\beta^1_{-t}(\CG_1)\cup\CF_2.
\end{equation}
Apply Lemma \ref{lem:acp} to $\beta^1$ in place of $\beta$ and find a unitary $z_1\in\CU(\eins+A)$ with
\begin{equation} \label{eq:step-1}
\max_{a\in\CG_2'}~ \max_{|t|\leq 1}~ \|\alpha^2_t(a)-\big( \ad(z_1)\circ\beta^1_t\circ\ad(z_1^*) \big)(a)\|<\delta_2/2.
\end{equation}
Set $\beta^3=\ad(z_1)\circ\beta^1\circ\ad(z_1^*)$. Combining \eqref{eq:G-prime-1}, \eqref{eq:step-0} and \eqref{eq:step-1}, we see that
\[
\max_{a\in\CG_1}~ \max_{|t|\leq 1}~ \|[z_1\beta^1_t(z_1^*),a]\|\leq\delta_1.
\]
By our choice of $(\delta_1,\CG_1)$, there exists a unitary $v_1\in\CU(\eins+A)$ with
\[
\|[v_1,a]\|\leq 1/2 \quad\text{and}\quad \max_{|t|\leq 1}~ \|a\big( z_1\beta^1_t(z_1^*)-v_1\beta^1_t(v_1^*) \big)\|\leq 1/2 
\]
for all $a\in\CF_1$. Set
\[
\CF_3' = \CF_3\cup v_1\CF_3v_1^* \cup \CF_3v_1^*.
\]
Apply Lemma \ref{lem:cv} to $\beta^3$ and the pair $(1/8, \CF_3')$ and choose a pair $(\delta_3,\CG_3)$ with $\delta_3\leq\min(1/8,\delta_2)$. Set
\begin{equation} \label{eq:G-prime-3}
\CG_3' = \bigcup_{|t|\leq 1} \beta^3_{-t}(\CG_3)\cup\alpha^2_{-t}(\CG_2)\cup\CF_3.
\end{equation}
Apply Lemma \ref{lem:acp} to $\alpha^2$ in place of $\beta$ and find a unitary $z_2\in\CU(\eins+A)$ with
\begin{equation} \label{eq:step-2}
\max_{a\in\CG_3'}~ \max_{|t|\leq 1}~ \|\beta^3_t(a)-\big( \ad(z_2)\circ\alpha^2_t\circ\ad(z_2^*) \big)(a)\|<\delta_3/2.
\end{equation}
Set $\alpha^4= \ad(z_2)\circ\alpha^2\circ\ad(z_2^*)$. Combining \eqref{eq:G-prime-2}, \eqref{eq:step-1} and \eqref{eq:step-2}, we see
\[
\max_{a\in\CG_2}~ \max_{|t|\leq 1}~ \|[z_2\alpha^2_t(z_2^*),a]\|\leq\delta_2.
\]
By our choice of $(\delta_2,\CG_2)$, there exists a unitary $v_2\in\CU(\eins+A)$ with
\[
\|[v_2,a]\|\leq 1/4 \quad\text{and}\quad \max_{|t|\leq 1}~ \|a\big( z_2\alpha^2_t(z_2^*)-v_2\alpha^2_t(v_2^*) \big)\|\leq 1/4 
\]
for all $a\in\CF_2'$. Set
\[
\CF_4' = \CF_4\cup v_2v_0\CF_4v_0^*v_2^* \cup\CF_4v_0^*v_2^*.
\]
Apply Lemma \ref{lem:cv} to $\alpha^4$ and the pair $(1/16, \CF_4')$ and choose a pair $(\delta_4,\CG_4)$ with $\delta_4\leq\min(1/16,\delta_3)$. Set
\[
\CG_4' = \bigcup_{0\leq t\leq 1} \alpha^4_{-t}(\CG_4)\cup\beta^3_{-t}(\CG_3)\cup\CF_4.
\]
Apply Lemma \ref{lem:acp} to $\beta^3$ in place of $\beta$ and find a unitary $z_3\in\CU(\eins+A)$ with
\begin{equation} \label{eq:step-3}
\max_{a\in\CG_4'}~ \max_{|t|\leq 1}~ \|\alpha^4_t(a)-\big( \ad(z_3)\circ\beta^3_t\circ\ad(z_3^*) \big)(a)\|<\delta_4/2.
\end{equation}
Set $\beta^5= \ad(z_3)\circ\beta^3\circ\ad(z_3^*)$. Combining \eqref{eq:G-prime-3}, \eqref{eq:step-2} and \eqref{eq:step-3}, we see
\[
\max_{a\in\CG_3}~ \max_{|t|\leq 1}~ \|[z_3\beta^3_t(z_3^*),a]\|\leq\delta_3.
\]
By our choice of $(\delta_3,\CG_3)$, there exists a unitary $v_3\in\CU(\eins+A)$ with
\[
\|[v_3,a]\|\leq 1/8 \quad\text{and}\quad \max_{|t|\leq 1}~ \|a\big( z_3\beta^3_t(z_3^*)-v_3\beta^3_t(v_3^*) \big)\|\leq 1/8 
\]
for all $a\in\CF_3'$.

Carry on inductively as above.
This yields sequences of flows $(\alpha^{2k})_{k\geq 0}$ and $(\beta^{2k+1})_{k\geq 0}$ on $A$ and sequences of unitaries $z_k, v_k\in\CU(\eins+A)$ satisfying the properties as in \eqref{eq:alpha-2k} to \eqref{eq:cauchy-odd}.

We shall now construct the aforementioned automorphisms $\phi_0,\phi_1$ on $A$ and cocycles $\set{ W_t^0}_{t\in\IR}$, $\set{W_t^1}_{t\in\IR}$ that will implement the strong cocycle conjugacy between $\alpha$ and $\beta$.

Consider the sequence of unitaries $V_n\in\CU(\eins+A)$ given by
\begin{equation} \label{eq:Vn}
V_n = \begin{cases} v_{2k+1}v_{2k-1}\cdots v_3 v_1 &,\quad n=2k+1 \\
v_{2k}v_{2(k-1)}\cdots v_2 v_0 &,\quad n=2k.  \end{cases}
\end{equation}
In other words, depending whether $n$ is odd or even, $V_n$ is defined as the product of all unitaries $v_j$ for odd or even indices $j\leq n$.
Define inner automorphisms $\sigma_n\in\Aut(A)$ by $\sigma_n=\ad(V_n)$.
Since the union of the sets $\CF_n$ is dense in the unit ball of $A$, it follows from \eqref{eq:F-prime-even} and \eqref{eq:cauchy-even} that the sequences $\big( \sigma_{2k}(a) \big)_{2k}$ and $\big( \sigma_{2k}^{-1}(a) \big)_{2k}$ are Cauchy sequences for every $a\in A$. 
Hence the point-norm limit
\begin{equation} \label{eq:phi-0}
\phi_0 = \lim_{k\to\infty} \sigma_{2k}
\end{equation}
exists and yields an automorphism on $A$. Similarly, it follows from \eqref{eq:F-prime-odd} and \eqref{eq:cauchy-odd} that
\begin{equation} \label{eq:phi-1}
\phi_1 = \lim_{k\to\infty} \sigma_{2k+1}
\end{equation}
exists and yields an automorphism on $A$.

Let us now turn to constructing the cocycles. Consider the sequences of unitaries $Z_n, X_n\in\CU(\eins+A)$ given by
\begin{equation} \label{eq:Zn}
Z_n = \begin{cases} z_{2k+1} z_{2k-1}\cdots z_3 z_1 &,\quad n=2k+1 \\
z_{2k} z_{2(k-1)}\cdots z_2 z_0 &,\quad n=2k  \end{cases}
\end{equation}
and
\begin{equation} \label{eq:Xn}
X_n = V_n^*Z_n.
\end{equation}
We shall now show that for any $a\in A$, the sequences of functions of the form 
\[
[t\mapsto aX_{2k}\alpha_t(X_{2k}^*)] \quad\text{and}\quad [t\mapsto aX_{2k+1}\beta_t(X_{2k+1}^*)]
\]
satisfy the Cauchy criterion uniformly on $t\in [-1,1]$.
For this purpose, it is enough to show this for $a$ being in one of the sets $\CF_n$.
Let such $n$ be fixed for the moment.

Let us treat the even sequence.
Note that \eqref{eq:alpha-2k} translates to 
\begin{equation} \label{eq:alpha-2k-2}
\alpha^{2(k+1)}=\ad(Z_{2k})\circ\alpha\circ\ad(Z_{2k}^*) ,\quad k\geq 0,
\end{equation}
and \eqref{eq:beta-2k+1} translates to 
\begin{equation} \label{eq:beta-2k+1-2}
\beta^{2k+1}=\ad(Z_{2k-1})\circ\beta\circ\ad(Z_{2k-1}^*) ,\quad k\geq 1.
\end{equation}
Using \eqref{eq:alpha-2k}, we see for all $t\in [-1,1]$ that
\[
z_{2k}\alpha^{2k}_t(z_{2k}^*) = \alpha^{2(k+1)}_t(z_{2k}^*)z_{2k}
\]
and
\[
v_{2k}\alpha^{2k}_t(v_{2k}^*) = v_{2k} z_{2k}^* \alpha_t^{2(k+1)}\big( z_{2k} v_{2k}^* z_{2k}^* \big) z_{2k}.
\]
For $b\in\CF_{2k}'$ we use this with \eqref{eq:zv-even} and compute
\[
\renewcommand\arraystretch{1.5}
\begin{array}{cl}
\multicolumn{2}{l}{
\| b \big( v_{2k}^* \alpha_t^{2(k+1)}(v_{2k}) - z_{2k}^* \alpha_t^{2(k+1)}(z_{2k}) \big) \| } \\
=& \| v_{2k} b \big( v_{2k}^* \alpha_t^{2(k+1)}(v_{2k}) - z_{2k}^* \alpha_t^{2(k+1)}(z_{2k}) \big) \| \\
\stackrel{\eqref{eq:cauchy-even}}{\leq}& 2^{-2k}+\| b\big(\alpha^{2(k+1)}_t(v_{2k})-v_{2k}z_{2k}^*\alpha^{2(k+1)}_t(z_{2k}) \big)\| \\
=& 2^{-2k} + \| b\big( \alpha^{2(k+1)}_t(z_{2k}^*)z_{2k} - v_{2k} z_{2k}^* \alpha_t^{2(k+1)}\big( z_{2k} v_{2k}^* z_{2k}^* \big) z_{2k} \big)\| \\
=& 2^{-2k} + \| b\big( z_{2k}\alpha^{2k}_t(z_{2k}^*) - v_{2k}\alpha^{2k}_t(v_{2k}^*) \big)\| \\
\stackrel{\eqref{eq:zv-even}}{\leq}& 2^{1-{2k}}.
\end{array}
\]
To summarize, we have
\begin{equation} \label{eq:zv-even-2}
\max_{b\in\CF_{2k}'}~ \max_{|t|\leq 1}~ \| b\big( v_{2k}^* \alpha_t^{2(k+1)}(v_{2k}) - z_{2k}^* \alpha_t^{2(k+1)}(z_{2k}) \big)\| \leq 2^{1-2k}.
\end{equation}
If $a\in\CF_n$, then $aV_{2(k-1)}^*\in\CF_{2k}'$ for all $k\geq n/2$ by \eqref{eq:F-prime-even}. So for every $t\in [-1,1]$ it follows that
\[
\renewcommand\arraystretch{1.5}
\begin{array}{cl}
\multicolumn{2}{l}{ a X_{2k}\alpha_t(X_{2k}^*) } \\
\stackrel{\eqref{eq:Xn}}{=}& a V_{2k}^* Z_{2k} \alpha_t(Z_{2k}^* V_{2k}) \\
=& a V_{2k}^* Z_{2k} \alpha_t(Z_{2k}^* V_{2k} Z_{2k}^* Z_{2k}) Z_{2k}^* Z_{2k} \\
\stackrel{\eqref{eq:alpha-2k-2}}{=}& a V_{2k}^* \alpha_t^{2(k+1)}(V_{2k} Z_{2k}^*) Z_{2k} \\
=& a V_{2(k-1)}^* v_{2k}^* \alpha^{2(k+1)}_t\big( v_{2k} V_{2(k-1)} Z_{2(k-1)}^* z_{2k}^* \big) z_{2k} Z_{2(k-1)} \\
\stackrel{\eqref{eq:zv-even-2}}{=}_{\makebox[0pt]{\footnotesize\hspace{2mm}$2^{1-2k}$}} ~~ &  a V_{2(k-1)}^* z_{2k}^* \alpha_t^{2(k+1)}\big( z_{2k} V_{2(k-1)} Z_{2(k-1)}^* z_{2k}^* \big) z_{2k} Z_{2(k-1)} \\
\stackrel{\eqref{eq:alpha-2k}}{=}& a V_{2(k-1)}^*\alpha_t^{2k}(V_{2(k-1)} Z_{2(k-1)}^*) Z_{2(k-1)} \\
=& a V_{2(k-1)}^* Z_{2(k-1)} Z_{2(k-1)}^* \alpha_t^{2k}(Z_{2(k-1)} Z_{2(k-1)}^* V_{2(k-1)} Z_{2(k-1)}^*) Z_{2(k-1)} \\
\stackrel{\eqref{eq:alpha-2k-2}}{=}& a V_{2(k-1)}^* Z_{2(k-1)} \alpha_t( Z_{2(k-1)}^* V_{2(k-1)} ) \ \stackrel{\eqref{eq:Xn}}{=} \ a X_{2(k-1)}\alpha_t(X_{2(k-1)}^*).
\end{array}
\]
As the union $\bigcup_{n\in\IN} \CF_n$ is dense in the unit ball of $A$, we see that the sequence of functions 
\[
\big[ t\mapsto aX_{2k}\alpha_t(X_{2k}^*) \big]
\]
is uniformly Cauchy on $t\in [-1,1]$ for every $a\in A$. As one has
\[
X_{2k}\alpha_t(X_{2k}^*) \cdot a = \alpha_{t}\Big( \alpha_{-t}(a)^*\cdot X_{2k}\alpha_{-t}(X_{2k}^*) \Big)^*,
\]
the same follows for the functions
\[
\big[ t\mapsto X_{2k}\alpha_t(X_{2k}^*) a \big].
\]
In particular, we see that the strict limits
\begin{equation} \label{eq:final-cocycle-alpha}
W^0_t = \lim_{k\to\infty} X_{2k}\alpha_t(X_{2k}^*) \ \in \ \CU(\CM(A)),\quad t\in\IR
\end{equation}
exist and yield a strictly continuous map $W^0: \IR\to\CU(\CM(A))$, which by definition is an approximate $\alpha$-coboundary. Following the same line of argument for the odd sequence, one sees that the strict limits  
\begin{equation} \label{eq:final-cocycle-beta}
W^1_t = \lim_{k\to\infty} X_{2k+1}\beta_t(X_{2k+1}^*) \ \in \ \CU(\CM(A)),\quad t\in\IR
\end{equation}
exist and yield a strictly continuous, approximate $\beta$-coboundary $W^1: \IR\to\CU(\CM(A))$.

By the definition of $\sigma_{n}$, the construction of the flows $\alpha^{2k}$ \eqref{eq:alpha-2k-2} and $\beta^{2k+1}$ \eqref{eq:beta-2k+1-2}, and the definition of the elements $X_{n}$ \eqref{eq:Xn}, we have
\begin{equation} \label{eq:cocycle-eq-even}
\sigma_{2k}\circ\ad(X_{2k})\circ\alpha\circ\ad(X_{2k}^*)\circ\sigma_{2k}^{-1} = \alpha^{2(k+1)}
\end{equation}
and
\begin{equation} \label{eq:cocycle-eq-odd}
\sigma_{2k+1}\circ\ad(X_{2k+1})\circ\beta\circ\ad(X_{2k+1}^*)\circ\sigma_{2k+1}^{-1} = \beta^{2k+3}
\end{equation}
for all $k\geq 0$.
Finally, we compute for every $t\in\IR$ that
\[
\renewcommand\arraystretch{1.75}
\begin{array}{cl}
\multicolumn{2}{l}{ \phi_0\circ\ad(W^0_t)\circ\alpha_t\circ\phi_0^{-1} } \\
\stackrel{\eqref{eq:final-cocycle-alpha}, \eqref{eq:phi-0}}{=}& 
\dst \lim_{k\to\infty} ~ \sigma_{2k}\circ\ad(X_{2k})\circ\alpha_t\circ\ad(X_{2k}^*)\circ\sigma_{2k}^{-1} \\
\stackrel{\eqref{eq:cocycle-eq-even}}{=}& \dst \lim_{k\to\infty} ~ \alpha_t^{2(k+1)} \\
\stackrel{\eqref{eq:alpha-beta}}{=}& \dst \lim_{k\to\infty} ~ \beta_t^{2k+3} \\
\stackrel{\eqref{eq:cocycle-eq-odd}}{=}& \dst \lim_{k\to\infty} ~ \sigma_{2k+1}\circ\ad(X_{2k+1})\circ\beta_t\circ\ad(X_{2k+1}^*)\circ\sigma_{2k+1}^{-1} \\
\stackrel{\eqref{eq:final-cocycle-beta}, \eqref{eq:phi-1}}{=}& \phi_1\circ\ad(W_t^1)\circ\beta_t\circ\phi_1^{-1}.
\end{array}
\]
This finishes the proof for the implication \ref{main-result:3}$\Rightarrow$\ref{main-result:1}.

Now let us proceed with \ref{main-result:3}$\Rightarrow$\ref{main-result:4}.
Assume $\acel(A)<\infty$.
In this case, follow the same intertwining argument as above, but using the improved statement in Lemma \ref{lem:cv}.
The only difference in the argument is then that \eqref{eq:zv-even} and \eqref{eq:zv-odd} are replaced by
\begin{equation} \label{eq:zv-even-mod}
\max_{|t|\leq 1}~ \|z_{2k}\alpha^{2k}_t(z_{2k}^*)-v_{2k}\alpha^{2k}_t(v_{2k}^*)\|\leq 2^{-2k}
\end{equation}
and
\begin{equation} \label{eq:zv-odd-mod}
\max_{|t|\leq 1}~ \|z_{2k+1}\beta^{2k+1}_t(z_{2k+1}^*)-v_{2k+1}\beta^{2k+1}_t(v_{2k+1}^*)\|\leq 2^{-(2k+1)}.
\end{equation}
Carrying out the calculation right after \eqref{eq:zv-even-2} with $a=\eins$ yields
\[
\max_{|t|\leq 1}~ \|X_{2k}\alpha_t(X_{2k}^*)-X_{2(k-1)}\alpha_t(X_{2(k-1)}^*)\|\leq 2^{-2k}, \quad k\geq 1.
\]
Thus it follows that the convergence
\begin{equation} \label{eq:final-cocycle-alpha-mod}
W^0_t = \lim_{k\to\infty} X_{2k}\alpha_t(X_{2k}^*) \ \in \ \CU(\eins+A)
\end{equation}
holds in norm uniformly in $t\in [-1,1]$, and so $\set{W^0_t}_{t\in\IR}$ forms a norm-approximate $\alpha$-coboundary.

Following the same line of argument for the odd sequence, one sees that  
\begin{equation} \label{eq:final-cocycle-beta-mod}
W^1_t = \lim_{k\to\infty} X_{2k+1}\beta_t(X_{2k+1}^*) \ \in \ \CU(\eins+A)
\end{equation}
yields a norm-approximate $\beta$-coboundary.
Thus we obtain norm-strong cocycle conjugacy between $\alpha$ and $\beta$.
This concludes the proof.
\end{proof}

\begin{rem} \label{ex:uniform-assumption}
As we have seen in Lemma \ref{lem:cc-ue}, the condition $\alpha_\co\ue\beta_\co$ is a necessary assumption to get any of the other statements in Theorem \ref{thm:main-result}.
For von Neumann algebras, Masuda--Tomatsu's classification theorem already yields the desired conclusion under the assumption that $\alpha_t\ue\beta_t$ for all $t\in\IR$.
This is essentially due to Borel functional calculus, and in fact the point-wise assumption always implies the uniform one; see \cite[Theorem 9.5]{MasudaTomatsu16}.
As was alluded to in the introduction, the pointwise assumption cannot be expected to be sufficient in general for \cstar-algebras.

To see this more concretely, we remark that if $\alpha_\co\ue\beta_\co$, then an elementary calculation (which we shall omit) reveals that for every $t\in\IR$, the automorphisms $\alpha_t$ and $\beta_t$ must in fact be strongly asymptotically unitarily equivalent.
In particular, if the underlying \cstar-algebra has tracial states, then the rotation maps associated to $\alpha$ and $\beta$ must coincide; see \cite[Subsection 5.7]{Kishimoto03} and \cite{Connes81, Kishimoto98II, KishimotoKumjian01, Lin09}.

If we fix parameters $\theta\in [0,1]\setminus\IQ$ and $\rho_1, \rho_2\in\IR$, we may consider the flow $\gamma: \IR\curvearrowright A_\theta=\cstar(u,v)$ on the irrational rotation algebra given by $\gamma_t(u)=e^{2\pi i \rho_1 t}u$ and $\gamma_t(v)=e^{2\pi i \rho_2 t} v$.
Although it follows from classification \cite{ElliottEvans93, Elliott93} that the individual automorphisms given by $\gamma$ are approximately inner, the rotation map of $\gamma$ can only be trivial when $\rho_1=\rho_2=0$.
In particular, this yields a natural class of counterexamples to the (false) implication discussed above.
\end{rem}


\section{$\CO_\infty$-absorbing \cstar-algebras}

In this section, we will prove Theorem \ref{Theorem-B} with the help of Theorem \ref{thm:main-result}.
This requires a two-step approach; an analysis of the approximately central exponential length of the \cstar-algebras under consideration, followed by a uniqueness theorem for the restricted coactions of flows up to approximate unitary equivalence.
We remark that the class of separable nuclear $\CO_\infty$-absorbing \cstar-algebras has been classified by Kirchberg in unpublished work \cite{KirchbergC}, which has been rigorously reproved by Gabe \cite{Gabe19}.

\subsection{Approximately central exponential length}

In this subsection, we prove that the approximately central exponential length of any tensor product with $\CO_\infty$ is finite; in fact its value is at most $2\pi$.
This is analogous to Phillips' optimal bound on the exponential length of such \cstar-algebras \cite{Phillips02} and the proof below imports some of his techniques. 
Other important ingredients, which take care of approximate centrality, are arguments due to Haagerup--R{\o}rdam \cite{HaagerupRordam95} and Nakamura \cite{Nakamura00}, where in turn Nakamura's argument originates in other work of Phillips \cite{Phillips97}.

The proof of the following lemma is essentially contained in the proof of \cite[Lemma 5.1]{HaagerupRordam95}. For the reader's convenience, we shall give the argument in its distilled form.

\begin{lemma} \label{lem:Haagerup-Rordam}
Let $A$ be a unital \cstar-algebra and $z\in\CU(A)$ a unitary. Then there exists a unitary path $u: [0,1]\to\CU(A\otimes\CO_2)$ satisfying
\[
u(0)=\eins,\quad u(1)=z\otimes\eins,\quad \ell(u)\leq\frac{8\pi}{3},
\]
and
\[
\max_{0\leq t\leq 1}~ \|[u(t),a\otimes\eins]\| \leq 4\|[z,a]\| \quad\text{for all } a\in A.
\]
\end{lemma}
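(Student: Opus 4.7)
The plan is to implement the Haagerup--R\o rdam construction, which factors $z \otimes \eins$ inside $A \otimes \CO_2$ as a product of two conjugate copies of a simpler unitary. Let $s_1, s_2 \in \CO_2$ be the canonical Cuntz generators, set $F := \eins \otimes (s_1 s_2^* + s_2 s_1^*) \in A \otimes \CO_2$, and define
\[
W_z := z \otimes s_1 s_1^* + \eins \otimes s_2 s_2^* \in \CU(A \otimes \CO_2).
\]
One checks directly that $F^2 = \eins$, that $F$ commutes with $A \otimes \eins$, and that $F W_z F = \eins \otimes s_1 s_1^* + z \otimes s_2 s_2^*$, so that $W_z \cdot (F W_z F) = z \otimes \eins$. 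Moreover, $[W_z, a \otimes \eins] = [z, a] \otimes s_1 s_1^*$, which has norm at most $\|[z, a]\|$.

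The core technical step will be to construct a continuous unitary path $W : [0,1] \to \CU(A \otimes \CO_2)$ with $W(0) = \eins$, $W(1) = W_z$, length at most $4\pi/3$, and $\|[W(t), a \otimes \eins]\| \leq 2 \|[z, a]\|$ uniformly in $t$ and $a$. The idea is to exploit the absorption $\CO_2 \otimes \CO_2 \cong \CO_2$ in order to split $s_1 s_1^*$ as a sum of three mutually orthogonal Murray--von Neumann equivalent subprojections $p_1, p_2, p_3$, embedding a copy of $M_3$ inside $s_1 s_1^* \CO_2 s_1 s_1^*$. A cyclic permutation unitary of order three acting on the $p_j$'s has spectrum $\{1, \omega, \omega^2\}$ with $\omega = e^{2\pi i /3}$ and can therefore be written as $\exp(ih)$ for a self-adjoint $h$ of norm $2\pi/3$, yielding a path of length $2\pi/3$. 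Conjugating by this path while reassembling $W_z$ from its restrictions to $p_1, p_2, p_3, s_2 s_2^*$ produces a path in $A \otimes \CO_2$ of length $2 \cdot (2\pi/3) = 4\pi/3$, the factor of $2$ coming from the out-and-back nature of a conjugation; the same factor of $2$ is what doubles the endpoint commutator bound $\|[z,a]\|$ to the uniform bound $2\|[z,a]\|$ along the path.

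With such $W$ in hand, I would define $u(t) := W(t) F W(t) F$. Then $u(0) = F^2 = \eins$ and $u(1) = W_z \cdot (F W_z F) = z \otimes \eins$. Since $F$ commutes with $A \otimes \eins$, expanding the commutator gives
\[
[u(t), a\otimes\eins] = [W(t), a\otimes\eins] \cdot F W(t) F + W(t) F \cdot [W(t), a\otimes\eins] \cdot F,
\]
so that $\|[u(t), a \otimes \eins]\| \leq 2 \|[W(t), a \otimes \eins]\| \leq 4 \|[z, a]\|$. An analogous telescoping estimate gives $\|u(t_1) - u(t_2)\| \leq 2 \|W(t_1) - W(t_2)\|$, and hence $\ell(u) \leq 2 \ell(W) \leq 8\pi/3$.

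The main obstacle is the middle step: producing the path $W$ with the required length bound and uniform commutator control simultaneously. The length $4\pi/3$ is dictated by the order-three cyclic rotation, which in turn requires the auxiliary $\CO_2$-factor in order to split $s_1 s_1^*$; the commutator bound $2\|[z,a]\|$ along the entire path (rather than only at the endpoint $W_z$, where it reduces to $\|[z,a]\|$) relies on the fact that the matrix units implementing the rotation lie in $\eins \otimes \CO_2$ and therefore commute exactly with $A \otimes \eins$. Reconciling these two constraints is the technical heart of the Haagerup--R\o rdam argument.
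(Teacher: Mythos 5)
Your outer reduction is correct: $F=\eins\otimes(s_1s_2^*+s_2s_1^*)$ is a symmetry commuting with $A\otimes\eins$, $W_z\cdot FW_zF=z\otimes\eins$, and the estimates $\ell(u)\leq 2\ell(W)$ and $\|[u(t),a\otimes\eins]\|\leq 2\|[W(t),a\otimes\eins]\|$ all check out. The gap is the step you yourself flag as the technical heart: producing a path $W$ from $\eins$ to $W_z=z\otimes s_1s_1^*+\eins\otimes s_2s_2^*$ of length $4\pi/3$ with commutator bound $2\|[z,a]\|$. The mechanism you propose --- a single conjugation path $t\mapsto\exp(ith)V\exp(-ith)V^*$ built from an order-three cyclic rotation on subprojections $p_1,p_2,p_3$ of $s_1s_1^*$ --- cannot terminate at $W_z$. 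If $V=a_1\otimes p_1+a_2\otimes p_2+a_3\otimes p_3+\eins\otimes s_2s_2^*$ and $\exp(ih)$ implements the cyclic permutation, the endpoint restricted to the three subprojections is $a_3a_1^*\oplus a_1a_2^*\oplus a_2a_3^*$; demanding that each entry equal $z$ forces $a_3=za_1=z^2a_2=z^3a_3$, i.e.\ $z^3=\eins$. More generally, the endpoint of a single such rotation commutator is of the type $z^2\oplus z^*\oplus z^*$, never $z\oplus z\oplus z$, which is exactly why the Haagerup--R{\o}rdam trick must multiply \emph{two} commutators rather than reach a factor like $W_z$ directly.

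The paper's proof sidesteps this by never aiming at $W_z$. It takes two unital embeddings $\theta_1,\theta_2:M_3\to\CO_2$ with $\theta_1(e_{11})=s_1s_1^*$ and $\theta_2(e_{11})=s_2s_2^*$, sets $v=z^*\otimes e_{11}+\eins\otimes e_{22}+z\otimes e_{33}$ and $w=\exp(ih)$ the cyclic permutation with $\|h\|=2\pi/3$, and defines $u(t)$ as the product of the two conjugation paths $(\id_A\otimes\theta_i)\bigl(\exp(ith)v\exp(-ith)v^*\bigr)$. Each factor ends at $z^2\otimes\theta_i(e_{11})+z^*\otimes\theta_i(e_{22}+e_{33})$, and since $\theta_1(e_{11})=s_1s_1^*$ and $\theta_2(e_{11})=s_2s_2^*$ are complementary, the product of the two endpoints is $z^2z^*\otimes s_1s_1^*+z^*z^2\otimes s_2s_2^*=z\otimes\eins$. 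Each factor is $4\pi/3$-Lipschitz and commutes with $a\otimes\eins$ up to $2\|[z,a]\|$, giving $8\pi/3$ and $4\|[z,a]\|$ for the product --- the same constants you were targeting, but obtained from a decomposition of $z\otimes\eins$ into two genuine commutators rather than into $W_z\cdot FW_zF$. To repair your argument you would have to either abandon the factor $W_z$ or prove the $4\pi/3$ path to $W_z$ by some entirely different device, which the obstruction above suggests is not available.
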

\begin{proof}
Let $s_1, s_2\in\CO_2$ be the two generating isometries. Consider two unital $*$-homomorphisms $\theta_1, \theta_2: M_3\to\CO_2$ given by
\[
\theta_1(e_{11}) = s_1s_1^*,\ \theta_1(e_{21}) = s_2s_1s_1^*,\ \theta_1(e_{31})=s_2^2s_1^*
\]
and
\[
\theta_2(e_{11}) = s_2s_2^*,\ \theta_2(e_{21}) = s_1s_2s_2^*,\ \theta_2(e_{31})=s_1^2s_2^*.
\]
We consider the permutation matrix 
\[
w=e_{21}+e_{32}+e_{13} \ \in \ M_3,
\]
whose set of eigenvalues is $\set{1, e^{\pm 2\pi i/3}}$. Thus $w=\exp(ih)$ for a self-adjoint element $h\in M_3$ with $\|h\|= 2\pi/3$. 
For convenience below, we will slightly abuse notation and write $w$ and $h$ also for the elements $\eins\otimes w$ and $\eins\otimes h$ in $A\otimes M_3$.
Consider 
\[
v=z^*\otimes e_{11} + \eins\otimes e_{22} + z\otimes e_{33} \ \in \ A\otimes M_3.
\]
Define $u: [0,1]\to\CU(A\otimes\CO_2)$ via
\[
u(t) = (\id_A\otimes\theta_1)\big( \exp(ith)v\exp(-ith)v^* \big)\cdot (\id_A\otimes\theta_2)\big( \exp(ith)v\exp(-ith)v^* \big).
\]
Clearly $u(0)=\eins$. We compute
\[
\begin{array}{ccl}
u(1) &=& (\id_A\otimes\theta_1)\big( wvw^*v^* \big)\cdot (\id_A\otimes\theta_2)\big( wvw^*v^* \big) \\
&=& (\id_A\otimes\theta_1)\big( z^2\otimes e_{11} + z^*\otimes (e_{22}+e_{33}) \big)\cdot \\
&& \cdot (\id_A\otimes\theta_2)\big( z^2\otimes e_{11} + z^*\otimes (e_{22}+e_{33}) \big) \\
&=& (z^2\otimes s_1s_1^* + z^*\otimes s_2s_2^*)\cdot (z^2\otimes s_2s_2^* + z^*\otimes s_1s_1^*) \\
&=& z\otimes\eins.
\end{array}
\]
Since $\|h\|= 2\pi/3$, the path $[t\mapsto\exp(ith)]$ is $2\pi/3$-Lipschitz, so indeed $\ell(u)\leq 8\pi/3$. As both $\theta_1(\exp(ith))$ and $\theta_2(\exp(ith))$ live in $\eins_A\otimes\CO_2$, we immediately see by the definition of $v$ and $u(t)$ that 
\[
\max_{0\leq t\leq 1}~ \|[u(t),a\otimes\eins]\| \leq 4\|[z,a]\| \quad\text{for all } a\in A.
\]
This finishes the proof.
\end{proof}

The next lemma arises as an adaptation of the proof of \cite[Theorem 7]{Nakamura00}.

\begin{lemma} \label{lem:Nakamura}
Let $A$ be a unital \cstar-algebra. 
Let $\eps>0$ and $\CF\fin A_{\leq 1}$ be a finite subset of contractions. Suppose that $u: [0,1]\to\CU(A)$ is a unitary path satisfying $u(0)=\eins$
and
\[
\max_{a\in\CF}~ \max_{0\leq t\leq 1}~ \|[u(t),a]\|\leq\eps.
\]
Then given any $C>\frac{16\pi}{3}$, there exists another unitary path $w: [0,1]\to\CU(A\otimes\CO_\infty)$ satisfying
\[
w(0)=\eins,\quad w(1)=u(1)\otimes\eins,\quad \ell(w) < C
\]
and
\[
\max_{a\in\CF}~ \max_{0\leq t\leq 1}~ \|[w(t),a\otimes\eins]\|\leq 9\eps.
\]
\end{lemma}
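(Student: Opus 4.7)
The argument adapts the construction from Lemma 5.2 (Haagerup--R{\o}rdam) into the $\CO_\infty$-setting, following Nakamura \cite[Theorem 7]{Nakamura00}. The specific coefficients $16\pi/3 = 2 \cdot 8\pi/3$ in the length bound and $9\eps$ in the commutator bound strongly suggest two applications of the Lemma 5.2 construction, plus a small patching/rotation step.

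The key observation underpinning the argument is that the proof of Lemma 5.2 is really an algebraic construction that goes through verbatim for any unital \cstar-algebra $B$ containing two isometries $s_1, s_2$ satisfying $s_1 s_1^* + s_2 s_2^* = \eins_B$, not just for $B = \CO_2$. In $\CO_\infty$ such a structure is unavailable unitally (by a $K$-theoretic obstruction, since $\CO_2$ does not unitally embed into $\CO_\infty$), but it is available in corners $p \CO_\infty p$ with $[p] = 0$ in $K_0(\CO_\infty) = \IZ$. Such corners are unital Kirchberg algebras with trivial unit class and, by Kirchberg--Phillips classification, unitally contain $\CO_2$. My plan is to exploit this in two complementary corners of $\CO_\infty$ to cover all of $\eins_{\CO_\infty}$ and reach the target $u(1) \otimes \eins$.

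Concretely, I would proceed in two steps. First, I would pick a projection $p \in \CO_\infty$ with $[p] = 0$, carry out the generalised Lemma 5.2 construction in $A \otimes p \CO_\infty p$ to obtain a unitary path from $\eins_A \otimes p$ to $u(1) \otimes p$ of length at most $8\pi/3$ and commutator bound $4\eps$ against $a \otimes \eins$, and then extend trivially by $\eins_A \otimes (\eins-p)$ to obtain a path in $A \otimes \CO_\infty$ from $\eins$ to $u(1) \otimes p + \eins_A \otimes (\eins-p)$. Second, I would similarly connect this intermediate endpoint to $u(1) \otimes \eins$ by a path inside $A \otimes (\eins-p)\CO_\infty(\eins-p)$, using a further Lemma 5.2 application in a subcorner of matching form, producing another path of length $\leq 8\pi/3$ and commutator $\leq 4\eps$. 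After arc-length reparametrisation, the concatenation has length $< C$ for any $C > 16\pi/3$.

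The main obstacle is the precise $K$-theoretic arithmetic required to set up both steps simultaneously: naively one wants a decomposition $\eins_{\CO_\infty} = p_1 + p_2$ with $[p_i] = 0$ for both $i$, but this forces $[\eins] = 0$, contradicting $[\eins] = 1 \in K_0(\CO_\infty)$. The real argument therefore needs one of the following fixes: (i) a finite iteration in which the $K_0$-class of the residual projection is handled at the end via a short rotation; (ii) working inside a tensor absorption copy $\CO_\infty \cong \CO_\infty \otimes \CO_\infty$, where the internal tensor factor gives additional flexibility to manufacture the required isometries; or (iii) a direct decomposition $\eins = p + p_1 + p_2$ where the small residual $\eins - p - p_1 - p_2$ is handled separately by a rotation using the path structure of $u$. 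This patching step is where I expect the extra $\eps$ in the bound $9\eps = 4\eps + 4\eps + \eps$ to arise, and where the hypothesis that $u$ is a continuous path (not merely the unitary $u(1)$) should enter, as the homotopy $u$ will be needed to connect the residual piece to the identity while controlling commutators with $\CF$.
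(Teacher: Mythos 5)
Your reading of the numerology is right (two Haagerup--R{\o}rdam paths of length $8\pi/3$ each, plus a cheap patch), and you correctly identify both the need for corners of trivial $K_0$-class and the fact that the path structure of $u$ must enter somewhere. But the concrete two-step plan does not work, and the obstruction you name yourself is fatal to it: after running the construction of Lemma \ref{lem:Haagerup-Rordam} in $A\otimes p\CO_\infty p$ with $[p]=0$, the leftover corner $(\eins-p)\CO_\infty(\eins-p)$ has unit of class $1$ in $K_0(\CO_\infty)$, so it admits no unital copy of $\CO_2$, and no finite iteration of the same move exhausts it --- the residual class stays $1$ forever. None of your three proposed fixes supplies a mechanism for moving $u(1)\otimes r$ to $\eins\otimes r$ on such a residual corner by a short approximately central path; that is precisely the hard content of the lemma, not a patching step.

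The missing idea is an interleaving in the style of Berg's technique that uses the whole path $u$, not just its endpoint. One chooses $n$ with $\|u(s)-u(t)\|\le\eta$ whenever $|s-t|\le 1/n$, and projections $p_1,\dots,p_{2n+1}\in\CO_\infty$ with $[p_j]=1$ for $j$ odd and $[p_j]=-1$ for $j$ even, so that every consecutive pair $p_j+p_{j+1}$ has class $0$ and its corner unitally contains $\CO_2$. The even pairs carry, in parallel on orthogonal corners (so the lengths do not add), Haagerup--R{\o}rdam paths from $\eins$ to $u(k/n)\otimes\eins$; the odd pairs carry paths from $u(k/n)\otimes\eins$ to $u(1)\otimes\eins$, obtained by applying Lemma \ref{lem:Haagerup-Rordam} to $u(k/n)^*u(1)$ --- whence the constant $9\eps=8\eps+\eps$, rather than your $4\eps+4\eps+\eps$. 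The endpoint of the first combined path and the starting point of the second are two staircases of the form $\sum_k u(k/n)\otimes(\,\cdot\,)$ built over the two different pairings of the $p_j$, shifted by one step, hence within $\eta$ of each other; the short bridge of length $3\eta$ between them is the only place where the slack $C-\frac{16\pi}{3}$ and the remaining commutator estimate are spent. Without this interleaving there is no way to reconcile the $K_0$-classes, which is exactly where your plan stalls.
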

\begin{proof}
Let $C,\eps,\CF, u$ be as in the statement.
Let $\eta>0$ be any number with $\eta < \frac13 \max\set{ 1, \eps, C-\frac{16\pi}{3} }$. 

As $u$ is uniformly continuous, we choose some $n\in\IN$ large enough so that
\begin{equation} \label{eq:uniform-cont-u}
\|u(s)-u(t)\|\leq\eta\quad\text{whenever } |s-t|\leq\frac1n .
\end{equation}
Using basic properties of simple and purely infinite \cstar-algebras from \cite{Cuntz81}, we choose a partition of unity of projections
\[
p_1,\dots,p_{2n+1}\in\CO_\infty \quad\text{with}\quad [p_j] = \begin{cases} 1 &,\quad j \text{ is odd} \\ -1 &,\quad j \text{ is even}. \end{cases}
\]
Here $[p_j]$ denotes the class of $p_j$ in $K_0(\CO_\infty)\cong\IZ$.
We note that, as a consequence, one has $[p_j+p_{j+1}]=0$ for all $j=1,\dots,2n$.
In particular, each corner of the form $(p_j+p_{j+1})\CO_\infty(p_j+p_{j+1})$ admits some unital embedding of $\CO_2$, one of which we shall choose for every $j$ and denote by $\iota_j$.

For every $k=1,\dots,n$, we apply Lemma \ref{lem:Haagerup-Rordam} and find a unitary path $u_k: [0,1]\to\CU(A\otimes\CO_2)$ with
\[
u_k(0)=\eins,\quad u_k(1)=u(k/n)\otimes\eins,\quad \ell(u_k)\leq\frac{8\pi}{3},
\]
and
\[
\max_{0\leq t\leq 1}~ \|[u_k(t),a\otimes\eins]\| \leq 4\|[u(k/n),a]\| \leq 4\eps \quad\text{for all } a\in\CF.
\]
Similarly, apply Lemma \ref{lem:Haagerup-Rordam} for each $k=0,\dots,n-1$ and find a unitary path $v_k': [0,1]\to\CU(A\otimes\CO_2)$ with
\[
v_k'(0)=\eins,\quad v_k'(1)=u(k/n)^*u(1)\otimes\eins,\quad \ell(v_k')\leq\frac{8\pi}{3},
\]
and
\[
\max_{0\leq t\leq 1}~ \|[v_k'(t),a\otimes\eins]\| \leq 4\|[u(k/n)^*u(1),a]\| \leq 8\eps \quad\text{for all } a\in\CF.
\]
Then set $v_k=(u(k/n)\otimes\eins)v_k'$, which is a path in $\CU(A\otimes\CO_2)$ satisfying
\[
v_k(0)=u(k/n)\otimes\eins ,\quad v_k(1)=u(1)\otimes\eins,\quad \ell(v_k)\leq\frac{8\pi}{3},
\]
and
\[
\max_{0\leq t\leq 1}~ \|[v_k(t),a\otimes\eins]\| \leq \max_{0\leq t\leq 1}~ \|[v_k'(t),a\otimes\eins]\| + \|[u(k/n),a]\| \leq 9\eps
\]
for all $a\in\CF$.
Let us now consider two paths of unitaries 
\[
w^{(1)}, w^{(2)}: [0,1]\to\CU(A\otimes\CO_\infty)
\]
via
\[
w^{(1)}(t) = \eins_A\otimes p_1 + \sum_{k=1}^n (\id_A\otimes\iota_{2k})(u_k(t))
\]
and
\[
w^{(2)}(t) = u(1)\otimes p_{2n+1} + \sum_{k=0}^{n-1} (\id_A\otimes\iota_{2k+1})(v_k(t)).
\]
As the $p_j$ form a partition of unity of $\CO_\infty$, it follows from the choice of the maps $\iota_j$ that these are indeed well-defined unitary paths. Our choices of $u_k$ and $v_k$ entail that
\[
w^{(1)}(0)=\eins,\quad w^{(2)}(1)=u(1)\otimes\eins,\quad \ell(w^{(1)})\leq\frac{8\pi}{3},\quad \ell(w^{(2)})\leq\frac{8\pi}{3},
\]
and
\[
\max_{i=1,2}~\max_{0\leq t\leq 1}~ \|[w^{(i)}(t),a\otimes\eins]\|\leq 9\eps \quad\text{for all } a\in\CF.
\]
Let us now compare the remaining endpoints for these paths. We have
\[
w^{(1)}(1) = \eins_A\otimes p_1 + \sum_{k=1}^n u(k/n)\otimes (p_{2k}+p_{2k+1})
\]
and
\[
w^{(2)}(0) = u(1)\otimes p_{2n+1} + \sum_{k=0}^{n-1} u(k/n)\otimes (p_{2k+1}+p_{2(k+1)}).
\]
By \eqref{eq:uniform-cont-u}, we thus have $\|w^{(1)}(1)-w^{(2)}(0)\|\leq\eta$. 
As $\eta<\frac13$, we may find a unitary path $w': [0,1]\to\CU(A\otimes\CO_\infty)$ with
\[
w'(0) = w^{(1)}(1),\quad w'(1)=w^{(2)}(0) \quad\text{and}\quad \ell(w')\leq 3\eta < C-\frac{16\pi}{3}.
\]
Note that
\[
\|[w^{(1)}(1),a\otimes\eins]\|\leq \max_{k=1,\dots,n}~ \|[u(k/n),a]\| \leq \eps \quad\text{for all } a\in\CF
\]
by our initial assumption. 
Thus we have
\[
\max_{0\leq t\leq 1}~ \|[w'(t),a\otimes\eins]\|\leq \eps+2\ell(w') \leq 3\eps \quad\text{for all } a\in\CF.
\]
We may thus obtain the desired unitary path $w: [0,1]\to\CU(A\otimes\CO_\infty)$ as the path composition of $w^{(1)}$, $w'$ and $w^{(2)}$. 
\end{proof}

It would be fairly simple to show finite approximately central exponential length as a straightforward consequence of Lemma \ref{lem:Nakamura}.
However, the resulting immediate upper bound would be at least $\frac{16\pi}{3}$, which is far from optimal.
For completeness, we will insert a few additional arguments that show the optimal bound of $2\pi$.
Readers who do not wish to bother with the precise value may consider to skip ahead and continue reading from Theorem \ref{thm:fin-acel-Oinf} onward.

Recall the following fact, which is shown as part of the proof of \cite[Theorem 3.1]{Phillips02}.

\begin{lemma} \label{lem:Phillips}
Let $A$ be a unital \cstar-algebra and let $u\in\CU_0(A)$ with $\sp(u)=\IT$. Let $u_f\in\CU(\CO_\infty)$ be a unitary with $\sp(u_f)=\IT$.
Then for every $\eps>0$, there exists a self-adjoint element $h\in A\otimes\CO_\infty$ with 
\[
\|h\|\leq\pi \quad\text{and}\quad \|\exp(ih)-u\otimes u_f\|\leq\eps.
\]
\end{lemma}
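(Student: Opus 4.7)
The plan is to adapt Phillips' proof of \cite[Theorem 3.1]{Phillips02}, reducing the problem to a standard functional-calculus argument via a careful perturbation. The starting observation is that if $v\in\CU(B)$ is a unitary in any unital \cstar-algebra $B$ with a spectral gap, i.e., $\sp(v)\subsetneq\IT$, then taking $h=-i\log(v)$ for the principal branch of logarithm cut at some $\zeta_0\in\IT\setminus\sp(v)$ yields a self-adjoint $h\in B$ with $\|h\|\leq\pi$ and $\exp(ih)=v$. Thus the crux is to find, within $\eps$ of $u\otimes u_f$ in norm, a unitary in $A\otimes\CO_\infty$ whose spectrum avoids some point on $\IT$.

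For the perturbation, I would exploit the purely infinite structure of $\CO_\infty$. Specifically, the self-absorption $\CO_\infty\otimes\CO_\infty\cong\CO_\infty$ provides, for any finite subset of $\CO_\infty$ and any precision $\delta>0$, an approximately central unital copy of $\CO_n$ (for any $n\in\IN$) inside $\CO_\infty$. This yields $n$ orthogonal projections $q_1,\dots,q_n\in\CO_\infty$ summing to $\eins$ and approximately commuting with $u_f$ up to $\delta$. Using these projections together with carefully chosen small rotations implemented via the $\CO_\infty$ structure (much in the spirit of the Haagerup--R{\o}rdam halving in Lemma \ref{lem:Haagerup-Rordam} and Nakamura's construction in Lemma \ref{lem:Nakamura}), one constructs a unitary $v\in\CU(A\otimes\CO_\infty)$ within $\eps$ of $u\otimes u_f$ whose spectrum has a gap at some $\zeta_0\in\IT$, by arranging the different spectral ``slices'' cut out by the $q_j$ to contribute to the joint spectrum in a way that avoids $\zeta_0$.

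The main obstacle is the interplay between approximate centrality and spectral geometry. Since $\sp(u)=\IT$, any perturbation of $u\otimes u_f$ supported only on the $\CO_\infty$ tensor factor will in general still produce full spectrum: one has $\sp(u\otimes\tilde u_f)=\sp(u)\cdot\sp(\tilde u_f)=\IT$ as soon as $\sp(\tilde u_f)$ is nonempty. The rotations must therefore be chosen so as to entangle with $u$ itself, via the approximate centrality of the projections $q_j$ with respect to an enlarged finite subset containing spectral data of $u$; only then does a joint spectral gap in $A\otimes\CO_\infty$ emerge after the full tensor-product perturbation is assembled. This is where the argument becomes technically delicate, and where the full strength of Kirchberg--Phillips absorption is used. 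Once the gap is established, the principal branch of $\log$ delivers the required $h$ with $\|h\|\leq\pi$.
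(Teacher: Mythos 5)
The paper itself does not prove this lemma --- it is imported verbatim from the proof of \cite[Theorem 3.1]{Phillips02} --- but your proposed route diverges from any workable argument in a way that leaves the entire content of the statement unproved. The central problem is that reducing to a unitary with a spectral gap is both unjustified and not the right mechanism. Since $\sp(u)=\IT$, one has $\sp(u\otimes u_f)=\IT$, and because the Hausdorff distance between spectra of unitaries is bounded by their norm distance, the spectrum of \emph{any} unitary $v$ with $\|v-u\otimes u_f\|\leq\eps$ is automatically $\eps$-dense in $\IT$; a gap, if it exists at all, has size at most $2\eps$. Moreover your opening claim is false: a branch cut at an arbitrary $\zeta_0\in\IT\setminus\sp(v)$ places $\sp(h)$ in an interval of length $2\pi$ ending at $\arg\zeta_0$, giving only $\|h\|\leq 2\pi$ in general; one gets $\|h\|\leq\pi$ only when the gap sits at $-1$. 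Producing such a $v$ is a strictly \emph{stronger} assertion than the lemma, since $\exp(ih)$ with $\|h\|=\pi$ and $\sp(h)=[-\pi,\pi]$ has full spectrum. The actual point of the lemma is precisely that $h$ is built so that its spectrum fills $[-\pi,\pi]$, with the branch of the logarithm chosen ``with multiplicity'' near $-1$ --- assigning $+\pi$ to one part of the spectrum there and $-\pi$ to another --- which is the \cstar-analogue of Borel functional calculus, and the full spectrum of $u_f$ is exactly what provides the room to do this continuously. Your sketch replaces this construction, which is the whole mathematical content, by the remark that it ``becomes technically delicate''; nothing in the proposal produces the claimed perturbation, and already for $A=\CC([0,1])$, $u(t)=e^{2\pi it}$, a gapped approximant would require a gap of size $\leq 2\eps$ to travel once around the circle through unitaries staying uniformly $\eps$-close to the fixed full-spectrum $u_f$ --- something the statement does not require and for which you offer no construction.

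Two further concrete issues. First, your argument never uses the hypothesis $u\in\CU_0(A)$, but the lemma is false without it: take $A=\CC(\IT)\otimes\CO_\infty$ and $u=z\otimes\eins$, which has $\sp(u)=\IT$; then $[u\otimes u_f]$ generates $K_1(A\otimes\CO_\infty)\cong\IZ$, so $u\otimes u_f$ does not lie in $\CU_0$ and cannot be approximated by exponentials at all. A correct proof must therefore exploit $[u]=0$ in $K_1(A)$ (or a path from $u$ to $\eins$) somewhere, and your sketch gives no indication of where this enters. Second, a small but real error: there is no \emph{unital} copy of $\CO_n$ in $\CO_\infty$ for $2\leq n<\infty$, since $[\eins_{\CO_n}]$ is a torsion element of $K_0(\CO_n)\cong\IZ/(n-1)\IZ$ while $[\eins_{\CO_\infty}]$ generates $K_0(\CO_\infty)\cong\IZ$. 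Approximately central projections and isometries are of course available in $\CO_\infty$, but the specific device you invoke does not exist.
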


\begin{lemma} \label{lem:fin-acel-Oinf-pre}
For every $\eps>0$, there exists $\delta>0$ satisfying the following property:

Let $A$ be a unital \cstar-algebra.
Suppose that $\CF\fin A_{\leq 1}$ is any finite set of contractions and $u: [0,1]\to\CU(A)$ is a unitary path satisfying $u(0)=\eins$ and
\[
\max_{a\in\CF}~ \max_{0\leq t\leq 1}~ \|[u(t),a]\| \leq \delta.
\]
Suppose that $\sp(u(1))\subseteq\IT$ is $\delta$-dense in the Euclidean metric. Then there exists a unitary $u_f\in\CU(\CO_\infty)$ with full spectrum $\sp(u_f)=\IT$ and a self-adjoint element $h\in A\otimes\CO_\infty$ with
\[
\|h\|\leq\pi ,\quad \max_{a\in\CF}~ \|[h,a\otimes\eins]\|\leq\eps
\]
and
\[
\|u(1)\otimes u_f-\exp(ih)\|\leq\eps.
\]
\end{lemma}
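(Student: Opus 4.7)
The plan is to invoke Lemma \ref{lem:Phillips} and propagate approximate centrality through its construction. Given $\eps > 0$, I will choose $\delta > 0$ small (depending on $\eps$ and on a universal Lipschitz constant extracted from Phillips's argument).

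First, I must address the mismatch between the hypothesis $\sp(u) = \IT$ of Phillips's lemma and our weaker $\delta$-density condition. The key observation is that for any $u_f \in \CU(\CO_\infty)$ of full spectrum, the element $u(1) \otimes u_f \in \CU(A \otimes \CO_\infty)$ has $\sp(u(1) \otimes u_f) = \sp(u(1)) \cdot \IT = \IT$ regardless of $\sp(u(1))$. Accordingly, I would apply Lemma \ref{lem:Phillips} to $u(1) \otimes u_f \in \CU(A \otimes \CO_\infty)$ together with an auxiliary second unitary $u_f' \in \CU(\CO_\infty)$ of full spectrum, producing a self-adjoint $h' \in A \otimes \CO_\infty \otimes \CO_\infty$ with $\|h'\| \leq \pi$ and $\|\exp(ih') - u(1) \otimes u_f \otimes u_f'\| \leq \eps/2$. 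I then fix an isomorphism $\CO_\infty \otimes \CO_\infty \cong \CO_\infty$, transport $h'$ to $h \in A \otimes \CO_\infty$, and define $\tilde{u}_f \in \CO_\infty$ as the image of $u_f \otimes u_f'$; this $\tilde{u}_f$ still has full spectrum $\IT$, and the resulting $h$ satisfies $\|h\| \leq \pi$ and $\|\exp(ih) - u(1) \otimes \tilde{u}_f\| \leq \eps/2$. Up to renaming $u_f := \tilde{u}_f$, this takes care of the metric estimates in the conclusion.

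The main difficulty is then to ensure that $\|[h, a \otimes \eins]\| \leq \eps$ for $a \in \CF$. Phillips's proof builds $h$ using continuous functional calculus on $u(1) \otimes u_f$ combined with projections, isometries and unitaries living entirely in the $\CO_\infty$ factor, all of which commute exactly with $A \otimes \eins$. Since $\|[u(1) \otimes u_f, a \otimes \eins]\| = \|[u(1), a]\| \leq \delta$ for $a \in \CF$, the commutator $\|[h, a \otimes \eins]\|$ can in principle be controlled by $C \delta$ for some universal constant $C$. The subtle point is that $\arg \colon \IT \to [-\pi, \pi]$ fails to be Lipschitz at $-1$, so a naive functional calculus argument does not produce such an estimate directly; the trick is that the full spectrum of $u_f$ allows Phillips's construction to bypass this singularity using non-commutative structure of $\CO_\infty$, thereby keeping the effective Lipschitz constant uniform. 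Extracting this Lipschitz bound from Phillips's proof is the principal technical hurdle; once it is in hand, choosing $\delta \leq \eps / C$ at the outset completes the argument.
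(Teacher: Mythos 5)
There is a genuine gap: the entire content of the lemma is the approximate centrality of $h$, and your proposal defers exactly that step ("extracting this Lipschitz bound from Phillips's proof is the principal technical hurdle") without carrying it out. There is no reason to expect a universal Lipschitz constant $C$ with $\|[h,a\otimes\eins]\|\leq C\delta$: even the harmless-looking functional-calculus steps only give "for every $\eps$ there is $\eta$" control (cf.\ Lemma \ref{lem:commutators}), and Phillips's construction involves choices (spectral projections of $u(1)\otimes u_f$ over a partition of the circle, and partial isometries implementing equivalences of projections) that are not produced by continuous functional calculus in $u(1)\otimes u_f$ alone, so their approximate centrality does not follow from $\|[u(1),a]\|\leq\delta$ by any soft argument. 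Asserting that "the full spectrum of $u_f$ allows the construction to bypass the singularity of $\arg$ while keeping the effective Lipschitz constant uniform" is precisely the claim that would need a proof.

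The paper avoids this quantitative bookkeeping entirely by a compactness argument: assuming a sequence of counterexamples with $\delta_n\to 0$, it passes to the sequence algebra $A^\sharp=\prod A^{(n)}\otimes\CO_\infty/\bigoplus A^{(n)}\otimes\CO_\infty$ and applies Lemma \ref{lem:Phillips} \emph{exactly} inside the relative commutant $A^\sharp\cap S'$ of the set $S$ coming from the $\CF_n$; the resulting $H$ then commutes exactly with $S$, which translates back into approximate centrality of its components $h_n$. This also exposes a second ingredient your proposal discards: you only use $\|[u(1),a]\|\leq\delta$ and $u(1)\in\CU_0(A)$, never the hypothesis that the whole path $u(t)$ approximately commutes with $\CF$. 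The paper needs that hypothesis, via Lemma \ref{lem:Nakamura}, to produce a path of \emph{uniformly bounded length} from $\eins$ to $u^{(n)}(1)\otimes\eins$ that is approximately central along its entire course; only this puts the limit unitary $U$ into $\CU_0(A^\sharp\cap S')$, which is a hypothesis of Lemma \ref{lem:Phillips}. A homotopy of $u(1)$ to $\eins$ that is not itself approximately central is of no use, so your route would additionally have to explain why the relevant connectedness survives in the approximately central setting. (Your device of tensoring with $u_f$ first to manufacture full spectrum is fine as far as it goes, but it does not touch either of these problems.)
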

\begin{proof}
Suppose that the claim were false.
Then there exists some $\eps>0$ and a decreasing null sequence $\delta_n\to 0$, unital \cstar-algebras $A^{(n)}$, finite sets $\CF_n\fin A^{(n)}$ of contractions and unitary paths $u^{(n)}: [0,1]\to\CU(A^{(n)})$ with $u^{(n)}(0)=\eins$ so that $\sp(u^{(n)}(1))\subseteq\IT$ is $\delta_n$-dense and
\[
\max_{a\in\CF_n}~ \max_{0\leq t\leq 1}~ \|[u^{(n)}(t),a]\| \leq \delta_n,
\]
and such that the conclusion of the Lemma fails. We are going to lead this to a contradiction.

First, we remark that since one has the identity
\[
\|[v,b]\|=\|[v^*,b]\|=\|[v,b^*]\|
\]
for arbitrary elements $b$ and unitaries $v$ in a \cstar-algebra, we may as well assume that $\CF_n=\CF_n^*$ for all $n$.

Let $C:=\frac{17\pi}{3}$. For every $n$, we apply Lemma \ref{lem:Nakamura} and find unitary paths $w^{(n)}: [0,1]\to\CU(A^{(n)}\otimes\CO_\infty)$ satisfying
\[
w^{(n)}(0)=\eins,\quad w^{(n)}(1)=u^{(n)}(1)\otimes\eins,\quad \ell(w^{(n)}) \leq C
\]
and
\[
\max_{a\in\CF_n}~ \max_{0\leq t\leq 1}~ \|[w^{(n)}(t),a\otimes\eins]\|\leq 9\delta_n.
\]
By passing to the arc-length parametrization, if necessary, we may assume that each path $w^{(n)}$ is $C$-Lipschitz.

We consider
\[
A^\sharp := \prod_{n=1}^\infty A^{(n)}\otimes\CO_\infty / \bigoplus_{n=1}^\infty A^{(n)}\otimes\CO_\infty
\]
and
\[
A^{\sharp\sharp} := \prod_{n=1}^\infty A^{(n)}\otimes\CO_\infty\otimes\CO_\infty / \bigoplus_{n=1}^\infty A^{(n)}\otimes\CO_\infty\otimes\CO_\infty
\]
Then we can identify $A^\sharp\subset A^{\sharp\sharp}$ in a natural way such that $A^{\sharp\sharp}\cap (A^\sharp)'$ contains a unital copy of $\CO_\infty$, i.e., we also have an inclusion $A^\sharp\otimes\CO_\infty\subset A^{\sharp\sharp}$.

Let $S\subset A^\sharp$ be the set of elements represented by all sequences that have the form $(s_1,s_2,s_3,\dots)$ with $s_n\in\CF_n\otimes\eins_{\CO_\infty}$. 
As we have assumed $\CF_n=\CF_n^*$ for all $n$ and that it consists of contractions, we see that $S$ is a well-defined self-adjoint set, and thus the relative commutant $A^\sharp\cap S'$ (as well as $A^{\sharp\sharp}\cap S'$) is a unital \cstar-algebra.

Consider
\[
W: [0,1]\to\CU(A^\sharp) \quad\text{via}\quad W(t)=[(w^{(1)}(t),w^{(2)}(t),w^{(3)}(t),\dots)].
\]
Then by the choice of $w^{(n)}$, we see that this is a well-defined continuous path of unitaries whose image is in fact in $A^\sharp\cap S'$. In particular, as $W(0)=\eins$ we have that
\[
U:= W(1) = [(u^{(1)}(1)\otimes\eins, u^{(2)}(1)\otimes\eins,\dots)] \in \CU_0(A^\sharp\cap S').
\]
At the same time, our assumptions on the spectra of $u^{(n)}(1)$ implies that $U$ has full spectrum $\sp(U)=\IT$. Let $u_f\in\CU(\CO_\infty)$ be a unitary with full spectrum. Then by applying Lemma \ref{lem:Phillips}, we find a self-adjoint element
\[
H\in (A^\sharp\cap S')\otimes\CO_\infty \subset A^{\sharp\sharp}\cap S'
\]
with $\|H\|\leq\pi$ and $\|\exp(iH)-U\otimes u_f\|\leq\eps/2$. Write
\[
H=[(h_1,h_2,h_3,\dots)] \quad\text{for}\quad h_n=h_n^*\in A^{(n)}\otimes\CO_\infty\otimes\CO_\infty \text{ with } \|h_n\|\leq\pi.
\]
Note that $\CO_\infty\cong\CO_\infty\otimes\CO_\infty$, and the unitary $\eins\otimes u_f$ can thus be identified with some unitary in $\CO_\infty$, and each self-adjoint element $h_n\in A\otimes\CO_\infty\otimes\CO_\infty$ can thus be identified with some self-adjoint element in $A\otimes\CO_\infty$.
By our choice of the sequence of tuples $(\delta_n, A^{(n)}, \CF_n, u^{(n)})$, it follows that for every $n$, there exists $a_n\in\CF_n$ such that either
\[
\|[h_n,a_n\otimes\eins\otimes\eins]\|>\eps
\]
or
\[
\|\exp(ih_n)-u^{(n)}(1)\otimes\eins\otimes u_f\|>\eps.
\]
But then $a:=[(a_1,a_2,\dots)]\in S$ and thus
\[
0=\|[a,H]\|=\limsup_{n\to\infty}~ \|[a_n\otimes\eins\otimes\eins,h_n]\|
\]
rules out the first condition for all but finitely many $n$. Moreover, the estimate
\[
\eps/2 \geq \|\exp(iH)-U\otimes u_f\| = \limsup_{n\to\infty}~ \|\exp(ih_n)-u^{(n)}(1)\otimes\eins\otimes u_f\|
\]
rules out the second condition for all but finitely many $n$. This leads to a contradiction. Hence our claim is true.
\end{proof}

As the following Lemma follows easily with functional calculus, we omit the proof.

\begin{lemma}[cf.~{\cite[Proposition 1.1]{TomsWinter13}}] \label{lem:commutators}
Let $K\subset\IC$ be a compact subset and $f: K\to\IC$ a continuous function. For every $\eps>0$, there exists $\eta>0$ with the following property:
Whenever $x\in A$ is a normal element in a \cstar-algebra $A$ with $\sp(x)\subseteq K$, and $y\in A$ is a contraction with $\|[x,y]\|\leq\eta$ and $\|[x^*,y]\|\leq\eta$, then $\|[f(x),y]\|\leq\eps$.
\end{lemma}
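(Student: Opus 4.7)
The plan is to reduce the continuous case to polynomials via Stone--Weierstrass and then control the commutators of polynomials by the Leibniz rule. Since $x$ is normal, $x$ and $x^*$ commute, so continuous functional calculus for $x$ factors through the commutative $*$-algebra generated by $x$ and $x^*$, which is isomorphic to $\CC(\sp(x))$. By Stone--Weierstrass, the $*$-polynomials in $z$ and $\bar z$ are dense in $\CC(K)$, so for any $\eps>0$ I can choose a polynomial $p(z,\bar z)=\sum_{n,m\le N} c_{n,m} z^n\bar z^m$ with $\sup_{z\in K}|f(z)-p(z,\bar z)|\le\eps/3$, which yields $\|f(x)-p(x,x^*)\|\le\eps/3$.

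Next I would estimate $\|[p(x,x^*),y]\|$ in terms of $\max(\|[x,y]\|,\|[x^*,y]\|)$. The basic identity $[ab,c]=a[b,c]+[a,c]b$ gives by induction the standard estimates
\[
\|[x^n,y]\|\le n\,\|x\|^{n-1}\|[x,y]\|,\qquad \|[(x^*)^m,y]\|\le m\,\|x\|^{m-1}\|[x^*,y]\|,
\]
and another application of the Leibniz rule to the products $x^n(x^*)^m$ gives
\[
\|[x^n(x^*)^m,y]\|\le (n+m)\,\|x\|^{n+m-1}\max\bigl(\|[x,y]\|,\|[x^*,y]\|\bigr).
\]
Setting $R:=\max\{1,\sup_{z\in K}|z|\}$, so that $\|x\|\le R$, and summing over the finitely many terms of $p$, I obtain a constant $M=M(p,K)=\sum_{n,m\le N}|c_{n,m}|(n+m)R^{n+m-1}$ depending only on $p$ and $K$ such that
\[
\|[p(x,x^*),y]\|\le M\cdot\max\bigl(\|[x,y]\|,\|[x^*,y]\|\bigr)
\]
for every contraction $y$.

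Combining the two estimates with $\|[f(x)-p(x,x^*),y]\|\le 2\|y\|\cdot\|f(x)-p(x,x^*)\|\le 2\eps/3$, I set $\eta:=\eps/(3M)$; then the hypotheses $\|[x,y]\|\le\eta$ and $\|[x^*,y]\|\le\eta$ force $\|[p(x,x^*),y]\|\le\eps/3$ and hence $\|[f(x),y]\|\le\eps$, as required. There is no real obstacle here: the only minor point to watch is that $M$ depends on the chosen polynomial approximation and on $K$ (through $R$), but not on $A$, $x$ or $y$, which is exactly the uniformity needed for the statement.
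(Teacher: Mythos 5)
Your proof is correct, and it is exactly the standard functional-calculus argument the paper has in mind: the paper explicitly omits the proof, remarking only that the lemma "follows easily with functional calculus," and your Stone--Weierstrass plus Leibniz-rule reduction is the canonical way to fill that in. The one point worth making explicit is that normality of $x$ is used twice --- once so that $f(x)$ and $p(x,x^*)$ agree with evaluation on $\sp(x)\subseteq K$, and once to get $\|x\|=r(x)\leq R$ --- and both are handled correctly in your write-up.
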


\begin{lemma} \label{lem:fin-acel-Oinf}
For every $\eps>0$, there exists $\delta>0$ satisfying the following property:

Let $A$ be a \cstar-algebra with $A\cong A\otimes\CO_\infty$.
Suppose that $\CF\fin A_{\leq 1}$ is any finite set of contractions and $u: [0,1]\to\CU(\eins+A)$ is a unitary path satisfying $u(0)=\eins$ and
\[
\max_{a\in\CF}~ \max_{0\leq t\leq 1}~ \|[u(t),a]\| \leq \delta.
\]
Then there exists a unitary path $v: [0,1]\to\CU(\eins+A)$ satisfying
\[
v(0)=\eins,\quad v(1)=u(1),\quad \ell(v)\leq 2\pi+\eps,
\]
and
\[
\max_{a\in\CF}~ \max_{0\leq t\leq 1}~ \|[v(t),a]\| \leq \eps.
\]
\end{lemma}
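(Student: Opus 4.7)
The plan is to reduce Lemma \ref{lem:fin-acel-Oinf} to the preceding Lemma \ref{lem:fin-acel-Oinf-pre} by absorbing the auxiliary $\CO_\infty$ factor inside $A$ via the isomorphism $A\cong A\otimes\CO_\infty$. Fix $\eps>0$ and $\CF\fin A_{\leq 1}$, and let $\eta,\eta'>0$ be small auxiliary tolerances to be determined. Since $\CO_\infty$ is strongly self-absorbing and $A$ absorbs $\CO_\infty$, the first-factor embedding $a\mapsto a\otimes\eins$ is approximately unitarily equivalent to an isomorphism; this lets us pick an isomorphism $\Psi:A\otimes\CO_\infty\to A$ with $\|\Psi(a\otimes\eins)-a\|\leq\eta'$ for every $a$ in the compact set $\CF\cup u([0,1])$. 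The induced unital embedding $\sigma:\CO_\infty\to A^+$ defined by $\sigma(x):=\Psi(\eins\otimes x)$ then commutes with $\CF\cup u([0,1])$ to within $2\eta'\|x\|$.

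Fix a unitary $u_f=\exp(ik_0)\in\CU(\CO_\infty)$ with $\sp(u_f)=\IT$ and $\|k_0\|\leq\pi$, and augment the path via $\tilde u(t):=u(t)\sigma(\exp(itk_0))$. Then $\tilde u(0)=\eins$, $\tilde u(1)=u(1)\sigma(u_f)$, and $\tilde u$ is approximately central with respect to $\CF$. For $\eta'$ and $\delta$ sufficiently small, $\sp(\tilde u(1))$ becomes $\delta_0$-dense in $\IT$ (for the $\delta_0$ provided by Lemma \ref{lem:fin-acel-Oinf-pre} with tolerance $\eta$), since $\sigma(u_f)$ has full spectrum and approximately commutes with $u(1)$. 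Applying Lemma \ref{lem:fin-acel-Oinf-pre} to $\tilde u$ then yields $u_f'\in\CU(\CO_\infty)$ with $\sp(u_f')=\IT$ and a self-adjoint $h\in A\otimes\CO_\infty$ with $\|h\|\leq\pi$, $\|[h,a\otimes\eins]\|\leq\eta$, and $\|\tilde u(1)\otimes u_f'-\exp(ih)\|\leq\eta$. Setting $\tilde h:=\Psi(h)\in A$ gives $\|\tilde h\|\leq\pi$, transports the commutator bound to $\|[\tilde h,a]\|\leq\eta+O(\eta')$ for $a\in\CF$, and gives $\exp(i\tilde h)\approx\Psi(\tilde u(1)\otimes u_f')\approx u(1)\sigma(u_fu_f')$ within $O(\eta+\eta')$.

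Assemble $v$ by concatenating three pieces: (a) the exponential interpolation $s\mapsto\exp(is\tilde h)$, $s\in[0,1]$, of length $\leq\pi$ from $\eins$ to $\exp(i\tilde h)$, whose commutator with $a\in\CF$ is controlled by $\|[\tilde h,a]\|$; (b) a short correction of length $O(\eta+\eta')$ from $\exp(i\tilde h)$ to $u(1)\sigma(u_fu_f')$; and (c) the path $s\mapsto u(1)\sigma(\exp(i(1-s)k))$, where $u_fu_f'=\exp(ik)\in\CU(\CO_\infty)$ is written with $\|k\|\leq\pi$ using $\cel(\CO_\infty)\leq\pi$, of length $\leq\pi$ from $u(1)\sigma(u_fu_f')$ back to $u(1)$. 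After arc-length reparametrisation, the total length of $v$ is $\leq 2\pi+O(\eta+\eta')$, and the commutator of $v(t)$ with each $a\in\CF$ is bounded by $O(\eta+\delta+\eta')$. Choosing $\eta,\eta'\leq c\eps$ for a small absolute constant and $\delta$ sufficiently smaller then delivers the required length bound $\leq 2\pi+\eps$ and centrality bound $\leq\eps$.

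The main obstacle is the spectral-density step needed to apply Lemma \ref{lem:fin-acel-Oinf-pre} to $\tilde u$: one must show that $\sp(u(1)\sigma(u_f))$ is arbitrarily close to $\IT$ in Hausdorff distance despite $u(1)$ and $\sigma(u_f)$ commuting only approximately. This is handled cleanest by passing to a sequence algebra, where the approximate commutation becomes exact, $\sigma$ is truly central, and the product of a unitary with a central unitary of full spectrum has full spectrum; alternatively one argues directly by a compactness argument mirroring the one internal to the proof of Lemma \ref{lem:fin-acel-Oinf-pre} itself. Once this is in place the remaining steps are routine exponential interpolations and bookkeeping of tolerances.
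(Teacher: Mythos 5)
Your proposal has a genuine gap in the length bookkeeping, and it stems from the augmentation step. You introduce \emph{two} full-spectrum unitaries that must later be removed: $\sigma(u_f)$, which you multiplied into the path to force $\sp(\tilde u(1))$ to be dense, and $u_f'$, which Lemma \ref{lem:fin-acel-Oinf-pre} produces and over which you have no control. Your segment (c) removes both at once by writing $u_fu_f'=\exp(ik)$ with $\|k\|\leq\pi$, ``using $\cel(\CO_\infty)\leq\pi$''. This fails: Phillips' result gives $\cel(\CO_\infty)=2\pi$, not $\pi$, and in any case finite exponential length only yields a \emph{path} of that length (a product of several exponentials, by Ringrose's formula), not a single exponential of norm $\leq\pi$. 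Since $u_f'$ has full spectrum by construction, the product $u_fu_f'$ will in general not be of the form $\exp(ik)$ with $\|k\|\leq\pi$ (such unitaries satisfy $\cel\leq\pi$, whereas unitaries in $\CO_\infty$ can have $\cel$ up to $2\pi$). Correcting segment (c) honestly costs up to $2\pi$ --- or $\pi+\pi$ if you cancel $\sigma(u_f)$ via your chosen $k_0$ and $\sigma(u_f')$ via a separate exponential --- so the total length becomes $3\pi+O(\eta+\eta')$, overshooting the required bound $2\pi+\eps$.

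The paper's proof is structured precisely to avoid this. It makes a case distinction: if $\sp(u(1))$ is \emph{not} $\delta_1$-dense, a scalar rotation and a continuous logarithm give a path of length $\leq 2\pi$ directly, with centrality controlled by Lemma \ref{lem:commutators}, and no copy of $\CO_\infty$ is needed; if it \emph{is} dense, Lemma \ref{lem:fin-acel-Oinf-pre} is applied to $u$ itself, so only the single factor $u_f$ ever needs cancelling, and the cancelling unitary is obtained as $\exp(ih_2)$ with $h_2=-(\chi\otimes\id_{\CO_\infty})(h_1)$ --- an exponential of norm $\leq\pi$ \emph{by construction}, since $\chi(u(1))=1$ forces $(\chi\otimes\id_{\CO_\infty})(\exp(ih_1))$ to be close to $u_f$. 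Running $\exp(ith_1)\exp(ith_2)$ simultaneously then gives length $\leq 2\pi$, and only afterwards is everything pushed into $A$ via a map $\psi:A\otimes\CO_\infty\to A$. To salvage your unified, case-free approach you would need an analogue of this character trick producing the entire correction $\sigma(u_fu_f')^*$ as a single exponential of norm $\leq\pi$; as written, the argument does not deliver the stated length bound. (By contrast, the spectral-density step for $\tilde u(1)$ that you flag as the main obstacle is the more benign issue and can indeed be handled in a sequence algebra as you suggest.)
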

\begin{proof}
First we remark that it suffices for the claim (up to replacing $\eps$ by $\eps/3$) to prove that a path $v$ as above exists satisfying
\[
 \|v(1)-u(1)\|\leq\eps,\quad \ell(v)\leq 2\pi
\]
instead of
\[
 v(1)=u(1),\quad \ell(v)\leq 2\pi+\eps.
\]
This follows from a standard perturbation argument. So let us show this modified claim.

Let $\eps>0$ be given. Choose $\eta>0$ as in Lemma \ref{lem:commutators} for $K_1=[-\pi,\pi]$ and $f_2=\exp(i\cdot \_ )$ and $\eps/3$ in place of $\eps$. Assume $\eta\leq\eps/3$ without loss of generality.
Now choose $\delta_1>0$ as in Lemma \ref{lem:fin-acel-Oinf-pre} for $\eta$ in place of $\eps$. 
Consider $K_2=\set{ z\in\IT \mid |z+1|\geq\delta_1 }$ and let $f_2: K_2\to [-\pi,\pi]$ be the continuous branch of the logarithm given by $f_2(e^{it})=t$. Choose $\delta>0$ as in Lemma \ref{lem:commutators} for $K_2$, $f_2$ and $\eta$ in place of $\eps$. We may assume $\delta\leq\delta_1$.
We claim that this $\delta$ is as desired.

Let $A$ be a \cstar-algebra with $A\cong A\otimes\CO_\infty$.
Suppose that $\CF\fin A$ is any finite set of contractions and $u: [0,1]\to\CU(\eins+A)$ is a unitary path satisfying $u(0)=\eins$ and
\[
\max_{a\in\CF}~ \max_{0\leq t\leq 1}~ \|[u(t),a]\| \leq \delta.
\]
Then we may consider two cases. 

\textbf{Case 1:}
Suppose that the spectrum of $u(1)$ is not $\delta_1$-dense in $\IT$.\footnote{This case will only use functional calculus and not the assumption that $A\cong A\otimes\CO_\infty$.}
We find a scalar $\lambda\in\IT$ such that the spectrum of $u'=\lambda u(1)\in\CU(A^+)$ is a subset of $K_2$. 
Then $h=f_2(u')$ is a self-adjoint element with $\|h\|\leq\pi$ and $u'=\exp(ih)$. 
By our choice of $\delta$, we have $\|[h,a]\|\leq\eta$. Consider
\[
v': [0,1]\to\CU(A^+),\quad v'(t)=\exp(ith).
\]
By our choice of $\eta$, it follows that
\[
\max_{a\in\CF}~\max_{0\leq t\leq 1}~ \|[v'(t),a]\| \leq \eps/3\leq \eps.
\]
Moreover, it is clear that $v'(0)=\eins$ and $\ell(v')\leq\pi$. Let $\chi: A^+\to\IC$ denote the canonical character.
Then
\[
v: [0,1]\to\CU(\eins+A),\quad \chi(v'(t)^*)v'(t)
\]
defines a unitary path with
\[
v(0)=\eins,\quad v(1)=u(1),\quad \ell(v)\leq 2\pi
\]
and
\[
\max_{a\in\CF}~\max_{0\leq t\leq 1}~ \|[v(t),a]\| \leq \eps.
\]
This concludes the proof for the first case.

\textbf{Case 2:} Suppose that $\sp(u(1))\subseteq\IT$ is $\delta_1$-dense. 
As $\eins+A\subset A^+$ and $\delta\leq\delta_1$, we can thus apply the statement in Lemma \ref{lem:fin-acel-Oinf-pre} and find a unitary $u_f\in\CU(\CO_\infty)$ with full spectrum and a self-adjoint element $h_1\in A^+\otimes\CO_\infty$ such that
\[
\|h_1\|\leq\pi,\quad \|\exp(ih_1)-u(1)\otimes u_f\|\leq\eta,\quad \max_{a\in\CF}~ \|[h_1,a\otimes\eins]\|\leq\eta.
\]
Let $\chi: A^+\to\IC$ be the canonical character and set 
\[
h_2\:=-(\chi\otimes\id_{\CO_\infty})(h_1) \ \in \ \IC\otimes\CO_\infty \subset A^+\otimes\CO_\infty.
\]
Then we define
\[
v': [0,1]\to\CU(\eins+A\otimes\CO_\infty) \text{ via } v'(t)=\exp(ith_1)\exp(ith_2).
\]
By the definition of $h_2$, we see that this is a well-defined unitary path with
\[
v'(0)=\eins \quad\text{and}\quad \|v'(1)-u(1)\otimes\eins\|\leq 2\eta.
\]
As $\|h_1\|, \|h_2\|\leq\pi$, we see that $\ell(v')\leq 2\pi$.
Moreover, observe that as $[h_2,a\otimes\eins]=0$ for all $a\in A$, our choice of $\eta$ according to Lemma \ref{lem:commutators} implies
\[
\|[v'(t),a\otimes\eins]\| = \|[\exp(ith_1),a\otimes\eins]\|\leq\eps/3
\]
for all $t\in [0,1]$ and $a\in\CF$.

As $A\cong A\otimes\CO_\infty$, it follows from \cite[Remark 2.7]{TomsWinter07} that there is a $*$-homomorphism $\psi: A\otimes\CO_\infty\to A$ satisfying
\[
\|\psi(x\otimes\eins)-x\|\leq\eps/3 \quad\text{for all } x\in\CF\cup\set{u(1)-\eins}.
\]
We then define $v: [0,1]\to\CU(\eins+A)$ via $v(t)=\psi^+(v'(t))$. By the construction so far, we have $v(0)=\eins$, $\ell(v)\leq 2\pi$, and it follows that
\[
\|v(1)-u(1)\|\leq\|u(1)-\psi^+(u(1)\otimes\eins)\|+\|v'(1)-u(1)\otimes\eins\|\leq 2\eps/3+\eps/3\leq \eps
\]
and
\[
\max_{a\in\CF}~ \max_{0\leq t\leq 1}~ \|[v(t),a]\| \leq \max_{a\in\CF}~\Big( 2\|a-\psi(a\otimes\eins)\|+\max_{0\leq t\leq 1}~ \|[v'(t),a\otimes\eins]\| \Big) \leq \eps.
\]
This concludes the proof for the second case, and finishes the proof.
\end{proof}

\begin{theorem} \label{thm:fin-acel-Oinf}
For every \cstar-algebra $A$ with $A\cong A\otimes\CO_\infty$, the approximately central exponential length of $A$ is at most $2\pi$.
\end{theorem}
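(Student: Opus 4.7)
The plan is to deduce the theorem almost immediately from Lemma \ref{lem:fin-acel-Oinf}, whose statement is essentially a reformulation of the defining condition in Definition \ref{def:acel}\ref{def:acel-1} with the specific choice $\CG = \CF$ and $C = 2\pi$. In other words, the genuinely difficult work has already been carried out in the proof of Lemma \ref{lem:fin-acel-Oinf}, which in turn imports the Haagerup--R{\o}rdam trick (Lemma \ref{lem:Haagerup-Rordam}), Nakamura's partition-of-unity construction using $\CO_\infty$-projections (Lemma \ref{lem:Nakamura}), and Phillips' spectral bound (Lemma \ref{lem:Phillips}).

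Concretely, given $\eps > 0$ and $\CF \fin A_{\leq 1}$, I would choose $\delta > 0$ according to Lemma \ref{lem:fin-acel-Oinf} applied to $\eps$, and simply set $\CG := \CF$. Then any unitary path $u: [0,1] \to \CU(\eins + A)$ with $u(0) = \eins$ satisfying
\[
\max_{a \in \CG}\ \max_{0 \leq t \leq 1}\ \|[u(t), a]\| \leq \delta
\]
lies in the scope of Lemma \ref{lem:fin-acel-Oinf}, which directly produces a unitary path $v: [0,1] \to \CU(\eins + A)$ with $v(0) = \eins$, $v(1) = u(1)$, $\ell(v) \leq 2\pi + \eps$, and $\max_{a \in \CF} \max_{0 \leq t \leq 1} \|[v(t), a]\| \leq \eps$. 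Comparing this verbatim with Definition \ref{def:acel}\ref{def:acel-1}, we conclude that $\acel(A) \leq 2\pi$.

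Since the heavy lifting has been outsourced, there is no obstacle to speak of here; the only thing to check is that the quantifier structure of Lemma \ref{lem:fin-acel-Oinf} is compatible with Definition \ref{def:acel}\ref{def:acel-1}, which it is precisely because the lemma allows $\CF$ itself (rather than a strictly larger set $\CG$) to witness approximate centrality of the input path $u$. It is perhaps worth remarking that the bound $2\pi$ is of the same order as Phillips' optimal bound $\cel(A) \leq 2\pi$ for such algebras, so the theorem should be viewed as an approximately central refinement of Phillips' result, achieved essentially by running his spectral argument in a suitable relative commutant via a sequence-algebra reduction, as in the proof of Lemma \ref{lem:fin-acel-Oinf-pre}.
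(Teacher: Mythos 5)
Your proposal is correct and coincides with the paper's own proof, which likewise deduces the theorem immediately from Lemma \ref{lem:fin-acel-Oinf}; the quantifier check you perform (taking $\CG=\CF$ and noting the lemma's $\delta$ is even uniform over $A$ and $\CF$) is exactly the routine verification implicit in that one-line deduction.
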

\begin{proof}
This follows directly from Lemma \ref{lem:fin-acel-Oinf}.
\end{proof}


\subsection{Uniqueness for restricted coactions}

In this subsection, we prove the following result, which completes the second step in our approach to obtain Theorem \ref{Theorem-B} indicated at the beginning of this section.

\begin{theorem} \label{thm:spi-flow-uniqueness}
Let $A$ be a separable, nuclear, $\CO_\infty$-absorbing \cstar-algebra.
Let $\alpha, \beta: \IR\curvearrowright A$ be two flows inducing the same actions on $\prim(A)$.
Then the restricted coactions $\alpha_\co$ and $\beta_\co$ are approximately unitarily equivalent.
\end{theorem}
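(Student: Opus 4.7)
The plan is to invoke a general uniqueness theorem for $*$-homomorphisms into separable, nuclear, strongly purely infinite \cstar-algebras with respect to ideal-preserving homotopy, which constitutes the main technical content of this subsection and which, as advertised in the introduction, is derived using Gabe's $\CO_2$-stable classification \cite{Gabe18}. Viewing the restricted coactions as $*$-homomorphisms $\alpha_\co,\beta_\co:A\to B$ with $B:=\CC([0,1],A)$, we need to verify that $B$ falls into the scope of that theorem and that $\alpha_\co,\beta_\co$ satisfy its hypotheses.

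The target algebra $B$ inherits separability and nuclearity from $A$. Strong pure infiniteness follows from $A\cong A\otimes\CO_\infty$ via Kirchberg--Phillips absorption, which upgrades to $B\cong B\otimes\CO_\infty$ and hence, since $B$ is separable and nuclear, to strong pure infiniteness. Both $\alpha_\co$ and $\beta_\co$ are naturally homotopic to the constant embedding $\iota:A\to B$, $\iota(a)(t)=a$, through the canonical paths of $*$-homomorphisms
\[
\phi^\alpha_s(a)(t)=\alpha_{st}(a),\qquad \phi^\beta_s(a)(t)=\beta_{st}(a),\qquad s\in[0,1],
\]
so concatenation yields a continuous homotopy from $\alpha_\co$ to $\beta_\co$ through $\iota$. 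The hypothesis that $\alpha$ and $\beta$ induce the same action on $\prim(A)$ translates into $\alpha_t^{-1}(I)=\beta_t^{-1}(I)$ for every ideal $I\triangleleft A$ and every $t\in\IR$, which in turn forces $\alpha_\co$ and $\beta_\co$ to induce the same map from the ideal lattice of $B$ to that of $A$. Combined with the existence of the homotopy above, this verifies the hypotheses of the uniqueness theorem and yields $\alpha_\co\ue\beta_\co$.

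The main technical subtlety lies in matching the natural homotopy above to the precise form of ideal-preservation required by the uniqueness theorem: the intermediate homomorphisms $\phi^\alpha_s$ for $0<s<1$ will generally have different ideal pull-backs from $\iota$, because ideals of $B$ correspond to $\prim(A)$-valued lower-semicontinuous maps on $[0,1]$ that need not be $\alpha$-invariant. The good formulation of the uniqueness theorem should therefore only demand agreement of the induced maps on ideal lattices at the \emph{endpoints} of the homotopy, rather than along it, which is exactly what an $\CO_2$-tensored $KK$-theoretic approach naturally delivers. Once this formulation is in place, the verification above becomes routine and the statement of Theorem \ref{thm:spi-flow-uniqueness} follows.
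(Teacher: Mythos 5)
Your reduction correctly identifies the target $B=\CC([0,1],A)$ and the relevant uniqueness machinery, and you even put your finger on the genuine obstruction --- but your proposed way around it does not work, and this is exactly where the main idea of the proof lives. The homotopies $\phi^\alpha_s(a)(t)=\alpha_{st}(a)$ and $\phi^\beta_s(a)(t)=\beta_{st}(a)$ are not ideal-preserving: one needs $\overline{B\Phi_s(a)B}$ to be \emph{independent of $s$}, which for $\phi^\alpha_s$ amounts to $\alpha_{st}(\overline{AaA})=\overline{AaA}$ for all $s,t$, i.e., each flow individually fixing every ideal of $A$ --- far stronger than the hypothesis, which only controls the combination $\beta_t\circ\alpha_{-t}$. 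Moreover, the hoped-for ``better formulation'' of the uniqueness theorem demanding ideal agreement only at the endpoints of a bare homotopy is not available: the passage from Gabe's $\CO_2$-stable uniqueness to $\CO_\infty$-stable (and then genuine) approximate unitary equivalence goes through Lemma \ref{lem:homotopy-reduction}, which requires $\Phi_s\otimes\eins_{\CO_2}\ue\Phi_t\otimes\eins_{\CO_2}$ for \emph{all} $s,t\in[0,1]$, and by Theorem \ref{thm:Gabe-uniqueness-1} this is precisely the requirement that the ideal maps be constant along the homotopy. Ideal-related $KK$-theory is an invariant of ideal-preserving homotopy, not of bare homotopy together with endpoint ideal agreement, so there is no reason to expect your weakened hypothesis to suffice.

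The fix is to change which maps get homotoped. The paper considers the point-norm continuous path of \emph{automorphisms} $\kappa_s\in\Aut(B)$ given by $\kappa_s(f)(t)=(\beta_{st}\circ\alpha_{-st})(f(t))$, which connects $\id_B$ to $\kappa_1$ with $\kappa_1\circ\alpha_\co=\beta_\co$. Since the hypothesis gives $\alpha_t(J)=\beta_t(J)$ for every closed two-sided ideal $J$ of $A$, each $\beta_{st}\circ\alpha_{-st}$ fixes every ideal of $A$, hence each $\kappa_s$ fixes every ideal of $B$; thus $s\mapsto\kappa_s$ is an ideal-preserving homotopy through non-degenerate maps, Corollary \ref{cor:homotopy-uniqueness} applies to show that $\kappa_1$ is approximately inner, and $\alpha_\co\ue\beta_\co$ follows. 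As written, your argument has a gap at its central step: you must either adopt a homotopy of this form or supply a genuinely stronger uniqueness theorem that you have not proved.
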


For this, we need to consider a more general uniqueness theorem for nuclear maps into $\CO_\infty$-absorbing \cstar-algebras, taking into account the following notion of equivalence.
For a given $*$-homomorphism of the form $\Phi: A\to\CC( [0,1], B)$, we denote for each $s\in [0,1]$ the fibre map $\Phi_s=\ev_s\circ\Phi: A\to B$.

\begin{defi}[cf.\ \cite{KirchbergRordam05}]
Two non-degenerate $*$-homomorphisms $\phi_0, \phi_1: A\to B$ between \cstar-algebras are called ideal-preservingly homotopic through non-degenerate maps, if there exists a non-degenerate $*$-homomorphism $\Phi: A\to \CC([0,1], B)$ with $\Phi_0=\phi_0$, $\Phi_1=\phi_1$, and $\overline{B\phi_0(a)B}=\overline{B\Phi_t(a)B}$ for all $a\in A$ and all $t\in [0,1]$.
\end{defi}

Since the \cstar-algebras treated in Theorem \ref{Theorem-B} are $\CO_\infty$-absorbing, the following reduction argument for homotopic maps will be useful.
This is an observation of Phillips, which is essentially contained in the proof of \cite[Theorem 3.4]{Phillips97}, although I do not know whether it has ever been stated in this generality in the literature before.
We will give the short argument for the reader's convenience, which is similar to the one in Lemma \ref{lem:Nakamura}.

\begin{lemma} \label{lem:homotopy-reduction}
Let $A$ and $B$ be two \cstar-algebras.
Let $\Phi: A\to\CC\big( [0,1], B \big)$ be a $*$-homomorphism, and denote the endpoint maps by $\phi_i=\ev_i\circ\Phi$ for $i=0,1$.
Suppose that $\Phi_s\otimes\eins_{\CO_2}\ue\Phi_t\otimes\eins_{\CO_2}$ as $*$-homomorphisms from $A$ to $B\otimes\CO_2$, for all $s,t\in [0,1]$.
Then $\phi_0\otimes\eins_{\CO_\infty}\ue\phi_1\otimes\eins_{\CO_\infty}$ as $*$-homomorphisms from $A$ to $B\otimes\CO_\infty$.
\end{lemma}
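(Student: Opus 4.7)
The plan is to mimic the Haagerup--R{\o}rdam/Phillips trick used in Lemma \ref{lem:Nakamura} for paths of unitaries, now adapted to paths of $*$-homomorphisms. Fix $\eps>0$ and a finite set $\CF\fin A$. By uniform continuity of $\Phi$, I first choose a partition $0=t_0<t_1<\dots<t_n=1$ so fine that $\|\Phi_{t_k}(a)-\Phi_{t_{k-1}}(a)\|<\eps/3$ for every $a\in\CF$ and every $k=1,\dots,n$. Next I pick projections $p_1,\dots,p_{2n+1}\in\CO_\infty$ summing to $\eins_{\CO_\infty}$ and satisfying $[p_j]=(-1)^{j+1}$ in $K_0(\CO_\infty)\cong\IZ$, so that $[p_j+p_{j+1}]=0$ and each corner $(p_j+p_{j+1})\CO_\infty(p_j+p_{j+1})$ admits a unital embedding $\iota_j$ of $\CO_2$, exactly as in the proof of Lemma \ref{lem:Nakamura}.

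For each $k=1,\dots,n$, the hypothesis applied to the pair $(0,t_k)$ furnishes a unitary $U_k\in\CU(\eins+B\otimes\CO_2)$ with $\|U_k(\phi_0(a)\otimes\eins_{\CO_2})U_k^*-\Phi_{t_k}(a)\otimes\eins_{\CO_2}\|<\eps/3$ for all $a\in\CF$. Transferring each $U_k$ into the mutually orthogonal corner $(p_{2k}+p_{2k+1})\CO_\infty(p_{2k}+p_{2k+1})$ via $\iota_{2k}$ yields a unitary
\[
W_1 \ = \ \eins_B\otimes p_1 \ + \ \sum_{k=1}^n (\id_B\otimes\iota_{2k})(U_k) \ \in \ \CU(\eins+B\otimes\CO_\infty)
\]
satisfying $W_1(\phi_0(a)\otimes\eins)W_1^*\ \approx_{\eps/3}\ \phi_0(a)\otimes p_1+\sum_{k=1}^n\Phi_{t_k}(a)\otimes(p_{2k}+p_{2k+1})$ on $\CF$. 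Analogously, for each $k=0,\dots,n-1$ the hypothesis applied to the pair $(t_k,1)$ yields $V_k\in\CU(\eins+B\otimes\CO_2)$, and using the disjoint corners $(p_{2k+1}+p_{2k+2})\CO_\infty(p_{2k+1}+p_{2k+2})$ together with $\iota_{2k+1}$ I build
\[
W_2 \ = \ \eins_B\otimes p_{2n+1} \ + \ \sum_{k=0}^{n-1} (\id_B\otimes\iota_{2k+1})(V_k) \ \in \ \CU(\eins+B\otimes\CO_\infty)
\]
with $W_2(\phi_1(a)\otimes\eins)W_2^*\ \approx_{\eps/3}\ \sum_{k=0}^{n-1}\Phi_{t_k}(a)\otimes(p_{2k+1}+p_{2k+2})+\phi_1(a)\otimes p_{2n+1}$ on $\CF$.

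A projection-by-projection comparison now closes the argument: on each odd-indexed projection $p_{2k+1}$ both staircase expressions coincide with $\Phi_{t_k}(a)\otimes p_{2k+1}$, while on each even-indexed $p_{2k}$ they differ only by $\|\Phi_{t_k}(a)-\Phi_{t_{k-1}}(a)\|<\eps/3$. Since the pieces are mutually orthogonal, the two expressions differ by at most $\eps/3$ in norm, which combined with the two $\eps/3$-estimates above gives
\[
\|W_1(\phi_0(a)\otimes\eins)W_1^* \ - \ W_2(\phi_1(a)\otimes\eins)W_2^*\| \ < \ \eps \quad\text{for every } a\in\CF.
\]
Thus $W_2^*W_1\in\CU(\eins+B\otimes\CO_\infty)$ implements the desired approximate unitary equivalence up to $\eps$ on $\CF$, and since $\CF$ and $\eps$ were arbitrary, the conclusion follows. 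The only point requiring genuine care is the transfer of each $U_k\in\CU(\eins+B\otimes\CO_2)$ through the unital corner embedding $\iota_{2k}$ into an unambiguous unitary in $\eins+B\otimes\CO_\infty$: this is the same bookkeeping already carried out in Lemma \ref{lem:Nakamura} and presents no real difficulty, but must be done carefully to confirm that the conjugations indeed take the claimed staircase form and that each $W_i$ really lies in $\eins+B\otimes\CO_\infty$ rather than merely in the multiplier algebra.
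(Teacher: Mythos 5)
Your proposal is correct and follows essentially the same route as the paper's proof: the same partition of $[0,1]$, the same projections $p_1,\dots,p_{2n+1}\in\CO_\infty$ with alternating $K_0$-classes and unital $\CO_2$-embeddings into the corners $(p_j+p_{j+1})\CO_\infty(p_j+p_{j+1})$, and the same two interleaved staircases whose mismatch on the even-indexed projections is controlled by uniform continuity of $\Phi$. The only cosmetic difference is that you conjugate both endpoints toward the staircases and compare, whereas the paper composes the two unitaries into a single $W=UV$ and runs the conjugation in one chain; these are the same argument.
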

\begin{proof}
Let $\CF\fin A_{\leq 1}$ and $\eps>0$ be given.
We choose $n\in\IN$ large enough so that
\begin{equation} \label{eq:ideal-homotopy}
\max_{a\in\CF}~ \|\Phi_t(a)-\Phi_s(a)\|\leq\eps/3 \quad\text{whenever } |s-t|\leq\frac1n.
\end{equation}
As in the proof of Lemma \ref{lem:Nakamura}, we choose a partition of unity of projections
\[
p_1,\dots,p_{2n+1}\in\CO_\infty \quad\text{with}\quad [p_j] = \begin{cases} 1 &,\quad j \text{ is odd} \\ -1 &,\quad j \text{ is even}. \end{cases}
\]
Here $[p_j]$ denotes the class of $p_j$ in $K_0(\CO_\infty)\cong\IZ$.
As a consequence, one has $[p_j+p_{j+1}]=0$ for all $j=1,\dots,2n$.
In particular, each corner of the form $(p_j+p_{j+1})\CO_\infty(p_j+p_{j+1})$ admits some unital embedding of $\CO_2$, one of which we shall choose for every $j$ and denote by $\iota_j$.

By assumption, we may choose unitaries $v_1,\dots,v_n\in\CU(\eins+B\otimes\CO_2)$ satisfying
\begin{equation} \label{eq:O2-vk}
\max_{k=1,\dots,n}~ \max_{a\in\CF}~ \|v_k(\phi_0(a)\otimes\eins)v_k^*-\Phi_{k/n}(a)\otimes\eins\|\leq\eps/3,
\end{equation}
as well as unitaries $u_0,\dots,u_{n-1}\in\CU(\eins+B\otimes\CO_2)$ satifying
\begin{equation} \label{eq:O2-uk}
\max_{k=0,\dots,n-1}~ \max_{a\in\CF}~ \|u_k(\Phi_{k/n}(a)\otimes\eins)u_k^*-\phi_1(a)\otimes\eins\|\leq\eps/3.
\end{equation}
We then set
\[
V = \eins\otimes p_1 + \sum_{k=1}^n (\id_B\otimes\iota_{2k})(v_k)
\]
and
\[
U = \eins\otimes p_{2n+1} + \sum_{k=0}^{n-1} (\id_B\otimes\iota_{2k+1})(u_k).
\]
Note that in these instances we extend the map $(\id_B\otimes\iota_{j})$ to a map from $(B\otimes\CO_2)^+$ to $B^+\otimes\CO_\infty$ by sending the unit to the projection $\eins\otimes (p_j+p_{j+1})$.
As the $p_j$ form a partition of unity of $\CO_\infty$, it follows from the choice of the maps $\iota_j$ that $U$ and $V$ are indeed well-defined unitaries in $\CU(\eins+B\otimes\CO_\infty)$.
For the unitary $W=UV$, we then compute for all $a\in\CF$ that
\[
\begin{array}{cl}
\multicolumn{2}{l}{ W(\phi_0(a)\otimes\eins)W^* }\\
=& \dst UV\Big( \phi_0(a)\otimes p_1 + \sum_{k=1}^n \phi_0(a)\otimes (p_{2k}+p_{2k+1}) \Big)V^*U^* \\
=& \dst U\Big( \phi_0(a)\otimes p_1 + \sum_{k=1}^n (\id_B\otimes\iota_{2k})\big( v_k(\phi_0(a)\otimes\eins)v_k^* \big) \Big)U^* \\
\stackrel{\eqref{eq:O2-vk}}{=}_{\makebox[0pt]{\footnotesize $\eps/3$}} & \dst U\Big( \Phi_0(a)\otimes p_1 + \sum_{k=1}^n \Phi_{k/n}(a)\otimes (p_{2k}+p_{2k+1}) \Big)U^* \\
 \stackrel{\eqref{eq:ideal-homotopy}}{=}_{\makebox[0pt]{\footnotesize $\eps/3$}} & \dst U\Big( \Phi_1(a)\otimes p_{2n+1} + \sum_{k=0}^{n-1} \Phi_{k/n}(a)\otimes (p_{2k+1}+p_{2(k+1)}) \Big)U^* \\
 = & \dst \Phi_1(a)\otimes p_{2n+1} + \sum_{k=0}^{n-1} (\id_B\otimes\iota_{2k+1})\big( u_k(\Phi_{k/n}(a)\otimes\eins)u_k^* \big) \\
 \stackrel{\eqref{eq:O2-uk}}{=}_{\makebox[0pt]{\footnotesize $\eps/3$}} & \dst \phi_1(a)\otimes p_{2n+1} + \sum_{k=0}^{n-1} \phi_1(a)\otimes (p_{2k+1}+p_{2(k+1)}) \ = \ \phi_1(a)\otimes\eins.
\end{array}
\]
In particular, we see
\[
\max_{a\in\CF}~ \|W(\phi_0(a)\otimes\eins)W^*-\phi_1(a)\otimes\eins\| \leq \eps.
\]
As $\CF\fin A_{\leq 1}$ and $\eps>0$ were arbitrary, this finishes the proof.
\end{proof}

In particular, we now need to consider uniqueness results for maps into $\CO_2$-absorbing \cstar-algebras.
The following is an important intermediate result due to Gabe, which is based on Kirchberg's work on strongly purely infinite \cstar-algebras.
Recall from \cite[Definition 3.4, Observation 3.7]{Gabe18} that two $*$-homomorphisms $\phi,\psi: A\to B$ from a separable \cstar-algebra to another \cstar-algebra are called approximately Murray--von Neumann equivalent, if there is a contraction $v\in B_\infty$ satisfying
\begin{equation} \label{eq:MvN:1}
v^*\phi(a)v=\psi(a) \text{ and } v\psi(a)v^*=\phi(a) \quad \text{for all } a\in A.
\end{equation}

\begin{theorem}[see {\cite[Theorem 3.23]{Gabe18}}] \label{thm:Gabe-uniqueness-1}
Let $A$ and $B$ be \cstar-algebras with $A$ being separable and exact.  
Let $\phi, \psi: A\to B$ be two nuclear $*$-homomorphisms with $\overline{B\phi(a)B}=\overline{B\psi(a)B}$ for all $a\in A$.
Then $\phi\otimes\eins_{\CO_2}$ and $\psi\otimes\eins_{\CO_2}$ are approximately Murray--von Neumann equivalent as maps from $A$ to $B\otimes\CO_2$.
\end{theorem}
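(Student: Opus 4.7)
The plan is to exploit the strong pure infiniteness and $\CO_2$-absorption of $B\otimes\CO_2$ to assemble a single approximate partial isometry conjugating $\phi\otimes\eins_{\CO_2}$ to $\psi\otimes\eins_{\CO_2}$ on any prescribed finite set. First observe that the tensored maps remain nuclear and inherit the ideal-support hypothesis, since
\[
\overline{(B\otimes\CO_2)(\phi(a)\otimes\eins)(B\otimes\CO_2)} \;=\; \overline{B\phi(a)B}\otimes\CO_2 \;=\; \overline{B\psi(a)B}\otimes\CO_2
\]
for each $a\in A$. Thus I may rename the data and work with nuclear $*$-homomorphisms $\Phi,\Psi: A\to C$ into an $\CO_2$-absorbing (hence strongly purely infinite) \cstar-algebra $C$ satisfying $\overline{C\Phi(a)C}=\overline{C\Psi(a)C}$ for every $a\in A$.

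The main technical step is an element-wise Kirchberg-type Cuntz comparison in $C$: two positive contractions $c,d\in C$ generating the same closed two-sided ideal are Cuntz equivalent, and by $\CO_2$-absorption the comparison is in fact realized by genuine partial isometries inside $C$ (rather than merely in a sequence algebra). Given $\CF\fin A_{\leq 1}$ and $\eps>0$, I would first use nuclearity to approximately factorize $\Phi$ and $\Psi$ through a common finite-dimensional algebra $F$: there exist c.p.~maps $A\xrightarrow{\kappa}F$ and $F\xrightarrow{\theta_\Phi,\theta_\Psi} C$ with $\theta_\Phi\circ\kappa\approx_\eps\Phi$ and $\theta_\Psi\circ\kappa\approx_\eps\Psi$ on $\CF$. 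Passing to the Stinespring decomposition, the problem reduces to matching, inside $C$, the images of a fixed matrix-unit system of $F$ on the two sides. Since each matrix unit on either side sees the same ideal of $C$ (this is precisely the hypothesis), each pair can be matched by a partial isometry in $C$ obtained from the Kirchberg--Cuntz comparison.

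The main obstacle is the coherent assembly of these individual matrix-unit-level partial isometries into a single element $v\in C$ satisfying
\[
\|v^*v\,\Phi(a)-\Phi(a)\|\leq\eps,\qquad \|v\Phi(a)v^*-\Psi(a)\|\leq\eps \quad\text{for all } a\in\CF.
\]
This is where $\CO_2$-absorption becomes essential: inside any hereditary subalgebra of $C$ one may realize countably many mutually orthogonal copies of any given positive element via $\CO_2$-generators tensored into the picture, so the various matrix units of $F$ and their cross-side identifications can be arranged in pairwise orthogonal position and then summed. Once such a $v$ is produced for each pair $(\CF,\eps)$, iterating along a nested exhausting sequence $(\CF_n,\eps_n)$ with $\CF_n$ dense in $A_{\leq 1}$ and $\eps_n\to 0$ yields the sequence of contractions $v_n\in C$ witnessing the approximate Murray--von Neumann equivalence demanded by the conclusion.
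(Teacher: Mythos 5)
This statement is not proved in the paper at all: it is imported verbatim as \cite[Theorem 3.23]{Gabe18}, a deep uniqueness result whose proof (in Gabe's paper, building on Kirchberg) runs through an absorption/Weyl--von Neumann argument and an asymptotic intertwining. Your proposal is therefore an attempt to reprove that result from scratch, and as written it has genuine gaps at exactly the points where the real work lies.

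Two concrete problems. First, you invoke "element-wise Kirchberg-type Cuntz comparison" to get that two positive contractions of $C=B\otimes\CO_2$ generating the same ideal are matched "by genuine partial isometries inside $C$." In a (strongly) purely infinite \cstar-algebra one does get $c\precsim d$ and $d\precsim c$ when $c$ and $d$ generate the same ideal, but Cuntz subequivalence is only witnessed by sequences $x_n$ with $x_ndx_n^*\to c$; there is no partial isometry implementing it in general (positive elements need not have spectral gaps, and $C$ need not even contain nonzero projections in the relevant hereditary subalgebras). Second, and more seriously, the finite-dimensional factorization $\theta_\Phi\circ\kappa\approx\Phi$, $\theta_\Psi\circ\kappa\approx\Psi$ produces completely positive maps $\theta_\Phi,\theta_\Psi\colon F\to C$, not $*$-homomorphisms, so there are no "images of a matrix-unit system" in $C$ to match; the Stinespring dilations live in larger algebras, and the hypothesis $\overline{C\Phi(a)C}=\overline{C\Psi(a)C}$ is a statement about the maps $\Phi,\Psi$ themselves which does not descend in any controlled way to the individual matrix-unit pieces of a c.p.\ factorization. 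The step "arrange in pairwise orthogonal position and then sum" also ignores the cross terms, whose control is precisely what Kirchberg's $\CO_2$-embedding and absorption machinery is for. If you want a self-contained argument you would essentially have to reproduce Gabe's proof; for the purposes of this paper the correct move is simply to cite \cite[Theorem 3.23]{Gabe18}, as the text does, and then combine it with Lemma \ref{lem:MvN-vs-au} to upgrade approximate Murray--von Neumann equivalence to approximate unitary equivalence.
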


We will combine this with the following fact:

\begin{lemma} \label{lem:MvN-vs-au}
Let $A$ and $B$ be two \cstar-algebras with $A$ being separable.
Let $\phi, \psi: A\to B$ be two non-degenerate $*$-homomorphisms.
Then the maps $\phi\otimes\eins_{\CO_2}$ and $\psi\otimes\eins_{\CO_2}$ are approximately Murray--von Neumann equivalent if and only if they are approximately unitarily equivalent.
\end{lemma}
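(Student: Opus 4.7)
The forward implication is immediate: a witnessing unitary $u$ is in particular a partial isometry with $u^*u = uu^* = \eins$, so approximate unitary equivalence automatically yields approximate Murray--von Neumann equivalence. The substance of the lemma lies entirely in the converse.

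For the converse, I will start from a sequence of partial isometries $v_n \in \CM(B \otimes \CO_2)$ implementing approximate Murray--von Neumann equivalence, meaning $v_n (\phi(a) \otimes \eins) v_n^* \to \psi(a) \otimes \eins$ together with the support conditions $v_n^* v_n (\phi(a) \otimes \eins) \to \phi(a) \otimes \eins$ and $v_n v_n^* (\psi(a) \otimes \eins) \to \psi(a) \otimes \eins$ for every $a \in A$. The plan is to replace each $v_n$ by a unitary $u_n$ that agrees with $v_n$ on the approximate support of $\phi \otimes \eins$. Concretely, if one can produce a partial isometry $w_n \in \CM(B \otimes \CO_2)$ with $w_n^* w_n = \eins - v_n^* v_n$ and $w_n w_n^* = \eins - v_n v_n^*$, then $u_n := v_n + w_n$ is a unitary; the cross terms $v_n (\phi(a) \otimes \eins) w_n^*$ and $w_n (\phi(a) \otimes \eins) v_n^*$ tend to zero in norm because $w_n^* w_n = \eins - v_n^* v_n$ becomes approximately orthogonal to $\phi(a) \otimes \eins$ by the support hypothesis, and $w_n (\phi(a) \otimes \eins) w_n^*$ vanishes for the same reason. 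Consequently $u_n (\phi(a) \otimes \eins) u_n^* \to \psi(a) \otimes \eins$ as required.

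The heart of the proof is therefore the existence of the $w_n$, which amounts to showing that $\eins - v_n^* v_n$ and $\eins - v_n v_n^*$ are Murray--von Neumann equivalent inside $\CM(B \otimes \CO_2)$. This is where $\CO_2$-absorption becomes indispensable: the $*$-isomorphism $\CO_2 \cong \CO_2 \otimes \CO_2$ furnishes a unital copy of $\CO_2$ inside $\CM(B \otimes \CO_2)$ via $\eins_B \otimes \CO_2$, whose canonical generators $T_1, T_2$ are isometries with $T_1 T_1^* + T_2 T_2^* = \eins$. These make the identity of $\CM(B \otimes \CO_2)$ concretely and controllably properly infinite, and allow one either to write down $w_n$ by an explicit formula built from the $T_i$ together with corrections absorbing the defects of $v_n$, or more abstractly to invoke Cuntz's comparison theorem for full properly infinite projections in the multiplier algebra.

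The main obstacle I anticipate is the careful bookkeeping in the multiplier algebra to ensure that the construction stays inside $\CM(B \otimes \CO_2)$ rather than some larger tensor product, and that the required support projections actually dominate $\phi(a) \otimes \eins$ and $\psi(a) \otimes \eins$ to within the prescribed tolerance. Non-degeneracy of $\phi$ and $\psi$ together with separability of $A$ enables a standard approximate-unit argument to manage this; once it is in place, the norm estimates controlling the cross terms follow from the inequality $\|x y\|^2 \leq \|x x^*\| \cdot \|y^* y\|$ applied to $y = (\eins - v_n^* v_n)^{1/2}(\phi(a) \otimes \eins)$ and its adjoint partner, closing the argument.
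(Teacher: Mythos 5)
Your forward direction is fine, and the instinct to use the unital copy of $\CO_2$ inside $\CM(B\otimes\CO_2)$ to upgrade Murray--von Neumann witnesses to unitaries is in the right spirit, but the converse has genuine gaps. First, approximate Murray--von Neumann equivalence in the sense of \cite[Definition 3.4]{Gabe18} is witnessed by \emph{contractions} in $B\otimes\CO_2$ (equivalently, by a single contraction $v\in (B\otimes\CO_2)_\infty$ with exact intertwining relations, by \cite[Observation 3.7]{Gabe18}), not by partial isometries in $\CM(B\otimes\CO_2)$ whose supports are honest projections. Since there is no polar decomposition in a \cstar-algebra, your starting point is not available, and the entire scheme of completing $v_n$ to a unitary $u_n=v_n+w_n$ presupposes it. Second, even granting partial isometries, the existence of $w_n$ with $w_n^*w_n=\eins-v_n^*v_n$ and $w_nw_n^*=\eins-v_nv_n^*$ is not justified: these defect projections need not be full or properly infinite (they may be small or zero), so Cuntz comparison for full properly infinite projections does not apply as stated, and the possible $K_0$-obstruction in $\CM(B\otimes\CO_2)$ is not addressed.

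Third --- and this matters for how the lemma is used downstream --- the convention in this paper is that $\ue$ means conjugation by unitaries in $\CU(\eins+B\otimes\CO_2)$, i.e., in the minimal unitization, not in the multiplier algebra. Your construction, even if completed, would only produce multiplier unitaries. Bridging that gap is precisely where the actual proof uses $\CO_2$ in an essential way: it reduces via \cite[Observation 3.7, Lemma 3.8]{Gabe18} to a single contraction $v\in B_\infty$ satisfying exact relations, observes that $v$ defines a unitary $m_v$ in the multiplier algebra of the hereditary subalgebra $\CB=\overline{B\cdot B_\infty\cdot B}$, and then invokes Lemma \ref{lem:Haagerup-Rordam} to see that $m_v\otimes\eins_{\CO_2}$ is homotopic to the unit, which permits lifting it along the canonical surjection from the unitization of $\CN\otimes\CO_2$ onto $\CM(\CB)\otimes\CO_2$ to a unitary represented by a sequence in $\widetilde{B\otimes\CO_2}$. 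Some such homotopy-and-lifting argument is unavoidable here, and it is absent from your proposal.
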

\begin{proof}
Since the ``if'' part is trivial, we shall show the ``only if'' part.
Suppose that $\phi\otimes\eins_{\CO_2}$ and $\psi\otimes\eins_{\CO_2}$ are approximately Murray--von Neumann equivalent.
Using an isomorphism $\CO_2\cong\CO_2\otimes\CO_2$, we may replace $\phi$ by $\phi\otimes\eins_{\CO_2}$ and $\psi$ by $\psi\otimes\eins_{\CO_2}$, and thus assume without loss of generality that $\phi$ and $\psi$ are already approximately Murray--von Neumann equivalent.
We may hence choose a contraction $v\in B_\infty$ satisfying condition \eqref{eq:MvN:1}.
Choose some strictly positive contraction $e\in A$.
As $\phi$ and $\psi$ are non-degenerate, both elements $\phi(e)$ and $\psi(e)$ are strictly positive contractions of $B$.
Using \eqref{eq:MvN:1} with \cite[Lemma 3.8]{Gabe18}, we see that 
\begin{equation} \label{eq:MvN:2}
\phi(a)v=v\psi(a)
\end{equation}
as well as
\begin{equation} \label{eq:MvN:3}
vv^*\phi(a)=\phi(a),\quad v^*v\psi(a)=\psi(a)
\end{equation}
for all $a\in A$.

Let us consider the hereditary \cstar-algebra
\[
\CB = \overline{B\cdot B_\infty\cdot B} \ \subseteq \ B_\infty,
\]
which by assumption coincides with the hereditary subalgebra in $B_\infty$ generated by either $\phi(e)$ or $\psi(e)$.
We also consider its normalizer given by
\[
\CN = \set{ x\in B_\infty \mid x\CB + \CB x \subseteq \CB}.
\]
There is a canonical $*$-homomorphism
\[
m: \CN \to \CM(\CB),\quad x\mapsto m_x \quad\text{with}\quad m_xb=xb,\ bm_x=bx
\]
given by the universal property of the multiplier algebra.
Note that in this context, this map is surjective; see \cite[Proposition 1.5]{BarlakSzabo16}.

Now \eqref{eq:MvN:2} applied to $a=e$ implies $v\in\CN$.
The non-degeneracy of $\phi$ and $\psi$ and \eqref{eq:MvN:3} moreover imply that $m_v\in\CM(\CB)$ is a unitary.
By Lemma \ref{lem:Haagerup-Rordam}, the unitary $m_v\otimes\eins_{\CO_2}\in\CM(\CB)\otimes\CO_2$ is homotopic to the unit.
Note that we may consider the canonical extension to the smallest unitization
\[
m'=(m\otimes\id_{\CO_2})^\sim : \widetilde{\CN\otimes\CO_2} \to \CM(\CB)\otimes\CO_2.
\]
Then $m_v\otimes\eins_{\CO_2}$ lifts under $m'$ to a unitary 
\[
u\in\CU\big( \widetilde{\CN\otimes\CO_2} \big) \ \subseteq \ \CU\big( (\widetilde{B\otimes\CO_2})_\infty \big).
\]
In particular, $u$ may be represented by a sequence of unitaries $u_n\in\widetilde{B\otimes\CO_2}$.
Then $m'_{v\otimes\eins} = m'_u$ means $ub=(v\otimes\eins)b$ and $bu=b(v\otimes\eins)$ for all $b\in B\otimes\CO_2$, which in particular implies
\[
\phi(a)\otimes\eins_{\CO_2} \stackrel{\eqref{eq:MvN:1}}{=} v\psi(a)v^* \otimes \eins_{\CO_2} = u(\psi(a)\otimes\eins_{\CO_2})u^* = \lim_{n\to\infty} u_n(\psi(a)\otimes\eins_{\CO_2})u_n^*.
\]
This finishes the proof.
\end{proof}

\begin{cor} \label{cor:Gabe-uniqueness-2}
Let $A$ and $B$ be \cstar-algebras with $A$ being separable and exact. 
Let $\phi, \psi: A\to B$ be two non-degenerate, nuclear $*$-homomorphisms with $\overline{B\phi(a)B}=\overline{B\psi(a)B}$ for all $a\in A$.
Then $\phi\otimes\eins_{\CO_2}$ and $\psi\otimes\eins_{\CO_2}$ are approximately unitarily equivalent as maps from $A$ to $B\otimes\CO_2$.
\end{cor}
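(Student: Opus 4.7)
The plan is essentially to string together the two preceding results. By hypothesis $\phi$ and $\psi$ are nuclear, non-degenerate, and ideal-compatible in the sense that $\overline{B\phi(a)B} = \overline{B\psi(a)B}$ for every $a \in A$. Since $A$ is separable and exact, Theorem \ref{thm:Gabe-uniqueness-1} applies directly and yields that $\phi \otimes \eins_{\CO_2}$ and $\psi \otimes \eins_{\CO_2}$ are approximately Murray--von Neumann equivalent as maps $A \to B \otimes \CO_2$.

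To upgrade this to approximate unitary equivalence, I would invoke Lemma \ref{lem:MvN-vs-au}, which does exactly that, provided we verify its hypothesis that both maps are non-degenerate. But this is automatic: if $(e_\lambda)$ is an approximate unit for $A$, then $\phi(e_\lambda)$ converges strictly to $\eins_{\CM(B)}$, and hence $\phi(e_\lambda) \otimes \eins_{\CO_2}$ converges strictly to $\eins_{\CM(B \otimes \CO_2)}$, giving non-degeneracy of $\phi \otimes \eins_{\CO_2}$ (and similarly for $\psi \otimes \eins_{\CO_2}$). So Lemma \ref{lem:MvN-vs-au} finishes the proof.

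There is no real obstacle here; the whole point of extracting the two ingredients (Theorem \ref{thm:Gabe-uniqueness-1} and Lemma \ref{lem:MvN-vs-au}) as separate statements is so that the corollary collapses to a concatenation. The only thing to watch is the non-degeneracy hypothesis in Lemma \ref{lem:MvN-vs-au}, which must be transported from $\phi,\psi$ to $\phi\otimes\eins_{\CO_2},\psi\otimes\eins_{\CO_2}$, but as indicated above this transfer is automatic via approximate units.
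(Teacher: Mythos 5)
Your proposal is correct and is exactly the paper's proof, which simply combines Theorem \ref{thm:Gabe-uniqueness-1} with Lemma \ref{lem:MvN-vs-au}. The only remark worth making is that Lemma \ref{lem:MvN-vs-au} is already phrased with the non-degeneracy hypothesis on $\phi,\psi$ themselves (the tensoring with $\eins_{\CO_2}$ is built into its statement), so your transfer-of-non-degeneracy check, while harmless, is not even needed.
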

\begin{proof}
Combine Theorem \ref{thm:Gabe-uniqueness-1} and Lemma \ref{lem:MvN-vs-au}.
\end{proof}

\begin{cor} \label{cor:homotopy-uniqueness}
Let $A$ be a separable, exact \cstar-algebra and $B$ a separable, nuclear, $\CO_\infty$-absorbing \cstar-algebra.
Suppose that $\phi, \psi: A\to B$ are two non-degenerate $*$-homomorphisms that are ideal-preservingly homotopic through non-degenerate maps.
Then $\phi$ and $\psi$ are approximately unitarily equivalent.
\end{cor}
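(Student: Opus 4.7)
The plan is to combine Corollary~\ref{cor:Gabe-uniqueness-2} with the reduction in Lemma~\ref{lem:homotopy-reduction}, and then to strip off the extra tensor factor of $\CO_\infty$ using that $B$ absorbs $\CO_\infty$ tensorially.

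First I would take a non-degenerate $*$-homomorphism $\Phi\colon A\to\CC([0,1],B)$ implementing the ideal-preserving homotopy through non-degenerate maps, so that $\Phi_0=\phi$, $\Phi_1=\psi$, and $\overline{B\Phi_s(a)B}=\overline{B\phi(a)B}$ for every $s\in[0,1]$ and $a\in A$. Because $B$ is nuclear, a CPAP-argument shows that every $*$-homomorphism into $B$ is automatically nuclear; in particular each slice $\Phi_s=\ev_s\circ\Phi$ is a nuclear $*$-homomorphism. Combined with the exactness of $A$ and the equality of generated ideals, the hypotheses of Corollary~\ref{cor:Gabe-uniqueness-2} are met for every pair $(\Phi_s,\Phi_t)$, yielding
\[
\Phi_s\otimes\eins_{\CO_2}\ue \Phi_t\otimes\eins_{\CO_2}\quad\text{as maps } A\to B\otimes\CO_2,
\]
for all $s,t\in[0,1]$.

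Second, this is precisely the input required by Lemma~\ref{lem:homotopy-reduction} applied to $\Phi$, which then upgrades the above to
\[
\phi\otimes\eins_{\CO_\infty}\ue \psi\otimes\eins_{\CO_\infty}\quad\text{as maps } A\to B\otimes\CO_\infty.
\]

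Third, since $B$ is separable, nuclear, and strongly purely infinite, Kirchberg's $\CO_\infty$-absorption theorem yields an isomorphism $\Theta\colon B\otimes\CO_\infty\to B$. Because $\CO_\infty$ is strongly self-absorbing, the first-factor embedding $\iota\colon b\mapsto b\otimes\eins_{\CO_\infty}$ is approximately unitarily equivalent to any isomorphism $B\to B\otimes\CO_\infty$, in particular to $\Theta^{-1}$. Hence the endomorphism $\rho:=\Theta\circ\iota$ of $B$ satisfies $\rho\ue\id_B$, and composing with $\phi$ and $\psi$ on the right gives $\phi\ue\Theta\circ(\phi\otimes\eins_{\CO_\infty})$ and $\psi\ue\Theta\circ(\psi\otimes\eins_{\CO_\infty})$. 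Composing the previous paragraph's equivalence with $\Theta$ and concatenating yields
\[
\phi\ue\Theta\circ(\phi\otimes\eins_{\CO_\infty})\ue\Theta\circ(\psi\otimes\eins_{\CO_\infty})\ue\psi,
\]
which is the desired conclusion.

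Given Corollary~\ref{cor:Gabe-uniqueness-2}, Lemma~\ref{lem:homotopy-reduction}, and the standard $\CO_\infty$-absorption machinery, the proof is essentially formal. The only point requiring care is the first step: checking that each slice $\Phi_s$ inherits enough structure from $\Phi$ to feed into Corollary~\ref{cor:Gabe-uniqueness-2}, which comes down to automatic nuclearity forced by $B$ being nuclear and to the definition of ideal-preserving homotopy through non-degenerate maps.
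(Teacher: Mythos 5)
Your proposal is correct and follows essentially the same route as the paper: establish $\CO_\infty$-absorption of $B$, reduce to comparing $\phi\otimes\eins_{\CO_\infty}$ and $\psi\otimes\eins_{\CO_\infty}$ via the standard strongly self-absorbing argument, and then combine Lemma \ref{lem:homotopy-reduction} with Corollary \ref{cor:Gabe-uniqueness-2} applied to the slices of the homotopy. The details you spell out (automatic nuclearity of maps into the nuclear $B$, non-degeneracy and ideal-equality of each $\Phi_s$) are exactly the points the paper leaves implicit.
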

\begin{proof}
By assumption (and \cite[Remark 2.7]{TomsWinter07}), there exist $*$-homomorphisms from $B\otimes\CO_\infty$ to $B$ mapping finitely many elements of the form $b\otimes\eins$ approximately to $b$.
By a standard argument it thus suffices to show that $\phi\otimes\eins_{\CO_\infty}$ and $\psi\otimes\eins_{\CO_\infty}$ are approximately unitarily equivalent as $*$-homomorphisms from $A$ to $B\otimes\CO_\infty$.
The claim then follows directly from combining Lemma \ref{lem:homotopy-reduction} and Corollary \ref{cor:Gabe-uniqueness-2}.
\end{proof}

We can now prove the desired uniqueness result for restricted coactions of flows on $\CO_\infty$-absorbing \cstar-algebras:

\begin{proof}[Proof of Theorem \ref{thm:spi-flow-uniqueness}]
Consider the \cstar-algebra $B=\CC([0,1],A)$, which is also separable, nuclear, and $\CO_\infty$-absorbing.
We consider the point-norm continuous path of automorphisms $\kappa: [0,1]\to\Aut(B)$ given by
\[
\kappa_s(f)(t) = (\beta_{st}\circ\alpha_{-st})(f(t)).
\]
Notice that $\kappa_0=\id_B$ and $\kappa_1\circ\alpha_\co = \beta_\co$.

Since arbitrary ideals of $A$ correspond to open subsets of $\prim(A)$, we have $\alpha_t(J)=\beta_t(J)$ for all $t\in\IR$ and all closed, two-sided ideals $J$ in $A$.
This in turn implies that $\kappa_s(I)=I$ for all $s\in [0,1]$ and all closed, two-sided ideals $I$ in $B$. 
In other words, $\set{\kappa_s}_{s\in [0,1]}$ is a continuous family of automorphisms connecting $\id_B$ to $\kappa_1$ through ideal-preserving automorphisms, and in particular yields an ideal-preserving homotopy through non-degenerate maps.
From Corollary \ref{cor:homotopy-uniqueness} it thus follows that $\kappa_1$ is approximately inner. 
In particular, we see that $\alpha_\co$ and $\beta_\co$ are approximately unitarily equivalent.
\end{proof}

\begin{rem}
We note that in the classification theory of $\CO_\infty$-absorbing \cstar-algebras \cite{Gabe19}, there is a more general and stronger uniqueness theorem using ideal-related $KK$-theory, which by definition is an ideal-preserving homotopy invariant; see also \cite{KirchbergC} and \cite[Hauptsatz 4.2]{Kirchberg00}.
However, there is a priori a problem when trying to obtain Corollary \ref{cor:homotopy-uniqueness} as a direct consequence given the fact that the \cstar-algebra $B$ in this context is usually assumed to be stable in these references.
The stability assumption bypasses all considerations related to non-degeneracy, which are in general necessary.
For example, it is easy to write down examples of two maps between unital \cstar-algebras where one of them is unital, the other one is not, but they have the same invariant.
I do not know in general whether two non-degenerate $*$-homomorphisms giving the same ideal-related $KK$-element must always be approximately unitarily equivalent as in Corollary \ref{cor:homotopy-uniqueness}.
\end{rem}


\subsection{Proof of Theorem \ref{Theorem-B}}

Using the results we have collected so far in this section, we can now obtain our main result about flows on $\CO_\infty$-absorbing \cstar-algebras:

\begin{proof}[Proof of Theorem {\em\ref{Theorem-B}}]
Let $A$ be a separable, nuclear, $\CO_\infty$-absorbing \cstar-algebra. 
Let $\alpha, \beta: \IR\curvearrowright A$ be two flows satisfying the Rokhlin property. 
Suppose that the induced actions of $\alpha$ and $\beta$ agree on $\prim(A)$.
Then $\alpha_\co$ and $\beta_\co$ are approximately unitarily equivalent by Theorem \ref{thm:spi-flow-uniqueness}.

By Theorem \ref{thm:fin-acel-Oinf}, $A$ has finite approximately central exponential length. 
It also has finite exponential length due to \cite{Phillips02}, and thus finite weak inner length by Remark \ref{rem:celw-cel}.
Thus Theorem \ref{thm:main-result} implies that $\alpha$ and $\beta$ are (norm-strongly) cocycle conjugate via an approximately inner automorphism on $A$. 
This shows our claim.
\end{proof}

\begin{rem}
It appears to be an obvious question whether it is possible to improve Theorem \ref{Theorem-B} to a strong classification result. 
That is, let $A$ be a separable, nuclear, $\CO_\infty$-absorbing \cstar-algebra with two Rokhlin flows $\alpha, \beta: \IR\curvearrowright A$. 
Suppose that there exists a homeomorphism $\Theta$ on $\prim(A)$ that yields a conjugacy between the induced $\IR$-actions of $\alpha$ and $\beta$. 
Does $\Theta$ lift to an automorphism inducing cocycle conjugacy between $\alpha$ and $\beta$?
Theorem \ref{Theorem-B} at least reduces this question to the much weaker question whether $\Theta$ lifts to any automorphism on $A$.

In general, this is not possible, as already the simple example $A=\CO_2\oplus\CO_\infty$ shows, where we equip $A$ with a Rokhlin flow on each summand and $\Theta$ is the flip map on the two-point set, which represents the prime ideal space.
However, a homeomorphism $\Theta$ as above could in principle exist when it is assumed to preserve compact-open subsets of $\prim(A)$, but I do not know of any existence result of this kind.

We point out that in the special case where $A\cong A\otimes\CO_2\otimes\CK$, the prime ideal space constitutes the entire classification invariant.
So using Kirchberg's results, a lifting of $\Theta$ indeed exists; see Gabe's proof \cite{Gabe18}  of the $\CO_2$-stable classification.
In particular, we observe that the improved version of Theorem \ref{Theorem-B} holds for $\CO_2$-absorbing, stable \cstar-algebras.

In general, Kirchberg's classification allows one to turn Theorem \ref{Theorem-B} into a strong classification result for stable \cstar-algebra, but at the price of enlarging the invariant.
The underlying object of the invariant would still be $X:=\prim(A)$, but we would need to equip it with a Hom-set consisting of all the invertible, ideal-related $KK$-elements in $KK(X; A, A)$.
Since the ideal-related $KK$-theory is virtually impossible to compute, however, it is unclear how useful this variant of Theorem \ref{Theorem-B} could be in applications.
\end{rem}

\begin{rem}
As can be easily seen from the proof of Lemma \ref{lem:homotopy-reduction}, the given argument is in a sense universal and can be generalized in various ways.
One obvious direction is the equivariant setting.
That is, if we equip the \cstar-algebras $A$ and $B$ in the statement with actions $\alpha: G\curvearrowright A$ and $\beta: G\curvearrowright B$ of a locally compact group $G$, then the analogous claim is true with respect to equivariant $*$-homomorphisms and approximate $G$-unitary equivalence; see \cite[Definition 2.1(i)]{Szabo17ssa3}.
We will not pursue this here further, but note that this has already been used as an ingredient to obtain \cite[Theorem F]{Szabo18kp}.
\end{rem}


\section{Simple $KK$-contractible algebras}

In this section, we consider the class of simple $KK$-contractible \cstar-algebras recently classified by Elliott--Gong--Lin--Niu \cite{ElliottGongLinNiu17}.
Our main technical observation, relying on the proof of Gong--Lin's basic homotopy lemma for such \cstar-algebras, is that these \cstar-algebras have finite weak inner length as well as finite weak approximately central exponential length; recall Definitions \ref{def:weak-inner-length} and \ref{def:acel}.
This will allow us to apply Theorem \ref{thm:main-result} in order to conclude Theorem \ref{Theorem-C}.

Before we do all this, we highlight the abundance of Rokhlin flows on simple $KK$-contractible \cstar-algebras.

\begin{example} \label{ex:R-flow-on-W}
There is a Rokhlin flow on the Razak--Jacelon algebra $\CW$.
\end{example}
\begin{proof}
We may consider some Rokhlin flow on some irrational rotation \cstar-algebra  $\gamma: \IR\curvearrowright A_\theta$; see \cite[Proposition 2.5]{Kishimoto96_R}. By classification, we have $\CW\cong A_\theta\otimes\CW$, and thus $\gamma\otimes\id_\CW$ gives rise to a Rokhlin flow on $\CW$.
\end{proof}

\begin{cor}
Let $A$ be a separable, simple, stably projectionless \cstar-algebra with finite nuclear dimension. Suppose that $KK(A,A)=0$. 
Let $\beta: \IR\curvearrowright A$ be any flow.
Then there exists a Rokhlin flow $\alpha: \IR\curvearrowright A$ such that $\alpha$ and $\beta$ induce the same actions on traces.
\end{cor}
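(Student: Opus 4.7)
The plan is to realize $\alpha$ as a tensor-product perturbation of $\beta$ by the Razak--Jacelon Rokhlin flow from Example \ref{ex:R-flow-on-W}. Under the stated hypotheses, $A$ falls into the classifiable $KK$-contractible class, and the classification theorem of Elliott--Gong--Lin--Niu \cite{ElliottGongLinNiu17}, combined with the role of $\CW$ as a distinguished object of that class, implies $A\otimes\CW\cong A$; I would fix such an isomorphism $\psi: A\to A\otimes\CW$. Letting $\gamma: \IR\curvearrowright\CW$ denote the Rokhlin flow supplied by Example \ref{ex:R-flow-on-W}, I would then define
\[
\alpha_t \ = \ \psi^{-1}\circ(\beta_t\otimes\gamma_t)\circ\psi,\quad t\in\IR.
\]

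The Rokhlin property for $\alpha$ is inherited from $\gamma$: for any $p>0$, a Rokhlin witness $u\in F_{\infty,\gamma}(\CW)$ satisfying $\tilde\gamma_\infty(u)=e^{ipt}u$ produces, via the canonical unital $*$-homomorphism $F_{\infty,\gamma}(\CW)\to F_{\infty,\beta\otimes\gamma}(A\otimes\CW)$ sending $v\mapsto\eins\otimes v$, a Rokhlin witness for $\beta\otimes\gamma$ on $A\otimes\CW$, and conjugating by $\psi$ yields one for $\alpha$. Alternatively, one may verify Lemma \ref{lem:seq-split-picture}\ref{lem:seq-split-picture:3} directly for $\alpha$.

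For the tracial invariant, I would use that $\CW$ has a unique lower-semicontinuous trace up to scaling, so $T(\CW)$ is a single ray, on which every automorphism --- in particular each $\gamma_t$ --- must act trivially. The canonical affine identification $T(A)\to T(A\otimes\CW)$, $\tau\mapsto\tau\otimes\tau_\CW$ for a normalized $\tau_\CW$, then intertwines the induced action of $\beta$ on $T(A)$ with that of $\beta\otimes\gamma$ on $T(A\otimes\CW)$. Pulling back along $\psi$ exhibits the induced action of $\alpha$ on $T(A)$ as affinely conjugate to that of $\beta$, which is what is required.

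No serious obstacle is anticipated: the argument is essentially mechanical given $\CW$-absorption of $A$ and the existence of a Rokhlin flow on $\CW$. The only step warranting care is the identification of trace cones under tensoring with $\CW$, which reduces to uniqueness of the trace on $\CW$ and compatibility of that identification with the scale.
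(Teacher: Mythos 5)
Your construction is exactly the one the paper uses: absorb $\CW$, tensor $\beta$ with the Rokhlin flow $\gamma$ on $\CW$ from Example \ref{ex:R-flow-on-W}, and pull back along an isomorphism $A\cong A\otimes\CW$. The Rokhlin part of your argument is fine (indeed more detailed than the paper, which does not spell it out).

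There is, however, one gap in the final step. The statement asks that $\alpha$ and $\beta$ induce the \emph{same} action on $T(A)$, not merely affinely conjugate ones, and your closing sentence (``affinely conjugate \dots which is what is required'') settles for the weaker conclusion. With an arbitrary isomorphism $\psi\colon A\to A\otimes\CW$, the induced action of $\alpha=\psi^{-1}\circ(\beta\otimes\gamma)\circ\psi$ on $T(A)$ is the conjugate of the action of $\beta$ by the affine homeomorphism $\tau\mapsto(\tau\otimes\tau^{\CW})\circ\psi$, and there is no reason for this homeomorphism to be the identity. The paper closes this gap by invoking the existence part of the Elliott--Gong--Lin--Niu classification (\cite[Theorem 7.5]{ElliottGongLinNiu17}) to choose $\psi$ with $(\tau\otimes\tau^{\CW})\circ\psi=\tau$ for all $\tau\in T(A)$, so that the two trace actions literally coincide. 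Equivalently, you could keep your arbitrary $\psi$ and then correct $\alpha$ by conjugating with an automorphism of $A$ lifting the scale-preserving affine homeomorphism above --- but that lift again requires the same classification theorem, so the extra citation is unavoidable either way.
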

\begin{proof}
Note that $A\cong A\otimes\CW$ by classification.
If $\gamma: \IR\curvearrowright\CW$ is as in Example \ref{ex:R-flow-on-W}, then fix some isomorphism $\phi: A\to A\otimes\CW$ with $(\tau\otimes\tau^\CW) \circ \phi = \tau$ for all $\tau\in T(A)$ (this exists by \cite[Theorem 7.5]{ElliottGongLinNiu17}), and set $\alpha=\phi^{-1}\circ(\beta\otimes\gamma)\circ\phi$.
\end{proof}

In particular, any action on the traces that may come from any flow, can in fact be realized by a Rokhlin flow on a classifiable $KK$-contractible \cstar-algebra.
It remains open what kind of actions of $\IR$ on the cone of traces can be lifted to flows, but already the trace-scaling flows constructed by Kishimoto--Kumjian \cite{KishimotoKumjian96, KishimotoKumjian97} yield interesting examples.

In what follows for the rest of this section, we need to appeal to the following fundamental uniqueness theorem between classifiable $KK$-contractible \cstar-algebras, which is a known consequence of their classification theory.

\begin{theorem} \label{thm:kk-contractible-uniqueness}
Let $A$ and $B$ be two separable, simple, stably projectionless \cstar-algebra with finite nuclear dimension. 
Suppose that $KK(A,A)=KK(B,B)=0$.
Let $\phi, \psi: A\to B$ be two $*$-homomorphisms.
Then $\phi$ and $\psi$ are approximately unitarily equivalent if and only if they agree on traces, i.e., $\tau\circ\phi=\tau\circ\psi$ for every extended trace $\tau\in T(B)$.
\end{theorem}
\begin{proof}
The main classification theorem of Elliott--Gong--Lin--Niu \cite{ElliottGongLinNiu17} implies that $A$ and $B$ are simple inductive limits of so-called Razak building blocks.
This is a special case of the fact that $A$ and $B$ belong to Robert's class \cite{Robert12} of \cstar-algebras that can be expressed as inductive limits of 1-NCCW complexes with vanishing $K_1$-groups at the level of each building block.
Since such building blocks have stable rank 1, the same follows for $A$ and $B$.
Since $A$ and $B$ have vanishing $K_0$-groups, it follows directly from \cite[Theorem 1.0.1 and Proposition 6.2.3]{Robert12} that $*$-homomorphisms $A\to B$ are approximately unitarily equivalent precisely when they induce the same maps $T(B)\to T(A)$.
\end{proof}

Let us now follow the strategy outlined at the beginning of this section towards proving Theorem \ref{Theorem-C}.
We first need some technical preparations.
We remark that in many instances below, it is relevant to assume that some \cstar-algebra $A$ in question has the property called \emph{almost stable rank one}, which requires $A\subseteq\overline{\GL(\widetilde{A})}$.
It is an observation of Robert \cite{Robert15} that every projectionless Jiang--Su stable \cstar-algebras has this property, and hence it will automatically hold for the \cstar-algebras of interest in this section.
In this context recall from \cite{Winter12, Tikuisis14, CETWW, CastillejosEvington19} that any separable non-elementary nuclear simple \cstar-algebra is Jiang--Su stable if and only if it has finite nuclear dimension.

\begin{lemma} \label{lem:asr1}
Let $A$ be a \cstar-algebra. Let $h\in A$ be a positive contraction and consider $A^{(0)}=\overline{hAh}$. Suppose that $A^{(0)}\subseteq\overline{\GL(\widetilde{A^{(0)}})}$.
Then for any $\eps>0$, there exists $\delta>0$ with the following property:

For every contraction $x\in\CM(A)$ with 
\[
\|(\eins-x^*x)h\|,\ \|(\eins-xx^*)h\|\leq\delta
\]
and
\[
\dist(xh, A^{(0)}),\ \dist(hx, A^{(0)})\leq\delta,
\]
there exists a unitary $v\in\CU(\widetilde{A^{(0)}})$ such that $\|(x-v)h\|,\ \|h(x-v)\|\leq\eps$.
\end{lemma}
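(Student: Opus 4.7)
The plan is to proceed by polar decomposition of an invertible approximation to $xh$ inside $\widetilde{A^{(0)}}$. Given $\eps>0$, I will pick $\delta>0$ very small (of order $\eps^4$) and argue as follows. Using the distance bound $\dist(xh,A^{(0)})\le\delta$, choose $a\in A^{(0)}$ with $\|xh-a\|\le 2\delta$, and symmetrically $b\in A^{(0)}$ with $\|hx-b\|\le 2\delta$. The near-isometry hypotheses $\|(1-x^*x)h\|,\|(1-xx^*)h\|\le\delta$ then translate into
\[
\|a^*a-h^2\|=\|a^*a-hx^*xh\|+\|h(x^*x-1)h\|=O(\delta), \qquad \|bb^*-h^2\|=O(\delta).
\]

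Next, by the standing hypothesis $A^{(0)}\subseteq\overline{\GL(\widetilde{A^{(0)}})}$, pick an invertible element $c\in\widetilde{A^{(0)}}$ with $\|c-a\|\le\delta$, and form its polar decomposition $c=v|c|$ giving a unitary $v\in\CU(\widetilde{A^{(0)}})$. Since $\|c^*c-h^2\|=O(\delta)$, Hölder continuity of the square root on $[0,\|h\|]$ yields $\||c|-h\|=O(\sqrt\delta)$. From this the left-hand estimate is immediate:
\[
\|(x-v)h\|\le\|a-xh\|+\|c-a\|+\|v(|c|-h)\|=O(\sqrt\delta)\le\eps,
\]
where the last term uses $v|c|=c$.

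The main obstacle is the right-hand bound $\|h(x-v)\|\le\eps$ for the same $v$, since the left polar decomposition of $c$ only naturally controls left multiplication by $h$. I plan to handle this by expanding
\[
\|h(v-x)\|^2 = \bigl\|h^2 - hvx^*h - hxv^*h + hxx^*h\bigr\|.
\]
Here $hxx^*h$ is within $O(\delta)$ of $h^2$ directly from the hypothesis. For the cross-term $hvx^*h$, I use $v=c|c|^{-1}$ together with $x^*h=(hx)^*\approx b^*$ to write $hvx^*h\approx hc|c|^{-1}b^*$, and then invoke the crucial bilateral identity $hc\approx ha\approx hxh\approx bh$ (which uses the hypothesis on $hx$). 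This lets the errors telescope through $|c|$ so that the blow-up of $\||c|^{-1}\|$ is cancelled: indeed $bh|c|^{-1}b^*$ is shown to be within $O(\sqrt\delta)$ of $bb^*\approx h^2$, by replacing $h|c|^{-1}$ by the identity on the range of $|c|$ only after pairing it with elements supported there. The adjoint cross-term $hxv^*h$ is handled symmetrically. Combining these estimates gives $\|h(v-x)\|^2=O(\sqrt\delta)$, hence $\|h(v-x)\|=O(\delta^{1/4})\le\eps$ for small enough $\delta$. The hard part will be justifying the telescoping rigorously, since a naive bound involves $\||c|^{-1}\|$ which is not controlled; the resolution is that the errors always appear in products of the form $(\text{something in }A^{(0)})\cdot|c|^{-1}\cdot(\text{something in the range of }|c|)$, in which $|c|^{-1}$ effectively inverts $|c|$ on its own range.
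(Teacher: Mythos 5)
Your left-hand estimate is fine, but the right-hand estimate $\|h(x-v)\|\le\eps$ has a genuine gap, and it is located exactly where you flagged it. The source of the trouble is your choice to take the polar decomposition of an invertible approximant $c$ of $xh$ rather than of $x$. This breaks the left/right symmetry of the problem: the hypothesis $\|(\eins-x^*x)h\|\le\delta$ controls $|c|^2=c^*c\approx hx^*xh\approx h^2$, which is why $vh\approx c\approx xh$ works, but the hypothesis $\|(\eins-xx^*)h\|\le\delta$ controls $hxx^*h$, \emph{not} $cc^*\approx xh^2x^*$; so $|c^*|$ is not close to $h$ and the second form $v=|c^*|^{-1}c$ of the polar decomposition is unavailable. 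Your substitute telescoping cannot be made rigorous. The two error terms you must absorb are $(hc-bh)\,|c|^{-1}b^*$ and $b(h-|c|)\,|c|^{-1}b^*$: in each, the left factor is merely small in norm ($O(\delta)$ resp.\ $O(\sqrt\delta)$) and the right factor $|c|^{-1}b^*$ has uncontrolled norm. Indeed, since $a\in A^{(0)}$ is not bounded below (in the intended application $A^{(0)}$ is non-unital), any invertible $c$ with $\|c-a\|\le\delta$ satisfies $\||c|^{-1}\|=\|c^{-1}\|\ge 1/\delta$, so the first error is a priori only $O(1)$. The heuristic that ``$|c|^{-1}$ effectively inverts $|c|$ on its own range'' is vacuous: $c$ is invertible, so its range is everything and the inversion costs the full norm $\||c|^{-1}\|$; nothing forces the error factors $hc-bh$ or $b^*-x^*|c|$ to be supported where $|c|$ is bounded below.

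The paper's proof shows how to repair this with a minimal change: approximate $x$ itself, not $xh$. Using \emph{both} distance hypotheses one first replaces $x$ by the compression $h^\eta xh^\eta\in A^{(0)}$ for small $\eta>0$, so that both $\|(x-h^\eta xh^\eta)h\|$ and $\|h(x-h^\eta xh^\eta)\|$ are $O(\delta)$, and then picks an invertible contraction $z\in\GL(\widetilde{A^{(0)}})$ with both $\|(x-z)h\|$ and $\|h(x-z)\|$ of order $\delta$. Now both forms of the polar decomposition are usable: with $v=z|z|^{-1}=|z^*|^{-1}z$ one has $(v-z)h=v(\eins-|z|)h$ and $h(v-z)=h(\eins-|z^*|)v$, and the two near-isometry hypotheses feed the two estimates via $\|h(\eins-z^*z)h\|=O(\delta)$ and $\|h(\eins-zz^*)h\|=O(\delta)$ followed by functional calculus for the square root. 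No inverse of uncontrolled norm ever appears. If you want to keep the structure of your argument, replacing your $c$ by such a $z$ is the fix.
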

\begin{proof}
Let $\eps>0$ be given.
Using functional calculus, we may find $\delta>0$ with the following property, which we claim will satisfy the desired property as in the statement:

Whenever $a,b$ are positive contractions in a \cstar-algebra, then $\|(\eins-a)b\|\leq 2\delta^{1/2}$ implies $\|(\eins-a^{1/2})b\|\leq\eps/2$. We may assume $\delta\leq \eps/3$.

Now suppose $x\in\CM(A)$ satisfies 
\[
\|(\eins-x^*x)h\|,\ \|(\eins-xx^*)h\|\leq\delta
\]
and
\[
\dist(xh, A^{(0)}),\ \dist(hx, A^{(0)})\leq\delta,
\]
We may find $\eta>0$ such that
\[
\|xh-h^{\eta}xh^{\eta}h\| = \|(x-h^{\eta}x h^{\eta})h\| \leq 5\delta/4
\]
and also
\[
\|h(x-h^{\eta}xh^{\eta})\|\leq 5\delta/4.
\]
Now we have $h^{\eta}xh^{\eta}\in A^{(0)}\subseteq\overline{\GL(\widetilde{A^{(0)}})}$, so we find $z\in\GL(\widetilde{A^{(0)}})$ with $\|z\|\leq 1$ and $\|(x-z)h\|, \|h(x-z)\|\leq 3\delta/2$. 
Set $v=z|z|^{-1}=|z^*|^{-1}z\in\CU(\widetilde{A^{(0)}})$, which is a unitary.

We have
\[
\|(\eins-z^*z)^{1/2}h\|^2 = \|h(\eins-z^*z)h\| \leq \|(\eins-x^*x)h\|+\|(x-z)h\|+\|h(x-z)\|\leq 4\delta
\]
In particular
\[
\|(\eins-z^*z)h\| \leq \|(\eins-z^*z)^{1/2}h\|\leq 2\delta^{1/2}.
\]
By our choice of $\delta$, this implies
\[
\|(\eins-|z|)h\|\leq \eps/2.
\]
Analogously we have $\|(\eins-|z^*|)h\|\leq\eps/2$.
Thus
\[
\|(v-x)h\|\leq \|(x-z)h\|+\|v(\eins-|z|)h\|\leq \eps/2+3\delta/2\leq\eps
\]
and analogously $\|h(v-x)\|\leq\eps$. 
\end{proof}


\subsection{Basic homotopy lemma revisited}

The technical results in this subsection arise upon a close inspection of \cite[Lemma 14.8]{GongLin17} and its proof, as well as the classification theory \cite{ElliottGongLinNiu17} of $KK$-contractible \cstar-algebras.
For what follows, recall the notion of scale for \cstar-algebras:\ For a separable simple \cstar-algebra $A$ with $T(A)\neq\emptyset$ and a strictly positive element $e\in A$, one defines the scale function as the affine map $\Sigma_A: T(A)\to [0,\infty]$ by $\Sigma_A(\tau)=d_\tau(e):=\lim_{n\to\infty} \tau( e(e+\frac1n)^{-1} )$.
If additionally $A$ is exact, algebraically simple, and has strict comparison of positive elements, we say that $A$ has continuous scale, if $\Sigma_A$ is a continuous map; see \cite[Definition 2.1]{Lin04_2} and \cite[Theorem 5.3]{ElliottGongLinNiu17_2} for a more general treatment.
Note that any \cstar-algebra covered by the aforementioned classification theorems satisfies the assumptions of the next lemma if it has continuous scale:

\begin{lemma} \label{lem:CU-strictly-dense}
Let $A$ be an exact, non-unital, algebraically simple, and Jiang--Su stable \cstar-algebra with stable rank one. 
Then for every $\eps>0$, $\CF\fin A$ and every unitary $u\in\CU_0(\eins+A)$, one can find a unitary path $v: [0,1]\to\CU(\eins+A)$ such that
\[
v(1)=u,\quad \max_{a\in\CF}~ \max_{0\leq t\leq 1}~ \|(v(t)-u)a\|+\|a(v(t)-u)\|\leq\eps,
\]
and such that $v(0)$ is in the commutator subgroup of $\CU(\eins+A)$.
\end{lemma}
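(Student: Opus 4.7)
The plan is to compare $u$ against a determinant-matching unitary $z$ confined to a ``small corner'' of $A$, so that $uz^{-1}$ sits automatically near the commutator subgroup, while the path from $uz^{-1}$ back to $u$ moves elements of $\CF$ only slightly. The main ingredients will be the de la Harpe--Skandalis determinant together with the structural richness afforded by $\CZ$-stability of $A$, which holds by Elliott--Gong--Lin--Niu classification under finite nuclear dimension. Recall that under the trivial pairing hypothesis, $\rho_A(K_0(A))=0$, so the determinant $\Delta\colon\CU_0(\eins+A)\to\mathrm{Aff}(T(A))$ has kernel equal to the closure of the commutator subgroup $[\CU_0(\eins+A),\CU_0(\eins+A)]$. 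After enlarging $\CF$ to $\CF':=\CF\cup u\CF\cup u^*\CF$ and rescaling $\eps$, it suffices to find $z\in\CU_0(\eins+A)$ with $\Delta(z)=\Delta(u)$ and $\|(z-\eins)a\|+\|a(z-\eins)\|\leq\eps'$ for $a\in\CF'$; then $w:=uz^{-1}$ lies in the closed commutator subgroup, and a further norm-small perturbation pushes $w$ into the commutator subgroup itself.

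To construct such $z$, I would use continuous scale and $\CZ$-stability to pick, for any $\eta>0$, a positive contraction $b\in A_+$ with $\|b\|\leq\eta$ while the dimension function $d_\tau(b)$ is uniformly bounded below by a constant $\kappa>0$ on all $\tau\in T(A)$. Inside the hereditary subalgebra $B:=\overline{bAb}$, whose induced trace space essentially recovers $T(A)$ rescaled by $\hat b$, I select a self-adjoint $h\in B$ with $\tau(h)=2\pi\,\Delta(u)(\tau)$ for all $\tau$. Thanks to the lower bound on $d_\tau(b)$, this can be arranged with $\|h\|$ bounded independently of $\eta$. Set $z:=\exp(ih)\in\CU_0(\eins+B)$; by construction $\Delta(z)=\Delta(u)$. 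For the path itself, define $v(t):=u\exp(i(t-1)h)$, so that $v(1)=u$ and $v(0)=uz^{-1}=w$. Writing $\exp(i(t-1)h)-\eins=i(t-1)h\cdot f((t-1)h)$ with $\|f\|_\infty\leq 1$, and using Pedersen's factorization inside $B$ to express each element of $B$ as $b^{1/2-\delta}\cdot(\text{bounded})\cdot b^{1/2-\delta}$, one obtains $\|(v(t)-u)a\|+\|a(v(t)-u)\|\leq C\eta^{1-2\delta}$ uniformly in $t\in[0,1]$ and $a\in\CF'$, which is at most $\eps$ provided $\eta$ is small enough.

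The main obstacle is the tension hidden in the second step: the condition $\Delta(z)=\Delta(u)$ forces $z$ to be tracially large in that its logarithm must carry a prescribed affine functional on $T(A)$, whereas the strict closeness condition demands $\exp(ih)$ to behave, against multiplication by $\CF'$, like a norm-negligible element. These are in apparent conflict, and it is precisely the ample tracial room of a simple, stably projectionless, $\CZ$-stable \cstar-algebra with continuous scale that reconciles them: one can decouple the norm of $b$ from its dimension function, so that $B=\overline{bAb}$ has rich tracial structure even while its generator is norm-small. Verifying that this decoupling can be carried out under the stated hypotheses, and handling the bookkeeping for Pedersen's factorization so that the constant $C$ above is genuinely independent of $\eta$, is where the technical work would concentrate.
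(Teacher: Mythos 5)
Your overall architecture — correct the de la Harpe--Skandalis determinant of $u$ by an exponential $\exp(ih)$ supported in a "small" hereditary subalgebra, take the path $v(t)=u\exp(i(t-1)h)$, and conclude via triviality of the pairing that $v(0)$ lands in the commutator subgroup — is exactly the paper's. But the mechanism you propose for making $\exp(ith)$ invisible to $\CF$ does not work, and this is the heart of the lemma. The hereditary subalgebra $B=\overline{bAb}$ depends only on the support of $b$, not on its norm: $\overline{bAb}=\overline{(b/\|b\|)A(b/\|b\|)}$, and if $b$ is a small multiple of a strictly positive element then $B=A$. Consequently "norm-small $b$ with large dimension function" buys nothing. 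Your Pedersen-factorization estimate is circular: writing $h=b^{1/2-\delta}cb^{1/2-\delta}$ with $\|h\|$ bounded below (as it must be, since $\tau(h)$ realizes the fixed functional $2\pi\Delta(u)$) forces $\|c\|\gtrsim\|h\|/\eta^{1-2\delta}$, so the claimed bound $\|(v(t)-u)a\|\leq C\eta^{1-2\delta}$ degenerates to the trivial $\|h\|\,\|a\|$; the constant $C$ cannot be made independent of $\eta$. There is also no reason in your setup that $a\,u(\exp(i(t-1)h)-\eins)$ is small, since $h$ need not interact well with $u$.

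The correct localization is by orthogonality, not by norm-smallness, and this is what the paper does: perturb $u$ so that $u-\eins\in\overline{e_1Ae_1}$ for a positive contraction $e_1$ that acts $\eps/2$-approximately as a unit on $\CF$, choose $0\neq e_2\perp e_1$, and place the trace-correcting self-adjoint $h_0$ (with $\tau(h_0)=\sum_j\tau(h_j)$ for a decomposition $u=\exp(ih_1)\cdots\exp(ih_n)$) inside $\overline{e_2Ae_2}$. Then $(\exp(ith_0)-\eins)a\approx(\exp(ith_0)-\eins)e_1a=0$ because $e_2e_1=0$, and $\exp(ith_0)$ commutes exactly with $u$, which gives both halves of the required estimate; simplicity and continuous scale guarantee that $\overline{e_2Ae_2}$ has enough tracial room to realize the prescribed values $\tau(h_0)$. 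If you replace your choice of $b$ by such an $e_2$ orthogonal to an approximate unit for $\CF\cup\set{u-\eins}$, the rest of your argument goes through essentially as you wrote it.
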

\begin{proof}
This fact appears as a reduction argument in the proof of \cite[Corollary 14.9]{GongLin17} with stronger assumptions, but we shall briefly recall it here.

Given $\eps,\CF,u$, one perturbs $u$ slightly and chooses positive contractions $0 \neq e_1, e_2\in A$ with $e_1\perp e_2$, such that $e_1$ acts $\eps/2$-approximately like a unit on $\CF$ and $u-\eins\in\overline{e_1 A e_1}$.

One writes $u=\exp(ih_1)\cdots\exp(ih_n)$ for some self-adjoint elements $h_j\in A$ and picks $h_0=h_0^*\in\overline{e_2Ae_2}$ satisfying $\tau(h_0)=\sum_{j=1}^n \tau(h_j)$ for all $\tau\in T(A)$; see \cite[Corollary 3.10]{BrownPereraToms08}. 
The path $v: [0,1]\to\CU(\eins+A)$ is then constructed via
\[
v(1-t)=u\exp(-ith_0) \quad\text{for}\quad t\in [0,1].
\]
By $e_1\perp e_2$ one sees that the exponentials $\exp(ith_0)$ approximately act like a unit on elements of $\CF$, and they all commute with $u$. In particular, this shows
\[
\max_{a\in\CF}~ \max_{0\leq t\leq 1}~ \|(v(t)-u)a\|+\|a(v(t)-u)\|\leq\eps.
\]
Moreover one has that $v(0)=u\exp(-ih_0)$ is in the commutator subgroup in $\CU(\eins+A)$ as all of its Skandalis--de la Harpe determinants vanish by construction; see \cite{delaHarpeSkandalis84, NgRobert17}.
All the assumptions on $A$ not explicitly used so far ensure that the main result of \cite{NgRobert17} is applicable.
Namely, the exactness of $A$ implies that quasi-traces on $A$ are traces \cite{Haagerup14}, and Jiang--Su stability implies that $A$ is a pure \cstar-algebra \cite{Rordam04}.
\end{proof}

\begin{rem} \label{rem:local-continuous-scale}
Due to Gong--Lin's classification, it is easy to see that all \cstar-algebras covered by \cite[Theorem 15.6]{GongLin17} admit some increasing sequence of hereditary subalgebras with continuous scale.
This is a fact we will sometimes use below.
\end{rem}

\begin{lemma} \label{lem:GL-celw}
Let $A$ be a separable, simple, stably projectionless \cstar-algebra with finite nuclear dimension. 
Suppose that the pairing map between $K_0(A)$ and $T(A)$ is trivial.
Then the weak inner length of $A$ is at most $12\pi$.
\end{lemma}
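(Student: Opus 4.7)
The plan is to construct, for any $u \in \CU(\eins+A)$ and any given $\eps > 0$ and $\CF \fin A_{\leq 1}$, a unitary path $V \colon [0,1] \to \CU(\eins+A)$ with $V(0) = \eins$, $V(1) = u$, and
\[
\|V(t_1)^{*} a V(t_1) - V(t_2)^{*} a V(t_2)\| \leq \eps + 12\pi\,|t_1 - t_2|
\]
for all $t_1, t_2 \in [0,1]$ and all $a \in \CF$. The construction has two stages, with the second carrying the bulk of the work.

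In the first stage I apply Lemma \ref{lem:CU-strictly-dense} to $\CF \cup \CF^{*}$ to obtain a unitary path $v_{\mathrm{red}} \colon [0,1] \to \CU(\eins+A)$ with $v_{\mathrm{red}}(1) = u$, $v_{\mathrm{red}}(0) =: u_{0}$ in the commutator subgroup of $\CU(\eins+A)$, and satisfying $\|a(v_{\mathrm{red}}(t) - u)\|, \|a^{*}(v_{\mathrm{red}}(t) - u)\| \leq \eps/4$ for every $t \in [0,1]$ and every $a \in \CF$. Expanding $v_{\mathrm{red}}(t_1)^{*} a v_{\mathrm{red}}(t_1) - v_{\mathrm{red}}(t_2)^{*} a v_{\mathrm{red}}(t_2)$ as
\[
v_{\mathrm{red}}(t_1)^{*} a \bigl(v_{\mathrm{red}}(t_1) - v_{\mathrm{red}}(t_2)\bigr) + \bigl(v_{\mathrm{red}}(t_1) - v_{\mathrm{red}}(t_2)\bigr)^{*} a v_{\mathrm{red}}(t_2)
\]
and bounding each summand by these two estimates, one obtains $\|v_{\mathrm{red}}(t_1)^{*} a v_{\mathrm{red}}(t_1) - v_{\mathrm{red}}(t_2)^{*} a v_{\mathrm{red}}(t_2)\| \leq \eps$. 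Thus this segment contributes only the additive $\eps$-term to the weak inner length, with no Lipschitz slope; it therefore suffices to connect $\eins$ to $u_{0}$ with slope at most $12\pi$.

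The key computation for the second stage is: for any unitary $x \in \CU(\eins+A)$ and any self-adjoint $h$ (in a suitable unitization) with $\|h\| \leq \pi$, the single-commutator path
\[
w(t) = x\,e^{ith}\,x^{*}\,e^{-ith}, \qquad w(0) = \eins, \quad w(1) = [x, e^{ih}],
\]
satisfies $\|[w(t_2) w(t_1)^{-1}, a]\| \leq 4\pi|t_2 - t_1|$ for every contraction $a \in A$. With $r = t_2 - t_1$, the identity $x^{*} e^{-irh} x = e^{-ir\,x^{*} h x}$ (valid because $x$ is unitary) rewrites the product as
\[
w(t_2) w(t_1)^{-1} \;=\; x\,e^{it_2 h}\,\bigl(e^{-ir\,x^{*} h x}\, e^{irh}\bigr)\, e^{-it_2 h}\, x^{*},
\]
so $\|w(t_2) w(t_1)^{-1} - \eins\| = \|e^{irh} - e^{ir\,x^{*} h x}\| \leq r\,\|h - x^{*} h x\| \leq 2\pi r$ by the usual Duhamel bound. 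Hence each such commutator path contributes at most $4\pi$ to the weak inner length.

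The principal obstacle---and the step in which the structural hypotheses enter crucially---is to approximate $u_{0}$ on $\CF$ within $\eps/4$ by a product of three commutators $c_1 c_2 c_3$, each of the form $c_j = [x_j, e^{ih_j}]$ with $\|h_j\| \leq \pi$. The triviality of the pairing between $K_0(A)$ and $T(A)$ forces the de la Harpe--Skandalis determinant of $u_{0}$ to vanish on $T(A)$, and the required three-commutator decomposition is then obtained from the Berg-type construction in the proof of \cite[Lemma 14.8]{GongLin17}, aided by Lemma \ref{lem:asr1} to supply unitary approximations in the absence of projections, and by Remark \ref{rem:local-continuous-scale} to reduce to a hereditary subalgebra with continuous scale on which the Gong--Lin machinery applies directly. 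Granted this decomposition, concatenating $w_1, w_2, w_3$ (each reparameterized to occupy time $1/3$) yields a path from $\eins$ to $c_1 c_2 c_3$ of slope at most $3 \cdot 4\pi = 12\pi$; splicing with $v_{\mathrm{red}}$ on a short final subinterval absorbs the small slope excess into the additive $\eps$-term, completing the construction.
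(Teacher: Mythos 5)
Your first stage coincides with the paper's: Lemma \ref{lem:CU-strictly-dense} reduces the problem to connecting a unitary $u_0$ in the (closed) commutator subgroup to $\eins$, and your expansion of $v(t_1)^*av(t_1)-v(t_2)^*av(t_2)$ correctly shows that this segment only contributes to the additive $\eps$-term. The gap is in the second stage: the claim that $u_0$ can be approximated by a product of \emph{three} commutators $[x_j,e^{ih_j}]$ with $\|h_j\|\leq\pi$ is asserted, not proved, and the source you point to cannot deliver it. \cite[Lemma 14.8]{GongLin17} is a \emph{basic homotopy lemma}: its hypothesis is that the unitary almost commutes with a prescribed finite set $\CG$ up to a tolerance $\delta$ depending on $(\eps,\CF)$ (this is exactly how it is used in Lemma \ref{lem:GL-bhl} of the paper). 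Here $u_0$ is an arbitrary element of the commutator subgroup with no approximate centrality whatsoever, so that lemma simply does not apply; and I know of no result giving the very specific decomposition you need (three factors, each a commutator with an exponential of norm-$\pi$ generator). Since the constants $3$ and $\pi$ are exactly what produce $12\pi$, this is the entire content of the lemma and cannot be waved through.

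It is also worth noting that your single-commutator computation, while correct, buys nothing: the path $w(t)=xe^{ith}x^*e^{-ith}$ has length at most $2\|h\|\leq 2\pi$, and your bound $\|[w(t_2)w(t_1)^{-1},a]\|\leq 4\pi|t_2-t_1|$ is exactly the trivial estimate $\|[w(t_2)w(t_1)^*,a]\|\leq 2\|a\|\,\|w(t_2)-w(t_1)\|$ from Remark \ref{rem:celw-cel}. So your strategy is equivalent to producing a path from $\eins$ to $u_0$ of length at most $6\pi$ and then invoking $\wil(u_0)\leq 2\cel(u_0)$. That length bound is precisely what the paper imports as a black box, namely \cite[Theorem 4.4]{GongLin17}, which states that unitaries in the closed commutator subgroup of $\CU(\eins+A)$ (for $A$ of generalized tracial rank at most one) have exponential length at most $6\pi+\eps$. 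Replacing your unproved three-commutator decomposition by that citation, and running the two paths simultaneously as a product $v(t)=v_{\mathrm{red}}(t)u_0^*v_1(t)$ rather than splicing them (which avoids the endpoint mismatch created by your $\eps/4$-approximation of $u_0$), recovers the paper's argument.
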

\begin{proof}
First we note that by \cite[Section 15]{GongLin17}, $A$ has generalized tracial rank at most one in the sense of \cite[Definition 3.9]{GongLin17}.

Let $\eps>0$, $\CF\fin A_{\leq 1}$ and $u\in\CU_0(\eins+A)$ be given.
Without loss of generality, assume $\CF=\CF^*$.
By perturbing $\CF$ and $u$ slightly, if necessary, we may assume by Remark \ref{rem:local-continuous-scale} without loss of generality that $\CF$ and $u$ are contained in a hereditary subalgebra with continuous scale.
So, let us simply assume that $A$ has continuous scale. 

Apply Lemma \ref{lem:CU-strictly-dense} and find a unitary path $v_2: [0,1]\to\CU(\eins+A)$ such that $v_2(1)=u$, $u_0:=v_2(0)$ is in the commutator subgroup of $\CU(\eins+A)$, and
\begin{equation} \label{eq:GL-celw-1}
\max_{a\in\CF}~ \max_{0\leq t\leq 1}~ \|a(v_2(t)-u)\|\leq\eps/8.
\end{equation}
By appealing to \cite[Theorem 4.4]{GongLin17}, we can find a unitary path $v_1: [0,1]\to\CU(\eins+A)$ satisfying $v_1(0)=\eins$, $v_1(1)=u_0$, and $\ell(v_1)\leq 6\pi+\eps/4$. 
By considering the arc-length parameterization of $v_1$, if necessary, we may assume that it is $(6\pi+\eps/4)$-Lipschitz.

Let us define $v: [0,1]\to\CU(\eins+A)$ via
\[
v(t)=v_2(t)u_0^*v_1(t).
\]
By the endpoints of $v_1$ and $v_2$ we see that $v(0)=\eins$ and $v(1)=u$.

For any $a\in\CF$ and $t_1, t_2\in [0,1]$, we then have
\[
\renewcommand\arraystretch{1.25}
\begin{array}{cl}
\multicolumn{2}{l}{ \|v(t_1)^*av(t_1)-v(t_2)^*av(t_2)\| } \\
\leq& \|(v(t_1)-v(t_2))^*a\|+\|a(v(t_1)-v(t_2))\| \\
=& \|a^*(v_2(t_1)u_0^*v_1(t_1)-v_2(t_2)u_0^*v_1(t_2))\| \\
& + \|a(v_2(t_1)u_0^*v_1(t_1)-v_2(t_2)u_0^*v_1(t_2))\| \\
\stackrel{\eqref{eq:GL-celw-1}}{\leq}& \eps/2+\|a^*uu_0^* (v_1(t_1)-v_1(t_2))\| + \|auu_0^* (v_1(t_1)-v_1(t_2))\| \\
\leq& \eps/2+(12\pi+\eps/2)|t_1-t_2| \ \leq \ \eps+12\pi|t_1-t_2|.
\end{array}
\]
Since $\eps>0$, $\CF\fin A_{\leq 1}$ and $u$ were arbitrary, this verifies the condition in Definition \ref{def:weak-inner-length} for the constant $12\pi$.
\end{proof}

\begin{lemma} \label{lem:GL-bhl}
Let $A$ be a separable, simple, stably projectionless \cstar-algebra with finite nuclear dimension. Suppose that $KK(A,A)=0$ and that $A$ has continuous scale.
Then for every $\eps>0$ and $\CF\fin A$, there exists $\delta>0$ and $\CG\fin A$ with the following property:

For every unitary $u\in\CU(\eins+A)$ in the closed commutator subgroup with
\[
\max_{a\in\CG}~ \|[a,u]\|\leq\delta,
\]
there exists a unitary path $v: [0,1]\to\CU(\eins+A)$ with
\[
v(0)=\eins,\quad v(1)=u,\quad \ell(v)\leq 2\pi+\eps,\quad \max_{a\in\CF}~ \max_{0\leq t\leq 1}~ \|[a,v(t)]\|\leq\eps.
\] 
\end{lemma}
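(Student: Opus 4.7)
The plan is to adapt the proof of Gong--Lin's basic homotopy lemma (\cite[Lemma 14.8]{GongLin17}) for $KK$-contractible \cstar-algebras with continuous scale, paying careful attention to the length of the resulting path. The crucial new input is that, under the hypothesis that $u$ lies in the closed commutator subgroup, the Skandalis--de la Harpe determinant vanishes, which removes the main obstruction to obtaining the sharp length bound $2\pi+\eps$ instead of the weaker $6\pi+\eps$ bound employed in the proof of Lemma \ref{lem:GL-celw}.

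First, I would invoke the structure theory from \cite{GongLin17, ElliottGongLinNiu17} to use that, under the hypotheses ($KK$-contractibility, continuous scale, finite nuclear dimension), $A$ has generalized tracial rank at most one and admits sufficient approximations by tractable building blocks. This allows us to localize the problem to a hereditary subalgebra that contains $\CF$ and from which $u$ is almost absorbed, inside which the classification machinery can be brought to bear.

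Second, I would run the basic homotopy construction from \cite[Lemma 14.8]{GongLin17} to produce a continuous path $t\mapsto v_t$ from $\eins$ to $u$ that approximately commutes with $\CF$. The hypothesis $KK(A,A)=0$ kills the Bott element obstruction to the existence of a short such path, while the commutator-subgroup hypothesis annihilates the tracial determinant obstruction. The key length estimate comes from the fact that the constructed path can be written, up to a small perturbation, as $t\mapsto\exp(ith_1)\cdots\exp(ith_n)$, where the self-adjoint elements $h_j\in A$ approximately commute with $\CF$ and satisfy $\|h_1\|+\dots+\|h_n\|\leq 2\pi+\eps/2$; the vanishing of the determinant is precisely what permits this tight bound, as opposed to the larger bound arising in Lemma \ref{lem:GL-celw}. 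Concatenating this with a short correction path of length at most $\eps/2$ then matches the endpoint to $u$ exactly, while preserving the approximate centrality estimate.

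The main obstacle is in extracting the precise quantitative estimates from the inductive construction in the proof of \cite[Lemma 14.8]{GongLin17}, both to verify the $2\pi+\eps$ length bound and to track how the approximate centrality of $u$ with respect to a sufficiently large finite subset $\CG\fin A$ propagates to the approximate centrality of the self-adjoint lifts $h_j$ with respect to $\CF$. Provided this bookkeeping works out, the choice of $\delta$ and $\CG$ is dictated by the continuity estimates of the Gong--Lin construction together with standard functional calculus estimates for exponentials.
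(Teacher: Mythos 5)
Your proposal is correct and follows essentially the same route as the paper: the paper's proof is a direct appeal to \cite[Lemma 14.8]{GongLin17} (with $A=B$, $\phi=\id_A$), observing that the commutator-subgroup hypothesis makes the determinant/$\sigma$-data redundant and $KK$-triviality makes the $K$-theoretic data redundant, and then inspecting the construction to see that the path is, up to a correction of length $\eps$, a composition of $t\mapsto u\exp(ith)$ and $t\mapsto W^*\exp(i(1-t)h)W$ for a single self-adjoint $h$ with $\|h\|\leq\pi$, giving $\ell(v)\leq 2\pi+\eps$. Your description of the path as a product of exponentials with total norm at most $2\pi$ matches this, and the "bookkeeping" you flag as the main obstacle is exactly the "close inspection" the paper performs.
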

\begin{proof}
This is precisely the statement proved in the second half of the proof of \cite[Lemma 14.8]{GongLin17} applied to the special case $A=B$ and $\phi=\id_A$; see also \cite[Corollary 14.9]{GongLin17}. 
Let us assume $\CF\fin A_{\leq 1}$ and choose a pair $(\delta,\CG)$ for $(\eps,\CF)$ according to that Lemma.\footnote{Note here that the general version of \cite[Lemma 14.8]{GongLin17} also involves a constant $\sigma>0$, a finite set $\CP\fin\underline{K}(A)$ and a subgroup $G_u\subset K_0(A)$, all depending on the pair $(\eps,\CF)$.
The constant $\sigma$ is redundant due to our assumption that $u$ is in the closed commutator subgroup, and both $\CP$ and $G_u$ are redundant because we assume $A$ to be $KK$-trivial.}

The only non-obvious part of the statement is that one may arrange $\ell(v)\leq 2\pi+\eps$, but this is true upon a close inspection of how the path $v$ is constructed in the proof. 
Namely, a close look at the construction\footnote{The actual construction of the path begins on the line after (e14.131) on page 110 in \cite{GongLin17}.} shows that there is a self-adjoint element $h\in A$ with $\|h\|\leq\pi$ such that the path $v: [0,1]\to \CU(\eins+A)$ arises (in the reverse direction from $1$ to $0$) as the path composition of
\begin{itemize}
\item a path $V_1: [0,1]\to\CU(\eins+A)$ given by $V_1(t)=u\exp(ith)$;
\item a path $V_2: [0,1]\to\CU(\eins+A)$ with $\ell(V_2)\leq\eps$;
\item a path $V_3: [0,1]\to\CU(\eins+A)$ given by 
\[
V_3(t)=V_2(1)W^*\exp(i(1-t)h)W
\]
for some unitary $W\in\CU(\eins+A)$.
\end{itemize}
In particular, the path $v$ indeed has length at most $2\pi+\eps$.
\end{proof}

\begin{lemma} \label{lem:GL-bhl-2}
Let $A$ be a separable, simple, stably projectionless \cstar-algebra with finite nuclear dimension. Suppose that $KK(A,A)=0$.
Then for every $\eps>0$ and $\CF\fin A_{\leq 1}$, there exists $\delta>0$ and $\CG\fin A_{\leq 1}$ with the following property:

For every unitary $u\in\CU(\eins+A)$ with
\[
\max_{a\in\CG}~ \|[a,u]\|\leq\delta,
\]
there exists a unitary path $v: [0,1]\to\CU(\eins+A)$ with
\[
v(0)=\eins,\quad v(1)=u,\quad \max_{a\in\CF}~\max_{0\leq t\leq 1}~ \|[a,v(t)]\|\leq\eps
\]
and
\[
\|a(v(t_1)-v(t_2))\|\leq \eps+2\pi|t_1-t_2| \quad\text{for all } t_1,t_2\in [0,1] \text{ and } a\in\CF.
\]
\end{lemma}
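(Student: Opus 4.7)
The plan is to reduce, modulo a perturbation, to an approximately central unitary that lies in the closed commutator subgroup of $\CU(\eins+A)$, and then invoke Lemma \ref{lem:GL-bhl} directly. First, by a standard approximation using Remark \ref{rem:local-continuous-scale}, we may replace $A$ by a hereditary subalgebra with continuous scale containing $\CF$ and $u-\eins$ up to some small perturbation, so we assume without loss of generality that $A$ has continuous scale. Fix auxiliary parameters $\eps'=\eps/8$, $\eps_1=\eps/8$, and $\eta=\eps/(16\pi)$, so that $4\pi\eta+2\eps'+2\eps_1\leq\eps$ and $\eps'+(2\pi+\eps')\frac{\eta}{1-\eta}\leq\eps$.

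Apply Lemma \ref{lem:GL-bhl} to the pair $(\eps',\CF)$ to obtain $(\delta_0,\CG_0)$; enlarge and assume $\CG_0\supseteq\CF$. Next, inspecting the construction in Lemma \ref{lem:CU-strictly-dense}, for any unitary $u\in\CU(\eins+A)$ we have $u_0:=u\exp(-ih_0)$ in the commutator subgroup, with $h_0\in\overline{e_2Ae_2}$ for some $0\neq e_2\in A$ orthogonal to a positive contraction $e_1$ acting as an approximate unit on $\CG_0$, and the path $v_2(s)=u\exp(-i(1-s)h_0)$ satisfying $\|a(v_2(s)-u)\|+\|(v_2(s)-u)a\|\leq\eps_1$ for all $a\in\CF$. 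Crucially, the orthogonality $e_1\perp e_2$ forces $\|[a,h_0]\|$ and hence $\|[a,\exp(-ih_0)]\|$ to be small for $a\in\CG_0$. Choosing $\CG\supseteq\CG_0$ slightly larger (to control the commutator with $e_1,e_2$-valued elements) and $\delta\leq\delta_0/2$ sufficiently small, we conclude that every $u$ satisfying $\max_{a\in\CG}\|[a,u]\|\leq\delta$ yields a $u_0$ with $\max_{a\in\CG_0}\|[a,u_0]\|\leq\delta_0$. Applying Lemma \ref{lem:GL-bhl} to $u_0$ then produces a unitary path $v_1\colon[0,1]\to\CU(\eins+A)$ from $\eins$ to $u_0$ of length at most $2\pi+\eps'$ and with $\max_{a\in\CF}\max_t\|[a,v_1(t)]\|\leq\eps'$; after passing to arc-length parametrization, $v_1$ is $(2\pi+\eps')$-Lipschitz.

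Finally, define the concatenation
\[
v(t)=\begin{cases} v_1\bigl(t/(1-\eta)\bigr), & t\in[0,1-\eta],\\ v_2\bigl((t-(1-\eta))/\eta\bigr), & t\in[1-\eta,1],\end{cases}
\]
which is continuous at $t=1-\eta$ since both branches equal $u_0$. For the approximate centrality: on $[0,1-\eta]$ we use $\|[a,v_1(\cdot)]\|\leq\eps'$, and on $[1-\eta,1]$ we bound $\|[a,v_2(s)]\|\leq 2\|a(v_2(s)-u)\|+\|[a,u]\|\leq 2\eps_1+\delta\leq\eps$. For the weak Lipschitz estimate: when $t_1,t_2\in[0,1-\eta]$, the Lipschitz constant of the rescaled $v_1$ gives $\|a(v(t_1)-v(t_2))\|\leq(2\pi+\eps')|t_1-t_2|/(1-\eta)\leq 2\pi|t_1-t_2|+\eps$; when both belong to $[1-\eta,1]$ the estimate is at most $2\eps_1\leq\eps$; and in the mixed case one splits at $t=1-\eta$ and combines the two previous bounds, with the slack $4\pi\eta+2\eps'+2\eps_1\leq\eps$ absorbing the error.

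The conceptual crux of the proof is the second step: producing $u_0$ in the commutator subgroup that inherits the approximate centrality of $u$, which is what allows us to use the sharper length bound $2\pi+\eps$ from Lemma \ref{lem:GL-bhl} instead of the $6\pi$ bound from Theorem 4.4 of \cite{GongLin17}. This is precisely why the weak Lipschitz constant $2\pi$ (rather than e.g.\ $6\pi$ or $12\pi$ as in Lemma \ref{lem:GL-celw}) is attainable here.
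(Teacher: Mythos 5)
Your core construction --- pass to the commutator subgroup via Lemma \ref{lem:CU-strictly-dense}, apply Lemma \ref{lem:GL-bhl} to the resulting unitary, and then combine the short approximately central path with the "trace-correcting" path $s\mapsto u\exp(-i(1-s)h_0)$ (your concatenation versus the paper's pointwise product of paths is immaterial) --- is exactly the core of the paper's argument, and your bookkeeping for the two estimates on the combined path is fine. The problem is your very first step: "we may replace $A$ by a hereditary subalgebra with continuous scale containing $\CF$ and $u-\eins$ up to some small perturbation, so we assume without loss of generality that $A$ has continuous scale." This is not available, for two reasons. First, the quantifier order: $\delta$ and $\CG$ must be produced from $(\eps,\CF)$ alone, before $u$ is given, so the hereditary subalgebra $A^{(0)}=\overline{eAe}$ with continuous scale can only be chosen to (approximately) contain $\CF$, not $u-\eins$. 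Second, and more substantively, a unitary $u\in\CU(\eins+A)$ that approximately commutes with a finite subset $\CG$ of $A^{(0)}$ need not be anywhere near $\CU(\eins+A^{(0)})$ in norm: take $u=\eins+x$ with $x$ supported in the orthogonal corner $\overline{(\eins-e)A(\eins-e)}$; it commutes \emph{exactly} with $\CG$ but is at distance roughly $\|x\|$ from $\eins+A^{(0)}$. Your argument, carried out inside the continuous-scale subalgebra, simply does not see this component of $u$, and the path you build cannot end at $u$.

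The paper spends most of its proof on precisely this point. Using stable rank one (Lemma \ref{lem:asr1}) it extracts a unitary $u_0\in\CU(\widetilde{A^{(0)}})$ with $\|(u-u_0)e\|$ and $\|e(u-u_0)\|$ small --- note this does \emph{not} make $\|u-u_0\|$ small --- runs your argument on $u_0$ inside $A^{(0)}$, and then connects the leftover $uu_0^*$ to the unit by an auxiliary path $v_4$ (preceded by a short correction $v_3$) inside $\CU(\eins+A^{(1)})$ with $A^{(1)}=\overline{(\eins-e)A(\eins-e)}$. That path may be arbitrarily long, but it commutes exactly with $\CF$ (since $ea=ae=a$ for $a\in\CF$ forces $a$ to annihilate $A^{(1)}$), so it contributes nothing to either the commutator estimate or the weak Lipschitz estimate. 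This orthogonal-corner contribution is also the structural reason the lemma only asserts the \emph{weak} Lipschitz bound and not a genuine length bound $\ell(v)\leq 2\pi+\eps$: the latter would give $\acel(A)\leq 2\pi$, whereas these algebras typically have infinite $\acel$. To repair your proof you need to add this decomposition step rather than assume continuous scale.
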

\begin{proof}
By Remark \ref{rem:local-continuous-scale}, we may assume that $\CF\fin A^{(0)}$ for a hereditary subalgebra $A^{(0)}\subseteq A$ with continuous scale. 

Choose $\delta_1>0$ and $\CG\fin A^{(0)}$ as in Lemma \ref{lem:GL-bhl} for $\eps/5$ and $\CF$ and $A^{(0)}$ in place of $A$. We may assume $\delta_1\leq\eps/20$. Let us also assume without loss of generality that $\CG\supseteq\CF$ consists of contractions and contains a positive contraction $e$ with $A^{(0)}=\overline{eAe}$ and $ex=xe=x$ for all $x\in\CG\setminus\set{e}$. 
 
By \cite{Robert15}, we have $A^{(0)} \subseteq\overline{\GL(\widetilde{A^{(0)}})}$, so we may choose $\delta>0$ according to Lemma \ref{lem:asr1} for $e\in A^{(0)}\subseteq A$ and $\delta_1/4$ in place of $\eps$. 
We may assume $\delta\leq\delta_1/4$.

Now let $u\in\CU(\eins+A)$ with
\[
\max_{a\in\CG}~ \|[a,u]\|\leq\delta.
\]
By our choice of $\delta$, this implies that there exists a unitary $u_0\in\CU(\widetilde{A^{(0)}})$ with 
\begin{equation} \label{eq:GL-1}
\|(u-u_0)e\|\leq\delta_1/4 \quad\text{and}\quad \|e(u-u_0)\|\leq\delta_1/4 .
\end{equation}
By our choice of $e$ this implies
\[
\max_{a\in\CG}~ \|(u-u_0)a\|\leq \delta_1/4, \quad \max_{a\in\CG}~ \|a(u-u_0)\|\leq \delta_1/4
\]
and with \eqref{eq:GL-1} therefore
\begin{equation} \label{eq:GL-2}
\max_{a\in\CG}~ \|[a,u_0]\|\leq 3\delta_1/4.
\end{equation}
Applying Lemma \ref{lem:CU-strictly-dense} to $A^{(0)}$ in place of $A$, we choose a unitary path $v_2: [0,1]\to\CU(\eins+A^{(0)})$ such that $v_2(1)=u_0$, $v_2(0)\in\CU(\eins+A^{(0)})$ is in the closed commutator subgroup, and
\begin{equation} \label{eq:GL-3}
\max_{a\in\CG}~ \max_{0\leq t\leq 1}~ \|(v_2(t)-u)a\|+\|a(v_2(t)-u)\|\leq\delta_1/4.
\end{equation}
Applying \eqref{eq:GL-2} and \eqref{eq:GL-3} at $t=0$ yields that $u_{00}=v_2(0)\in\CU(\eins+A^{(0)})$ is a unitary in the closed commutator subgroup satisfying
\begin{equation} \label{eq:GL-4}
\max_{a\in\CG}~ \|[a,u_{00}]\| \ \leq \ \max_{a\in\CG}~ \max_{0\leq t\leq 1}~ \|[a,v_2(t)]\| \leq \delta_1.
\end{equation} 
By our choice of $\delta_1$, we may now find a unitary path
$v_1: [0,1]\to\CU(\eins+A^{(0)})$ with
\begin{equation} \label{eq:GL-5}
v_1(0)=\eins,\quad v_1(1)=u_{00},\quad \ell(v_1)\leq 2\pi+\eps/5,
\end{equation}
and
\begin{equation} \label{eq:GL-6}
\max_{a\in\CF}~ \max_{0\leq t\leq 1}~ \|[a,v_1(t)]\|\leq\eps/5.
\end{equation}
By considering the arc-length parameterization of $v_1$, we may assume that it is $(2\pi+\eps/5)$-Lipschitz.

Combining \eqref{eq:GL-1} and \eqref{eq:GL-2} we see that
\[
\|(\eins-e)(\eins-uu_0^*)(\eins-e)-(\eins-u_0^*u)\|\leq 2\delta_1 \leq \eps/10.
\]
In particular, the unitary $uu_0^*\in\CU(\eins+A)$ is $\eps/10$-close to a unitary $u_1\in\CU(\eins+A^{(1)})$ for $A^{(1)}=\overline{(\eins-e)A(\eins-e)}$. 
Let us find a unitary path $v_3: [0,1]\to\CU(\eins+A^{(1)})$ of length at most $\eps/5$ with $v_3(0)=\eins$ and $v_3(1)=u_1^*uu_0^*$. 
Lastly, we know that the unitary group $\CU(\eins+A^{(1)})$ is connected, so we may find a unitary path $v_4: [0,1]\to\CU(\eins+A^{(1)})$ with $v_4(0)=\eins$ and $v_4(1)=u_1$.

We claim that the unitary path $v: [0,1]\to\CU(\eins+A)$ given by
\[
v(t)=v_4(t)v_3(t)v_2(t)u_{00}^*v_1(t)
\]
satisfies the desired property. 
Recalling the endpoints of the paths $v_1, v_2, v_3, v_4$, we see that indeed $v(0)=\eins$ and $v(1)=u$.

Now we compute for all $t\in [0,1]$ and $a\in\CF$ that
\[
\renewcommand\arraystretch{1.25}
\begin{array}{ccl}
v(t)a &=& v_4(t)v_3(t)v_2(t)u_{00}^*v_1(t)a \\
&\stackrel{\eqref{eq:GL-6},\eqref{eq:GL-5}}{=}_{\makebox[0pt]{\footnotesize \hspace{-6mm}$2\eps/5$}}& v_4(t)v_3(t)v_2(t)au_{00}^* v_1(t) \\
&\stackrel{\eqref{eq:GL-4}}{=}_{\makebox[0pt]{\footnotesize\hspace{1mm}$\eps/5$}}& v_4(t)v_3(t)a v_2(t)u_{00}^*v_1(t) \\
&=_{\makebox[0pt]{\footnotesize \hspace{7mm}$2\eps/5$}}& v_4(t) a v_3(t)v_2(t)u_{00}^*v_1(t) \\
&=& a v_4(t)v_3(t)v_2(t)u_{00}^*v_1(t) \ = \ a v(t).
\end{array}
\]
This proves
\[
\max_{a\in\CF}~ \max_{0\leq t\leq 1}~ \|[a,v(t)]\|\leq\eps.
\]
Moreover, for all $t_1, t_2\in [0,1]$ and $a\in\CF$ we have
\[
\renewcommand\arraystretch{1.25}
\begin{array}{cl}
\multicolumn{2}{l}{ \|a\big( v(t_1)-v(t_2) \big)\| }\\
=& \| a \Big( v_4(t_1)v_3(t_1)v_2(t_1)u_{00}^*v_1(t_1) - v_4(t_2)v_3(t_2)v_2(t_2)u_{00}^*v_1(t_2) \Big)\| \\
=& \| a \Big( v_3(t_1)v_2(t_1)u_{00}^*v_1(t_1) - v_3(t_2)v_2(t_2)u_{00}^*v_1(t_2) \Big)\| \\
\leq& 2\eps/5+ \| a \Big( v_2(t_1)u_{00}^*v_1(t_1) - v_2(t_2)u_{00}^*v_1(t_2) \Big)\| \\
\stackrel{\eqref{eq:GL-3}}{\leq}& 3\eps/5+\| auu_{00}^* \big( v_1(t_1)-v_1(t_2) \big) \| \\
\stackrel{\eqref{eq:GL-5}}{\leq}& 3\eps/5+(2\pi+\eps/5)|t_1-t_2| \ \leq \ \eps+2\pi|t_1-t_2|.
\end{array}
\]
This finishes the proof.
\end{proof}

\begin{theorem} \label{thm:finite-acelw}
Let $A$ be a separable, simple, stably projectionless \cstar-algebra with finite nuclear dimension. Suppose that $KK(A,A)=0$. Then the weak approximately central exponential length of $A$ is at most $2\pi$.
\end{theorem}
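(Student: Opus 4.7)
The plan is to reduce Theorem \ref{thm:finite-acelw} almost directly to Lemma \ref{lem:GL-bhl-2}. Observe that the hypothesis in Definition \ref{def:acel}(ii) concerns a continuous path $u:[0,1]\to\CU(\eins+A)$ with $u(0)=\eins$ and commuting approximately with some finite set $\CG$ along its entire length, whereas the conclusion only requires us to produce another path $v$ with the \emph{same endpoint} $v(1)=u(1)$ (and $v(0)=\eins$). In particular, we never have to use the approximate commutativity at times $t<1$, only at $t=1$. This means the problem really is about constructing a nice replacement path from $\eins$ to a given approximately central unitary, which is precisely the content of Lemma \ref{lem:GL-bhl-2}.

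So first I would, given $\eps>0$ and $\CF\fin A_{\leq 1}$, apply Lemma \ref{lem:GL-bhl-2} to obtain a constant $\delta>0$ and a finite set $\CG\fin A_{\leq 1}$ that witness its conclusion for this pair $(\eps,\CF)$. Next, consider an arbitrary continuous path $u:[0,1]\to\CU(\eins+A)$ with $u(0)=\eins$ satisfying
\[
\max_{a\in\CG}\ \max_{0\leq t\leq 1}\ \|[u(t),a]\|\leq\delta.
\]
Specializing to $t=1$ gives $\max_{a\in\CG}\|[u(1),a]\|\leq\delta$, so Lemma \ref{lem:GL-bhl-2} applied to the single unitary $u(1)$ yields a continuous path $v:[0,1]\to\CU(\eins+A)$ with $v(0)=\eins$, $v(1)=u(1)$,
\[
\max_{a\in\CF}\ \max_{0\leq t\leq 1}\ \|[v(t),a]\|\leq\eps,
\]
and the Lipschitz-like estimate
\[
\|a(v(t_1)-v(t_2))\|\leq\eps+2\pi\,|t_1-t_2| \quad\text{for all } a\in\CF,\ t_1,t_2\in[0,1].
\]
This is exactly the property demanded by Definition \ref{def:acel}(ii) with constant $C=2\pi$, so $\acel_w(A)\leq 2\pi$.

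There is essentially no obstacle at this level: all the technical work has already been absorbed into Lemma \ref{lem:GL-bhl-2} (and through it, into Gong--Lin's basic homotopy lemma and the careful tracking of path lengths in its proof). The only conceptual point worth emphasizing is that Definition \ref{def:acel}(ii) is weaker than Definition \ref{def:acel}(i) in exactly the right way: we do not need to control the \emph{norm} of the differences $v(t_1)-v(t_2)$, only their products with fixed elements of $\CF$. This is crucial because the unitaries $u(1)$ under consideration will in general sit outside any hereditary subalgebra with continuous scale, and the construction in Lemma \ref{lem:GL-bhl-2} achieves the $2\pi$ bound only after a ``cut-off'' step that restricts the nontrivial part of $v$ to such a hereditary subalgebra (paying for it by weakening norm estimates to the one-sided estimates used here). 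In particular, we cannot expect $\acel(A)\leq 2\pi$ in general for the \cstar-algebras under consideration, which is why the \emph{weak} notion is the right one for the classification theorem in Section~3.
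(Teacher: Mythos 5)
Your proposal is correct and follows exactly the paper's route: Theorem \ref{thm:finite-acelw} is deduced by applying Lemma \ref{lem:GL-bhl-2} to the endpoint $u(1)$, which is all that Definition \ref{def:acel}\ref{def:acel-2} requires. The additional remarks on why only the weak version can be expected are accurate but not needed for the deduction.
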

\begin{proof}
This follow directly from Lemma \ref{lem:GL-bhl-2}.
\end{proof}

\begin{rem}
Following the analogous line of thought as so far, it is likely that Theorem \ref{thm:finite-acelw} should generalize for all \cstar-algebras classified in \cite{GongLin17}.
This would likely involve the full force of \cite[Lemma 14.8]{GongLin17} or even a generalization of it; at present the homotopy lemma additionally requires the absorption of some infinite-dimensional UHF algebra.
Since this larger class of \cstar-algebras is far more challenging when trying to classify $*$-homomorphisms of the form $A\to\CC([0,1], A)$, this is not further pursued here.
\end{rem}


\subsection{Interlude --- Rokhlin automorphisms}

In this subsection we observe an analog of Theorem \ref{Theorem-C} for single automorphisms.
This builds on our previous observations about approximately central unitaries inside a \cstar-algebra in the $KK$-contractible classifiable class.
This can be viewed as an extension of an approach pursued recently by Nawata \cite{Nawata19}.

For the next result, recall the notion of the central sequence algebra from Definition \ref{def:central-sequence}. 

\begin{theorem}[cf.\ {\cite[Theorem 5.7]{Nawata19}}] \label{thm:nawata-property}
Let $A$ be a separable, simple, stably projectionless \cstar-algebra with finite nuclear dimension. Suppose that $KK(A,A)=0$. 
Then the central sequence algebra $F_\infty(A)$ has a connected unitary group and has exponential length at most $2\pi$.
\end{theorem}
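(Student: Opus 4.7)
The plan is to follow Nawata's strategy from \cite[Theorem 5.7]{Nawata18}, which treats the special case $A=\CW$, replacing his $\CW$-specific homotopy lemma with our Lemma \ref{lem:GL-bhl-2}. Given a unitary $u\in\CU(F_\infty(A))$, I would first lift it to a sequence $(u_n)\subset\CU(\eins+A)$ of approximately central unitaries, which is a standard lifting for separable $A$. Then I choose an increasing sequence $\CF_n\fin A_{\leq 1}$ of self-adjoint finite subsets with dense union, arranged so that each $\CF_n$ contains a positive contraction $e_n$, where $(e_n)_n$ forms a quasi-central approximate unit of $A$ (i.e., $e_na, ae_n\to a$ and $\|[e_n,a]\|\to 0$ for every $a\in A$). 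For a null sequence $\eps_n\downarrow 0$, Lemma \ref{lem:GL-bhl-2} supplies $(\delta_n,\CG_n)$, and after passing to a subsequence I may assume $\max_{a\in\CG_n}\|[a,u_n]\|\leq\delta_n$, which yields unitary paths $v_n:[0,1]\to\CU(\eins+A)$ with $v_n(0)=\eins$, $v_n(1)=u_n$, approximate centrality on $\CF_n$ up to $\eps_n$, and the weak Lipschitz estimate $\|a(v_n(t_1)-v_n(t_2))\|\leq\eps_n+2\pi|t_1-t_2|$ for $a\in\CF_n$.

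I then set $w(t):=[(v_n(t))_n]\in F_\infty(A)$. The density of $\bigcup_n\CF_n$ together with the approximate centrality on each $\CF_n$ forces $w(t)\in A_\infty\cap A'$ for each $t$, and by construction $w(0)=\eins$, $w(1)=u$, with each $w(t)$ unitary. The existence of this path already implies $\CU(F_\infty(A))$ is connected; it then remains to show $\ell(w)\leq 2\pi$ in the quotient norm of $F_\infty(A)$, which will yield $\cel(u)\leq 2\pi$.

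The hard part will be passing from the one-sided weak Lipschitz control $\|a(v_n(t_1)-v_n(t_2))\|\leq\eps_n+2\pi|t_1-t_2|$ to a genuine Lipschitz bound on $w$ in the quotient norm of $F_\infty(A)$. The naive bound $\|[x]\|_{F_\infty(A)}\leq\|x\|_{A_\infty}=\limsup_n\|v_n(t_1)-v_n(t_2)\|$ will be useless, since each $v_n$ can have arbitrarily large length in $A$. My plan is to construct a better representative of the class: writing $x_n:=v_n(t_1)-v_n(t_2)\in A$, I consider $y_n:=x_n-e_nx_ne_n=(\eins-e_n)x_n+e_nx_n(\eins-e_n)\in A$. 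A direct computation using the approximate unit property of $(e_n)$ together with the approximate centrality of both $(x_n)$ and $(v_n(t))$ on each fixed element of $A$ will show $(y_n)\in\ann(A,A_\infty)$, so that $(e_nx_ne_n)_n$ represents the class $w(t_1)-w(t_2)$ in $F_\infty(A)$. Since $e_n\in\CF_n$, Lemma \ref{lem:GL-bhl-2} then yields $\|e_nx_n\|\leq\eps_n+2\pi|t_1-t_2|$ and hence $\|e_nx_ne_n\|\leq\eps_n+2\pi|t_1-t_2|$, so passing to the limit gives $\|w(t_1)-w(t_2)\|_{F_\infty(A)}\leq 2\pi|t_1-t_2|$. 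This establishes $\ell(w)\leq 2\pi$ and hence $\cel(u)\leq 2\pi$; as $u$ was arbitrary, $\cel(F_\infty(A))\leq 2\pi$.
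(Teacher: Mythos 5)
Your first step hides the real gap: you assert that a unitary $u\in\CU(F_\infty(A))$ can be lifted to a sequence of approximately central unitaries in $\CU(\eins+A)$ as ``a standard lifting for separable $A$''. It is not. Two obstructions are glossed over. First, $F_\infty(A)$ is a quotient of $(\tilde A)_\infty\cap A'$ by $\ann(A,(\tilde A)_\infty)$, and unitaries in a quotient of a \cstar-algebra do not in general lift to unitaries; the paper has to invoke a stable-rank-one type lifting (Lemma \ref{lem:asr1}, or \cite[Proposition 4.9]{Nawata18}) to produce a representing sequence $(u_n')$ of unitaries, and these live in $\CU(\tilde A)$, not in $\CU(\eins+A)$. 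Second, and more seriously, replacing $u_n'$ by $\lambda_n^{-1}u_n'\in\CU(\eins+A)$, where $\lambda_n\in\IT$ is the scalar part, changes the class in $F_\infty(A)$ unless $\lambda_n\to 1$; whether $u$ admits a representing sequence in $\CU(\eins+A)$ at all is essentially equivalent to $u\in\CU_0(F_\infty(A))$, i.e., to the connectivity statement you are trying to prove, so assuming it at the outset is circular. The paper resolves this by interpolating the scalars with $\pi$-Lipschitz circle paths $\lambda_n(t)$, which assemble into an auxiliary path $\Lambda$ in $F_\infty(A)$; the composite $\Lambda V$ connects $u$ to the unit, and only \emph{a posteriori}, once $u\in\CU_0(F_\infty(A))$ is known, can one represent $u$ by unitaries in $\CU(\eins+A)$ and rerun the argument to get the sharp bound $2\pi$ rather than $3\pi$.

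The remainder of your proposal is essentially the paper's proof. Your device of replacing $x_n=v_n(t_1)-v_n(t_2)$ by $e_nx_ne_n$ and checking that $(x_n-e_nx_ne_n)_n\in\ann(A,A_\infty)$ is a correct, more hands-on version of the identity $\|[x]\|_{F_\infty(A)}=\sup_{a\in A_{\leq 1}}\|ax\|$ for $x\in A_\infty\cap A'$, which is what the paper uses directly; both reduce the Lipschitz estimate to the one-sided bound of Lemma \ref{lem:GL-bhl-2} tested against $e_n\in\CF_n$. One further point to repair: you cannot literally ``pass to a subsequence'' of a representing sequence of $u$, since discarding terms changes the element of $F_\infty(A)$. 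Instead choose $k_1<k_2<\dots$ with $\sup_{m\geq k_n}\max_{a\in\CG_n}\|[u_m,a]\|\leq\delta_n$ and apply the lemma with the pair $(\delta_n,\CG_n)$ to every index $m\in\set{k_n,\dots,k_{n+1}-1}$, so that a path $v_m$ is produced for every $m$ and the original class is connected to the unit.
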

\begin{proof}
Let $\CF_n\fin A_{\leq 1}$ be an increasing sequence of finite sets in the unit ball with dense union.
Let $\eps_n>0$ be a decreasing sequence with $\eps_n\to 0$.
For each $n$, apply Lemma \ref{lem:GL-bhl-2} to the pair $(\eps_n,\CF_n)$ and choose a pair $(\delta_n,\CG_n)$.

Let $U$ be a unitary in $F_\infty(A)$.
We identify 
\[
F_\infty(A)=\big( (\tilde{A})_\infty\cap A' \big)/\ann\big(A, (\tilde{A})_\infty \big).
\]
Since $A$ has almost stable rank 1, it follows by \cite[Proposition 4.9]{Nawata19} (one can also apply Lemma \ref{lem:asr1}) that $U$ has a representing sequence $(u_n')_n$ consisting of unitaries in $\CU(\tilde{A})$.

First, we may find $\pi$-Lipschitz paths $\lambda_n: [0,1]\to\IT\subset\CU(\tilde{A})$ such that $\lambda_n(0)=1$ and $u_n:=\lambda_n(1)^*u_n'\in\CU(\eins+A)$. 
Let $\Lambda: [0,1]\to F_\infty(A)$ be the resulting path given by $\Lambda(t)=[(\lambda_n(t))_n]+\ann(A, (\tilde{A})_\infty)$, which is well-defined.

Now as $(u_n)_n$ defines an approximately central sequence of unitaries, it is possible to find increasing numbers $1\leq k_1< k_2<\dots$ with the property that
\[
\sup_{m\geq k_n}~ \max_{a\in\CG_n}~ \|[u_m, a]\|\leq \delta_n.
\]
For every $n$ and $l\in\set{k_n,\dots,k_{n+1}-1}$, our choice of the pair $(\delta_n,\CG_n)$ allows us to find a unitary path $v_l: [0,1]\to\CU(\eins+A)$ 
with $v_l(0)=\eins$, $v_l(1)=u_l$, 
\begin{equation} \label{eq:nawata-1}
\max_{a\in\CF_n}~ \max_{0\leq t\leq 1}~ \|[a,v_l(t)]\|\leq\eps_n,
\end{equation}
and
\begin{equation} \label{eq:nawata-2}
\max_{a\in\CF_n}~ \|a(v_l(t_1)-v_l(t_2))\|\leq\eps_n+2\pi|t_1-t_2|
\end{equation}
for all $t_1,t_2\in [0,1]$.

Now consider the map $V: [0,1]\to\CU(F_\infty(A))$ given by
\[
V(t) = [(v_m(t))_m]+\ann(A, (\tilde{A})_\infty ).
\]
First of all, we clearly have $V(0)=\eins$ and $V(1)=\Lambda(1)^*U$. It is well-defined as a map since for any $a\in\bigcup_{n\in\IN}\CF_n$, we have
\[
\limsup_{m\to\infty}~ \|[v_m(t),a]\| = \inf_{n\geq 1}~ \sup_{l\geq k_n}~ \|[v_l(t),a]\| \stackrel{\eqref{eq:nawata-1}}{\leq} \inf_{n\geq 1}~ \eps_n = 0.
\]
We compute for all $t_1,t_2\in [0,1]$ that
\[
\renewcommand\arraystretch{1.25}
\begin{array}{ccl}
\|V(t_1)-V(t_2)\| &=& \dst \sup_{a\in A_{\leq 1}}~ \|a(V(t_1)-V(t_2))\| \\
&=& \dst \sup_{n_0\in\IN}~ \sup_{a\in\CF_{n_0}}~ \|a(V(t_1)-V(t_2))\| \\
&=& \dst \sup_{n_0\in\IN}~ \sup_{a\in\CF_{n_0}}~ \limsup_{m\to\infty}~ \|a(v_m(t_1)-v_m(t_2))\| \\
&=& \dst \sup_{n_0\in\IN}~ \sup_{a\in\CF_{n_0}}~ \inf_{n\geq 1}~ \sup_{l\geq k_n}~ \|a(v_l(t_1)-v_l(t_2))\| \\
&\stackrel{\eqref{eq:nawata-2}}{\leq}& \dst \sup_{n_0\in\IN}~ \sup_{a\in\CF_{n_0}}~ \inf_{n\geq 1}~ \eps_n+2\pi|t_1-t_2| \\
&=& 2\pi|t_1-t_2|.
\end{array}
\]
Thus $V$ is indeed a continuous path with length at most $2\pi$. The product $[t\mapsto \Lambda(t)V(t)]$ then yields a unitary path connecting $U$ with the unit.
In particular, this means that $U$ is homotopic to $\eins$.
Knowing this, we know by functional calculus that $U$ can actually be represented by a sequence of unitaries in $\CU(\eins+A)$ in the first place. 
Thus the unitary path $\Lambda$ becomes (a posteriori) obsolete and the above argument shows that there is a continuous path of length at most $2\pi$ connecting $U$ with the unit. This finishes the proof. 
\end{proof}

Now using Theorem \ref{thm:nawata-property} in place of \cite[Theorem 5.7]{Nawata19}, the arguments detailed in \cite[Section 7]{Nawata19} show the following result, which generalizes the unique trace case considered by Nawata.
Note that the assumption involves the Rokhlin property for non-unital \cstar-algebras in the sense of \cite[Definition 6.1]{Nawata19}.

\begin{theorem}[cf.\ {\cite[Theorems 7.3, 7.4]{Nawata19}}] \label{thm:Rokhlin-automorphisms}
Let $A$ be a separable, simple, stably projectionless \cstar-algebra with finite nuclear dimension. 
Suppose that $KK(A,A)=0$.
Let $\phi$ and $\psi$ be automorphisms on $A$ with the Rokhlin property.
Then they are cocycle conjugate if and only if their induced maps on the extended traces of $A$ are affinely conjugate. 
Moreover, any conjugacy on the level of traces lifts to an automorphism on $A$ inducing cocycle conjugacy between $\phi$ and $\psi$.
\end{theorem}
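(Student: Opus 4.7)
The ``only if'' direction is formal: if $\Theta\phi\Theta^{-1} = \ad(u)\circ\psi$ for some isomorphism $\Theta: A\to A$ and unitary $u\in\CU(\CM(A))$, then inner automorphisms act trivially on traces, so $\Theta_*: T(A)\to T(A)$ affinely conjugates the induced actions of $\phi$ and $\psi$. The substance lies in the converse, and the strategy is to follow the blueprint of \cite[Section 7]{Nawata18} using Theorem~\ref{thm:nawata-property} as a replacement for \cite[Theorem 5.7]{Nawata18} to widen its applicability from $\CW$ to the full classifiable $KK$-contractible class.

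First I would invoke the Elliott--Gong--Lin--Niu classification \cite{ElliottGongLinNiu17} to lift an affine conjugacy $\gamma$ on $T(A)$ (respecting the scale and intertwining the trace actions of $\phi$ and $\psi$) to an automorphism $\Theta$ of $A$. This is possible because $KK(A,A)=0$ reduces the classification invariant essentially to the tracial cone with scale. Replacing $\psi$ by $\Theta^{-1}\psi\Theta$, I reduce to the case where $\phi$ and $\psi$ induce the same action on $T(A)$; classification then additionally guarantees that $\phi$ and $\psi$ are approximately unitarily equivalent as individual automorphisms.

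Second, I would run an Evans--Kishimoto intertwining argument \cite{EvansKishimoto97} in the style of the proof of Theorem~\ref{thm:main-result} above, but for a single automorphism rather than a flow. Two ingredients are needed. \emph{Ingredient (a):} $\phi$ is an approximate cocycle perturbation of $\psi$ and vice versa. In the single-automorphism case this follows from approximate unitary equivalence via a straightforward discrete variant of Lemma~\ref{lem:acp}; no Berg-style continuous implementation is needed, since there is no real parameter to interpolate along. \emph{Ingredient (b):} an approximate first-cohomology vanishing for Rokhlin automorphisms. Given an approximately central $\phi$-cocycle sequence $(u_n)_n$, lift it to a unitary $U$ in the fixed point algebra of $\tilde\phi_\infty$ on $F_\infty(A)$; by Theorem~\ref{thm:nawata-property} join $U$ to $\eins$ by a unitary path of length at most $2\pi$, and then combine that path with the Rokhlin tower of $\phi$ inside $F_\infty(A)$ to realize $U$ as a coboundary $V\tilde\phi_\infty(V^*)$ with $V$ approximately central. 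This is the discrete analog of the proof of Lemma~\ref{lem:cv-pre}, and is in fact technically simpler because no continuous $t$-dependence enters.

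The main obstacle is ingredient (b): turning the length bound from Theorem~\ref{thm:nawata-property} into an actual coboundary requires carefully marrying the unitary path with the cyclic shift structure provided by the Rokhlin tower inside $F_\infty(A)$, so that the resulting $V$ is genuinely approximately central in $A$ and not merely in the central sequence algebra of some corner. Once both ingredients are in hand, a standard zig-zag intertwining produces an approximately inner automorphism implementing a cocycle conjugacy between $\phi$ and $\psi$ in the reduced situation. Composing back with $\Theta$ from the first step yields the ``moreover'' assertion that any affine conjugacy on traces lifts to an automorphism of $A$ implementing cocycle conjugacy between $\phi$ and $\psi$.
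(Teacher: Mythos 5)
Your proposal follows essentially the same route as the paper: the "only if" direction is dismissed as formal, the affine conjugacy on $T(A)$ is lifted to an automorphism via \cite[Theorem 7.5]{ElliottGongLinNiu17} to reduce to the trace-preserving (hence approximately unitarily equivalent) case, and the cocycle conjugacy is then produced by the intertwining machinery of \cite[Section 7]{Nawata18} with Theorem \ref{thm:nawata-property} substituting for Nawata's Theorem 5.7. The paper simply cites Nawata's arguments for the cohomology-vanishing and intertwining steps rather than spelling them out as you do, so there is no substantive difference.
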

\begin{proof}
Since the ``only if'' part is trivial, we show the ``if'' part.
Suppose that $\phi$ and $\psi$ have affinely conjugate maps on traces.
This means that there exists an affine homeomorphism $\gamma: (T(A), \Sigma_A)\to (T(A), \Sigma_A)$ such that $\gamma(\tau\circ\phi)=\gamma(\tau)\circ\psi$ for all extended traces $\tau\in T(A)$.

By applying \cite[Theorem 7.5]{ElliottGongLinNiu17}, we may find an automorphism $\sigma$ on $A$ that represents $\gamma$, meaning $\gamma(\tau)=\tau\circ\sigma$ for all $\tau\in T(A)$.
Thus, if we replace $\psi$ by $\sigma\circ\psi\circ\sigma^{-1}$, our claim reduces to the case where $\phi$ and $\psi$ induce the same maps on traces.
By Theorem \ref{thm:kk-contractible-uniqueness}, it follows that $\phi$ and $\psi$ are approximately unitarily equivalent.
 
By appealing to Theorem \ref{thm:nawata-property} in place of \cite[Theorem 5.7]{Nawata19}, we can directly generalize \cite[Theorem 7.1]{Nawata19} with the well-known method of Herman--Ocneanu \cite[Theorem 1]{HermanOcneanu84}.
Namely, every unitary $u\in F_\infty(A)$ is a $\tilde{\alpha}_\infty$-coboundary, i.e., there is a unitary $v\in F_\infty(A)$ with $u=v\tilde{\alpha}_\infty(v)^*$. 
This allows us to repeat the Evans--Kishimoto intertwining argument in the proof of \cite[Theorem 7.3]{Nawata19} (the proof given there generalizes verbatim with no other modifications and is very similar to our proof of Theorem \ref{thm:main-result}) and conclude that $\phi$ and $\psi$ are cocycle conjugate via an approximately inner automorphism.
This finishes the proof.
\end{proof}


\subsection{Uniqueness for restricted coactions}

In this subsection, we shall show that for any flow $\alpha: \IR\curvearrowright A$ on a \cstar-algebra in the $KK$-contractible classifiable class, the approximate unitary equivalence class of the restricted coaction $\alpha_\co$ is determined by the induced action of $\alpha$ on traces.


\begin{lemma} \label{lem:interval-alg}
Let $A$ be a separable, simple, stably projectionless \cstar-algebra with finite nuclear dimension. Suppose that $KK(A,A)=0$. Let $\phi: A\to\CC\big( [0,1], A \big)$ be a non-degenerate $*$-homomorphism. Then $\phi\ue\eins\otimes\id_A$ if and only if $\tau\circ\phi_t=\tau$ for all $t\in [0,1]$ and all $\tau\in T(A)$.
\end{lemma}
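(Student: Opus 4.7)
The ``only if'' direction is immediate: evaluating an approximate unitary equivalence $\phi\ue\eins\otimes\id_A$ pointwise at each $t\in[0,1]$ yields unitaries in $\CU(\eins+A)$ implementing $\phi_t\ue\id_A$, and since approximate unitary equivalence preserves traces we conclude $\tau\circ\phi_t=\tau$ for all $\tau\in T(A)$ and $t\in[0,1]$.

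For the ``if'' direction, the strategy is to combine a pointwise uniqueness theorem with a Bratteli--Elliott-style intertwining along $[0,1]$. First, I would invoke the EGLN classification of $*$-homomorphisms between \cstar-algebras in the $KK$-contractible classifiable class \cite{ElliottGongLinNiu17}: since $KK(A,A)=0$ renders the $KK$-obstruction trivial and the hypothesis makes the action on traces trivial, each non-degenerate $*$-homomorphism $\phi_t: A\to A$ satisfies $\phi_t\ue\id_A$.

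The second ingredient upgrades this pointwise equivalence to an equivalence of the whole path-valued map $\phi$. Given $\eps>0$ and $\CF\fin A$, choose $(\delta,\CG)$ from Lemma \ref{lem:GL-bhl-2} applied to $(\eps/3,\CF)$, arranging $\CG\supseteq\CF$. By uniform continuity of the paths $[t\mapsto\phi_t(a)]$ on $\CG$, pick $N\in\IN$ such that $\|\phi_s(a)-\phi_t(a)\|\leq\delta/10$ whenever $|s-t|\leq 1/N$ and $a\in\CG$. At each node $t_k=k/N$ invoke the pointwise equivalence on $\CG$ with tolerance $\delta/10$ to obtain unitaries $u_k\in\CU(\eins+A)$ with $\|u_k\phi_{t_k}(a)u_k^*-a\|\leq\delta/10$. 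A short triangle-inequality computation combining these three estimates shows $\|[u_k u_{k+1}^*,a]\|\leq\delta$ for $a\in\CG$. Lemma \ref{lem:GL-bhl-2} then supplies a unitary path $v_k:[0,1]\to\CU(\eins+A)$ with $v_k(0)=\eins$, $v_k(1)=u_k u_{k+1}^*$, approximately commuting with $\CF$ up to $\eps/3$. Splicing these via $u(t):=v_k(1-(t-t_k)N)\,u_{k+1}$ for $t\in[t_k,t_{k+1}]$ produces a continuous unitary $u\in\CU(\eins+\CC([0,1],A))$ satisfying $\|u(t)\phi_t(a)u(t)^*-a\|\leq\eps$ for all $a\in\CF$ and $t\in[0,1]$.

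The main obstacle is the pointwise uniqueness step, namely confirming that the EGLN machinery truly produces $\phi_t\ue\id_A$ for all non-degenerate trace-preserving endomorphisms $\phi_t:A\to A$ in this class (these need not be surjective a priori). Once this ingredient is secured, the intertwining portion is a routine consequence of the finite weak approximately central exponential length established in Theorem \ref{thm:finite-acelw}, which is exactly the quantitative tool needed to connect the pointwise unitaries $u_k$ by short, approximately central paths.
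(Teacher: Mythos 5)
Your proposal is correct and follows essentially the same route as the paper: pointwise approximate innerness of each trace-preserving non-degenerate endomorphism $\phi_t$, followed by a node-by-node intertwining along $[0,1]$ in which Lemma \ref{lem:GL-bhl-2} supplies approximately central paths joining $\eins$ to the transition unitaries. The pointwise step you flag as the main obstacle is indeed available — the paper invokes the uniqueness theorems of Razak or Robert for such $*$-homomorphisms rather than the full EGLN machinery — so there is no gap.
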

\begin{proof}
Since the ``only if'' part is trivial, let us show the ``if'' part.

Suppose that $\tau\circ\phi_t=\tau$ for all $t\in [0,1]$ and all $\tau\in T(A)$.
By Theorem \ref{thm:kk-contractible-uniqueness}, we have that $\phi_t$ is an approximately inner endomorphism for every $t\in [0,1]$.

Now let $\CF\fin A_{\leq 1}$ and $\eps>0$. 
We need to show the approximate unitary equivalence with respect to $\CF$ and $\eps$. 
Apply Lemma \ref{lem:GL-bhl-2} and choose $\delta>0$ and $\CG\fin A_{\leq 1}$ with respect to $\CF$ and $\eps/3$.
We may assume $\delta\leq\eps$ and $\CF\subseteq\CG$.

Choose $N\in\IN$ so large that
\begin{equation} \label{eq:interval-2}
\|\phi_{t_1}(a)-\phi_{t_2}(a)\|\leq\delta/3
\end{equation}
for all $a\in\CG$ and $t_1,t_2\in [0,1]$ with $|t_1-t_2|\leq 1/N$.

As $\phi_t$ is approximately inner for each $t$, we find unitaries
\[
w_0,w_1,\dots,w_N\in\CU(\eins+A)
\]
with
\begin{equation} \label{eq:interval-3}
\| \phi_{j/N}(a)-w_jaw_j^*\|\leq\delta/3 \quad\text{for all}\quad a\in\CG \text{ and } j=0,\dots,N.
\end{equation}
So by \eqref{eq:interval-2} and \eqref{eq:interval-3}
\[
\|[w_{j}^*w_{j+1}, a]\| \leq \delta \quad\text{for all}\quad a\in\CG \text{ and } j=0,\dots,N-1.
\]
From our choice of $\delta$ according to Lemma \ref{lem:GL-bhl-2} it now follows that there exist unitary paths
\[
v^{(j)}: [0,1]\to\CU(\eins+A),\quad j=0,\dots,N-1
\]
satisfying
\begin{equation} \label{eq:interval-6}
v^{(j)}(0)=\eins,\quad v^{(j)}(1)=w_{j}^*w_{j+1},\quad \max_{a\in\CF}~\max_{0\leq t\leq 1}~ \|[a,v^{(j)}(t)]\|\leq\eps/3.
\end{equation}
We then define a unitary path $V: [0,1]\to\CU(\eins+A)$ by the recursive formula
\[
V\big( (j+t)/N \big) = w_jv^{(j)}(t) \quad\text{for all}\quad j=0,\dots,N-1 \text{ and } t\in [0,1].
\]
Then $V$ is well-defined and satisfies $V(j/N)=w_j$ for all $j=1,\dots,N$.
It also represents a unitary in $\CU\big( \eins+\CC([0,1],A) \big)$.
By combining \eqref{eq:interval-2} and \eqref{eq:interval-6}, we finally see that
\[
\|\phi(a)-V(\eins\otimes a)V^*\| = \max_{0\leq s\leq 1}~ \|\phi_s(a)-V(s)aV(s)^*\|\leq\eps
\]
for all $a\in\CF$. 
This finishes the proof.
\end{proof}

\begin{cor} \label{cor:KKc-restricted-coaction}
Let $A$ be a separable, simple, stably projectionless \cstar-algebra with finite nuclear dimension. Suppose that $KK(A,A)=0$. Let $\alpha,\beta: \IR\curvearrowright A$ be two flows. Then $\alpha_\co\ue\beta_\co$ if and only if $\alpha$ and $\beta$ induce the same actions on traces.
\end{cor}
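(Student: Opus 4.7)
The ``only if'' direction is immediate: if $\alpha_\co \ue \beta_\co$, then evaluating at any $t\in [0,1]$ gives $\alpha_t\ue\beta_t$ as endomorphisms of $A$, and approximately unitarily equivalent $*$-homomorphisms induce identical maps on traces, hence $\tau\circ\alpha_t=\tau\circ\beta_t$ for all $\tau\in T(A)$ and $t\in [0,1]$. A routine argument using the cocycle identity for flows then extends this equality to all $t\in\IR$: for $t>1$, write $t$ as a finite sum of elements in $[0,1]$ and iteratively apply the $[0,1]$ case to the traces of the form $\tau\circ\beta_{t_1}\circ\cdots\circ\beta_{t_k}$; for $t<0$ one then inverts.

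For the converse, assume $\tau\circ\alpha_t=\tau\circ\beta_t$ for all $\tau\in T(A)$ and $t\in\IR$, and consider the $*$-homomorphism
\[
\phi: A\to\CC([0,1],A),\quad \phi_t(a)=\beta_{-t}(\alpha_t(a)).
\]
Norm-continuity of $t\mapsto\phi_t(a)$ follows from the flow property of $\alpha$ and $\beta$, and $\phi$ is non-degenerate because each fiber $\phi_t$ is an automorphism of $A$. Moreover, the hypothesis immediately gives $\tau\circ\beta_{-t}=\tau\circ\alpha_{-t}$ for all $\tau\in T(A)$ and $t\in\IR$, so that
\[
(\tau\circ\phi_t)(a)=(\tau\circ\beta_{-t})(\alpha_t(a))=(\tau\circ\alpha_{-t})(\alpha_t(a))=\tau(a).
\]
Thus $\phi$ satisfies the trace-preservation hypothesis of Lemma \ref{lem:interval-alg}, which applies since $A$ is in the $KK$-contractible classifiable class under consideration. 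We conclude $\phi\ue\eins\otimes\id_A$.

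Finally, one translates this back into the statement about restricted coactions. Given $\CF\fin A$ and $\eps>0$, choose $V\in\CU(\eins+\CC([0,1],A))$ with $\sup_{t\in[0,1]}\|V_t\phi_t(a)V_t^*-a\|<\eps$ for all $a\in\CF$, and apply $\beta_t$ fiberwise. Setting $W_t=\beta_t(V_t)$ defines a unitary $W\in\CU(\eins+\CC([0,1],A))$ (the assignment $t\mapsto W_t$ is norm-continuous by joint continuity of the flow, and $\beta_t$ fixes $\eins$ in the unitization), which satisfies $\sup_{t\in[0,1]}\|W_t\alpha_t(a)W_t^*-\beta_t(a)\|<\eps$ for $a\in\CF$. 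Hence $\alpha_\co\ue\beta_\co$. The main content is entirely contained in Lemma \ref{lem:interval-alg}; the remaining steps amount to a routine change of variables via the flow.
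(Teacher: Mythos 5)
Your proposal is correct and follows essentially the same route as the paper: both reduce the ``if'' direction to Lemma \ref{lem:interval-alg} applied to the trace-preserving homomorphism $a\mapsto[t\mapsto\beta_{\mp t}(\alpha_{\pm t}(a))]$ (the paper phrases this via the fiberwise automorphism $\kappa(f)(t)=(\beta_t\circ\alpha_{-t})(f(t))$ of $\CC([0,1],A)$ and concludes that $\kappa$ is approximately inner, whereas you conjugate the implementing unitary fiberwise by $\beta_t$, which is an equivalent bookkeeping). The only-if direction and the continuity/non-degeneracy checks you include are routine and correct.
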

\begin{proof}
Since the ``only if'' part is trivial, we show the ``if'' part.

Assume that $\alpha$ and $\beta$ induce the same actions on traces. Then in particular $\tau\circ\beta_t\circ\alpha_{-t}=\tau$ for every trace $\tau$ on $A$.
We consider the automorphism $\kappa$ on $\CC([0,1],A)$ given by $\kappa(f)(t)=(\beta_t\circ\alpha_{-t})(f(t))$.

By Lemma \ref{lem:interval-alg}, it follows that the $*$-homomorphism $\kappa\circ (\eins\otimes\id_A)$ is approximately unitarily equivalent to $\eins\otimes\id_A$. Since $\kappa$ acts trivially on $\CC[0,1]\subset\CM\big( \CC([0,1],A) \big)$ and one has
\[
\CC\big( [0,1], A\big) \ \subset \ \cstar\Big( \CC[0,1]\cup\eins\otimes A \Big) \ \subset \ \CM\big(\CC( [0,1], A)\big),
\]
it follows that $\kappa$ is an approximately inner automorphism. 
As $\beta_\co = \kappa\circ\alpha_\co$ by definition, this shows our claim.
\end{proof}


\subsection{Proof Theorem \ref{Theorem-C}}

We are now ready to prove our last main result:

\begin{proof}[Proof of Theorem {\em\ref{Theorem-C}}]
Let $A$ be a separable, simple, stably projectionless, $KK$-contractible \cstar-algebra with finite nuclear dimension.
Let $\alpha, \beta: \IR\curvearrowright A$ be two Rokhlin flows, and suppose that there exists an affine homeomorphism $\gamma: (T(A), \Sigma_A) \to (T(A), \Sigma_A)$ such that $\gamma(\tau\circ\alpha_t)=\gamma(\tau)\circ\beta_t$ for all extended traces $\tau\in T(A)$ and $t\in\IR$.
We have to show that $\alpha$ and $\beta$ are cocycle conjugate via an automorphism representing $\gamma$.

By applying \cite[Theorem 7.5]{ElliottGongLinNiu17}, we may find an automorphism $\sigma$ on $A$ that represents $\gamma$, meaning $\gamma(\tau)=\tau\circ\sigma$ for all $\tau\in T(A)$.
Thus, if we replace $\beta$ by $\sigma\circ\beta\circ\sigma^{-1}$, our claim reduces to the case where $\alpha$ and $\beta$ induce the same actions on traces.
With this assumption, it is now enough to show that $\alpha$ and $\beta$ are cocycle conjugate via an approximately inner automorphism.

By Corollary \ref{cor:KKc-restricted-coaction}, the restricted coactions $\alpha_\co$ and $\beta_\co$ are approximately unitarily equivalent.
By Lemma \ref{lem:GL-celw}, $A$ has finite weak inner length.
By Theorem \ref{thm:finite-acelw}, $A$ has finite weak approximately central exponential length.
So by Theorem \ref{thm:main-result}, it follows that $\alpha$ and $\beta$ are indeed (strongly) cocycle conjugate via an approximately inner automorphism. This finishes the proof.
\end{proof}


\subsection{Concluding remarks}

As it was alluded to in the introduction, there are certain classes of flows where one might expect the Rokhlin property to hold automatically.
Let us discuss this problem in the context of stably projectionless \cstar-algebras.

\begin{question}[cf.\ \cite{KishimotoKumjian96, KishimotoKumjian97}]
\label{q:rokhlin-automatic}
Let $A$ be a separable, simple, stably projectionless \cstar-algebra with finite nuclear dimension and satisfying the UCT. Let $\alpha: \IR\curvearrowright A$ be a flow, and suppose that it is trace-scaling, i.e., there exists $\lambda\neq 0$ such that $\tau\circ\alpha_{t}=e^{\lambda t}\cdot\tau$ for all extended traces $\tau$ on $A$. Does $\alpha$ satisfy the Rokhlin property?
\end{question}

Recall that the class of \cstar-algebras covered by this question has recently been classified by Gong--Lin \cite{GongLin17}.\footnote{Note that any flow acts trivially on the $K$-theory of $A$, so the existence of a trace-scaling flow on $A$ implies that the pairing between $K_0(A)$ and $T(A)$ must be trivial.} 
More specifically, let us ask:

\begin{question} \label{q:trace-scaling-WK}
Does every trace-scaling flow on $\CW\otimes\CK$ have the Rokhlin property?
In other words, by Theorem \ref{Theorem-C}, does the scaling factor uniquely determine such a flow up to cocycle conjugacy?
\end{question}

In \cite{Nawata19}, the analogous question for single automorphisms has been settled, but the question is much more difficult for flows.

\begin{rem}
It might not be immediately clear why Question \ref{q:trace-scaling-WK} has a special appeal.
For comparison, let us translate this into an analogous question about $\mathrm{W}^*$-algebras:

If we close up $\CW\otimes\CK$ in the image of its GNS representation associated to the unique extended trace, then we obtain the hyperfinite II${}_\infty$-factor.
In particular, the analogous $\mathrm{W}^*$-question is whether all trace-scaling flows on the hyperfinite II${}_\infty$-factor have the Rokhlin property.

Now this is known to be true. For the reader's convenience, let us briefly discuss why. 
It is straightforward that the $\mathrm{W}^*$-crossed product of such a flow is an injective III${}_1$-factor, and that the dual flow will have a unique (unbounded) KMS weight for the inverse temperature given by the scaling factor.
By Haagerup's uniqueness theorem \cite{Haagerup87}, there is only one injective III${}_1$-factor.
Hence by a result of Connes \cite{Connes73}, the dual flow of any such trace-scaling flow is uniquely determined by the scaling factor up to cocycle conjugacy.
By passing to the double dual flow, one can thus show that the scaling factor uniquely determines the flow up to stable conjugacy, and by a known trick \cite[Theorem 6.1(2)]{MasudaTomatsu16}, in fact up to cocycle conjugacy.
Since there exists some Rokhlin flow on the hyperfinite II${}_1$-factor, it is then clear that such flows must have the Rokhlin property.

Upon analyzing the above line of argument, one might point out that the crucial result used above is Haagerup's uniqueness theorem for the injective III${}_1$-factor.
Let us, for the sake of toying around, pretend that we do not know about Haagerup's theorem.

Suppose then that $M^{(1)}$ and $M^{(2)}$ are injective factors of type III${}_1$.
For $i=1,2$ there exists, by definition, a flow $\alpha^{(i)}: \IR\curvearrowright M^{(i)}$ with a unique normal KMS state for the temperature $1$ such that the crossed product is a factor.
In fact, then $M^{(i)}\rtimes_{\alpha^{(i)}}\IR$ must be the hyperfinite II${}_\infty$-factor by \cite{Connes76}, and the dual flow $\hat{\alpha}^{(i)}$ scales the trace by $\tau\circ\hat{\alpha}^{(i)}_{t}=e^t\cdot\tau$.
If we know that these flows satisfy the Rokhlin property, then Masuda--Tomatsu's classification theorem \cite[Theorem 1]{MasudaTomatsu16} gives us a cocycle conjugacy between $\hat{\alpha}^{(1)}$ and $\hat{\alpha}^{(2)}$. By Takai duality, this would imply that
\[
M^{(1)} \cong M^{(1)}\bar{\otimes}\CB(\ell^2(\IN)) \cong (M^{(i)}\rtimes_{\alpha^{(1)}}\IR)\rtimes_{\hat{\alpha}^{(1)}}\IR
\]
is isomorphic to
\[
M^{(2)} \cong M^{(2)}\bar{\otimes}\CB(\ell^2(\IN)) \cong (M^{(2)}\rtimes_{\alpha^{(2)}}\IR)\rtimes_{\hat{\alpha}^{(2)}}\IR,
\]
and would hence recover Haagerup's theorem.

In other words, the positive answer to the $\mathrm{W}^*$-variant of Question \ref{q:trace-scaling-WK} is \emph{formally equivalent} to Haagerup's theorem.
Via this line of thought, we may interpret a potential positive answer to Question \ref{q:trace-scaling-WK} as a \cstar-algebraic variant of Haagerup's theorem.
Moreover, by applying the dual flow construction similarly as above, a positive answer to either Question \ref{q:trace-scaling-WK} or \ref{q:rokhlin-automatic} would also open the door towards the classification of naturally occurring classes of flows that do not necessarily have the Rokhlin property.
\end{rem}


\bibliographystyle{gabor}
\bibliography{master}

\end{document}